\documentclass[11pt, reqno]{amsart}

\usepackage{amsmath, amsfonts, amssymb, amsbsy, bigstrut, graphicx, enumerate,  upref}

\usepackage{pstricks}

\usepackage[noadjust,nosort]{cite}

%\usepackage[numbers, sort&compress]{natbib} 

%\usepackage{pdftricks}
%\begin{psinputs}
%   \usepackage{pstricks}
%   \usepackage{multido}
%\end{psinputs}

%\PassOptionsToPackage{pdf}{pstricks}

\topmargin=0in
   \oddsidemargin=0in
   \evensidemargin=0in
   \textwidth=6.5in
   \textheight=8.5in

\newcommand{\ep}{\epsilon}

\newcommand{\ms}{\mathcal{S}}

\newtheorem{thm}{Theorem}[section]
\newtheorem{lmm}[thm]{Lemma}
\newtheorem{cor}[thm]{Corollary}
\newtheorem{prop}[thm]{Proposition}

\newcommand{\bigavg}[1]{\biggl\langle #1 \biggr\rangle}

\newcommand{\ee}{\mathbb{E}}

\newcommand{\ma}{\mathcal{A}}

\newcommand{\cp}{\mathcal{P}}
\newcommand{\cs}{\mathcal{S}}

\newcommand{\mx}{\mathcal{X}}

\newcommand{\ra}{\rightarrow}
\newcommand{\rr}{\mathbb{R}}
\newcommand{\smallavg}[1]{\langle #1 \rangle}

\newcommand{\tr}{\operatorname{Tr}}

\newcommand{\zz}{\mathbb{Z}}

\newcommand{\fpar}[2]{\frac{\partial #1}{\partial #2}}
\newcommand{\spar}[2]{\frac{\partial^2 #1}{\partial #2^2}}
\newcommand{\mpar}[3]{\frac{\partial^2 #1}{\partial #2 \partial #3}}

%\makeindex

%\newcommand{\fs}{\mathfrak{S}}
%\newcommand{\fd}{\mathfrak{D}}
%\newcommand{\ftw}{\mathfrak{T}}
%\newcommand{\fst}{\mathfrak{N}}

\newcommand{\fs}{\mathbb{S}}
\newcommand{\fd}{\mathbb{D}}
\newcommand{\ftw}{\mathbb{T}}
\newcommand{\fst}{\mathbb{M}}

\numberwithin{equation}{section}

\usepackage{hyperref}

\setcounter{tocdepth}{1}

\begin{document}
%\title[Gauge-string duality]{Towards a mathematical understanding of gauge-string duality}
\title[Solution of $SO(N)$ lattice gauge theory]{Rigorous solution of strongly coupled $SO(N)$ lattice gauge theory in the large $N$ limit}
\author{Sourav Chatterjee}
\address{\newline Department of Statistics \newline Stanford University\newline Sequoia Hall, 390 Serra Mall \newline Stanford, CA 94305\newline \newline \textup{\tt souravc@stanford.edu}}
\thanks{Research partially supported by NSF grant DMS-1441513}
\keywords{Gauge-string duality, AdS/CFT, lattice gauge theory, Yang--Mills, string theory, matrix integral}
\subjclass[2010]{70S15, 81T13, 81T25, 82B20}

\begin{abstract}
%This paper proves an explicit formula for the Wilson loop expectations in the large $N$ limit of strongly coupled $SO(N)$ lattice gauge theory. The formula is given by a sum over trajectories in a kind of string theory on the lattice, thereby verifying the widely conjectured duality between gauge theories and string theories in this example.
The main result of this paper is a rigorous  computation of Wilson loop expectations in strongly coupled $SO(N)$ lattice gauge theory in the large $N$ limit, in any dimension.  The formula appears as  an absolutely convergent sum over trajectories in a kind of string theory on the lattice, demonstrating an explicit gauge-string duality. The generality of the proof technique  may allow it to be extended other gauge groups.%This may give a first step to a conceptual understanding of gauge-string duality, going beyond exactly solvable models. 
\end{abstract}

\maketitle

\tableofcontents

\section{Introduction}\label{intro}
Quantum gauge theories, also called quantum Yang--Mills theories, form the basic building blocks of the Standard Model of quantum mechanics. In an effort to gain a better theoretical understanding of quantum gauge theories, Wilson \cite{wilson74} introduced a discretized version of these theories in 1974. These have since come to be known as `lattice gauge theories' or `lattice Yang--Mills theories'. Lattice gauge theories are mathematically well-defined objects. However, proving theorems about them has turned out to be quite challenging. Indeed, proving the existence of a suitable scaling limit of lattice gauge theories would solve the Clay  millennium problem of Yang--Mills existence \cite{jaffewitten}. Partial progress in this direction was made by Brydges, Fr\"ohlich and Seiler \cite{brydgesetal79, brydgesetal80, brydgesetal81}, Ba\l aban \cite{balaban83, balaban85, balaban85b, balaban87} and Magnen, Rivasseau and S\'en\'eor~\cite{magnenetal93}.

Even dealing with lattice gauge theories by themselves, without taking  scaling limits, has not been easy. Wilson's original goal was to study  certain quantities that are now known as `Wilson loop expectations'. These are the basic objects of interest in lattice gauge theories. A `solution' of a lattice gauge theory would refer to a formula for Wilson loop expectations. Although a number of impressive mathematical advances have occurred over the years \cite{driver89, frohlichspencer82, gopfertmack82, grosswitten80, gross83, grossetal89, guth80,levy03,levy06,levy11, luscher10, osterwalderseiler78, sengupta93, sengupta97, tomboulisyaffe85, witten89, witten90, witten92}, no lattice gauge theory in any dimension higher than two has been rigorously solved in the above~sense. The monograph \cite{seiler82} gives a comprehensive introduction to lattice gauge theories and their connections with quantum physics for mathematically oriented readers.

A promising approach to solving lattice gauge theories was proposed by  't Hooft \cite{thooft74}, who suggested an approximate computation of the partition function of lattice gauge theories in a certain kind of limit. The solution posited by 't Hooft involved a connection between matrix integrals and the enumeration of planar maps, an idea that has had ramifications in various branches of physics and mathematics over the last forty years. A large part of this connection is now mathematically well-grounded \cite{collinsetal09, ercolanimclaughlin03, ercolanimclaughlin08, eynard03, eynard05, eynardorantin07, eynardorantin09,guionnet04, guionnet06, guionnetetal12, guionnetmaida05, guionnetnovak14, guionnetzeitouni02}. The paper \cite{lucinipanero13} contains a survey of the developments on the physics side. 
%due to the efforts of Guionnet, Eynard, Orantin, Ercolani, McLaughlin and their various coauthors~\cite{collinsetal09, ercolanimclaughlin03, ercolanimclaughlin08, eynard03, eynard05, eynardorantin07, eynardorantin09,guionnet04, guionnet06, guionnetetal12, guionnetmaida05, guionnetnovak14, guionnetzeitouni02}. 

Lattice gauge theories have a parameter called the coupling strength. When the coupling strength is large, the system is said to be at strong coupling. Otherwise, it is in the weak coupling regime. The map enumeration technique of 't Hooft is a perturbative expansion for weakly coupled lattice gauge theories in the 't Hooft limit. This paper, on the other hand, gives a formula for Wilson loop expectations in the 't Hooft limit of $SO(N)$ lattice gauge theory at strong coupling. The formula involves a sum over all possible trajectories in a kind of string theory on the lattice. This establishes an explicit `gauge-string duality'. Dualities between gauge theories and string theories have been conjectured in the physics literature for a long time. After the seminal contribution of Maldacena \cite{maldacena97} in 1997, the investigation of gauge-string duality --- and  a special case of it, the AdS/CFT duality --- has become one of the most popular areas of research in theoretical physics. 

Incidentally, the groups $SU(N)$ and $U(N)$ are the ones that are more relevant for physics than $SO(N)$. However, the method of this paper is somewhat easier to implement for $SO(N)$ than $SU(N)$ or $U(N)$, and that is why $SO(N)$ has been chosen as the first candidate for investigation. Using the same technique, analogous results for $SU(N)$ have been recently derived in a lengthier paper by Jafarov~\cite{jafarov16}.

The formula obtained in this manuscript (Theorem \ref{mainthm}) is the first rigorously proved verification of any kind of gauge-string duality in any dimension higher than two, except for some exactly solvable models like three dimensional Chern--Simons gauge theory  \cite{witten89, witten90, witten92}. However, two things need to clarified. First, this paper deals only with theories on lattices, whereas the physicists who think about gauge-string duality are interested mainly in continuum theories. But mathematicians do not even know how to define the continuum theories, so that is a moot point. Second, physicists who work with lattice gauge theories are aware of combinatorial formulas for recursively computing Wilson loop expectations in the 't Hooft limit. But as far as I understand, these formulas are not rigorously proved.

The method of proof, very briefly, is as follows. In the physics literature, Makeenko and Migdal~\cite{makeenkomigdal79} showed in 1979 that Wilson loops expectations in lattice gauge theories satisfy a complicated set of equations that are now known as the Makeenko--Migdal master loop equations. These are special cases of the class of equations that are broadly known as Schwinger--Dyson equations in the random matrix literature. The first step in this paper is to rigorously derive a version of the Makeenko--Migdal equations. This is different than the original, because Makeenko and Migdal assumed a certain mathematically unproved `factorization property' of Wilson loops, which is not assumed in this article but rather derived later as a corollary of the main theorem. The next step is to rigorously solve the modified Makeenko--Migdal equations, and then write the solution as an absolutely convergent sum indexed by trajectories of strings in a discrete string theory. Solving the Makeenko--Migdal equations is a challenge that has stood for the last thirty-five years (in spite of some famous attempts in~\cite{bhanotetal82, eguchikawai82, gonzalez83} and more recently in~\cite{kovtunetal,uy08,du16}), so it is interesting to see that it can actually be done.  Prior to this work, the only rigorously proved case was that of dimension two, where the Makeenko--Migdal equations were analyzed by L\'evy \cite{levy11}. %, and even more interesting that the solution involves a duality with a string theory. 

The approach outlined above is fairly general, and may apply to other models. It is not dependent on any magical integrability property of the system under consideration, or connections to other integrable systems. %The reason why this is done for $SO(N)$ in this paper is nothing in particular; the proof uses no special property of $SO(N)$ that is unavailable for other matrix groups such as $SU(N)$ or $U(N)$. 

The next section introduces some basic notation and terminology that will be used throughout this article, and also introduces a `lattice string theory'. The main result and some corollaries are  presented in Section \ref{results}.

%\section{Loops and operations on loops}\label{not1}

\section{Notation, terminology and a lattice string theory}\label{not1}
The following notation will be fixed throughout this manuscript. Let $d\ge 2$ be a positive integer. Let $\zz^d$ be the $d$-dimensional integer lattice. Let $E$ be the set of directed nearest neighbor edges of $\zz^d$. If $e\in E$, then $e$ has a beginning point, which will be denoted by $u(e)$, and an ending point, which will be denoted by $v(e)$. Furthermore, we will say that $e$ is positively oriented if the ending point is greater than the beginning point in the lexicographic ordering, and negatively oriented otherwise. Let $E^+$ be the set of positively oriented edges and $E^-$ be the set of negatively oriented edges. If $e = (u,v)\in E$, we will denote the edge $(v,u)$ by $e^{-1}$. Note that if $e\in E^+$ then $e^{-1}\in E^-$ and vice versa. 

\subsection{Loops and loop sequences}
A `path' $\rho$ in the lattice $\zz^d$ is defined to be a sequence of edges $e_1,e_2,\ldots, e_n\in E$ such that $v(e_i)=u(e_{i+1})$ for $i=1,2,\ldots, n-1$. We will usually write $\rho = e_1e_2\cdots e_n$ and say that $\rho$ has length $n$. The length of $\rho$ will be denoted by $|\rho|$. The path $\rho$ will be called `closed' if $v(e_n)= u(e_1)$. We also define a `null path' that has no edges. The null path is denoted by $\emptyset$ and is defined to have length zero. By definition, the null path is closed. The `inverse' of the path $\rho$ is defined as 
\[
\rho^{-1}:= e_n^{-1}e_{n-1}^{-1}\cdots e_1^{-1}\, .
\]
If $\rho' = e_1'\cdots e_m'$ is another path such that $v(e_n)=u(e_1')$, then the concatenated path $\rho\rho'$ is defined~as 
\[
\rho\rho' := e_1\cdots e_n e_1'\cdots e_m'\,.
\]
If $\rho=e_1\cdots e_n$ is a closed path, we will say that another closed path $\rho'$ is `cyclically equivalent' to $\rho$, and write $\rho \sim \rho'$, if 
\[
\rho' = e_i e_{i+1}\cdots e_n e_1e_2\cdots e_{i-1}
\]
for some $2\le i\le n$. The equivalence classes will be called `cycles'. The length of a cycle $l$, denoted by $|l|$, is defined to be equal to the length of any of the closed paths in $l$. The `null cycle' is defined to be the equivalence class containing only the null path, and its length is defined to be zero.

It will sometimes be useful to talk about edges at particular locations in paths and cycles. Locations in paths are easy to define: If $\rho = e_1\cdots e_n$ is a path, then $e_k$ is the edge at its $k^{\mathrm{th}}$ location. For cycles, we define the `first edge' of a cycle by some arbitrary rule. Once the first edge is defined, the definition of the $k^{\mathrm{th}}$  edge is automatic. The $k^{\mathrm{th}}$ edge will sometimes be alternately called the edge at `location $k$'. If $e_1\cdots e_n$ is a closed path and $l$ is the cycle containing this path, we will often forget about the distinction between the two and simply write $l = e_1\cdots e_n$. %If $e_k$ is the $k$th edge of a cycle $l$ of length $n$, the `representative path' of the $l$ is the path $e_1e_2\cdots e_n$. We will often write $l=e_1e_2\cdots e_n$.

We will say that a path $\rho = e_1e_2\cdots e_n$ has a `backtrack' at location $i$, where $1\le i\le n-1$, if $e_{i+1}=e_i^{-1}$. If $\rho$ is a closed path, then we will say that it has a backtrack at location $n$ if $e_1 = e_n^{-1}$. In a closed path of length $n$, a backtrack that occurs at some location $i\le n-1$ will be called an `interior backtrack', and a backtrack at location $n$ will be called a `terminal backtrack'.

Figure \ref{nobackfig} contrasts a closed path with a backtrack versus a closed path with no backtracks.

\begin{figure}[t]
%\begin{pdfpic}
\begin{pspicture}(1,.5)(11,3.5)
\psset{xunit=1cm,yunit=1cm}
\psline{->}(1,3)(1.6,3)
\psline{-}(1.6,3)(2,3)
\psline{->}(2,3)(2,2.4)
\psline{-}(2,2.4)(2,2)
\psline{->}(2,2)(2.6, 2)
\psline{-}(2.6,2)(3,2)
\psline{->}(3,2)(3.6,2)
\psline{-}(3.6,2)(4,2)
\psline{-}(4,2)(4, 1.95)
\psline{->}(4, 1.95)(3.4, 1.95)
\psline{-}(3.4, 1.95)(3, 1.95)
\psline{->}(3,1.95)(3,1.4)
\psline{-}(3,1.4)(3,1)
\psline{->}(3, 1)(2, 1)
\psline{-}(2, 1)(1, 1)
\psline{->}(1,1)(1, 2)
\psline{-}(1, 2)(1, 3)
\psline{->}(7,3)(7.6,3)
\psline{-}(7.6,3)(8,3)
\psline{->}(8,3)(8,2.4)
\psline{-}(8,2.4)(8,2)
\psline{->}(8,2)(8.6, 2)
\psline{-}(8.6,2)(9,2)
\psline{->}(9,2)(9.6,2)
\psline{-}(9.6,2)(10,2)
\psline{->}(10,2)(10, 2.6)
\psline{-}(10,2.6)(10,3)
\psline{->}(10,3)(10.6,3)
\psline{-}(10.6,3)(11,3)
\psline{->}(11,3)(11,2.4)
\psline{-}(11,2.4)(11,1.95)
\psline{->}(11,1.95)(10.4, 1.95)
\psline{-}(10.4, 1.95)(10, 1.95)
\psline{->}(10, 1.95)(9.4, 1.95)
\psline{-}(9.4, 1.95)(9, 1.95)
\psline{->}(9,1.95)(9,1.4)
\psline{-}(9,1.4)(9,1)
\psline{->}(9, 1)(8, 1)
\psline{-}(8, 1)(7, 1)
\psline{->}(7,1)(7, 2)
\psline{-}(7, 2)(7, 3)
%\psline[linestyle = dotted]{->}(5.5, 2)(7, 2)
%\rput(5.5,2){\large $\longrightarrow$}
%%
%\psframe(1,1)(2,2)
%\rput(0,-.2){\small $x$}
%\rput(8.9,-.2){\small $y$}
%\psline{*-*}(0,0)(9,0)
%\pscurve{-}(0,0)(2,2)(4.5,-1.1)(7,.5)(9,0)
%\psline[linestyle=dashed]{->}(2,1.2)(2,2)
%\rput(2,1){\small $D(x,y)$}
%\psline[linestyle=dashed]{->}(2, .8)(2,0)
%\rput(4.5,-.7){\small $G(x,y)$}
\end{pspicture}
%\end{pdfpic}
\caption{A closed path with a backtrack (on the left) versus a closed path with no backtracks (on the right). The closely spaced double lines denote the same edge traversed twice.}
\label{nobackfig}
\end{figure}
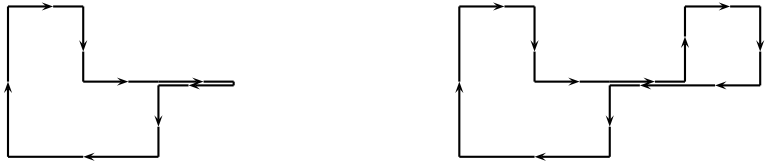

Given a path $\rho=e_1e_2\cdots e_n$ that has a backtrack at location $i$, the path obtained by erasing this backtrack is defined as
\[
\rho' := e_1e_2\cdots e_{i-1}e_{i+2}\cdots e_n\, .
\]
It is easy to check that $\rho'$ is indeed a path, and it is closed if $\rho$ is closed. If  $\rho$ is closed and has a backtrack at location $n$, then backtrack erasure at location $n$ results in
\[
\rho' := e_2e_3\cdots e_{n-1}\, .
\]
Again, it is easy to check that $\rho'$ is a closed path. Figure \ref{backfig} illustrates a single backtrack erasure.

\begin{figure}[t]
%\begin{pdfpic}
\begin{pspicture}(1,.5)(11,3.5)
\psset{xunit=1cm,yunit=1cm}
\psline{->}(1,3)(1.6,3)
\psline{-}(1.6,3)(2,3)
\psline{->}(2,3)(2,2.4)
\psline{-}(2,2.4)(2,2)
\psline{->}(2,2)(2.6, 2)
\psline{-}(2.6,2)(3,2)
\psline{->}(3,2)(3.6,2)
\psline{-}(3.6,2)(4,2)
\psline{-}(4,2)(4, 1.95)
\psline{->}(4, 1.95)(3.4, 1.95)
\psline{-}(3.4, 1.95)(3, 1.95)
\psline{->}(3,1.95)(3,1.4)
\psline{-}(3,1.4)(3,1)
\psline{->}(3, 1)(2, 1)
\psline{-}(2, 1)(1, 1)
\psline{->}(1,1)(1, 2)
\psline{-}(1, 2)(1, 3)
\psline[linestyle = dotted]{->}(5.5, 2)(7, 2)
%\rput(5.5,2){\large $\longrightarrow$}
%%
\psline{->}(8,3)(8.6,3)
\psline{-}(8.6,3)(9,3)
\psline{->}(9,3)(9,2.4)
\psline{-}(9,2.4)(9,2)
\psline{->}(9,2)(9.6, 2)
\psline{-}(9.6,2)(10,2)
\psline{->}(10,2)(10,1.4)
\psline{-}(10,1.4)(10,1)
\psline{->}(10, 1)(9, 1)
\psline{-}(9, 1)(8, 1)
\psline{->}(8,1)(8, 2)
\psline{-}(8, 2)(8, 3)
%\psframe(1,1)(2,2)
%\rput(0,-.2){\small $x$}
%\rput(8.9,-.2){\small $y$}
%\psline{*-*}(0,0)(9,0)
%\pscurve{-}(0,0)(2,2)(4.5,-1.1)(7,.5)(9,0)
%\psline[linestyle=dashed]{->}(2,1.2)(2,2)
%\rput(2,1){\small $D(x,y)$}
%\psline[linestyle=dashed]{->}(2, .8)(2,0)
%\rput(4.5,-.7){\small $G(x,y)$}
\end{pspicture}
%\end{pdfpic}
\caption{A single backtrack erasure.}
\label{backfig}
\end{figure}
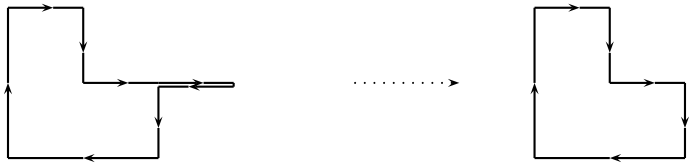

Note that backtrack erasures always decrease the length of the path  (although they can produce new backtracks that were not there in the original path, so the number of available backtracks may not decrease). Therefore, we cannot go on indefinitely erasing backtracks. If we start with a closed path $\rho$ and keep erasing backtracks successively, we must at some point end up with a closed path with no backtracks. Such a path will be called a `nonbacktracking closed path', and its cyclical equivalence class will be called a `nonbacktracking cycle', since each member of such an equivalence class is nonbacktracking. Nonbacktracking cycles will be called `loops'. The null cycle is by definition a loop, called the null loop. The following simple lemma shows that the nonbacktracking closed path obtained by successively erasing backtracks from a closed path is unique up to cyclical equivalence.
\begin{lmm}\label{core}
Let $\rho$ and $\rho'$ be  cyclically equivalent closed paths, and let $\rho_1$ and $\rho_2$ be two nonbacktracking closed paths obtained by successively erasing backtracks starting from $\rho$ and $\rho'$ respectively. Then $\rho_1$ and $\rho_2$ are cyclically equivalent. In particular, the case $\rho=\rho'$  implies that any two sequences of backtrack erasures starting from the same closed path must lead to cyclically equivalent nonbacktracking closed paths.
\end{lmm}
This lemma is proved in Section \ref{coresec}. The lemma allows us to define a `nonbacktracking core' of any cycle $l$ --- it is simply the unique loop obtained by successive backtrack erasures until there are no more backtracks. The nonbacktracking core of a cycle $l$ will be denoted by $[l]$. Figure \ref{corefig} illustrates a nonbacktracking core obtained by succesively erasing all backtracks. One can check in this figure that the order of erasing backtracks does not matter.

\begin{figure}[t]
%\begin{pdfpic}
\begin{pspicture}(1,.5)(11,4.5)
\psset{xunit=1cm,yunit=1cm}
\psline{->}(1,3)(1.6,3)
\psline{-}(1.6,3)(2,3)
\psline{->}(2,3)(2,2.4)
\psline{-}(2,2.4)(2,2)
\psline{->}(2,2)(2.6, 2)
\psline{-}(2.6,2)(3,2)
\psline{->}(3,2)(3,1.4)
\psline{-}(3,1.4)(3,1)
\psline{->}(3,1)(3.6,1)
\psline{-}(3.6,1)(4,1)
\psline{->}(4,1)(4,1.6)
\psline{-}(4,1.6)(4,2)
\psline{-}(4,2)(4.05,2)
\psline{->}(4.05,2)(4.05, 1.4)
\psline{-}(4.05, 1.4)(4.05, 1)
\psline{->}(4.05, 1)(4.6,1)
\psline{-}(4.6,1)(5,1)
\psline{-}(5,1)(5, .95)
\psline{->}(5, .95)(4.4, .95)
\psline{-}(4.4, .95)(4, .95)
\psline{->}(4, .95)(3.4, .95)
\psline{-}(3.4, .95)(3, .95)
\psline{->}(3, .95)(2, .95)
\psline{-}(2, .95)(.95, .95)
\psline{->}(.95,.95)(.95, 2)
\psline{-}(.95, 2)(.95, 3)
\psline{->}(.95,3)(.95, 3.6)
\psline{-}(.95, 3.6)(.95,4)
\psline{-}(.95,4)(1,4)
\psline{->}(1,4)(1,3.4)
\psline{-}(1,3.4)(1,3)
\psline[linestyle = dotted]{->}(5.5, 2)(7, 2)
%\rput(5.5,2){\large $\longrightarrow$}
%%
\psline{->}(8,3)(8.6,3)
\psline{-}(8.6,3)(9,3)
\psline{->}(9,3)(9,2.4)
\psline{-}(9,2.4)(9,2)
\psline{->}(9,2)(9.6, 2)
\psline{-}(9.6,2)(10,2)
\psline{->}(10,2)(10,1.4)
\psline{-}(10,1.4)(10,1)
\psline{->}(10, 1)(9, 1)
\psline{-}(9, 1)(8, 1)
\psline{->}(8,1)(8, 2)
\psline{-}(8, 2)(8, 3)
%\psframe(1,1)(2,2)
%\rput(0,-.2){\small $x$}
%\rput(8.9,-.2){\small $y$}
%\psline{*-*}(0,0)(9,0)
%\pscurve{-}(0,0)(2,2)(4.5,-1.1)(7,.5)(9,0)
%\psline[linestyle=dashed]{->}(2,1.2)(2,2)
%\rput(2,1){\small $D(x,y)$}
%\psline[linestyle=dashed]{->}(2, .8)(2,0)
%\rput(4.5,-.7){\small $G(x,y)$}
\end{pspicture}
%\end{pdfpic}
\caption{Nonbactracking core obtained by backtrack erasures.}
\label{corefig}
\end{figure}
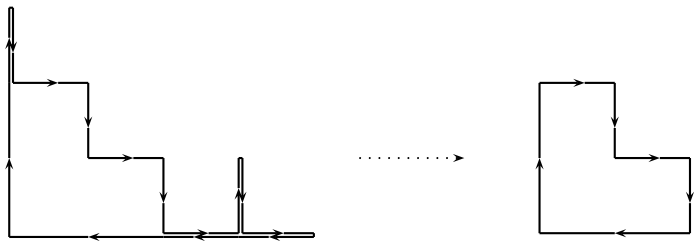

A nonbacktracking closed path of length four will be called a plaquette. Let $\cp$ be the set of plaquettes. Let $\cp^+$ be the subset of plaquettes $p=e_1e_2 e_3e_4$ such that $u(e_1)$ is lexicographically the smallest among all the vertices in $p$ and $v(e_1)$ is the second-smallest. It is not difficult to see that for any $p\in \cp$, either $p$ or $p^{-1}$ is cyclically equivalent to a unique element of $\cp^+$.  An element of $\cp^+$ will be called a `positively oriented plaquette'. 

If $\Lambda$ is a subset of $\zz^d$, let $E_\Lambda$ be the set of directed edges whose beginning and ending points are both in $\Lambda$, and define $E^+_\Lambda$ and $E^-_\Lambda$ analogously. Let $\cp_\Lambda$ be the set of plaquettes whose vertices are all in $\Lambda$, and define $\cp^+_\Lambda$ analogously.

Often, we will need to deal with collections of loops instead of a single loop. If $(l_1,\ldots, l_n)$ and $(l'_1,\ldots, l'_m)$ are two finite sequences of loops, we will say that they are equivalent if one can be obtained from the other by deleting and inserting null loops at various locations. Let $\ms$ denote the set of equivalence classes. Individual loops are also considered as members of $\ms$ by the natural inclusion. An element of $\ms$ will be called a `loop sequence'. The null loop sequence is the equivalence class of the null loop.  Any non-null loop sequence $s$ has a representative member $(l_1,\ldots, l_n)$ that has no null loops. This will be called the minimal representation of $s$. Often, we will write $s=(l_1,\ldots, l_n)$ even if $(l_1,\ldots, l_n)$ is not the minimal representation of $s$. 
 If $(l_1,\ldots, l_n)$ is the minimal representation of $s$, then the length of $s$ is defined as $|s| := |l_1|+\cdots +|l_n|$.  
The null loop sequence is defined to have length zero.

\subsection{A lattice string theory}
We will now define some operations on loops. These correspond to familiar operations on strings in the continuum setting. There are four kinds of operations in all, called `merger', `deformation', `splitting' and `twisting'. We begin by defining mergers. Let $l$ and $l'$ be two non-null loops. Let $x$ be a location in $l$ and $y$ be a location in $l'$. If $l$ contains an edge $e$ at location $x$ and $l'$  contains the same edge $e$ at location $y$, then write $l = aeb$  and $l' = ced$ (where $a,b,c,d$ are paths), and define the `positive merger' of $l$ and $l'$ at locations $x$ and $y$ as 
\[
l\oplus_{x,y} l' := [a e dceb]\,,
\]
where $[aedceb]$ is the nonbacktracking core of the cycle $aedceb$, and define the `negative merger' of $l$ and $l'$ at locations $x$ and $y$ as 
\[
l\ominus_{x,y} l' := [ac^{-1}d^{-1}b]\, .
\]
Figure \ref{posstfig} illustrates an example of positive merger. Negative merger is illustrated in Figure \ref{negstfig}. 

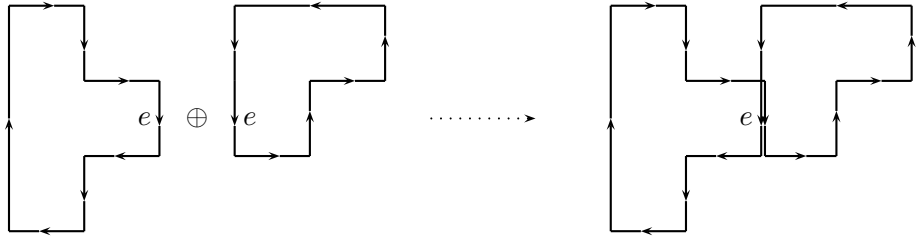
\begin{figure}[t]
%\begin{pdfpic}
\begin{pspicture}(1,1)(11,4.5)
\psset{xunit=1cm,yunit=1cm}
\psline{->}(1,4)(1.6, 4)
\psline{-}(1.6,4)(2,4)
\psline{->}(2,4)(2, 3.4)
\psline{-}(2,3.4)(2,3)
\psline{->}(2,3)(2.6, 3)
\psline{-}(2.6,3)(3,3)
\psline{->}(3,3)(3, 2.4)
\psline{-}(3, 2.4)(3,2)
\psline{->}(3,2)(2.4,2)
\psline{-}(2.4,2)(2,2)
\psline{->}(2,2)(2, 1.4)
\psline{-}(2,1.4)(2,1)
\psline{->}(2,1)(1.4,1)
\psline{-}(1.4,1)(1,1)
\psline{->}(1,1)(1,2.5)
\psline{-}(1,2.5)(1,4)
\psline{->}(4,3)(4,2.4)
\psline{-}(4,2.4)(4,2)
\psline{->}(4,2)(4.6,2)
\psline{-}(4.6,2)(5,2)
\psline{->}(5,2)(5,2.6)
\psline{-}(5,2.6)(5,3)
\psline{->}(5,3)(5.6,3)
\psline{-}(5.6,3)(6,3)
\psline{->}(6,3)(6, 3.6)
\psline{-}(6,3.6)(6,4)
\psline{->}(6,4)(5, 4)
\psline{-}(5,4)(4,4)
\psline{->}(4,4)(4,3.4)
\psline{-}(4,3.4)(4,3)
\psline[linestyle = dotted]{->}(6.5, 2.5)(8, 2.5)
\psline{->}(9,4)(9.6, 4)
\psline{-}(9.6,4)(10,4)
\psline{->}(10,4)(10, 3.4)
\psline{-}(10,3.4)(10,3)
\psline{->}(10,3)(10.6, 3)
\psline{-}(10.6,3)(11.05,3)
\psline{->}(11.05,3)(11.05, 2.4)
\psline{-}(11.05, 2.4)(11.05,2)
\psline{->}(11,2)(10.4,2)
\psline{-}(10.4,2)(10,2)
\psline{->}(10,2)(10, 1.4)
\psline{-}(10,1.4)(10,1)
\psline{->}(10,1)(9.4,1)
\psline{-}(9.4,1)(9,1)
\psline{->}(9,1)(9,2.5)
\psline{-}(9,2.5)(9,4)
\psline{->}(11,3)(11,2.4)
\psline{-}(11,2.4)(11,2)
\psline{->}(11.05,2)(11.6,2)
\psline{-}(11.6,2)(12,2)
\psline{->}(12,2)(12,2.6)
\psline{-}(12,2.6)(12,3)
\psline{->}(12,3)(12.6,3)
\psline{-}(12.6,3)(13,3)
\psline{->}(13,3)(13, 3.6)
\psline{-}(13,3.6)(13,4)
\psline{->}(13,4)(12, 4)
\psline{-}(12,4)(11,4)
\psline{->}(11,4)(11,3.4)
\psline{-}(11,3.4)(11,3)
\rput(3.5,2.5){$\oplus$}
\rput(2.8,2.5){$e$}
\rput(4.2,2.5){$e$}
\rput(10.8,2.5){$e$}
%\rput(5.5,2){\large $\longrightarrow$}
%%
%\psframe(1,1)(2,2)
%\rput(0,-.2){\small $x$}
%\rput(8.9,-.2){\small $y$}
%\psline{*-*}(0,0)(9,0)
%\pscurve{-}(0,0)(2,2)(4.5,-1.1)(7,.5)(9,0)
%\psline[linestyle=dashed]{->}(2,1.2)(2,2)
%\rput(2,1){\small $D(x,y)$}
%\psline[linestyle=dashed]{->}(2, .8)(2,0)
%\rput(4.5,-.7){\small $G(x,y)$}
\end{pspicture}
%\end{pdfpic}
\caption{Positive merger.}
\label{posstfig}
\end{figure}

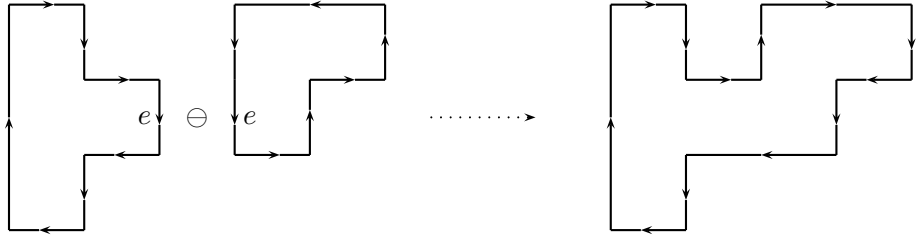
\begin{figure}[t]
%\begin{pdfpic}
\begin{pspicture}(1,1)(11,4.5)
\psset{xunit=1cm,yunit=1cm}
\psline{->}(1,4)(1.6, 4)
\psline{-}(1.6,4)(2,4)
\psline{->}(2,4)(2, 3.4)
\psline{-}(2,3.4)(2,3)
\psline{->}(2,3)(2.6, 3)
\psline{-}(2.6,3)(3,3)
\psline{->}(3,3)(3, 2.4)
\psline{-}(3, 2.4)(3,2)
\psline{->}(3,2)(2.4,2)
\psline{-}(2.4,2)(2,2)
\psline{->}(2,2)(2, 1.4)
\psline{-}(2,1.4)(2,1)
\psline{->}(2,1)(1.4,1)
\psline{-}(1.4,1)(1,1)
\psline{->}(1,1)(1,2.5)
\psline{-}(1,2.5)(1,4)
\psline{->}(4,3)(4,2.4)
\psline{-}(4,2.4)(4,2)
\psline{->}(4,2)(4.6,2)
\psline{-}(4.6,2)(5,2)
\psline{->}(5,2)(5,2.6)
\psline{-}(5,2.6)(5,3)
\psline{->}(5,3)(5.6,3)
\psline{-}(5.6,3)(6,3)
\psline{->}(6,3)(6, 3.6)
\psline{-}(6,3.6)(6,4)
\psline{->}(6,4)(5, 4)
\psline{-}(5,4)(4,4)
\psline{->}(4,4)(4,3.4)
\psline{-}(4,3.4)(4,3)
\psline[linestyle = dotted]{->}(6.5, 2.5)(8, 2.5)
\psline{->}(9,4)(9.6, 4)
\psline{-}(9.6,4)(10,4)
\psline{->}(10,4)(10, 3.4)
\psline{-}(10,3.4)(10,3)
\psline{->}(10,3)(10.6, 3)
\psline{-}(10.6,3)(11,3)
\psline{->}(10,2)(10, 1.4)
\psline{-}(10,1.4)(10,1)
\psline{->}(10,1)(9.4,1)
\psline{-}(9.4,1)(9,1)
\psline{->}(9,1)(9,2.5)
\psline{-}(9,2.5)(9,4)
\psline{->}(11,3)(11, 3.6)
\psline{-}(11,3.6)(11,4)
\psline{->}(11,4)(12,4)
\psline{-}(12,4)(13,4)
\psline{->}(13,4)(13, 3.4)
\psline{-}(13,3.4)(13,3)
\psline{->}(13,3)(12.4,3)
\psline{-}(12.4,3)(12,3)
\psline{->}(12,3)(12, 2.4)
\psline{-}(12,2.4)(12,2)
\psline{->}(12,2)(11,2)
\psline{-}(11,2)(10,2)
\rput(3.5,2.5){$\ominus$}
\rput(2.8,2.5){$e$}
\rput(4.2,2.5){$e$}
%\rput(5.5,2){\large $\longrightarrow$}
%%
%\psframe(1,1)(2,2)
%\rput(0,-.2){\small $x$}
%\rput(8.9,-.2){\small $y$}
%\psline{*-*}(0,0)(9,0)
%\pscurve{-}(0,0)(2,2)(4.5,-1.1)(7,.5)(9,0)
%\psline[linestyle=dashed]{->}(2,1.2)(2,2)
%\rput(2,1){\small $D(x,y)$}
%\psline[linestyle=dashed]{->}(2, .8)(2,0)
%\rput(4.5,-.7){\small $G(x,y)$}
\end{pspicture}
%\end{pdfpic}
\caption{Negative merger.}
\label{negstfig}
\end{figure}

If $l$ contains an edge $e$ at location $x$ and $l'$ contains the inverse edge $e^{-1}$ at location $y$, write $l = aeb$ and $l' = ce^{-1} d$ and define the positive merger of $l$ and $l'$ at locations $x$ and $y$ as
\[
l\oplus_{x,y} l' := [ae c^{-1} d^{-1} e b]\, ,
\]
and define the negative merger of $l$ and $l'$ at locations $x$ and $y$ as
\[
l\ominus_{x,y} l' := [adcb]\, ,
\]
It is easy to verify that in both cases the positive and negative mergers produce new loops. %If $l''$ is a loop produced by the positive stitching of two loops $l$ and $l'$ at two locations, then the length of $l''$ is the sum of the lengths of $l$ and $l'$. However, if $l''$ is produced by the negative stitching of $l$ and $l'$, then the length of $l''$ is 

If a loop $l'$ is produced by merging a plaquette with a loop $l$, we will say that $l'$ is a deformation of $l$. If the merger is positive we will say that $l'$ is a positive deformation, whereas if the merger is negative we will say that $l'$ is a negative deformation. Since a plaquette cannot contain an edge $e$ or it inverse at more than one locations, we will use the notations $l \oplus_x p$ and $l\ominus_x p$ to denote the loops obtained by merging $l$ and $p$ at locations $x$ and $y$, where $y$ is the unique location in $p$ where $e$ or $e^{-1}$ occurs, where $e$ is the edge occurring at location $x$ in $l$. Also, we will denote the set of positively oriented plaquettes containing an edge  $e$ or its inverse by $\cp^+(e)$. Figure \ref{posdefig} illustrates a positive deformation. A negative deformation is illustrated in Figure \ref{negdefig}.

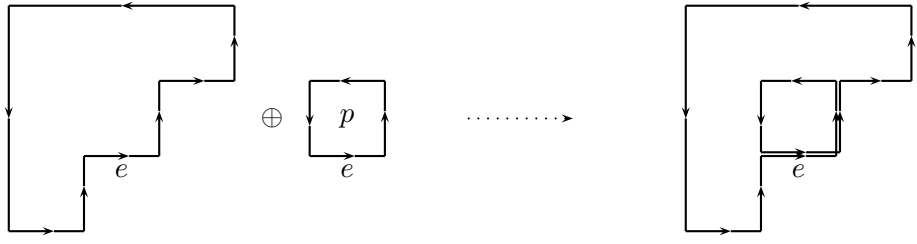
\begin{figure}[t]
%\begin{pdfpic}
\begin{pspicture}(1,1)(11,4.5)
\psset{xunit=1cm,yunit=1cm}
\psline{->}(0,1)(0.6,1)
\psline{-}(0.6,1)(1,1)
\psline{->}(1,1)(1,1.6)
\psline{-}(1,1.6)(1,2)
\psline{->}(1,2)(1.6,2)
\psline{-}(1.6,2)(2,2)
\psline{->}(2,2)(2,2.6)
\psline{-}(2,2.6)(2,3)
\psline{->}(2,3)(2.6,3)
\psline{-}(2.6,3)(3,3)
\psline{->}(3,3)(3,3.6)
\psline{-}(3,3.6)(3,4)
\psline{->}(3,4)(1.5, 4)
\psline{-}(1.5,4)(0,4)
\psline{->}(0,4)(0,2.5)
\psline{-}(0,2.5)(0,1)
\psline{->}(4,2)(4.6,2)
\psline{-}(4.6,2)(5,2)
\psline{->}(5,2)(5,2.6)
\psline{-}(5,2.6)(5,3)
\psline{->}(5,3)(4.4,3)
\psline{-}(4.4,3)(4,3)
\psline{->}(4,3)(4,2.4)
\psline{-}(4,2.4)(4,2)
\psline[linestyle = dotted]{->}(6, 2.5)(7.5, 2.5)
\psline{->}(9,1)(9.6,1)
\psline{-}(9.6,1)(10,1)
\psline{->}(10,1)(10,1.6)
\psline{-}(10,1.6)(10,2)
\psline{->}(10,2)(10.6,2)
\psline{-}(10.6,2)(11,2)
\psline{->}(11,2)(11,2.6)
\psline{-}(11,2.6)(11,3)
\psline{->}(11,3)(10.4,3)
\psline{-}(10.4,3)(10,3)
\psline{->}(10,3)(10,2.4)
\psline{-}(10,2.4)(10,2.05)
\psline{->}(10,2.05)(10.6, 2.05)
\psline{-}(10.6,2.05)(11.05, 2.05)
\psline{->}(11.05, 2.05)(11.05, 2.6)
\psline{-}(11.05, 2.6)(11.05,3)
\psline{->}(11.05,3)(11.6,3)
\psline{-}(11.6,3)(12,3)
\psline{->}(12,3)(12,3.6)
\psline{-}(12,3.6)(12,4)
\psline{->}(12,4)(10.5, 4)
\psline{-}(10.5,4)(9,4)
\psline{->}(9,4)(9,2.5)
\psline{-}(9,2.5)(9,1)
\rput(10.5, 1.8){$e$}
\rput(1.5, 1.8){$e$}
\rput(4.5, 1.8){$e$}
\rput(4.5, 2.5){$p$}
\rput(3.5, 2.5){$\oplus$}
%\rput(5.5,2){\large $\longrightarrow$}
%%
%\psframe(1,1)(2,2)
%\rput(0,-.2){\small $x$}
%\rput(8.9,-.2){\small $y$}
%\psline{*-*}(0,0)(9,0)
%\pscurve{-}(0,0)(2,2)(4.5,-1.1)(7,.5)(9,0)
%\psline[linestyle=dashed]{->}(2,1.2)(2,2)
%\rput(2,1){\small $D(x,y)$}
%\psline[linestyle=dashed]{->}(2, .8)(2,0)
%\rput(4.5,-.7){\small $G(x,y)$}
\end{pspicture}
%\end{pdfpic}
\caption{Positive deformation.}
\label{posdefig}
\end{figure}

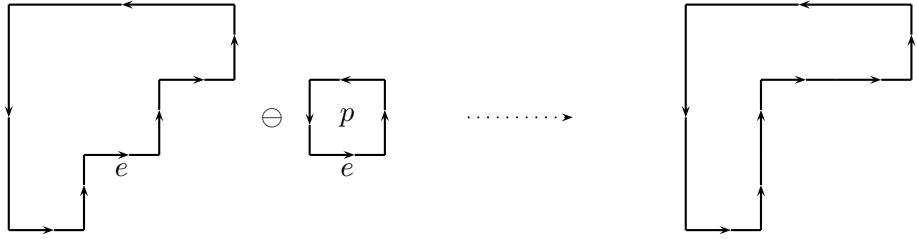
\begin{figure}[t]
%\begin{pdfpic}
\begin{pspicture}(1,1)(11,4.5)
\psset{xunit=1cm,yunit=1cm}
\psline{->}(0,1)(0.6,1)
\psline{-}(0.6,1)(1,1)
\psline{->}(1,1)(1,1.6)
\psline{-}(1,1.6)(1,2)
\psline{->}(1,2)(1.6,2)
\psline{-}(1.6,2)(2,2)
\psline{->}(2,2)(2,2.6)
\psline{-}(2,2.6)(2,3)
\psline{->}(2,3)(2.6,3)
\psline{-}(2.6,3)(3,3)
\psline{->}(3,3)(3,3.6)
\psline{-}(3,3.6)(3,4)
\psline{->}(3,4)(1.5, 4)
\psline{-}(1.5,4)(0,4)
\psline{->}(0,4)(0,2.5)
\psline{-}(0,2.5)(0,1)
\psline{->}(4,2)(4.6,2)
\psline{-}(4.6,2)(5,2)
\psline{->}(5,2)(5,2.6)
\psline{-}(5,2.6)(5,3)
\psline{->}(5,3)(4.4,3)
\psline{-}(4.4,3)(4,3)
\psline{->}(4,3)(4,2.4)
\psline{-}(4,2.4)(4,2)
\psline[linestyle = dotted]{->}(6, 2.5)(7.5, 2.5)
\psline{->}(9,1)(9.6,1)
\psline{-}(9.6,1)(10,1)
\psline{->}(10,1)(10,1.6)
\psline{-}(10,1.6)(10,2)
\psline{->}(10,2)(10,2.6)
\psline{-}(10,2.6)(10,3)
\psline{->}(10,3)(10.6,3)
\psline{-}(10.6,3)(11,3)
\psline{->}(11,3)(11.6,3)
\psline{-}(11.6,3)(12,3)
\psline{->}(12,3)(12,3.6)
\psline{-}(12,3.6)(12,4)
\psline{->}(12,4)(10.5, 4)
\psline{-}(10.5,4)(9,4)
\psline{->}(9,4)(9,2.5)
\psline{-}(9,2.5)(9,1)
\rput(1.5, 1.8){$e$}
\rput(4.5, 1.8){$e$}
\rput(4.5, 2.5){$p$}
\rput(3.5, 2.5){$\ominus$}
%\rput(10.5, 1.8){$e$}
%\rput(5.5,2){\large $\longrightarrow$}
%%
%\psframe(1,1)(2,2)
%\rput(0,-.2){\small $x$}
%\rput(8.9,-.2){\small $y$}
%\psline{*-*}(0,0)(9,0)
%\pscurve{-}(0,0)(2,2)(4.5,-1.1)(7,.5)(9,0)
%\psline[linestyle=dashed]{->}(2,1.2)(2,2)
%\rput(2,1){\small $D(x,y)$}
%\psline[linestyle=dashed]{->}(2, .8)(2,0)
%\rput(4.5,-.7){\small $G(x,y)$}
\end{pspicture}
%\end{pdfpic}
\caption{Negative deformation.}
\label{negdefig}
\end{figure}

Next, let $l$ be a non-null loop and let $x,y$ be distinct locations in $l$. If $l$ contains the same edge $e$ at $x$ and $y$, write $l = aebec$ and define the `positive splitting' of $l$ at $x$ and $y$ to be the pair of loops 
\[
\times^1_{x,y} l := [aec]\, , \ \ \times^2_{x,y} l := [be]\, .
\]
If $l$ contains $e$ at location $x$ and $e^{-1}$ at location $y$, write $l = aeb e^{-1} c$ and define the negative splitting of $l$ at $x$ and $y$ to be the pair of loops
\[
\times^1_{x,y} l := [ac]\, , \ \ \times^2_{x,y} l := [b]\, .
\]
%If $x>y$, let $\times^1_{x,y}l := \times^2_{y,x} l$ and $\times^2_{x,y} l := \times^1_{y,x} l$ in both of the above cases. 
Figure \ref{posspfig} illustrates an example of positive splitting, and Figure \ref{negspfig} illustrates negative splitting.

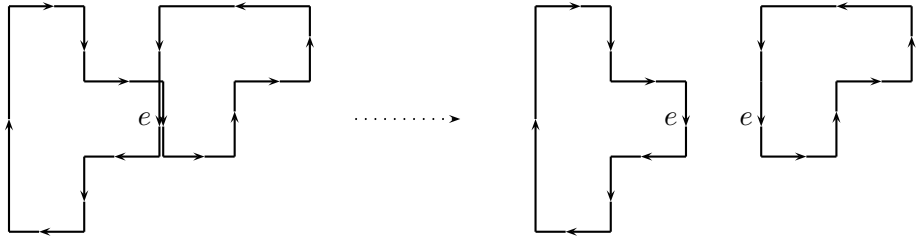
\begin{figure}[t]
%\begin{pdfpic}
\begin{pspicture}(1,1)(11,4.5)
\psset{xunit=1cm,yunit=1cm}
\psline{->}(8,4)(8.6, 4)
\psline{-}(8.6,4)(9,4)
\psline{->}(9,4)(9, 3.4)
\psline{-}(9,3.4)(9,3)
\psline{->}(9,3)(9.6, 3)
\psline{-}(9.6,3)(10,3)
\psline{->}(10,3)(10, 2.4)
\psline{-}(10, 2.4)(10,2)
\psline{->}(10,2)(9.4,2)
\psline{-}(9.4,2)(9,2)
\psline{->}(9,2)(9, 1.4)
\psline{-}(9,1.4)(9,1)
\psline{->}(9,1)(8.4,1)
\psline{-}(8.4,1)(8,1)
\psline{->}(8,1)(8,2.5)
\psline{-}(8,2.5)(8,4)
\psline{->}(11,3)(11,2.4)
\psline{-}(11,2.4)(11,2)
\psline{->}(11,2)(11.6,2)
\psline{-}(11.6,2)(12,2)
\psline{->}(12,2)(12,2.6)
\psline{-}(12,2.6)(12,3)
\psline{->}(12,3)(12.6,3)
\psline{-}(12.6,3)(13,3)
\psline{->}(13,3)(13, 3.6)
\psline{-}(13,3.6)(13,4)
\psline{->}(13,4)(12, 4)
\psline{-}(12,4)(11,4)
\psline{->}(11,4)(11,3.4)
\psline{-}(11,3.4)(11,3)
\psline[linestyle = dotted]{->}(5.5, 2.5)(7, 2.5)
\psline{->}(1,4)(1.6, 4)
\psline{-}(1.6,4)(2,4)
\psline{->}(2,4)(2, 3.4)
\psline{-}(2,3.4)(2,3)
\psline{->}(2,3)(2.6, 3)
\psline{-}(2.6,3)(3.05,3)
\psline{->}(3.05,3)(3.05, 2.4)
\psline{-}(3.05, 2.4)(3.05,2)
\psline{->}(3,2)(2.4,2)
\psline{-}(2.4,2)(2,2)
\psline{->}(2,2)(2, 1.4)
\psline{-}(2,1.4)(2,1)
\psline{->}(2,1)(1.4,1)
\psline{-}(1.4,1)(1,1)
\psline{->}(1,1)(1,2.5)
\psline{-}(1,2.5)(1,4)
\psline{->}(3,3)(3,2.4)
\psline{-}(3,2.4)(3,2)
\psline{->}(3.05,2)(3.6,2)
\psline{-}(3.6,2)(4,2)
\psline{->}(4,2)(4,2.6)
\psline{-}(4,2.6)(4,3)
\psline{->}(4,3)(4.6,3)
\psline{-}(4.6,3)(5,3)
\psline{->}(5,3)(5, 3.6)
\psline{-}(5,3.6)(5,4)
\psline{->}(5,4)(4, 4)
\psline{-}(4,4)(3,4)
\psline{->}(3,4)(3,3.4)
\psline{-}(3,3.4)(3,3)
\rput(2.8,2.5){$e$}
\rput(9.8,2.5){$e$}
\rput(10.8,2.5){$e$}
%\rput(5.5,2){\large $\longrightarrow$}
%%
%\psframe(1,1)(2,2)
%\rput(0,-.2){\small $x$}
%\rput(8.9,-.2){\small $y$}
%\psline{*-*}(0,0)(9,0)
%\pscurve{-}(0,0)(2,2)(4.5,-1.1)(7,.5)(9,0)
%\psline[linestyle=dashed]{->}(2,1.2)(2,2)
%\rput(2,1){\small $D(x,y)$}
%\psline[linestyle=dashed]{->}(2, .8)(2,0)
%\rput(4.5,-.7){\small $G(x,y)$}
\end{pspicture}
%\end{pdfpic}
\caption{Positive splitting.}
\label{posspfig}
\end{figure}

\begin{figure}[t]
%\begin{pdfpic}
\begin{pspicture}(1,1)(11,4.5)
\psset{xunit=1cm,yunit=1cm}
\psline{->}(1,1)(1.6,1)
\psline{-}(1.6,1)(2,1)
\psline{->}(2,1)(2,1.6)
\psline{-}(2,1.6)(2,2)
\psline{->}(2,2)(2.6,2)
\psline{-}(2.6,2)(3,2)
\psline{->}(3,2)(3.6,2)
\psline{-}(3.6,2)(4,2)
\psline{->}(4,2)(4,2.6)
\psline{-}(4,2.6)(4,3)
\psline{->}(4,3)(4.6,3)
\psline{-}(4.6,3)(5,3)
\psline{->}(5,3)(5,3.6)
\psline{-}(5,3.6)(5,4)
\psline{->}(5,4)(4,4)
\psline{-}(4,4)(3,4)
\psline{->}(3,4)(3,3)
\psline{-}(3,3)(3,2.05)
\psline{->}(3,2.05)(2.4, 2.05)
\psline{-}(2.4,2.05)(2,2.05)
\psline{->}(2,2.05)(2, 2.6)
\psline{-}(2,2.6)(2,3)
\psline{->}(2,3)(1,3)
\psline{-}(1,3)(0,3)
\psline{->}(0,3)(0,2.4)
\psline{-}(0,2.4)(0,2)
\psline{->}(0,2)(.6,2)
\psline{-}(.6,2)(1,2)
\psline{->}(1,2)(1,1.4)
\psline{-}(1,1.4)(1,1)
\psline[linestyle = dotted]{->}(5.5, 2.5)(7, 2.5)
\psline{->}(9,1)(9.6,1)
\psline{-}(9.6,1)(10,1)
\psline{->}(10,1)(10,1.6)
\psline{-}(10,1.6)(10,2)
\psline{->}(11,2)(11.6,2)
\psline{-}(11.6,2)(12,2)
\psline{->}(12,2)(12,2.6)
\psline{-}(12,2.6)(12,3)
\psline{->}(12,3)(12.6,3)
\psline{-}(12.6,3)(13,3)
\psline{->}(13,3)(13,3.6)
\psline{-}(13,3.6)(13,4)
\psline{->}(13,4)(12,4)
\psline{-}(12,4)(11,4)
\psline{->}(11,4)(11,3)
\psline{-}(11,3)(11,2)
\psline{->}(10,2)(10, 2.6)
\psline{-}(10,2.6)(10,3)
\psline{->}(10,3)(9,3)
\psline{-}(9,3)(8,3)
\psline{->}(8,3)(8,2.4)
\psline{-}(8,2.4)(8,2)
\psline{->}(8,2)(8.6,2)
\psline{-}(8.6,2)(9,2)
\psline{->}(9,2)(9,1.4)
\psline{-}(9,1.4)(9,1)
\rput(2.4, 1.8){$e$}
\rput(2.5, 2.3){$e^{-1}$}
%\rput(5.5,2){\large $\longrightarrow$}
%%
%\psframe(1,1)(2,2)
%\rput(0,-.2){\small $x$}
%\rput(8.9,-.2){\small $y$}
%\psline{*-*}(0,0)(9,0)
%\pscurve{-}(0,0)(2,2)(4.5,-1.1)(7,.5)(9,0)
%\psline[linestyle=dashed]{->}(2,1.2)(2,2)
%\rput(2,1){\small $D(x,y)$}
%\psline[linestyle=dashed]{->}(2, .8)(2,0)
%\rput(4.5,-.7){\small $G(x,y)$}
\end{pspicture}
%\end{pdfpic}
\caption{Negative splitting.}
\label{negspfig}
\end{figure}
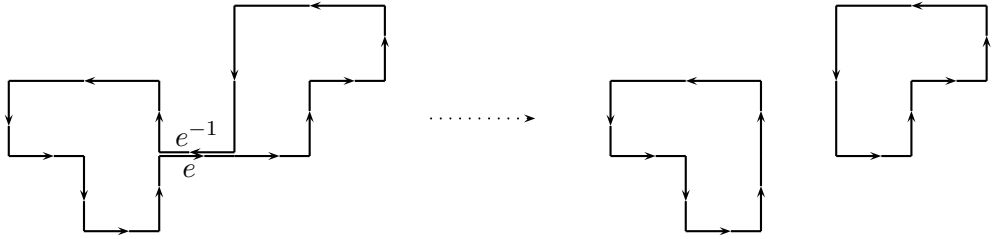

The final loop operation that we will define is `twisting'. Let $l$ be a non-null loop and $x,y$ be two locations in $l$. If $l$ contains an edge $e$ at both $x$ and $y$, write $l= aebec$ and define the negative twisting of $l$ between $x$ and $y$ as the loop
\[
\propto_{x,y} l := [ab^{-1} c]\, .
\]
If $l$ contains an edge $e$ at location $x$ and $e^{-1}$ at location $y$, write $l = aebe^{-1}c$, and define the positive twisting of $l$ between $x$ and $y$ as  the loop
\[
\propto_{x,y} l := [aeb^{-1}e^{-1} c]\, .
\]
It is easy to verify that these are indeed loops. %If $x>y$, let $\propto_{x,y} l :=\, \propto_{y,x} l$ in both of the above cases. 
Positive twisting is illustrated in Figure \ref{postwfig} and negative twisting is illustrated in Figure \ref{negtwfig}.

\begin{figure}[t]
%\begin{pdfpic}
\begin{pspicture}(1,1)(11,4.5)
\psset{xunit=1cm,yunit=1cm}
\psline{->}(1,1)(1.6,1)
\psline{-}(1.6,1)(2,1)
\psline{->}(2,1)(2,1.6)
\psline{-}(2,1.6)(2,2)
\psline{->}(2,2)(2.6,2)
\psline{-}(2.6,2)(3,2)
\psline{->}(3,2)(3.6,2)
\psline{-}(3.6,2)(4,2)
\psline{->}(4,2)(4,2.6)
\psline{-}(4,2.6)(4,3)
\psline{->}(4,3)(4.6,3)
\psline{-}(4.6,3)(5,3)
\psline{->}(5,3)(5,3.6)
\psline{-}(5,3.6)(5,4)
\psline{->}(5,4)(4,4)
\psline{-}(4,4)(3,4)
\psline{->}(3,4)(3,3)
\psline{-}(3,3)(3,2.05)
\psline{->}(3,2.05)(2.4, 2.05)
\psline{-}(2.4,2.05)(2,2.05)
\psline{->}(2,2.05)(2, 2.6)
\psline{-}(2,2.6)(2,3)
\psline{->}(2,3)(1,3)
\psline{-}(1,3)(0,3)
\psline{->}(0,3)(0,2.4)
\psline{-}(0,2.4)(0,2)
\psline{->}(0,2)(.6,2)
\psline{-}(.6,2)(1,2)
\psline{->}(1,2)(1,1.4)
\psline{-}(1,1.4)(1,1)
\psline[linestyle = dotted]{->}(5.5, 2.5)(7, 2.5)
\psline{->}(9,1)(9.6,1)
\psline{-}(9.6,1)(10,1)
\psline{->}(10,1)(10,1.6)
\psline{-}(10,1.6)(10,2)
\psline{->}(10,2)(10.6,2)
\psline{-}(10.6,2)(11,2)
\psline{->}(11,2)(11,3)
\psline{-}(11,3)(11,4)
\psline{->}(11,4)(12,4)
\psline{-}(12,4)(13,4)
\psline{->}(13,4)(13,3.4)
\psline{-}(13,3.4)(13,3)
\psline{->}(13,3)(12.4,3)
\psline{-}(12.4,3)(12,3)
\psline{->}(12,3)(12,2.4)
\psline{-}(12,2.4)(12,2.05)
\psline{->}(12,2.05)(11.4, 2.05)
\psline{-}(11.4,2.05)(11,2.05)
\psline{->}(11,2.05)(10.4, 2.05)
\psline{-}(10.4,2.05)(10,2.05)
\psline{->}(10,2.05)(10, 2.6)
\psline{-}(10,2.6)(10,3)
\psline{->}(10,3)(9,3)
\psline{-}(9,3)(8,3)
\psline{->}(8,3)(8,2.4)
\psline{-}(8,2.4)(8,2)
\psline{->}(8,2)(8.6,2)
\psline{-}(8.6,2)(9,2)
\psline{->}(9,2)(9,1.4)
\psline{-}(9,1.4)(9,1)
\rput(2.4, 1.8){$e$}
\rput(2.5, 2.3){$e^{-1}$}
\rput(10.4, 1.8){$e$}
\rput(10.5, 2.3){$e^{-1}$}
%\rput(5.5,2){\large $\longrightarrow$}
%%
%\psframe(1,1)(2,2)
%\rput(0,-.2){\small $x$}
%\rput(8.9,-.2){\small $y$}
%\psline{*-*}(0,0)(9,0)
%\pscurve{-}(0,0)(2,2)(4.5,-1.1)(7,.5)(9,0)
%\psline[linestyle=dashed]{->}(2,1.2)(2,2)
%\rput(2,1){\small $D(x,y)$}
%\psline[linestyle=dashed]{->}(2, .8)(2,0)
%\rput(4.5,-.7){\small $G(x,y)$}
\end{pspicture}
%\end{pdfpic}
\caption{Positive twisting.}
\label{postwfig}
\end{figure}
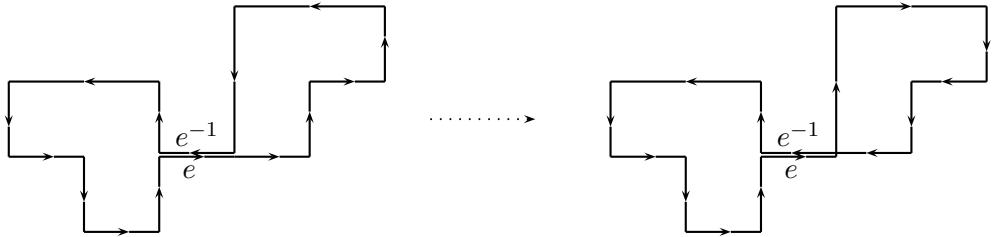

\begin{figure}[t]
%\begin{pdfpic}
\begin{pspicture}(1,1)(11,4.5)
\psset{xunit=1cm,yunit=1cm}
\psline{->}(1,4)(1.6, 4)
\psline{-}(1.6,4)(2,4)
\psline{->}(2,4)(2, 3.4)
\psline{-}(2,3.4)(2,3)
\psline{->}(2,3)(2.6, 3)
\psline{-}(2.6,3)(3.05,3)
\psline{->}(3.05,3)(3.05, 2.4)
\psline{-}(3.05, 2.4)(3.05,2)
\psline{->}(3,2)(2.4,2)
\psline{-}(2.4,2)(2,2)
\psline{->}(2,2)(2, 1.4)
\psline{-}(2,1.4)(2,1)
\psline{->}(2,1)(1.4,1)
\psline{-}(1.4,1)(1,1)
\psline{->}(1,1)(1,2.5)
\psline{-}(1,2.5)(1,4)
\psline{->}(3,3)(3,2.4)
\psline{-}(3,2.4)(3,2)
\psline{->}(3.05,2)(3.6,2)
\psline{-}(3.6,2)(4,2)
\psline{->}(4,2)(4,2.6)
\psline{-}(4,2.6)(4,3)
\psline{->}(4,3)(4.6,3)
\psline{-}(4.6,3)(5,3)
\psline{->}(5,3)(5, 3.6)
\psline{-}(5,3.6)(5,4)
\psline{->}(5,4)(4, 4)
\psline{-}(4,4)(3,4)
\psline{->}(3,4)(3,3.4)
\psline{-}(3,3.4)(3,3)
\psline[linestyle = dotted]{->}(5.5, 2.5)(7, 2.5)
\psline{->}(8,4)(8.6, 4)
\psline{-}(8.6,4)(9,4)
\psline{->}(9,4)(9, 3.4)
\psline{-}(9,3.4)(9,3)
\psline{->}(9,3)(9.6, 3)
\psline{-}(9.6,3)(10,3)
\psline{->}(9,2)(9, 1.4)
\psline{-}(9,1.4)(9,1)
\psline{->}(9,1)(8.4,1)
\psline{-}(8.4,1)(8,1)
\psline{->}(8,1)(8,2.5)
\psline{-}(8,2.5)(8,4)
\psline{->}(10,3)(10, 3.6)
\psline{-}(10,3.6)(10,4)
\psline{->}(10,4)(11,4)
\psline{-}(11,4)(12,4)
\psline{->}(12,4)(12, 3.4)
\psline{-}(12,3.4)(12,3)
\psline{->}(12,3)(11.4,3)
\psline{-}(11.4,3)(11,3)
\psline{->}(11,3)(11, 2.4)
\psline{-}(11,2.4)(11,2)
\psline{->}(11,2)(10,2)
\psline{-}(10,2)(9,2)
\rput(2.8,2.5){$e$}
%\rput(5.5,2){\large $\longrightarrow$}
%%
%\psframe(1,1)(2,2)
%\rput(0,-.2){\small $x$}
%\rput(8.9,-.2){\small $y$}
%\psline{*-*}(0,0)(9,0)
%\pscurve{-}(0,0)(2,2)(4.5,-1.1)(7,.5)(9,0)
%\psline[linestyle=dashed]{->}(2,1.2)(2,2)
%\rput(2,1){\small $D(x,y)$}
%\psline[linestyle=dashed]{->}(2, .8)(2,0)
%\rput(4.5,-.7){\small $G(x,y)$}
\end{pspicture}
%\end{pdfpic}
\caption{Negative twisting.}
\label{negtwfig}
\end{figure}
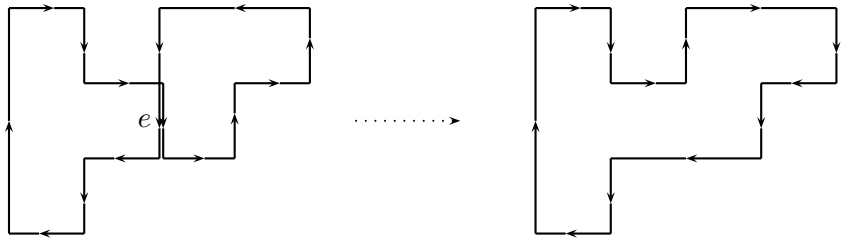

If $s$ and $s'$ are loop sequences, say that $s'$ is a splitting of $s$ if $s'$ is obtained by splitting one of the component loops in the minimal representation of $s$. Similarly, say that $s'$ is a deformation or twisting of $s$ if $s'$ is obtained by deforming or twisting one of the component loops of $s$, and say that $s'$ is a merger of $s$ if $s'$ is obtained by merging two of the component loops of $s$. Let
\begin{align*}
\fd^+(s) &:= \{s': \text{ $s'$ is a positive deformation of $s$}\}\, ,\\
\fd^-(s) &:= \{s': \text{ $s'$ is a negative deformation of $s$}\}\, ,\\
\fs^+(s) &:= \{s': \text{ $s'$ is a positive splitting of $s$}\}\, ,\\
\fs^-(s) &:= \{s': \text{ $s'$ is a negative splitting of $s$}\}\, ,\\
\fst^+(s) &:= \{s': \text{ $s'$ is a positive merger of $s$}\}\, ,\\
\fst^-(s) &:= \{s': \text{ $s'$ is a negative merger of $s$}\}\, ,\\
\ftw^+(s) &:= \{s': \text{ $s'$ is a positive twisting of $s$}\}\, ,\\
\ftw^-(s) &:= \{s': \text{ $s'$ is a negative twisting of $s$}\}\, .
\end{align*}
In the above definitions, it is important to clarify how the operations are counted. No clarification is necessary for counting deformations, but clarifications are needed for the other three kinds of operations. If a loop can be split positively at locations $x$ and $y$, then it can also be split positively at $y$ and $x$, producing the same pair of loops but in reverse order. Since the order of loops in a loop sequence is important, these two splittings are identified as distinct elements of $\fs^+(s)$. Similar remarks apply for negative splittings, twistings and mergers. For example, while counting mergers, one should be careful about the following. Let $s = (l_1,\ldots,l_n)$ be a loop sequence. Suppose that $l_1$ and $l_r$ can be positively merged at locations $x$ and $y$. Then $(l_1\oplus_{x,y} l_r,l_2,\ldots, l_{r-1}, l_{r+1},\ldots, l_n)$ is the sequence obtained by performing this merging operation. However, in this situation, $l_r$ and $l_1$ can be positively merged at locations $y$ and $x$, producing the loop sequence $(l_2,\ldots, l_{r-1}, l_r\oplus_{y,x} l_1,l_{r+1},\ldots, l_n)$. Although the loops $l_1\oplus_{x,y} l_r$ and $l_r\oplus_{y,x} l_1$ are the same, the two operations mentioned above are counted as distinct elements of $\fst^+(s)$, since the order of loops in a loop sequence is important.  A similar remark applies for negative mergers.

Define a `trajectory' to be a sequence $(s_0,s_1,\ldots)$ of loop sequences such that each $s_{i+1}$ is either a deformation or a splitting of $s_i$. Note that mergers and twistings are not allowed. A  trajectory may be finite or infinite. A `vanishing  trajectory' is a finite trajectory whose last element is the null loop sequence. Given a loop sequence $s$, define $\mx(s)$ to be the set of all vanishing  trajectories that begin at $s$. For an integer $k\ge 0$, let $\mx_k(s)$ be the set of vanishing trajectories that start at $s$ and have exactly $k$ deformations. 

A trajectory of loop sequences in $\zz^d$ naturally traces out a  surface in $\rr^{d+1}$. This is analogous to the Riemann surfaces traced out by strings in string theories. Figure \ref{trajfig} illustrates a vanishing trajectory of loop sequences in $\zz^2$ that starts from a single loop, then splits into two loops, which ultimately both vanish. Figure \ref{surffig} illustrates the surface traced out in $\rr^3$ by the trajectory from Figure \ref{trajfig}.

\begin{figure}[t]
\begin{pspicture}(3.5,-2.5)(13.5,4.5)
\psset{xunit=1cm,yunit=1cm}
\psline{->}(1,4)(2.6,4)
\psline{-}(2.6,4)(4,4)
\psline{->}(4,4)(4,3.4)
\psline{-}(4,3.4)(4,3)
\psline{->}(4,3)(3,3)
\psline{-}(3,3)(2,3)
\psline{->}(2,3)(2,2.4)
\psline{-}(2,2.4)(2,2)
\psline{->}(2,2)(1.4,2)
\psline{-}(1.4,2)(1,2)
\psline{->}(1,2)(1,3)
\psline{-}(1,3)(1,4)
\psline[linestyle = dotted]{->}(5,3)(6,3)
\psline{->}(7,4)(8.6,4)
\psline{-}(8.6,4)(10,4)
\psline{->}(10,4)(10,3.4)
\psline{-}(10,3.4)(10,3)
\psline{->}(10,3)(9.4,3)
\psline{-}(9.4,3)(9,3)
\psline{->}(9,3)(9,3.6)
\psline{-}(9,3.6)(9,3.95)
\psline{->}(9,3.95)(8.4,3.95)
\psline{-}(8.4, 3.95)(8,3.95)
\psline{->}(8, 3.95)(8,3)
\psline{-}(8,3)(8,2)
\psline{->}(8,2)(7.4,2)
\psline{-}(7.4,2)(7,2)
\psline{->}(7,2)(7,3)
\psline{-}(7,3)(7,4)
\psline[linestyle = dotted]{->}(11,3)(12,3)
\psline{->}(13,4)(13.6,4)
\psline{-}(13.6,4)(14,4)
\psline{->}(15,4)(15.6,4)
\psline{-}(15.6,4)(16,4)
\psline{->}(16,4)(16,3.4)
\psline{-}(16,3.4)(16,3)
\psline{->}(16,3)(15.4,3)
\psline{-}(15.4,3)(15,3)
\psline{->}(15,3)(15,3.6)
\psline{-}(15,3.6)(15,4)
\psline{->}(14, 4)(14,3)
\psline{-}(14,3)(14,2)
\psline{->}(14,2)(13.4,2)
\psline{-}(13.4,2)(13,2)
\psline{->}(13,2)(13,3)
\psline{-}(13,3)(13,4)
\psline[linestyle = dotted]{->}(14.5, 1.5)(14.5, .5)
\psline{->}(13, -1)(13.6, -1)
\psline{-}(13.6,-1)(14,-1)
\psline{->}(14,-1)(14, -1.6)
\psline{-}(14,-1.6)(14,-2)
\psline{->}(14,-2)(13.4,-2)
\psline{-}(13.4,-2)(13,-2)
\psline{->}(13,-2)(13, -1.4)
\psline{-}(13,-1.4)(13,-1)
\psline{->}(15,0)(15.6,0)
\psline{-}(15.6,0)(16,0)
\psline{->}(16,0)(16, -.6)
\psline{-}(16, -.6)(16, -1)
\psline{->}(16, -1)(15.4,-1)
\psline{-}(15.4,-1)(15,-1)
\psline{->}(15, -1)(15, -.4)
\psline{-}(15, -.4)(15, 0)
\psline[linestyle = dotted]{->}(12, -1)(11, -1)
\psline{->}(9,0)(9.6,0)
\psline{-}(9.6,0)(10,0)
\psline{->}(10,0)(10, -.6)
\psline{-}(10, -.6)(10, -1)
\psline{->}(10, -1)(9.4,-1)
\psline{-}(9.4,-1)(9,-1)
\psline{->}(9, -1)(9, -.4)
\psline{-}(9, -.4)(9, 0)
\psline[linestyle = dotted]{->}(6, -1)(5, -1)
\end{pspicture}
\caption{A vanishing trajectory of loop sequences. Note that although the loop sequences in this trajectory decrease in size at each step, this need not always be the case.}
\label{trajfig}
\end{figure}
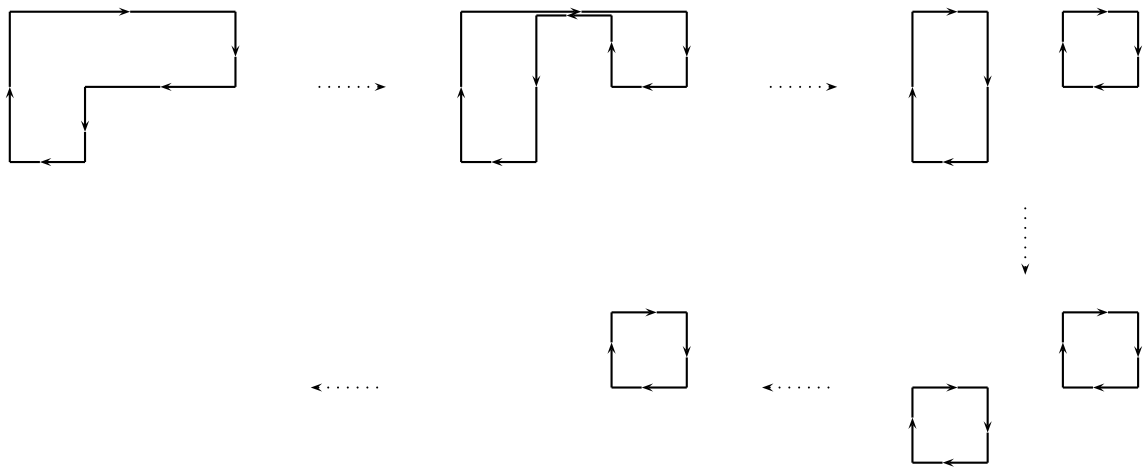

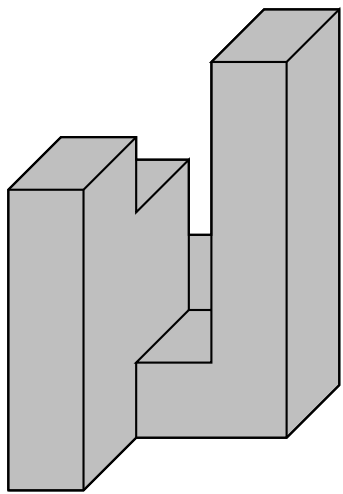
\begin{figure}[t]
%\begin{pdfpic}
\begin{pspicture}(-1,1)(9,7.5)
\psset{xunit=1cm,yunit=1cm}
\pspolygon[fillstyle=solid, fillcolor=lightgray](1,1)(1,5)(1.7,5.7)(2.7,5.7)(2.7,5.4)(3.4,5.4)(3.4,4.4)(3.7,4.4)(3.7,6.7)(4.4,7.4)(5.4,7.4)(5.4,2.4)(4.7,1.7)(2.7,1.7)(2,1)(1,1)
\psline{-}(1,1)(2,1)(2.7,1.7)(4.7,1.7)(5.4,2.4)(5.4,7.4)(4.7,6.7)(4.7,1.7)
\psline{-}(5.4,7.4)(4.4,7.4)(3.7,6.7)(4.7,6.7)
\psline{-}(3.7,6.7)(3.7,2.7)(2.7,2.7)(3.4,3.4)(3.7,3.4)
\psline{-}(1,1)(1,5)(1.7,5.7)(2.7,5.7)(2.7,4.7)(3.4,5.4)(3.4,3.4)
\psline{-}(3.4,4.4)(3.7,4.4)
\psline{-}(3.4,5.4)(2.7,5.4)
\psline{-}(2,1)(2,5)(2.7,5.7)
\psline{-}(2,5)(1,5)
\psline{-}(2.7,1.7)(2.7,2.7)
\end{pspicture}
\caption{Surface in $\rr^3$ traced out by the trajectory from Figure \ref{trajfig}. Time moves in the upward direction. Cross sections of this surface  give the loop sequences depicted in Figure \ref{trajfig}, with changes occurring at integer time points.}
\label{surffig}
\end{figure}

Let $\beta$ be a real number, that will occur as the `inverse coupling strength' of the lattice gauge theories that we will investigate later. If $s'$ is a deformation of a loop sequence $s$, define the `weight' of the transition from $s$ to $s'$ at inverse coupling strength $\beta$ as 
\[
w_\beta(s,s') := 
\begin{cases}
-\beta/|s| &\text{ if $s'$ is a positive deformation of $s$,}\\
\beta/|s| &\text{ if $s'$ is a negative deformation of $s$.}
\end{cases}
\]
If $s'$ is a splitting of $s$, define the weight of the transition from $s$ to $s'$ at inverse coupling strength $\beta$ as
\[
w_\beta(s,s') := 
\begin{cases}
-1/|s| &\text{ if $s'$ is a positive splitting of $s$,}\\
1/|s| &\text{ if $s'$ is a negative splitting of $s$.}
\end{cases}
\]
Note that in this case the weight does not actually depend on $\beta$.

Finally, if $X = (s_0,s_1,\ldots, s_n)$ is a vanishing   trajectory, define the weight of $X$ at inverse coupling strength $\beta$ as the product $w_\beta(X) := w_\beta(s_0, s_1)w_\beta(s_1, s_2)\cdots w_\beta(s_{n-1}, s_n)$. Note that the weight of a trajectory may be positive or negative. 

The trajectories defined above can be thought of as trajectories of strings in a discrete string theory, which are allowed to deform and split according to a given set of rules. The weight of a trajectory can be thought of as the exponential of the action in this string theory.

We will sometimes need a slight variation of the weights defined above, obtained by eliminating $\beta$ from the definitions. Define these `$\beta$-free weights' as 
\[
v(s,s') :=
\begin{cases}
\beta^{-1}w_\beta(s,s') &\text{ if $s'$ is a deformation of $s$,}\\
w_\beta(s,s') &\text{ if $s'$ is a splitting of $s$.} 
\end{cases}
\]
Note that in both cases $v(s,s')$ is independent of $\beta$. Define the $\beta$-free weight of a trajectory $X = (s_0,s_1,\ldots,s_n)$ as 
\[
v(X) := v(s_0,s_1)v(s_1,s_2)\cdots v(s_{n-1}, s_n)\, .  
\]
If $\delta(X)$ denotes the number of deformation steps in $X$, then $w_\beta(X)$ and $v(X)$ are related by the  relation $w_\beta(X) = v(X) \beta^{\delta(X)}$.

\section{Main result and corollaries}\label{results}
Let $SO(N)$ be the group of $N\times N$ orthogonal matrices with determinant $1$, and let $\sigma_N$ be the Haar measure on $SO(N)$. Let $\Lambda$ be a finite subset of $\zz^d$ and let $\beta$ be a real number. Let $\mu_{\Lambda, N, \beta}$ be a probability measure on the set of all collections $Q = (Q_e)_{e\in E_\Lambda^+}$ of $SO(N)$ matrices, defined as 
\begin{equation}\label{measure}
d\mu_{\Lambda, N, \beta}(Q) := Z_{\Lambda, N,\beta}^{-1}\exp\biggl(N\beta \sum_{p\in \cp^+_\Lambda} \tr(Q_p)\biggr) \prod_{e\in E^+_\Lambda} d\sigma_N(Q_e)\, ,
\end{equation}
where $Q_p := Q_{e_1}Q_{e_2}Q_{e_3}Q_{e_4}$ for a plaquette $p=e_1e_2e_3e_4$ (where $Q_{e^{-1}} := Q_e^{-1}$ for $e\in E_\Lambda^+$), and $Z_{\Lambda,  N, \beta}$ is the normalizing constant. This probability measure describes what is known as `lattice gauge theory' on $\Lambda$ for the gauge group $SO(N)$. The number $\beta$ is the inverse of the `coupling constant' of the theory. Lattice gauge theories can be defined similarly for other matrix groups such as $SU(N)$ and $U(N)$, where the trace is replaced by the real part of the trace if the group contains complex matrices.

If $f = f(Q)$ is a  function of the configuration $Q = (Q_e)_{e\in E^+_\Lambda}$, the expected value $\smallavg{f}_{\Lambda, N, \beta}$ of $f$ in the above lattice gauge theory is defined as the integral of $f$ with respect to the measure $\mu_{\Lambda, N, \beta}$, that is,
\begin{equation}\label{expdef}
\smallavg{f}_{\Lambda, N, \beta} := \int f(Q) d\mu_{\Lambda, N, \beta}(Q)\, ,
\end{equation}
where the integration is over the space of all configurations. When $\Lambda$, $N$ and $\beta$ are obvious from the context, we may omit the subscripts and simply write $\smallavg{f}$.

%Lattice gauge theories were introduced by Wilson \cite{wilson74} in an attempt to discretize quantum gauge theories, which are the building blocks of the standard model of quantum mechanics. Quantum gauge theories have not yet been made mathematically precise; indeed, this challenging task is the `existence' part of the Yang--Mill existence and mass gap problem that has been posed as one of the millennium problems by the Clay Institute \cite{jaffewitten}. Lattice gauge theories, on the other hand, are very clearly defined as mathematical objects as can be seen from the definition stated above. 

The main objects of interest in lattice gauge theories are the so-called Wilson loop variables and their expected values. Given a loop $l=e_1e_2\cdots e_n$, the Wilson loop variable $W_l$ is defined as
\[
W_l := \tr(Q_{e_1}Q_{e_2}\cdots Q_{e_n})\, ,
\]
and its expected value $\smallavg{W_l}_{\Lambda, N, \beta}$ is defined as in \eqref{expdef}, provided that the edges $e_1,\ldots, e_n$ all belong to $E_\Lambda$. (As mentioned before, if $e\in E_\Lambda^-$ then $Q_e$ is defined as $Q_{e^{-1}}^{-1}$.) By definition, $W_\emptyset = N$, where $\emptyset$ is the null loop. This is consistent with the convention that the empty product of $N\times N$ matrices is the identity matrix of order $N$.

The following theorem is the main result of this article. It gives a duality between expected values of Wilson loop variables in $SO(N)$ lattice gauge theory and certain sums over trajectories in the discrete string theory defined in Section \ref{not1}. 
\begin{thm}[Main result: Solution of $SO(N)$ lattice gauge theory and proof of gauge-string duality in the 't Hooft limit]\label{mainthm}
There exists a number $\beta_0(d) >0$, depending only on the dimension $d$,  such that the following is true. Let $\Lambda_1,\Lambda_2,\ldots$ be any sequence of finite subsets of $\zz^d$ such that $\Lambda_1\subseteq \Lambda_2\subseteq  \cdots$ and 
$\zz^d = \cup_{N=1}^\infty \Lambda_N$. If $|\beta| \le\beta_0(d)$, then for any loop sequence $s$ with minimal representation $(l_1,\ldots, l_n)$,
\[
\lim_{N\ra\infty}\frac{\smallavg{W_{l_1}W_{l_2}\cdots W_{l_n}}_{\Lambda_N, N, \beta}}{N^n} = \sum_{X\in \mx(s)} w_\beta(X)\, 
\]
where $\mx(s)$ is the set of all vanishing trajectories starting at $s$ and $w_\beta(X)$ is the weight of a  trajectory $X$ as defined in Section \ref{not1}. Moreover, the infinite sum is absolutely convergent. 
\end{thm}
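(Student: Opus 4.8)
The strategy is to first derive a finite-$N$ system of equations — a rigorous form of the Makeenko--Migdal (Schwinger--Dyson) equations — for the quantities $\smallavg{W_{l_1}\cdots W_{l_n}}_{\Lambda, N, \beta}$, and then to solve the $N\to\infty$ limit of this system by exhibiting the right-hand side $\sum_{X\in\mx(s)} w_\beta(X)$ as its unique solution in an appropriate Banach space of sequences indexed by loop sequences. The first step is to apply integration by parts on $SO(N)$: for a fixed edge $e$ occurring in $l_1$, differentiate the integrand $\exp(N\beta\sum_p \tr Q_p)\,W_{l_1}\cdots W_{l_n}$ along the one-parameter subgroups generated by a basis of the Lie algebra $\mathfrak{so}(N)$ acting on $Q_e$, sum over the basis, and use invariance of Haar measure to conclude that the total derivative has zero expectation. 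The terms produced are: (i) derivatives hitting the action $N\beta\sum_p\tr Q_p$, which generate deformations of $l_1$ by plaquettes through $e$ (with signs $\pm\beta$ according to relative orientation, and a factor $N\beta \cdot N = N^2\beta$ relative to the normalization — this is where the $w_\beta$ deformation weights $\mp\beta/|l|$ come from after dividing by the appropriate power of $N$); (ii) derivatives hitting $W_{l_1}$ itself at another occurrence of $e^{\pm1}$, which produce either a splitting of $l_1$ into two loops (a term of order $N^{n+1}$, giving weight $\mp 1/|l|$) or a twisting (a term of \emph{lower} order in $N$, hence vanishing in the limit); (iii) derivatives hitting some other $W_{l_r}$, producing a merger (again lower order in $N$, hence vanishing). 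Careful bookkeeping of the $\mathfrak{so}(N)$ structure constants, using the explicit basis $\{E_{ij}-E_{ji}\}$, yields the exact combinatorial rules — deformation, splitting, twisting, merger — defined in Section~\ref{not1}, with twistings and mergers suppressed by $N^{-2}$.

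\textbf{Passing to the limit and solving.} Write $\phi_N(s) := \smallavg{W_{l_1}\cdots W_{l_n}}_{\Lambda_N,N,\beta}/N^n$. The master loop equation, after dividing by $N^n$ and isolating one occurrence of an arbitrary edge in one loop, takes the schematic form $|s|\,\phi_N(s) = \sum_{s'\in\fd^+(s)\cup\fd^-(s)} (\pm\beta)\phi_N(s') + \sum_{s'\in\fs^\pm(s)}(\pm1)\phi_N(s') + O(N^{-2})\cdot(\text{sum over twistings and mergers})$, where the implied constant in the error depends on $|s|$ and the number of edges involved but not on $N$. (One must normalize by summing the $|s|$ identities obtained from each location and handle the null-loop boundary condition $W_\emptyset = N$, which supplies the ``vanishing trajectory'' terminal contribution.) The plan is then: (a) show the sequences $(\phi_N)$ are uniformly bounded — indeed $|\phi_N(s)|\le C^{|s|}$ for a dimension-dependent constant — by a crude bound ($|W_l|\le N$ pointwise, so $|\phi_N(s)|\le 1$, which already suffices); (b) extract subsequential limits $\phi$ pointwise (diagonal argument over the countably many loop sequences); (c) show any such limit satisfies the \emph{exact} limiting master equation with the error term gone; (d) show this limiting equation, together with the normalization at the null loop sequence, has at most one bounded solution. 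For (d), the key is that the limiting equation can be rearranged to express $\phi(s)$ recursively in terms of $\phi$ evaluated at loop sequences that are ``simpler'' in a suitable sense — but this is circular because deformations can increase length. The resolution is to iterate the equation and recognize the resulting formal series as exactly $\sum_{X\in\mx(s)}w_\beta(X)$, i.e. to define $\Phi(s) := \sum_{X\in\mx(s)} w_\beta(X)$ directly and verify (i) the sum converges absolutely for $|\beta|\le\beta_0(d)$, and (ii) $\Phi$ satisfies the limiting master equation; uniqueness then follows from a contraction/Gr\"onwall argument on the difference of two solutions within the space $\{\psi : |\psi(s)|\le K^{|s|}\}$.

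\textbf{Absolute convergence --- the main obstacle.} The crux of the whole argument is proving that $\sum_{X\in\mx(s)}|w_\beta(X)| < \infty$ for $|\beta|$ small, depending only on $d$. This is genuinely delicate because trajectories can be arbitrarily long: a deformation by a plaquette can \emph{increase} the length of a loop (by up to $2$), so there is no monotone quantity forcing termination, and naively the number of vanishing trajectories with $k$ deformations grows faster than any geometric rate. The plan is to set up a weighted counting argument: assign to each trajectory the product $|w_\beta(X)| = |\beta|^{\delta(X)} |s_0|^{-1}|s_1|^{-1}\cdots$, group trajectories by their number of deformation steps $k = \delta(X)$ and splitting steps, and bound the number of trajectories with given parameters by a product of (a) the number of choices of deformation (at most $|s_i|\cdot C(d)$ at step $i$, since a plaquette through a given edge is determined up to $O(d)$ choices) and (b) the number of choices of splitting (at most $|s_i|^2$). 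The factor $|s_i|^{-1}$ in the weight cancels one factor of $|s_i|$ from the deformation count. One then needs an \emph{a priori length bound}: along a vanishing trajectory the total length $\sum_i |s_i|$ (or equivalently the ``area'' of the traced-out surface) is controlled by $k$, because each splitting strictly decreases length while each deformation increases it by at most $2$, and the trajectory must return to length $0$; so the number of deformations is at least (initial length)$/2$ and the lengths stay $O(|s|+k)$. Combining, the sum over trajectories with exactly $k$ deformations is bounded by $(C(d))^k |\beta|^k (|s|+k)^{O(k)} / (\text{factorials from the surface's combinatorial structure})$; the factorial suppression — coming from the fact that a surface of bounded genus and boundary built from $k$ plaquettes can be assembled in at most $k^{O(k)}$ but the distinct \emph{loop-sequence trajectories} are far fewer, or alternatively from an explicit cancellation/recursion — must beat the $k^{O(k)}$ growth, forcing $\beta_0(d)$ to be a fixed positive constant. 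I expect this surface-counting / convergence estimate to occupy the technical heart of the paper; everything else (the integration-by-parts derivation of the master equation, the large-$N$ factorization of subleading terms, and the uniqueness argument) is comparatively routine once the combinatorial framework of Section~\ref{not1} is in place.
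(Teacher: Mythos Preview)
Your overall architecture is correct and matches the paper: derive a finite-$N$ master loop equation by differentiating on $SO(N)$, pass to the limit to get a closed recursion in which twistings and mergers drop out, prove uniqueness of bounded solutions by a contraction argument, and identify the trajectory sum as that unique solution. The paper uses Stein's exchangeable-pair construction rather than the Lie-algebra basis you describe, but these are equivalent ways of packaging the same integration by parts.

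The genuine gap is in your convergence argument. You correctly identify the difficulty: after cancelling the $1/|s_i|$ against the $O(|s_i|)$ deformation choices, each \emph{splitting} step still contributes a factor of order $|s_i|$ (there are $\sim |s_i|^2$ pairs of locations and only one $1/|s_i|$ in the weight), and since a trajectory with $k$ deformations can have up to $\iota(s)+4k$ splittings with $|s_i|=O(|s|+k)$, the naive bound is $(C|\beta|)^k \cdot (|s|+k)^{O(k)}$, which diverges. Your proposed fix --- ``factorial suppression from bounded-genus surface counting, or alternatively from an explicit cancellation/recursion'' --- is not a mechanism but a hope. There is no reason the traced-out surfaces should have bounded genus, and you give no concrete recursion.

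The paper's resolution is to avoid direct trajectory counting altogether. One defines, by nested induction on $k$ and on the index $\iota(s)=|s|-\#s$, a sequence $b_k(s)$ satisfying $b_k(s)=|s|^{-1}\sum_{s'\in\fs(s)}b_k(s')+|s|^{-1}\sum_{s'\in\fd(s)}b_{k-1}(s')$, and then proves inductively that $b_k(s)\le K^{5k+\iota(s)}\prod_i C_{|l_i|-1}$, where $C_j$ is the $j$th Catalan number. The point is that the splitting sum, which looked like $|s_i|$ too many terms, becomes exactly the Catalan convolution $\sum_{r} C_{\delta-r-1}C_{r-1}=C_{\delta-1}$: summing over the location $y$ of the second split point reproduces the Catalan recursion, and the remaining sum over the first location $x$ gives a factor $|l_i|$ that is absorbed by the $1/|s|$. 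This is the step (borrowed from Collins--Guionnet--Maurel-Segala) that converts the apparent super-exponential growth into a clean geometric bound in $k$; once you have it, $\sum_k b_k(s)|\beta|^k<\infty$ for $|\beta|<K^{-5}$, and a separate short induction shows $b_k(s)|\beta|^k=\sum_{X\in\mx_k(s)}|w_\beta(X)|$. The uniqueness proof uses a closely related device: the generating function $F(\lambda)=\sum_{\delta}\lambda^{\iota(\delta)}D(\delta)$ over degree vectors, shown to satisfy $F(\lambda)\le c(\lambda,\beta)F(\lambda)$ with $c<1$, rather than a bare Gr\"onwall inequality in $|s|$.
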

%The representation in terms of trajectories of lattice strings is the main contribution of Theorem~\ref{mainthm}. One may say that this is more explicit than the connection with strings given by planar diagrams. 
An alternative (and perhaps more natural) way of formulating Theorem \ref{mainthm} would be to first take $\Lambda_i\uparrow \zz^d$, with $N$ fixed, to get an infinite volume limit of $SO(N)$ lattice gauge theory on $\zz^d$, and then send $N\to\infty$. A small problem with this approach is that we do not know whether the infinite volume limit is unique, even for small enough $\beta$, since the 't Hooft coupling places $N\beta$ instead of $\beta$ in front of the Hamiltonian in~\eqref{measure}. Let $\smallavg{\cdot}_{N,\beta}$ denote expectation with respect to some chosen infinite volume limit of $SO(N)$ lattice gauge theory on $\zz^d$. Since $\Lambda_N$ is allowed to vary arbitrarily with $N$ in Theorem~\ref{mainthm}, it is not difficult to deduce that indeed,
\[
\lim_{N\ra\infty}\frac{\smallavg{W_{l_1}W_{l_2}\cdots W_{l_n}}_{N, \beta}}{N^n} = \sum_{X\in \mx(s)} w_\beta(X)\, , 
\]
irrespective of how  the infinite volume limits are chosen.

Theorem \ref{mainthm} has a number of interesting corollaries. The first corollary proves the `factorization property' of Wilson loops in the large $N$ limit. This fact is widely believed to be true  in the theoretical physics community and has been the basis of many theoretical calculations (for example, \cite{bhanotetal82, eguchikawai82, gonzalez83, makeenkomigdal79}), but was lacking a rigorous proof until now. 
\begin{cor}[Factorization of Wilson loops]\label{factor}
Let all notation be as in Theorem \ref{mainthm}, and suppose that $|\beta| \le \beta_0(d)$. Then for any non-null loops $l_1,\ldots, l_n$,
\[
\lim_{N\ra\infty} \frac{\smallavg{W_{l_1}W_{l_2}\cdots W_{l_n}}_{\Lambda_N, N, \beta}}{N^n}  = \lim_{N\ra\infty} \prod_{i=1}^n \frac{\smallavg{W_{l_i}}_{\Lambda_N, N, \beta}}{N}\, .
\]
In particular, for any loop $l$, $\smallavg{(W_l/N)^2}_{\Lambda_N, N, \beta}$ has the same limit as $\smallavg{W_l/N}_{\Lambda_N, N, \beta}^2$ when $N\ra\infty$ and $|\beta|\le \beta_0(d)$, implying that the random variable $W_l/N$ converges in probability to the (deterministic) limit of $\smallavg{W_l}/N$ given in Theorem~\ref{mainthm}.
\end{cor}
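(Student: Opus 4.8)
The plan is to read everything off the explicit formula of Theorem~\ref{mainthm}. Writing $s=(l_1,\dots,l_n)$ for the loop sequence with minimal representation $(l_1,\dots,l_n)$, the left-hand side of the asserted factorization is $\sum_{X\in\mx(s)}w_\beta(X)$ and each factor on the right-hand side is $\sum_{X_i\in\mx(l_i)}w_\beta(X_i)$, so the first assertion of the corollary is equivalent to the purely combinatorial identity
\[
\sum_{X\in\mx(s)}w_\beta(X)\;=\;\prod_{i=1}^n\Biggl(\sum_{X_i\in\mx(l_i)}w_\beta(X_i)\Biggr).
\]
Every sum appearing below is absolutely convergent (Theorem~\ref{mainthm}, applied once to $s$ and once to each $l_i$), so I may freely regroup and interchange sums.

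First I would exploit the fact that trajectories are built out of deformations and splittings only --- no mergers, no twistings --- and that each of these operations acts on a single component loop. Hence in any vanishing trajectory $X$ started at $s$, every elementary step ``belongs'' to a unique one of the initial loops $l_1,\dots,l_n$, namely the ancestor $l_i$ of the component being operated on (components are only created by splitting, never destroyed by merging, so this ancestry is well defined). Reading off, for each $i$, the sequence of states through which the $i$-th lineage passes yields a vanishing trajectory $X_i\in\mx(l_i)$, and $X$ is reconstituted from $(X_1,\dots,X_n)$ together with the \emph{interleaving word} recording which lineage acts at each time. Because distinct lineages never interact, conversely \emph{every} interleaving of \emph{every} tuple $(X_1,\dots,X_n)\in\mx(l_1)\times\cdots\times\mx(l_n)$ is a legitimate vanishing trajectory started at $s$. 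This produces a bijection between $\mx(s)$ and the set of pairs (tuple of trajectories, interleaving of that tuple). Making this bijection precise --- in particular handling the minimal-representation convention and the steps that create or annihilate null loops --- is the part of the argument that takes the most care, although it introduces no new idea.

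Next I would compute $w_\beta(X)$ across this decomposition. At a step belonging to lineage $i$, the sign of the step's weight, and whether it carries a factor $\beta$ (a deformation) or not (a splitting), are exactly as in $X_i$; thus the product of signs and the total power of $\beta$ in $w_\beta(X)$ agree with those in $\prod_i w_\beta(X_i)$. The sole discrepancy is the factor $1/|s_j|$ in the $j$-th step of $X$: it uses the length of the \emph{entire} loop sequence at time $j$, whereas $w_\beta(X_i)$ uses the length of lineage $i$ in isolation. So, after fixing $(X_1,\dots,X_n)$ and summing over interleavings, the identity reduces to the following statement about positive reals: if lineage $i$ visits states of lengths $\lambda^{(i)}_0,\lambda^{(i)}_1,\dots$ (ending at $\lambda^{(i)}_{m_i}=0$), then
\[
\sum_{\text{interleavings}}\;\prod_{j}\frac{1}{\;\sum_{i}\lambda^{(i)}_{a_i(j)}\;}\;=\;\prod_{i}\;\prod_{k=0}^{m_i-1}\frac{1}{\lambda^{(i)}_k}\,,
\]
where $a_i(j)$ is the number of lineage-$i$ steps taken strictly before time $j$. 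I would prove this by induction on $\sum_i m_i$: the $j=0$ factor equals the common value $1/\sum_i\lambda^{(i)}_0$ in every term; peeling it off and conditioning on the lineage $i^{\ast}$ that acts first reduces the inner sum to an instance of the identity with one fewer step, and the inductive hypothesis then collects the answer into $\bigl(\sum_{i^{\ast}}\lambda^{(i^{\ast})}_0\bigr)\big/\bigl(\sum_i\lambda^{(i)}_0\bigr)$ times the right-hand side above, the ratio being $1$ because lineages that are already exhausted contribute length $0$.

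Assembling the two steps,
\[
\sum_{X\in\mx(s)}w_\beta(X)=\sum_{(X_1,\dots,X_n)}\;\sum_{\text{interleavings}}w_\beta(X)=\sum_{(X_1,\dots,X_n)}\prod_{i=1}^n w_\beta(X_i)=\prod_{i=1}^n\;\sum_{X_i\in\mx(l_i)}w_\beta(X_i)\,,
\]
which is the first assertion. For the ``in probability'' part I would specialize to $n=2$, $l_1=l_2=l$: the displayed equality then shows $\smallavg{(W_l/N)^2}$ and $\smallavg{W_l/N}^2$ share the same limit, i.e. $\var(W_l/N)\ra 0$, and combined with the convergence of $\smallavg{W_l/N}$ furnished by Theorem~\ref{mainthm} this yields convergence of $W_l/N$ to $\lim_N\smallavg{W_l}/N$ in $L^2$, hence in probability. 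I expect the only genuine difficulty to be the bookkeeping in the lineage/interleaving bijection; the combinatorial identity is a short induction, and all the analytic content (absolute convergence) is inherited from Theorem~\ref{mainthm}.
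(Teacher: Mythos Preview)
Your proposal is correct and follows the same core idea as the paper: decompose a vanishing trajectory of the loop sequence into independent ``lineage'' trajectories of the individual loops together with an interleaving, then prove a combinatorial identity showing that the sum over interleavings exactly converts the joint weight into a product of individual weights.

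The paper takes a slightly different route to the general statement, however. It proves the bijection (Lemma~\ref{bijec}) and the interleaving identity (Lemma~\ref{general}) only in the \emph{two-loop} case, deduces from this that $\lim\smallavg{W_l^2}/N^2=(\lim\smallavg{W_l}/N)^2$, and then uses the resulting variance bound together with Cauchy--Schwarz and $|W_l|\le N$ to peel off one factor at a time and complete the proof for general $n$ by induction. Your approach instead establishes the $n$-lineage bijection and the $n$-lineage interleaving identity directly. Both are valid: the paper's route needs only the $2$-variable version of the identity but adds a short probabilistic step, while yours is more symmetric and avoids Cauchy--Schwarz at the cost of a slightly more general (though no harder) induction. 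The paper also records explicitly (Lemma~\ref{trivial}) that a deformation or splitting never fixes a loop sequence, which is exactly the fact you flag as the delicate point in making the interleaving bijection precise.
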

A second corollary that follows from Theorem \ref{mainthm} is the so-called `Wilson area law upper bound' in the large $N$ limit. A lattice gauge theory is said to obey the area law upper bound if 
\begin{align}\label{arealaw}
\smallavg{W_l}\le C_1e^{-C_2 \textup{area}(l)}\, ,
\end{align}
where $C_1$ and $C_2$ are positive constants that depend on the  coupling strength and the gauge group, and $\textup{area}(l)$ is the area enclosed by the loop $l$. Similarly, the theory is said to satisfy the area law lower bound if the inequality is valid in the opposite direction (with different constants). The upper bound is more important because of its connection with quark confinement \cite{wilson74}.

If $l$ lies on a coordinate plane, there is usually no ambiguity about the meaning of `area enclosed by $l$'. However for general lattice loops, one has to define what this term means. A natural definition is that this is the area of the minimal lattice surface enclosed by $l$, where  `lattice surface' needs to be appropriately defined.  Wilson's motivation for studying the area law upper bound in \cite{wilson74} was to understand the phenomenon of quark confinement from a theoretical perspective. Wilson gave an argument supporting the claim that the area law upper bound holds at strong coupling under fairly general conditions. For groups with nontrivial center such as $SU(N)$ and $U(N)$, this was rigorously verified for planar loops by Osterwalder and Seiler~\cite{osterwalderseiler78}, and extended to general loops by Seiler~\cite{seiler82}. For groups with trivial center, such as $SO(3)$, it is believed that the area law does not hold, even at strong coupling~\cite{gl81}. However, it turns out that in the 't Hooft limit of $SO(N)$ lattice gauge theory at strong coupling, the area law upper bound holds. This is our next corollary, stated below. %It is a consequence of the string theoretic representation of limiting Wilson loop expectations given in Theorem \ref{mainthm}.

Before stating the corollary, we need to have a definition of `area enclosed by a loop'. The most natural way to do this is through the language of algebraic topology, or more precisely, through the language of cell complexes. We do not need to deal with the full definition of the standard cell complex in $\zz^d$, but only with $1$-chains and $2$-chains of the complex. In the notation used in this paper, $1$-chains are elements of the free $\zz$-module over $E^+$, and $2$-chains are elements of the free $\zz$-module over $\cp^+$. In algebraic topology, $2$-chains are viewed as the natural algebraic objects corresponding to surfaces. Following that convention, we define a `lattice surface' to be simply a $2$-chain in the standard cell complex of $\zz^d$. 

Any $p\in\cp^+$ can be written uniquely as $e_1e_2e_3^{-1}e_4^{-1}$ where $e_1,e_2,e_3,e_4\in E^+$. There is a standard homomorphism $\delta$ from the module of $2$-chains into the module of $1$-chains that takes $p$ to $e_1+e_2-e_3-e_4$. This is known as the `differential map'. If $x$ is a $2$-chain, then $\delta(x)$ is called its `boundary' in cell complex terminology.  

Given an oriented edge $e\in E$, define a $1$-chain $r(e)$ as 
\[
r(e) := 
\begin{cases}
e &\text{ if } e\in E^+,\\
-e^{-1} &\text{ if } e\in E^-.
\end{cases}
\]
If $\rho=e_1e_2\cdots e_n$ is a path, define $r(\rho) := r(e_1)+\cdots+r(e_n)$. Note that if  $\rho$ and $\rho'$ are cyclically equivalent closed paths, then $r(\rho)=r(\rho')$. Thus there is no ambiguity in defining $r(l)$ for a loop $l$. A loop will be called `non-canceling' if there is no edge $e$ in the loop such that $e^{-1}$ is also in the loop (which means that there are no cancelations when computing $r(l)$).

We will say that a loop $l$ is the boundary of a lattice surface $x$ if $\delta(x) = r(l)$. Note that we are referring to both $l$ and $r(l)$ as the boundary of $x$, in different contexts. Define the area of a lattice surface $x = \sum_{p\in \cp^+} n_p p$ as 
\[
\textup{area}(x) := \sum_{p\in \cp^+} |n_p|\, .
\]
Finally, define the area of the minimal lattice surface enclosed by $l$ to be the minimum of the areas of all lattice surfaces with boundary $l$. Denote this by $\textup{area}(l)$. It follows from standard facts about the cell complex of $\zz^d$ that for any loop $l$ there exists at least one $2$-chain $x$ such that $\delta(x)=r(l)$, and therefore $\textup{area}(l)$ is well-defined.
\begin{cor}[Area law upper bound in the 't Hooft limit]\label{area}
Let all notation be as in Theorem~\ref{mainthm}. Then for any non-null, non-canceling loop $l$, 
\[
\lim_{N\ra\infty} \frac{|\smallavg{W_l}|}{N} \le (C(d)|\beta|)^{\textup{area}(l)}\, ,
\]
where $C(d)$ is a positive constant that depends only on the dimension $d$, and $\textup{area}(l)$ is the area of the minimal lattice surface enclosed by $l$, as defined above. 
\end{cor}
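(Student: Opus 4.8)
The plan is to derive the area law directly from the absolutely convergent expansion in Theorem \ref{mainthm}, by showing that every vanishing trajectory $X$ starting from the single loop $l$ must contain at least $\textup{area}(l)$ deformation steps, and then using the geometric decay of $w_\beta(X)$ in the number of deformations together with a crude exponential bound on the number of trajectories with a given number of steps. First I would recall from Section \ref{not1} that $w_\beta(X) = v(X)\beta^{\delta(X)}$ where $\delta(X)$ is the number of deformation steps, and that each splitting contributes a factor of modulus $1/|s|$ while each deformation contributes $|\beta|/|s|$; in particular $|v(X)| \le 1$ always, since the loop sequences appearing in a trajectory from a single loop have length at least $1$ at every step before the last (one should check this, or rather that $|w_\beta(X)| \le |\beta|^{\delta(X)}$ up to harmless combinatorial factors absorbed below). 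So $|\smallavg{W_l}/N|$ in the limit is bounded by $\sum_{k \ge \textup{area}(l)} |\beta|^k \cdot \#\{\text{vanishing trajectories from } l \text{ with } k \text{ deformations}\}$, provided we establish the lower bound $\delta(X) \ge \textup{area}(l)$.

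The key geometric step is: \emph{if $X = (s_0 = l, s_1, \ldots, s_m = \emptyset)$ is a vanishing trajectory, then $\delta(X) \ge \textup{area}(l)$.} To prove this I would assign to each loop sequence $s$ the quantity $a(s) := $ minimal total area of a lattice $2$-chain whose boundary is $r(s) := \sum_i r(l_i)$ (well-defined by the remark at the end of the excerpt, applied componentwise and summed). Then I would check that along any single transition $s_i \to s_{i+1}$: (i) a splitting changes $r(s)$ not at all when it is a negative splitting (the two pieces $ac$ and $b$ have $r$-sum equal to $r(aeb e^{-1} c) = r(a)+r(e)+r(b)-r(e)+r(c)$... one must verify $r(ac)+r(b) = r(l)$; similarly for positive splitting where an edge $e$ gets distributed), so $a$ is unchanged by a splitting; (ii) a deformation by a plaquette $p$ changes $r(s)$ by $\pm r(\partial p) = \pm\delta(p)$ up to backtrack erasure (which does not change $r$, by the cyclic-invariance remark), hence $a$ can drop by at most $1$ under a single deformation. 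Since $a(\emptyset) = 0$ and $a(l) = \textup{area}(l)$, telescoping gives $\textup{area}(l) = a(s_0) - a(s_m) \le \delta(X)$. The one subtlety to nail down here is that backtrack erasure genuinely preserves $r$ — that is exactly the content of the observation that $r(\rho) = r(\rho')$ for cyclically equivalent closed paths, extended to the erasure $e\, e^{-1} \mapsto \emptyset$, which is immediate since $r(e) + r(e^{-1}) = 0$.

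Finally I would bound the number of vanishing trajectories from $l$ with exactly $k$ deformations. Such a trajectory has length $m$ with $k$ deformations and $m - k$ splittings; since each splitting strictly increases the number of loops in the sequence and each deformation keeps it fixed, while the final sequence is empty, a counting argument (each of the $\ell := |l|$ plus accumulated edges bounds the branching) shows $m$ is controlled and the number of choices at each step is at most some $K(d)$ depending only on $d$ (a loop of length $L$ has at most $L$ locations, each deformation picks among the $\le 2d$ plaquettes through the chosen edge, each splitting picks two of $\le L$ locations, etc.), with $L$ itself bounded in terms of $k$ and $|l|$ along the trajectory. The cleanest route is to observe that total length can only increase at deformations (by at most $2$) and splittings do not increase it, so $|s_i| \le |l| + 2k$ throughout, giving a bound of the form $(C(d)(|l| + k))^{C'(d) k}$ on the number of such trajectories; absorbing the polynomial-in-$k$ part into a slightly larger constant and shrinking $\beta_0(d)$ if necessary so that $|\beta| \le \beta_0(d)$ makes the geometric series converge, yields $\lim_N |\smallavg{W_l}|/N \le (C(d)|\beta|)^{\textup{area}(l)}$. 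The main obstacle I anticipate is getting a clean enough trajectory count: the number of trajectories is not obviously finite for fixed $k$ since splittings can a priori lengthen and proliferate loops, so the real work is the bookkeeping showing that total length stays $\le |l| + 2k$ and hence that, with $k$ fixed, only finitely many (indeed $\exp(O_d(k))$-many) trajectories contribute — everything else is routine once that and the area-monotonicity lemma are in place.
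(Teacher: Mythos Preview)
Your geometric lemma --- that every vanishing trajectory from $l$ has at least $\textup{area}(l)$ deformations --- is correct and is exactly what the paper proves as Lemma~\ref{rlmm}: splittings and backtrack erasures preserve $r(s)$, while each deformation by a plaquette $p$ changes $r(s)$ by $\pm\delta(p)$, so telescoping along the trajectory produces a $2$-chain of area $\le \delta(X)$ with boundary $r(l)$.

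The gap is in your trajectory count. You correctly observe that $|s_i|\le |l|+4k$ throughout (it is $+4$, not $+2$, per deformation) and, via the index, that the total number of steps is at most $|l|-1+5k$. But the branching at a step is \emph{not} bounded by a constant $K(d)$: a splitting of a loop sequence of total length $L$ has up to order $L^2$ choices (two locations), and a deformation up to $2(d-1)L$ choices. With $L\le |l|+4k$ and $O(|l|+k)$ steps, this gives
\[
|\mx_k(l)|\ \le\ \bigl(C(d)(|l|+k)\bigr)^{c(|l|+k)},
\]
which for large $k$ behaves like $k^{ck}$. That is \emph{super}-exponential in $k$, not a polynomial factor you can absorb into the base; the series $\sum_k |\beta|^k\, k^{ck}$ diverges for every $\beta\ne 0$. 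Thus the crude bound $|w_\beta(X)|\le |\beta|^{\delta(X)}$, while true, discards precisely the information needed for convergence.

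What actually makes the sum converge is the full weight $|v(X)|=\prod_i 1/|s_i|$: these $1/|s_i|$ factors exactly offset the $O(|s_i|)$ or $O(|s_i|^2)$ branching at each step. The paper never counts trajectories. It shows instead (Lemmas~\ref{bklmm} and~\ref{replmm}) that
\[
\sum_{X\in \mx_k(l)} |v(X)| \;=\; b_k(l)\;\le\; K(d)^{5k+\iota(l)}\,C_{|l|-1},
\]
by an induction that feeds each splitting sum into the Catalan recursion $\sum_{j=0}^{i}C_jC_{i-j}=C_{i+1}$. This is the substantive estimate --- not routine bookkeeping --- and once it is in place, combining it with your area lemma gives a geometric tail $\sum_{k\ge \textup{area}(l)}(K(d)^5|\beta|)^k$ and hence the corollary.
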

Note that although Corollary \ref{area} looks slightly different than the description of the area law upper bound given in  \eqref{arealaw}, it is actually the same, as can be seen by choosing $C_2 = \max\{0, -\log (C(d)|\beta|)\}$. 

The key idea in the proof of Corollary \ref{area} is that a vanishing trajectory starting from a loop $l$ must have at least area$(l)$ deformations, implying that the weight of the trajectory must accumulate a product of at least area$(l)$ $\beta$'s. A natural question is whether a similar argument can give the area law lower bound as well. This seems to be more difficult to prove, because of the possibility that the trajectory weights may cancel each other out. For finite $N$, a general area law lower bound for rectangles was given by Seiler \cite{seiler78} using reflection positivity. It is not clear whether Seiler's result extends to the $N\ra\infty$ limit. Whether a general lower bound can be proved as a corollary of Theorem \ref{mainthm} is left as an open problem in this paper (see Section \ref{opensec}). 

The next corollary gives a formula for the limiting partition function.
\begin{cor}[Limiting partition function of $SO(N)$ lattice gauge theory]\label{part}
Let all notation be as in Theorem \ref{mainthm}. Let $M_N$ be a sequence of integers increasing to infinity, and suppose that $\Lambda_N = [-M_N, M_N]^d\cap \zz^d$. Let $p$ be any plaquette. Then
\[
\lim_{N\ra\infty} \frac{\log Z_{\Lambda_N, N, \beta}}{N^2|\Lambda_N|} = \frac{\beta d(d-1)}{2} \sum_{X\in \mx(p)} \frac{ w_\beta(X)}{\delta(X)+1}\, ,
\]
where $\delta(X)$ is the number of deformations in the trajectory $X$. 
\end{cor}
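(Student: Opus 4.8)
The strategy is to obtain the free energy by integrating its $\beta$-derivative, which by a standard computation is a sum of single-plaquette Wilson loop expectations, and then to invoke Theorem~\ref{mainthm}. Differentiating under the integral sign in the definition of $Z_{\Lambda,N,\beta}$ (legitimate since $\Lambda$ is finite and the integrand is smooth in $\beta$) gives
\[
\frac{d}{d\beta}\log Z_{\Lambda,N,\beta} \;=\; N\sum_{p\in\cp^+_\Lambda}\smallavg{W_p}_{\Lambda,N,\beta}\,,
\]
while $Z_{\Lambda,N,0}=1$ because $\sigma_N$ is a probability measure, so $\log Z_{\Lambda,N,0}=0$. Integrating from $0$ to $\beta$ and dividing by $N^2|\Lambda_N|$,
\[
\frac{\log Z_{\Lambda_N,N,\beta}}{N^2|\Lambda_N|} \;=\; \int_0^\beta \frac{1}{|\Lambda_N|}\sum_{p\in\cp^+_{\Lambda_N}}\frac{\smallavg{W_p}_{\Lambda_N,N,\beta'}}{N}\,d\beta'\,.
\]

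Next I would pass to the limit under the integral. Since $|W_p|\le N$ pointwise, the integrand is bounded in modulus by $|\cp^+_{\Lambda_N}|/|\Lambda_N|\le d(d-1)/2$ for all $N$ and all $\beta'$, so by dominated convergence it suffices to show that for each fixed $\beta'$ with $|\beta'|\le\beta_0(d)$ the integrand converges to $\frac{d(d-1)}{2}a(\beta')$, where $a(\beta'):=\sum_{X\in\mx(p)}w_{\beta'}(X)$ is the (absolutely convergent) trajectory sum attached by Theorem~\ref{mainthm} to a single plaquette $p$. Two elementary facts feed into this. First, a direct count gives $|\cp^+_{\Lambda_N}|=\frac{d(d-1)}{2}(2M_N)^2(2M_N+1)^{d-2}$, hence $|\cp^+_{\Lambda_N}|/|\Lambda_N|\ra d(d-1)/2$. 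Second, $a(\beta')$ does not depend on which plaquette $p$ is chosen: any two plaquettes are related by a symmetry of $\zz^d$ (a translation composed with a coordinate permutation), and all four loop operations, the positive/negative labels, the length function and the sets $\mx(\cdot)$ are equivariant under such symmetries, so the induced map on trajectories is a weight-preserving bijection $\mx(p)\ra\mx(p')$. The delicate point — and the one I expect to be the main obstacle — is that $\cp^+_{\Lambda_N}$ contains of order $|\Lambda_N|$ plaquettes, so a plaquette-by-plaquette use of Theorem~\ref{mainthm} is not enough; one needs $\smallavg{W_p}_{\Lambda_N,N,\beta'}/N\ra a(\beta')$ to hold \emph{uniformly} over all plaquettes $p$ whose $R$-neighbourhood lies inside $\Lambda_N$, for a fixed radius $R=R(\varepsilon)$ depending only on the required accuracy $\varepsilon$. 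This uniformity should be read off from the quantitative proof of Theorem~\ref{mainthm}: the Makeenko--Migdal expansion of $\smallavg{W_p}/N$ is controlled, up to any prescribed depth, by quantities that depend only on the shape of $p$ (not its location, and not on $\Lambda$), provided $\Lambda$ contains a ball of the corresponding radius around $p$. Granting this, split $\cp^+_{\Lambda_N}$ into these ``bulk'' plaquettes and the remaining $O(RM_N^{d-1})=o(|\Lambda_N|)$ ``boundary'' plaquettes; the boundary contribution to the normalized sum is $o(1)$ because each summand is at most $1$ in modulus, while each bulk summand is within $\varepsilon$ of $a(\beta')$ once $N$ is large. Letting $\varepsilon\ra0$ gives the asserted pointwise limit.

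Finally I would evaluate the $\beta$-integral of $a$. Writing $w_{\beta'}(X)=v(X)(\beta')^{\delta(X)}$ and noting that $\int_0^\beta\sum_{X}|w_{\beta'}(X)|\,d\beta'\le|\beta|\sum_{X\in\mx(p)}|w_\beta(X)|<\infty$ by the absolute convergence in Theorem~\ref{mainthm}, Fubini's theorem gives
\[
\int_0^\beta a(\beta')\,d\beta' \;=\; \sum_{X\in\mx(p)} v(X)\,\frac{\beta^{\delta(X)+1}}{\delta(X)+1} \;=\; \beta\sum_{X\in\mx(p)}\frac{w_\beta(X)}{\delta(X)+1}\,,
\]
so that $\lim_{N\ra\infty}\log Z_{\Lambda_N,N,\beta}/(N^2|\Lambda_N|)=\frac{d(d-1)}{2}\cdot\beta\sum_{X\in\mx(p)}\frac{w_\beta(X)}{\delta(X)+1}$, which is exactly the claimed formula. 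Apart from the uniformity issue discussed above, the remaining points — differentiation under the integral sign, the plaquette count, and the equivariance of the string operations under lattice symmetries — are routine.
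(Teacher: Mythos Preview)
Your integration strategy --- differentiate $\log Z$ in $\beta$, normalize, pass to the limit inside the integral, then integrate the trajectory series term by term --- is exactly what the paper does. The final computation $\int_0^\beta a(\beta')\,d\beta' = \beta\sum_X w_\beta(X)/(\delta(X)+1)$ is also the same.

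Where the two proofs diverge is precisely at the point you flagged as delicate: controlling the average $|\Lambda_N|^{-1}\sum_{p\in\cp^+_{\Lambda_N}}\smallavg{W_p}/N$. You try to do this directly, by a bulk/boundary split and an appeal to a locality/uniformity property that ``should be read off'' from the proof of Theorem~\ref{mainthm}. That property is plausible, but it is not actually provided by the paper: the convergence in Theorem~\ref{master2} comes from a compactness-plus-uniqueness argument, which gives no rate and hence no uniformity over a growing family of plaquettes. Extracting such a bound would require reworking the proof to get a quantitative, finite-depth estimate on $|\phi_{\Lambda,N,\beta}(p)-\phi_\beta(p)|$ that depends only on the distance from $p$ to $\partial\Lambda$. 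This is probably doable, but it is real additional work, not a routine reading-off.

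The paper sidesteps the whole issue with a torus trick: replace $\Lambda_N$ by its periodization $\Lambda_N'$, observe that $|\log Z_{\Lambda_N,N,\beta}-\log Z_{\Lambda_N',N,\beta}|=O(N\cdot M_N^{d-1})=o(N^2|\Lambda_N|)$ because the two Hamiltonians differ only on boundary plaquettes, and then exploit the \emph{exact} translation symmetry of the torus to replace the average over plaquettes by a single $\smallavg{W_p}_{\Lambda_N',N,\beta}$. One then checks that the torus theory satisfies the same finite-$N$ master loop equation, so the uniqueness argument of Theorem~\ref{master2} applies to it as well and gives $\smallavg{W_p}_{\Lambda_N',N,\beta}/N\to\sum_{X\in\mx(p)}w_\beta(X)$. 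This avoids any uniformity question entirely, at the cost of a short comparison lemma between the box and the torus.
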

Another simple corollary of Theorem \ref{mainthm} shows that the limits of Wilson loop expectations have power series expansions in $\beta$ when $|\beta|$ is small.
\begin{cor}[Real analyticity at strong coupling]\label{series}
Let all notation be as in Theorem \ref{mainthm}, and assume that $|\beta|\le \beta_0(d)$. Recall the $\beta$-free weights $v(X)$ defined in Section \ref{not1}. For any non-null loop sequence $s$ with minimal representation $(l_1,\ldots, l_n)$,
\[
\lim_{N\ra\infty}\frac{\smallavg{W_{l_1}W_{l_2}\cdots W_{l_n}}_{\Lambda_N, N, \beta}}{N^n} = \sum_{k=0}^\infty a_k(s) \beta^k\,, 
\]
where
\[
a_k(s) = \sum_{X\in \mx_k(s)} v(X)\, 
\]
and the infinite series is absolutely convergent. The rescaled log-partition function has a similar representation:
\[
\lim_{N\ra\infty} \frac{\log Z_{\Lambda_N, N, \beta}}{N^2|\Lambda_N|} = \frac{ d(d-1)}{2} \sum_{k=0}^\infty \frac{a_k(s)\beta^{k+1}}{k+1}\, ,
\]
where, again, the series is absolutely convergent. 
\end{cor}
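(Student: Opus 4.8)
The plan is to deduce the corollary from Theorem \ref{mainthm} (and, for the partition function, from Corollary \ref{part}) purely by regrouping an absolutely convergent series according to the number of deformation steps. The only inputs I would use are: the identity $w_\beta(X) = v(X)\beta^{\delta(X)}$ from Section \ref{not1}, in which $v(X)$ does not depend on $\beta$; the partition $\mx(s) = \bigsqcup_{k=0}^\infty \mx_k(s)$ of the vanishing trajectories starting at $s$ according to their number of deformations; and the standard fact that an absolutely summable family may be summed blockwise along an arbitrary partition, yielding again an absolutely convergent series.

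First I would fix $\beta$ with $0<|\beta|\le\beta_0(d)$ and a non-null loop sequence $s$ with minimal representation $(l_1,\ldots,l_n)$. Theorem \ref{mainthm} gives $\sum_{X\in\mx(s)}|w_\beta(X)|<\infty$; since $|v(X)| = |w_\beta(X)|\,|\beta|^{-k}$ on the block $\mx_k(s)$, this yields $\sum_{X\in\mx_k(s)}|v(X)| \le |\beta|^{-k}\sum_{X\in\mx(s)}|w_\beta(X)|<\infty$ for every $k$, so $a_k(s) := \sum_{X\in\mx_k(s)} v(X)$ is well defined (and, being $\beta$-free, does not depend on the chosen $\beta$). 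Summing the family $\{w_\beta(X)\}_{X\in\mx(s)}$ blockwise over $\{\mx_k(s)\}_{k\ge0}$ then gives
\[
\sum_{X\in\mx(s)} w_\beta(X) \;=\; \sum_{k=0}^\infty\Bigl(\sum_{X\in\mx_k(s)} v(X)\Bigr)\beta^k \;=\; \sum_{k=0}^\infty a_k(s)\,\beta^k\,,
\]
which is the claimed representation once combined with Theorem \ref{mainthm}. For absolute convergence I would use $|a_k(s)| \le \sum_{X\in\mx_k(s)}|v(X)|$ together with
\[
\sum_{k=0}^\infty |a_k(s)|\,|\beta|^k \;\le\; \sum_{k=0}^\infty\,\sum_{X\in\mx_k(s)}|v(X)|\,|\beta|^k \;=\; \sum_{X\in\mx(s)}|w_\beta(X)| \;<\;\infty\,,
\]
the middle equality holding because $|v(X)\beta^k| = |w_\beta(X)|$ on $\mx_k(s)$. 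The borderline case $\beta=0$ I would check by hand: then $w_0(X)$ vanishes whenever $X$ has a deformation and equals $v(X)$ otherwise, so Theorem \ref{mainthm} at $\beta=0$ reads $\lim_{N\ra\infty}\smallavg{W_{l_1}\cdots W_{l_n}}_{\Lambda_N,N,0}/N^n = a_0(s)$, which is the power series evaluated at $\beta=0$. Incidentally, the displayed bound shows the radius of convergence of $\sum_k a_k(s)\beta^k$ is at least $\beta_0(d)$, so the limit is a real-analytic function of $\beta$ on $(-\beta_0(d),\beta_0(d))$, which justifies the name of the corollary.

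For the log-partition function I would apply Corollary \ref{part} with $s$ taken to be an arbitrary plaquette $p$ (the string-theory operations are lattice-symmetric, so $\mx_k(p)$ is independent of the choice of $p$ up to a $v$-preserving bijection, and $a_k(p)$ is what is written $a_k(s)$ in the statement). Substituting $w_\beta(X) = v(X)\beta^{\delta(X)}$ into the formula of Corollary \ref{part} and regrouping over $\{\mx_k(p)\}_{k\ge0}$ exactly as above — the weight $1/(\delta(X)+1)$ is block-constant, equal to $1/(k+1)$, and the prefactor $\beta d(d-1)/2$ absorbs one power of $\beta$ — produces
\[
\lim_{N\ra\infty}\frac{\log Z_{\Lambda_N,N,\beta}}{N^2|\Lambda_N|} \;=\; \frac{\beta d(d-1)}{2}\sum_{k=0}^\infty\frac{a_k(p)\,\beta^k}{k+1} \;=\; \frac{d(d-1)}{2}\sum_{k=0}^\infty\frac{a_k(p)\,\beta^{k+1}}{k+1}\,,
\]
with absolute convergence inherited from the first series since $1/(k+1)\le 1$.

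I do not expect a genuine obstacle here: the whole argument is a rearrangement of sums that are absolutely convergent by Theorem \ref{mainthm} and Corollary \ref{part}. The only points demanding minor care are the legitimacy of blockwise summation and the fact that it preserves absolute summability of the outer series — both immediate from $\sum_{X\in\mx(s)}|w_\beta(X)|<\infty$ — together with the small bookkeeping of working at a nonzero $\beta$ so that each block sum $a_k(s)$ is finite, and then observing that these coefficients are $\beta$-free, so one and the same power series represents the limit for every $|\beta|\le\beta_0(d)$.
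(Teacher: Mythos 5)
Your proposal is correct, and it takes the route that the paper explicitly mentions but declines to write out: the paper's own proof of Corollary \ref{series} is a two-line citation of internal facts from the proof of Theorem \ref{mainthm} — namely equation \eqref{tkclaim}, which identifies $\sum_{X\in\mx_k(s)}w_\beta(X)$ with $a_k(s)\beta^k$ for the \emph{recursively defined} coefficients of Section \ref{power}, together with Theorem \ref{akthm} (the convergent power series for $\phi_\beta$, with Catalan-number bounds on $a_k$) and, for the partition function, Theorem \ref{convthm} combined with Corollary \ref{part}. You instead work purely from the statements of Theorem \ref{mainthm} and Corollary \ref{part}: you define $a_k(s)$ directly as the block sum $\sum_{X\in\mx_k(s)}v(X)$, check it is finite and $\beta$-free by pulling out $|\beta|^{-k}$ at a fixed nonzero $\beta$ (this is even automatic, since $\mx_k(s)$ is finite by Lemma \ref{mx}, though your argument does not need that), and then rearrange the absolutely convergent trajectory sums blockwise, with $\beta=0$ checked separately and the factor $1/(\delta(X)+1)$ being block-constant in the partition-function case. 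Both arguments are valid and give the same series; what the paper's route buys is the identification of the coefficients in Corollary \ref{series} with the recursively computable $a_k(s)$ of Sections \ref{algosec} and \ref{power} (which is what makes Proposition \ref{algoprop} and the explicit exponential bounds of Lemma \ref{aklmm} available), while your route buys self-containedness: it uses only the final duality statements and elementary facts about rearranging absolutely summable families, and it directly produces the formula $a_k(s)=\sum_{X\in\mx_k(s)}v(X)$ as written in the corollary. Your reading of the second display (taking $s$ to be a plaquette $p$, as in Corollary \ref{part}, so that $a_k(s)$ there means $a_k(p)$) matches the paper's intent, and your observation that the series converges absolutely on all of $|\beta|\le\beta_0(d)$, hence real analyticity, is the point of the corollary's title.
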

The main ingredient in the proof of Theorem \ref{mainthm} is a recursive equation for Wilson loop expectations in $SO(N)$ lattice gauge theories. Since this may be of independent interest, it is presented below as a theorem. Such equations are often called `master loop equations'. In the context of lattice gauge theories, they first appeared in the work of Makeenko and Migdal \cite{makeenkomigdal79}, and are therefore sometimes called `Makeenko--Migdal equations'. The main difference between the master loop equations appearing in the literature \cite{bhanotetal82, collinsetal09, eguchikawai82, makeenkomigdal79} and the following theorem is that our result is true for finite $N$, while the equations derived previously are valid only in the limit $N\ra\infty$. 
\begin{thm}[Finite $N$ master loop equation]\label{mastersymm}
Fix a nonempty finite set $\Lambda\subseteq \zz^d$, an integer $N\ge 2$ and a real number $\beta$. Let $\smallavg{\cdot}$ denote expectation with respect to the $SO(N)$ lattice gauge theory on $\Lambda$ at inverse coupling strength $\beta$. For any non-null loop sequence $s$ with minimal representation $(l_1,\ldots,l_n)$ such that each $l_i$ is contained in $\Lambda$, define
\[
\phi(s) := \frac{\smallavg{W_{l_1}W_{l_2}\cdots W_{l_n}}}{N^n}\, .
\]
Recall the sets $\ftw^+(s)$, $\ftw^-(s)$, $\fs^+(s)$, $\fs^-(s)$, $\fst^+(s)$, $\fst^-(s)$, $\fd^+(s)$ and $\fd^-(s)$ defined in Section~ \ref{not1}. Let $s$ be as above, and suppose that all vertices that are at distance $\le 1$ from any $l_i$ belong to $\Lambda$. Then $\phi$ satisfies the recursive equation
\begin{align*}
(N-1)|s|\phi(s) &= \sum_{s'\in \ftw^-(s)} \phi(s') - \sum_{s'\in \ftw^+(s)} \phi(s') + N\sum_{s'\in \fs^-(s)}\phi(s') - N\sum_{s'\in \fs^+(s)}\phi(s')\\
&\quad + \frac{1}{N} \sum_{s'\in \fst^-(s)}\phi(s') - \frac{1}{N} \sum_{s'\in \fst^+(s)}\phi(s') + N\beta \sum_{s'\in \fd^-(s)}\phi(s') - N\beta \sum_{s'\in \fd^+(s)}\phi(s')\, .
\end{align*}
\end{thm}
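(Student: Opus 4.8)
The plan is to derive the master loop equation by integration by parts on the group $SO(N)$, exploiting the invariance of Haar measure under one-parameter subgroups. Fix a loop sequence $s = (l_1,\dots,l_n)$ and an edge $e$ occurring at some location $x$ in, say, $l_1$; I will eventually sum over all locations in all loops and divide by $|s|$. The key tool is the following: for a smooth function $F$ on $SO(N)$ and a skew-symmetric matrix $A$ (an element of the Lie algebra $\mathfrak{so}(N)$), the identity $\int \frac{d}{dt}\big|_{t=0} F(e^{tA}Q_e)\,d\sigma_N(Q_e) = 0$ holds, because left translation preserves Haar measure. Applying this with $F$ equal to the integrand $\exp\big(N\beta\sum_{p}\tr(Q_p)\big)\,W_{l_1}\cdots W_{l_n}$, and summing the resulting identities over a basis $\{A_{ij} = E_{ij}-E_{ji}\}$ of $\mathfrak{so}(N)$ in a way that reconstitutes the relevant matrix contractions, produces a linear relation among Wilson loop expectations.

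The second step is the combinatorial bookkeeping: I must identify precisely which loop sequences arise when the derivative $\frac{d}{dt}$ hits each factor. When the derivative hits $W_{l_1} = \tr(Q_{e_1}\cdots Q_{e_n})$ at the location of $e$, the insertion of $A_{ij}$ and the subsequent sum $\sum_{i,j}$ over the basis yields, via the $SO(N)$ completeness relation $\sum_{i<j}(A_{ij})_{ab}(A_{ij})_{cd} = \tfrac12(\delta_{ad}\delta_{bc}-\delta_{ac}\delta_{bd})$, two types of terms: one that splices the index lines to produce a \emph{twisting} of $l_1$ (the $\delta_{ad}\delta_{bc}$ piece, reversing an arc), and one that produces a \emph{splitting} of $l_1$ into two loops (the same contraction acting on a self-intersection), with signs governed by whether $e$ reappears as $e$ or $e^{-1}$ elsewhere in the loop — this is exactly the positive/negative dichotomy in the definitions of $\ftw^\pm$ and $\fs^\pm$. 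When the derivative hits another factor $W_{l_r}$ sharing the edge $e$, the same contraction instead \emph{merges} $l_1$ and $l_r$, giving the $\fst^\pm(s)$ terms with their $1/N$ prefactor (one factor of $N$ from the normalization $N^{-n}$ versus $N^{-(n-1)}$, another from the contraction). When the derivative hits the exponential weight $\exp(N\beta\sum_p \tr Q_p)$, it brings down $N\beta\sum_{p\in\cp^+(e)}$ times a trace that, after contraction, glues a plaquette onto $l_1$ — precisely a \emph{deformation}, giving the $N\beta\,\fd^\pm(s)$ terms. Finally, the derivative can hit $W_{l_1}$ at the \emph{same} location $x$ in a ``diagonal'' way, producing a term proportional to $\smallavg{W_{l_1}\cdots}$ itself; collecting one such term for each of the $|s|$ locations, and tracking the $(A_{ij})^2$ diagonal contribution $-(N-1)/2$, yields the left-hand side $(N-1)|s|\phi(s)$.

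The main obstacle, and where I expect to spend the most care, is getting every sign and combinatorial multiplicity exactly right — in particular, matching the abstract loop operations defined in Section~\ref{not1} (which were phrased purely in terms of paths $a,b,c,d$ and nonbacktracking cores $[\cdot]$) with the index contractions that actually emerge from the $SO(N)$ Weingarten-type identity. Two subtleties demand attention. First, the contraction can create backtracks, so one must check that passing to the nonbacktracking core $[\cdot]$ does not change the Wilson loop expectation — this follows because $Q_{e}Q_{e^{-1}} = Q_e Q_e^{-1} = I$ inside the trace, so $W_\rho = W_{[\rho]}$ as random variables, not merely in expectation. Second, one must verify that the $\tfrac12$ factors from the $\mathfrak{so}(N)$ completeness relation, the factor from using $\tr(Q_p)$ rather than its real part (here real automatically, since $SO(N)\subseteq GL_N(\mathbb{R})$), and the symmetrization over ordered pairs of locations $(x,y)$ versus $(y,x)$ all conspire to produce integer coefficients with no stray halves — this is where the convention, stated explicitly in Section~\ref{not1}, that splittings/twistings/mergers at $(x,y)$ and $(y,x)$ count as distinct becomes essential. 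Once the identity is established for a single edge-location $x$, summing over all $|s|$ locations, noting that each unordered pair of coincident edges is counted once from each endpoint, and dividing through gives the stated symmetric form; the hypothesis that all vertices within distance $1$ of each $l_i$ lie in $\Lambda$ is exactly what guarantees that every plaquette and every merged/deformed loop appearing on the right-hand side is still supported in $\Lambda$, so that each $\phi(s')$ is well-defined.
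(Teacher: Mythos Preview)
Your approach is correct in outline and takes the classical Lie-algebra route to the Schwinger--Dyson equation, genuinely different from the paper's derivation. The paper proceeds via Stein's method of exchangeable pairs: it constructs an exchangeable pair $(Q, Q_\epsilon)$ of Haar-distributed $SO(N)$ matrices (Section~\ref{stein}), uses the identity $\ee[(f(Q_\epsilon)-f(Q))g(Q)] = -\tfrac12\ee[(f(Q_\epsilon)-f(Q))(g(Q_\epsilon)-g(Q))]$, and sends $\epsilon \to 0$ to obtain an asymmetric integration-by-parts formula (Theorem~\ref{sdthm}) involving second derivatives of $f$ but only first derivatives of $g$. This is applied with $f = W_{l_1}$ and $g = Z_{\Lambda,N,\beta}^{-1}W_{l_2}\cdots W_{l_n}\exp\bigl(N\beta\sum_{p}W_p\bigr)$; explicit trace computations (Lemmas~\ref{s2}--\ref{t2}) then identify each resulting term as a twisting, splitting, merger, or deformation, giving an unsymmetrized equation (Theorem~\ref{mastern}). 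The symmetric form follows by summing over all choices of distinguished loop $l_k$ and edge class in $l_k$. Your route replaces the Stein construction with direct Lie-algebra integration by parts; the subsequent combinatorial bookkeeping you describe matches the paper's lemmas term for term. The paper's method buys a novel probabilistic derivation of the underlying Schwinger--Dyson identity (which the author flags as a contribution in its own right); your method buys familiarity and a transparent connection to the Casimir.

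One point requires care. The identity $\int \frac{d}{dt}\big|_{t=0}F(e^{tA}Q_e)\,d\sigma = 0$ is first-order in $A$ and does not by itself produce the twofold Fierz contraction $\sum_{i<j}(A_{ij})_{ab}(A_{ij})_{cd}=\tfrac12(\delta_{ad}\delta_{bc}-\delta_{ac}\delta_{bd})$ that you correctly invoke --- that relation, and the diagonal $(A_{ij})^2 = -(E_{ii}+E_{jj})$ contribution yielding the factor $N-1$, are second-order in $A$. What you actually need is the \emph{asymmetric} second-order identity $\sum_{i<j}\int L_{A_{ij}}\bigl[(L_{A_{ij}}f)\,g\bigr]\,d\sigma = 0$, equivalently $\int (\Delta_e f)\,g = -\sum_{i<j}\int (L_{A_{ij}}f)(L_{A_{ij}}g)$, with the Casimir $\Delta_e = \sum_{i<j}L_{A_{ij}}^2$ acting only on $f = W_{l_1}$; this is precisely the Lie-algebra form of the paper's Theorem~\ref{sdthm}. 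Applying the Casimir to the full integrand instead would introduce spurious plaquette--plaquette terms from the second derivative of the Boltzmann weight, which are absent from the stated equation. With the asymmetric form in hand, your identification of terms and the final symmetrization over locations go through exactly as you describe.
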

The rest of the paper is organized as follows. An algorithm for computing the coefficients of the power series expansion of Corollary \ref{series} is presented in Section~\ref{algosec}. Section~\ref{sketch} contains a sketch of the proof of Theorem~\ref{mainthm}. The proof itself is carried out in Sections~\ref{stein} through~\ref{mainthmsec}. Theorem~\ref{mastersymm} is proved in Section~\ref{mm1sec}.  Corollaries~\ref{factor}, \ref{area} and~\ref{part} are proved in Sections~\ref{factorsec}, \ref{areasec}, \ref{partsec} and~\ref{seriessec} respectively. Lemma~\ref{core} is proved in Section~\ref{coresec}. The paper ends with a list of open problems in Section~\ref{opensec}. 

\section{Algorithmic aspects}\label{algosec}
Recall the coefficients $a_k(s)$ of the power series expansion from Corollary \ref{series}. For practical purposes, it may be interesting to have an implementable algorithm for computing these coefficients. We present one such recursive algorithm below. The algorithm inputs a loop sequence $s$ with minimal representation $(l_1,\ldots, l_n)$ and a nonnegative integer $k$, and outputs $a_k(s)$. If $s$ is the null loop sequence, then it outputs $a_0(s)=1$ and $a_k(s)= 0$ for every $k>0$. If $s$ is non-null, then it outputs $a_0(s)=0$. In all other cases, it proceeds as follows. Let $e$ be an arbitrary edge in $l_1$. Let $A_1$ be the set of locations in $l_1$ where $e$ occurs, and let $B_1$ be the set of locations in $l_1$ where $e^{-1}$ occurs. Let $C_1 = A_1\cup B_1$ and $m$ be the size of $C_1$. Then $a_k(s)$ is computed using the following recursive formula:
\begin{align*}
a_k(s) &:= \frac{1}{m}\sum_{x\in A_1, \, y\in B_1} a_k(\times_{x,y}^1 l_1, \times_{x,y}^2 l_1,l_2,\ldots,l_n) + \frac{1}{m}\sum_{x\in B_1, \, y\in A_1} a_k(\times_{x,y}^1 l_1, \times_{x,y}^2 l_1,l_2,\ldots,l_n)\\
&\qquad - \frac{1}{m}\sum_{\substack{x,y\in A_1\\ x\ne y}} a_k(\times^1_{x,y} l_1,\times^2_{x,y} l_1,l_2,\ldots,l_n)- \frac{1}{m}\sum_{\substack{x,y\in B_1\\ x\ne y}} a_k(\times^1_{x,y} l_1,\times^2_{x,y} l_1,l_2,\ldots,l_n)\\
&\quad   + \frac{1}{m} \sum_{p\in \cp^+(e)}\sum_{x\in C_1}a_{k-1}(l_1 \ominus_{x}p,l_2,\ldots,l_n) - \frac{1}{m} \sum_{p\in \cp^+(e)}\sum_{x\in C_1}a_{k-1}(l_1 \oplus_{x}p,l_2,\ldots,l_n) \, .
\end{align*}
It is not obvious that the the recursion terminates and gives the same $a_k(s)$ as in Corollary \ref{series}. The following result states that it indeed does.
\begin{prop}\label{algoprop}
The recursion described above terminates for any $k$ and $s$, and gives the same $a_k(s)$ as in Corollary \ref{series}. 
\end{prop}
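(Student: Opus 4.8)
The plan is to recognize the displayed recursion as the coefficient‑by‑coefficient reading of the $N\to\infty$ limit of an \emph{edge‑localized} master loop equation, and then to verify separately that the recursion is well‑founded. There are thus two tasks: (i) show the recursive formula is an identity satisfied by the quantities $a_k(s)=\sum_{X\in\mx_k(s)}v(X)$ of Corollary~\ref{series}; and (ii) exhibit a monovariant guaranteeing termination.

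For (i), the starting point is a single‑edge refinement of Theorem~\ref{mastersymm}. The proof of that theorem (Section~\ref{mm1sec}) rests on integration by parts in one matrix variable $Q_e$; carried out for a fixed edge $e$ occurring at the set of locations $C\subseteq l_1$, with $m:=|C|$, it produces
\[
(N-1)\,m\,\phi(s)=\sum(\pm)\phi(\text{twistings of }l_1\text{ at pairs in }C)+N\!\sum(\pm)\phi(\text{splittings at pairs in }C)+\tfrac1N\!\sum(\pm)\phi(\text{mergers at }C)+N\beta\!\sum(\pm)\phi(\text{deformations at }C),
\]
with the same signs as in Theorem~\ref{mastersymm}; summing over all edges and all component loops recovers Theorem~\ref{mastersymm} itself. (A splitting at an ordered pair $(x,y)$ in $C$ is positive when the edges at $x,y$ have the same orientation and negative otherwise; a deformation at $x\in C$ is $l_1\oplus_x p$ or $l_1\ominus_x p$ with $p\in\cp^+(e)$.) I would then divide by $N$ and let $N\to\infty$ along $\Lambda_N\uparrow\zz^d$: Theorem~\ref{mainthm} gives $\phi_{\Lambda_N,N,\beta}(t)\to\psi(t):=\lim_N\phi(t)$ for each loop sequence $t$, and since only boundedly many terms occur, the twisting and merger sums (of relative order $N^{-1}$ and $N^{-2}$) drop out, leaving
\[
m\,\psi(s)=\sum\psi(\text{negative splittings at }C)-\sum\psi(\text{positive splittings at }C)+\beta\sum\psi(\text{negative deformations at }C)-\beta\sum\psi(\text{positive deformations at }C).
\]
By Corollary~\ref{series}, $\psi(t)=\sum_{j\ge0}a_j(t)\beta^j$ with absolutely convergent series; extracting the coefficient of $\beta^k$ and dividing by $m$ gives exactly the displayed recursion once $A_1,B_1$ are read as the locations of $e$ and $e^{-1}$ in $l_1$ (the first two sums are the negative splittings, one location in each of $A_1,B_1$ in both orders; the next two the positive splittings; the last two the negative and positive deformations). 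Hence, granted termination, the output is $a_k(s)$. The base cases agree too: $\mx_0(\emptyset)$ is the single trivial trajectory of weight $1$, $\mx_k(\emptyset)$ is empty for $k\ge1$, and $\mx_0(s)$ is empty for non‑null $s$ because (see below) every splitting of a nonbacktracking loop leaves at least one non‑null loop, so no splitting‑only trajectory can reach the null loop sequence.

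For (ii), to a pair $(s,k)$ with $s$ of total length $|s|$ and exactly $c$ (non‑null) component loops in its minimal representation, attach $\mu(s,k):=(k,\,2|s|-c)\in\N^2$, ordered lexicographically (note $2|s|-c\ge|s|\ge0$ since each loop has positive length). Each recursive call strictly decreases $\mu$: a deformation step lowers $k$; a negative splitting replaces $l_1=aebe^{-1}c$ by $[ac]$ and $[b]$, which deletes the two occurrences of $e^{\pm1}$ and can only shorten further under core reduction, so $|s|$ drops by at least $2$ while $c$ changes by at most $1$; and a positive splitting replaces the nonbacktracking loop $l_1=aebec$ by $aec$ and $be$. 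The decisive structural point is that $aec$ and $be$ are \emph{already} nonbacktracking closed paths (their new cyclic junctions already occur in $l_1$) and, because $l_1$ is nonbacktracking, are both non‑null; hence $|s|$ is unchanged but $c$ rises by one and $2|s|-c$ drops. The same elementary check --- that an internally nonbacktracking closed path cannot cyclically reduce to the null path --- shows that the piece $[b]$ of a negative splitting is non‑null, which is the fact invoked for the base case above. Since the lexicographic order on $\N^2$ is well‑founded, the recursion halts; combined with (i), it computes $a_k(s)$.

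The one part that is more than bookkeeping is the edge‑localized master loop equation in (i): proving it means running the Schwinger--Dyson computation behind Theorem~\ref{mastersymm} for a single fixed edge $e$ rather than in aggregate, and checking that the resulting twisting, splitting, merger and deformation terms are precisely those supported at the locations of $e^{\pm1}$ in $l_1$, with the factor $|s|$ replaced by $m=|C|$. Once that identity is available, the remaining work is matching signs and orderings against the definitions of $\fs^\pm$, $\fd^\pm$ and the minimal representation, together with the small lemma on nonbacktracking paths used for termination and for the base case.
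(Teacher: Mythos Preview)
Your proof is correct, but the paper's route is much shorter. The paper simply observes that the displayed recursion is \emph{literally identical} to the inductive definition of $a_k(s)$ given at the start of Section~\ref{power}; since Theorem~\ref{akthm} (together with the argument in Section~\ref{mainthmsec}) identifies those numbers with the coefficients in Corollary~\ref{series}, nothing further is needed for correctness. For termination the paper uses the index $\iota(s)=|s|-\#s$ and Lemma~\ref{iota2} (any splitting strictly decreases $\iota$), which is the same idea as your monovariant $2|s|-\#s$ but marginally simpler. The edge-localized master loop equation you propose to derive is precisely Theorem~\ref{mastern}, and its $N\to\infty$ limit is Theorem~\ref{master}; so the step you flag as ``more than bookkeeping'' is already done in the paper, and your argument effectively reconstructs the chain Sections~\ref{mm1sec}--\ref{power} rather than citing the definition in Section~\ref{power}. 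Your approach does have the virtue of being self-contained from the \emph{statements} of Theorem~\ref{mainthm} and Corollary~\ref{series} alone, without looking inside Section~\ref{power}; the paper's proof buys brevity by pointing to the definition it has already set up there.
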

This proposition is proved at the end of Section \ref{power}. Although the algorithm is easy to implement using any standard programming language, the recursions can be time-consuming. It is possible that there exist cleverer recursions that terminate faster. 

As an application of the above algorithm, take $d=3$ and $s$ to be a single plaquette in $\zz^3$. The first six coefficients computed using the above algorithm (implemented on a standard laptop computer using a code written in the R programming language) turn out to be $a_0 = 0$, $a_1 = 1$, $a_2 = 0$, $a_3 = 0$, $a_4 = 0$ and $a_5 = -7$. In other words, if $p$ is a plaquette in $\zz^3$ and $\beta$ is small, then 
\[
\lim_{N\ra\infty}\frac{\smallavg{W_p}_{\Lambda_N, N,\beta}}{N} = \beta -7\beta^5 + O(\beta^6)\, ,
\]
where $\Lambda_N$ is a sequence of finite sets increasing to $\zz^3$. %On the other hand, in two dimension, the right-hand side turns out to be $\beta-\beta^5+82\beta^7/9 +\cdots$.

\section{Proof sketch}\label{sketch}
The purpose of this section is to outline the main steps in the proof of Theorem \ref{mainthm}. The details are worked out in Sections \ref{stein} through \ref{mainthmsec}.

The first step is to define a `Stein exchangeable pair'~\cite{stein72, stein86, stein95} of Haar-distributed random $SO(N)$ matrices, as follows. Let $Q$ be a Haar-distributed random $SO(N)$ matrix. Let $\ep\in (0,1)$ be a real number, and choose $(I,J)$ uniformly at random from the set $\{(i,j):1\le i\ne j\le N\}$. Let $\eta$ be a random variable that is $1$ with probability $1/2$, and $-1$ with probability $1/2$. Let $R_\ep$ be the $N\times N$ matrix whose $(i,j)^{\mathrm{th}}$ entry is 
\[
\begin{cases}
\sqrt{1-\ep^2} &\text{ if $i=j=I$ or $i=j=J$,}\\
\eta \ep &\text{ if $i=I$ and $j=J$,}\\
-\eta \ep &\text{ if $i=J$ and $j=I$,}\\
1 &\text{ if $i=j$ and $i\not \in \{I,J\}$,}\\
0 &\text{ in all other cases.}
\end{cases}
\]
It is easy to verify that $R_\ep$ is an element of $SO(N)$. Let $Q_\ep := R_\ep Q$. It turns out that $(Q,Q_\ep)$ is an exchangeable pair, which means that $(Q,Q_\ep)$ has the same joint distribution as $(Q_\ep, Q)$. A consequence of this exchangeability is the identity
\[
\ee((f(Q_\ep) - f(Q))g(Q)) = -\frac{1}{2} \ee((f(Q_\ep)-f(Q))(g(Q_\ep)-g(Q)))
\]
where $f$ and $g$ are arbitrary real-valued functions on $SO(N)$. 

Dividing both sides of the above identity by $\ep^2$ and sending $\ep$ to zero gives the following `Schwinger--Dyson equation for $SO(N)$': 
\begin{align*}
&\ee\biggl(\sum_{i,k} q_{ik} \fpar{f}{q_{ik}} g\biggr)\\
 &= \frac{1}{N-1}\ee\biggl(\sum_{i,k} \spar{f}{q_{ik}} g- \sum_{i,j,k,k'} q_{jk}q_{ik'} \mpar{f}{q_{ik}}{q_{jk'}} g + \sum_{i,k} \fpar{f}{q_{ik}}\fpar{g}{q_{ik}} - \sum_{i,j,k,k'} q_{jk}q_{ik'} \fpar{f}{q_{ik}} \fpar{g}{q_{jk'}}\biggr)\, ,
\end{align*}
where $q_{ij}$ denotes the $(i,j)^{\mathrm{th}}$ entry of $Q$, $\ee$ denotes expectation with respect to the Haar measure, and all indices run from $1$ to $N$. 

Let $\Lambda$ be a finite subset of $\zz^d$ and consider $SO(N)$ lattice gauge theory on $\Lambda$. Let $l_1,\ldots,l_n$ be non-null loops such that all vertices of $\zz^d$ that are at distance $\le 1$ from any of these loops are contained in $\Lambda$. In particular, the loops themselves are contained in $\Lambda$. Let $e$ be the first edge of~$l_1$. Let $q_{ij}^e$ denote the $(i,j)^{\mathrm{th}}$ entry of $Q_e$. Define two functions $f$ and $g$ as $f:= W_{l_1}$ and 
\[
g := Z_{\Lambda, N, \beta}^{-1}W_{l_2}W_{l_3}\cdots W_{l_n} \exp\biggl(N\beta\sum_{p\in \cp^+_\Lambda} W_p\biggr)\, .
\]
One property of Wilson loop variables that comes to our aid at this point is the identity
\[
\sum_{i,k} q_{ik}^e \fpar{W_{l_1}}{q_{ik}^e} = m W_{l_1}\, ,
\]
where $m$ is the number of locations in $l_1$ that contain either $e$ or $e^{-1}$. Using this identity, we get
\[
\ee\biggl(\sum_{i,k} q_{ik}^e \fpar{f}{q_{ik}^e}g\biggr) = \ee( mW_{l_1} g) =  m \smallavg{W_{l_1}W_{l_2}\cdots W_{l_n}}\, ,
\]
where, as before, $\ee$ is expectation with respect to Haar measure and $\smallavg{\cdot}$ is expectation with respect to the measure $\mu_{\Lambda, N,\beta}$ of lattice gauge theory. We are now in a setting where the previously derived Schwinger--Dyson equation for $SO(N)$ can be applied to the pair $(f,g)$. Luckily the right-hand side of the equation, after a lengthy sequence of computations,  emerges as a linear combination of expectations of products of Wilson loop variables. This gives rise to a `master loop equation' for $SO(N)$ lattice gauge theory (Theorem \ref{mastersymm}). 

Define, for a loop sequence $s = (l_1,\ldots, l_n)$, 
\[
\phi_{\Lambda_N, N, \beta}(s) := \frac{\smallavg{W_{l_1}W_{l_2}\cdots W_{l_n}}_{\Lambda_N, N, \beta}}{N^n}\, ,
\]
where $\Lambda_N$ is a sequence of finite sets increasing to $\zz^d$. Using the master loop equation and taking $N$ to infinity, one can then show that if $\phi_\beta$ is a limit point of $\phi_{\Lambda_N, N, \beta}$, then $\phi_\beta$ satisfies a `limiting master loop equation'. A main step in the proof, at this point, is to show that if $|\beta|$ is sufficiently small, then there is a unique $\phi_\beta$ that satisfies this master loop equation. %The proof of this uniqueness is a little too complicated and technical to outline in this section. 

The proof of uniqueness will be carried out as follows. Suppose that $\phi_\beta$ and $\psi_\beta$ are two functions that both satisfy the limiting master loop equation. Let $\Delta$ be the set of all finite sequences of integers. If $\delta,\delta'\in \Delta$, we will say that $\delta \le \delta'$ if the two sequences have the same length and $\delta'$ dominates $\delta$ in each component. If $s$ is a non-null loop sequence with minimal representation $(l_1,\ldots, l_n)$, let $\delta(s)\in \Delta$ be the sequence of length $n$ whose $i^{\mathrm{th}}$ component is $|l_i|$.  For $\delta\in \Delta$, let
\[
D(\delta):= \sup_{s:\delta(s)\le \delta} |\phi_\beta(s)-\psi_\beta(s)|\,,
\]
where the supremum is understood to be zero if there is no $s$ such that $\delta(s)\le \delta$.

For an element $\delta = (\delta_1,\ldots,\delta_n)\in \Delta$, let
\[
\iota(\delta) := \delta_1+\cdots +\delta_n - n\, .
\]
For each $\lambda \in (0,1)$, let
\[
F(\lambda) := \sum_{\delta\in\Delta} \lambda^{\iota(\delta)} D(\delta)\, .
\]
We will first prove that $F(\lambda)<\infty$ if $\lambda$ is sufficiently small. Next, we will spend a considerable amount of effort to prove the inequality
\begin{align*}
F(\lambda)&\le \biggl(4\lambda^3 + 4\lambda + \frac{4|\beta| d}{\lambda^4} + \frac{4|\beta| d}{1-\lambda}\biggr)F(\lambda)\, .
\end{align*}
This shows that if $|\beta|$ are small enough (depending on $\lambda$), then the coefficient of $F(\lambda)$ on the right is less than $1$. Due to the finiteness of $F(\lambda)$, this implies that $F(\lambda)=0$, and therefore $\phi_\beta=\psi_\beta$. Once we have proved the uniqueness  of $\phi_\beta$ when $|\beta|$ is small enough, a simple compactness argument shows that $\phi_{\Lambda_N, N, \beta}$ converges to $\phi_\beta$ as $N\ra\infty$.

The remainder of the proof is heavily inductive. The inductions will typically be on loop sequences. If we are dealing with single loops, it is natural to do induction on the length of the loop. However,  it is not immediately clear how to carry out induction on loop sequences. What will work for us is the following. If $s$ is a non-null loop sequence with minimal representation $(l_1,\ldots,l_n)$, define the `index' of $s$ as
\[
\iota(s) := |l_1|+\cdots +|l_n| - n\, .
\]
Since a non-null loop must have at least four edges, $\iota(s)$ is always a positive integer. This allows us to define functions of loop sequences and prove facts about loop sequences by induction on the index. The key result that helps us carry out the inductions is that if $s'$ is a loop sequence that is produced by splitting $s$, then $\iota(s')<\iota(s)$. 

The next step in the proof is to define a collection of coefficients $a_k(s)$, one  for each nonnegative integer $k$ and loop sequence $s$, using a certain inductive definition (by induction on $k$ and $\iota(s)$, as described above) that guarantees the following two properties: 
\begin{enumerate}[(a)]
\item For $|\beta|$ sufficiently small, the power series 
\begin{equation}\label{mainseries}
\psi_\beta(s) := \sum_{k=0}^\infty a_k(s) \beta^k
\end{equation}
converges absolutely for any $s$.
\item  The function $\psi_\beta$ satisfies the limiting master loop equation.
\end{enumerate}
These two properties and the uniqueness of the solution of the master loop equation imply that $\psi_\beta=\phi_\beta$. In other words, this identifies the power series expansion of $\phi_\beta$. 

For each $k\ge 0$ and loop sequence $s$, let $\mx_k(s)$ be the set of all vanishing trajectories that start at $s$ and have exactly $k$ deformations. We will show by induction that for any $k$ and $s$, 
\[
a_k(s)\beta^k = \sum_{X\in \mx_k(s)} w_\beta(X)\, .
\]
Once we have this, it follows that
\[
\phi_\beta(s) = \sum_{k=0}^\infty \sum_{X\in \mx_k(s)} w_\beta(X)\,.
\] 
The only thing that remains to be proved at this stage is that
\[
\sum_{X\in \mx(s)} |w_\beta(X)|<\infty\, .
\]
This will be shown as follows. We will inductively define a second set of coefficients $b_k(s)$ in such a way that for all $k$ and $s$, $b_k(s)\ge 0$  and
\[
b_k(s)|\beta|^k = \sum_{X\in \mx_k(s)} |w_\beta(X)|\, .
\]
We will then show, again by induction, that $b_k(s)$ grows at most exponentially in $k$, where the exponent does not depend on $s$. This will complete the proof.

Incidentally, the idea of computing asymptotic matrix integrals by first showing that the limit satisfies a Schwinger--Dyson equation and then solving this equation has been investigated previously, for example  by Guionnet and coauthors \cite{collinsetal09, guionnet04, guionnet06, guionnetetal12, guionnetmaida05, guionnetnovak14, guionnetzeitouni02}. Indeed, several steps in the proof of convergence of the infinite series \eqref{mainseries} are inspired by ideas contained in a paper of Collins, Guionnet and Maurel-Segala \cite{collinsetal09}. The main difference between the above papers and this one is that this paper deals with polynomials of a growing number of matrices, whereas the papers cited above deal with polynomials of a fixed number of matrices. The only exception is \cite{guionnetetal12}, where the finiteness of the number of matrices is replaced by an exponential decay of correlations and independence of matrix entries. The general technique of proving the limiting Schwinger--Dyson equations in the above papers relies on the finiteness of the number of matrices. It does not seem to generalize in any obvious way to the lattice gauge setting. The scheme of proving Schwinger--Dyson equations using Stein's method of exchangeable pairs  is a new technical contribution of this paper. Unlike previous techniques, this method does not actually require taking the matrix order $N$ to infinity, which raises the possibility that there may be something substantially different about this approach. It is this new kind of Schwinger--Dyson equation that leads to the explicit  representation in terms of string trajectories. % instead of the usual enumeration of planar diagrams. 

\section{Stein's exchangeable pair for $SO(N)$}\label{stein}
A pair of random variables $(U, U')$ is called an `exchangeable pair' if $(U,U')$ has the same probability law as $(U', U)$. Exchangeable pairs were introduced and effectively used by Charles Stein~\cite{stein72, stein86} to prove central limit theorems for sums of dependent random variables. Stein's paper gave birth to a flourishing subfield of probability theory, now called `Stein's method'.

In an unpublished work \cite{stein95}, Stein gave the following method for constructing an exchangeable pair of Haar-distributed random $SO(N)$ matrices. This was used in the paper \cite{chatterjeemeckes} and the thesis \cite{meckes} to prove a number of central limit theorems for $SO(N)$ and other matrix groups. 

Let $Q$ be a Haar-distributed random element of $SO(N)$. Take some $\ep>0$. Let $\eta$ be a random variable that is $1$ or $-1$ with equal probability. Pick a pair $(I,J)$ uniformly at random from the set $\{(i,j): 1\le i\ne j\le N\}$. Let $R_\ep = (r_{ij})_{1\le i,j\le N}$ be a random element of $SO(N)$, defined as follows: Let 
\[
r_{II} = r_{JJ} = \sqrt{1-\ep^2}\, , \ \ r_{IJ} = \eta \ep\, , \ \ r_{JI} = -\eta \ep\, ,
\]
and for all $k\ne I,J$ and $1\le k'\le N$, 
\[
r_{Ik} = r_{Jk} = 0\, , \ \text{ and } \ 
r_{kk'} = 
\begin{cases}
1 &\text{ if } k=k'\, ,\\
0 &\text{ if } k\ne k'\, .
\end{cases}
\]
It is easy to see that $R_\ep$ is indeed an $SO(N)$ matrix. 

Let $Q_\ep := R_\ep Q$. Since $Q$ is Haar-distributed, so is $Q_\ep$. Moreover, $(Q,Q_\ep)$ has the same distribution as $(Q_\ep, Q)$, by the following logic: Since $R_\ep^T$ is just $R_\ep$ with $\eta$ replaced by $-\eta$ (where $R_\ep^T$ is the transpose of the matrix $R_\ep$), therefore $R_\ep^T$ has the same law as $R_\ep$. Again, as observed above, $Q_\ep$ has the same law as $Q$. Not only that, the conditional distribution of $Q_\ep$ given $R_\ep$ is also the Haar distribution, irrespective of the value of $R_\ep$. Thus, $Q_\ep$ and $R_\ep$ are independent. Therefore, $(Q_\ep, R_\ep^T)$ has the same law as $(Q, R_\ep)$. Since $(Q,Q_\ep)= (Q, R_\ep Q)$ and $(Q_\ep, Q) = (Q_\ep, R_\ep^TQ_\ep)$, this shows that $(Q,Q_\ep)$ has the same law as $(Q_\ep, Q)$.

The exchangeability of $(Q,Q_\ep)$ will be used through the following lemma. Identities of this type are fairly common in papers on Stein's method; see  for example~\cite{chatterjee05, chatterjee07, stein72, stein86}. 
\begin{lmm}\label{exchlmm}
For any Borel measurable $f,g:SO(N)\ra \rr$,
\[
\ee((f(Q_\ep) - f(Q))g(Q)) = -\frac{1}{2} \ee((f(Q_\ep)-f(Q))(g(Q_\ep)-g(Q)))\, .
\]
\end{lmm}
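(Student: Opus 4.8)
The plan is to derive the identity purely from the exchangeability of the pair $(Q, Q_\ep)$, which has already been established in the preceding discussion. First I would expand the right-hand side. Writing
\[
(f(Q_\ep) - f(Q))(g(Q_\ep) - g(Q)) = f(Q_\ep)g(Q_\ep) - f(Q_\ep)g(Q) - f(Q)g(Q_\ep) + f(Q)g(Q),
\]
I would take expectations term by term. The key observation is that exchangeability of $(Q, Q_\ep)$ means that for any Borel measurable $h:SO(N)\times SO(N)\ra\rr$ for which the expectation exists, $\ee(h(Q, Q_\ep)) = \ee(h(Q_\ep, Q))$. Applying this to $h(x,y) = f(x)g(x)$ gives $\ee(f(Q_\ep)g(Q_\ep)) = \ee(f(Q)g(Q))$, so the first and last terms on the right-hand side combine to $2\ee(f(Q)g(Q))$. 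Applying exchangeability to $h(x,y) = f(y)g(x)$ gives $\ee(f(Q_\ep)g(Q)) = \ee(f(Q)g(Q_\ep))$, so the two cross terms combine to $-2\ee(f(Q_\ep)g(Q))$. Hence the right-hand side equals $-\tfrac12\bigl(2\ee(f(Q)g(Q)) - 2\ee(f(Q_\ep)g(Q))\bigr) = \ee(f(Q_\ep)g(Q)) - \ee(f(Q)g(Q))$, which is exactly $\ee((f(Q_\ep) - f(Q))g(Q))$, the left-hand side.

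So the proof is essentially a two-line algebraic manipulation once the right substitutions into the exchangeability relation are identified. The one point requiring a word of care is integrability: the identity is stated for arbitrary Borel measurable $f, g$, so strictly speaking I would either restrict to bounded (or integrable) $f, g$ so that all four expectations above are finite, or note that $SO(N)$ is compact and in the intended application $f, g$ are continuous hence bounded, so no integrability issue arises. I expect there is no real obstacle here; the only thing to be vigilant about is writing the expansion cleanly and matching each of the four resulting terms to the correct instance of the exchangeability identity. I would present it as: expand the product, regroup into the two pairs of terms described, invoke exchangeability twice (once with $h(x,y)=f(x)g(x)$ and once with $h(x,y)=f(y)g(x)$), and simplify.
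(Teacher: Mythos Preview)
Your proposal is correct and follows essentially the same approach as the paper's proof: expand the product on the right-hand side, then apply exchangeability twice (once to identify $\ee(f(Q_\ep)g(Q_\ep))$ with $\ee(f(Q)g(Q))$, once to identify the cross terms), and simplify. Your extra remark about integrability is a reasonable caveat that the paper does not make explicit.
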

\begin{proof}
Expanding the brackets on the right, we get
\begin{align*}
-\frac{1}{2}\ee(f(Q_\ep) g(Q_\ep) - f(Q) g(Q_\ep) - f(Q_\ep)g(Q)+f(Q)g(Q))\, .
\end{align*}
By exchangeability, 
\[
\ee (f(Q_\ep) g(Q_\ep)) = \ee (f(Q) g(Q))
\]
and 
\[
\ee(f(Q_\ep) g(Q)) = \ee(f(Q) g(Q_\ep))\, .
\]
The right-hand side therefore reduces to
\[
\ee(f(Q_\ep) g(Q) - f(Q)g(Q))\, ,
\]
which is exactly the same as the left-hand side. 
\end{proof}

\section{Generalized Schwinger--Dyson equation for $SO(N)$}\label{sd}
The main result of this section is the following theorem, which gives a kind of integration by parts formula for the $SO(N)$ group. Equations of this type are generally known as `Schwinger--Dyson equations' in the physics literature, and `Stein characterizing equations' in the probability and statistics literature. The difference between the usual forms of such equations and the version presented below is that our version involves two functions $f$ and $g$, whereas ordinarily Schwinger--Dyson equations involve one function only. Usually in Stein's method such equations characterize the underlying distribution (the Haar measure in this case), which should be the case here too. We will not pursue this point further. 
\begin{thm}\label{sdthm}
Let $\rr^{N\times N}$ be the space of $N\times N$ real matrices with the Euclidean topology. Let $f$ and $g$ be $C^2$ functions defined on an open subset of $\rr^{N\times N}$ that contains $SO(N)$. Let $Q= (q_{ij})_{1\le i,j\le N}$ be a Haar-distributed random element of $SO(N)$, and let $f$ and $g$ be shorthand notations for the random variables $f(Q)$ and $g(Q)$. 
Then
\begin{align*}
&\ee\biggl(\sum_{i,k} q_{ik} \fpar{f}{q_{ik}} g\biggr)\\
 &= \frac{1}{N-1}\ee\biggl(\sum_{i,k} \spar{f}{q_{ik}} g- \sum_{i,j,k,k'} q_{jk}q_{ik'} \mpar{f}{q_{ik}}{q_{jk'}} g + \sum_{i,k} \fpar{f}{q_{ik}}\fpar{g}{q_{ik}} - \sum_{i,j,k,k'} q_{jk}q_{ik'} \fpar{f}{q_{ik}} \fpar{g}{q_{jk'}}\biggr)\, ,
\end{align*}
where all indices run from $1$ to $N$. 
%where $\ee$ denotes integration with respect to the Haar measure.
\end{thm}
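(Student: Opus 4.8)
\emph{Proof proposal.} The plan is to run the exchangeability identity of Lemma~\ref{exchlmm} for the pair $(Q,Q_\ep)$ of Section~\ref{stein} through a second-order Taylor expansion in $\ep$, divide by $\ep^2$, and let $\ep\to0$. First I would record the small-$\ep$ behaviour of the rotation $R_\ep$: since $\sqrt{1-\ep^2}=1-\ep^2/2+O(\ep^4)$, we have $R_\ep=\mathrm{Id}+\ep\,\eta A-\tfrac{\ep^2}{2}D+O(\ep^3)$, where $A$ is the skew-symmetric matrix with $(I,J)$-entry $1$ and $(J,I)$-entry $-1$ and zeros elsewhere, and $D$ is the diagonal matrix with ones in positions $(I,I),(J,J)$ and zeros elsewhere. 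Consequently $Q_\ep-Q=(R_\ep-\mathrm{Id})Q=\ep V+\ep^2 W+O(\ep^3)$ with the random matrices $V:=\eta AQ$ and $W:=-\tfrac12 DQ$; note that $V$ is odd in $\eta$ while $W$ and the tensor $V\otimes V$ (as $\eta^2=1$) carry no $\eta$-dependence.

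Next, for a $C^2$ function $h$ on a neighborhood of $SO(N)$, Taylor's theorem gives
\[
h(Q_\ep)-h(Q)=\ep\,Dh(Q)[V]+\ep^2\Bigl(Dh(Q)[W]+\tfrac12 D^2h(Q)[V,V]\Bigr)+o(\ep^2),
\]
where $Dh(Q)[V]=\sum_{i,k}\fpar{h}{q_{ik}}V_{ik}$ and $D^2h(Q)[V,V]=\sum_{i,j,k,k'}\mpar{h}{q_{ik}}{q_{jk'}}V_{ik}V_{jk'}$. Because $SO(N)$ is compact and $D^2h$ is bounded and uniformly continuous on a neighborhood, for small $\ep$ the whole segment from $Q$ to $Q_\ep$ stays inside the domain of $h$ and the $o(\ep^2)$ remainder is uniform over all the randomness. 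Plugging the expansions of $f$ and $g$ into Lemma~\ref{exchlmm}, dividing by $\ep^2$, and sending $\ep\to0$: on the left the $\ep^{-1}$ coefficient is $\ee(\eta)\,\ee\bigl(Df(Q)[AQ]\,g(Q)\bigr)=0$ because $\eta$ is independent of everything and mean zero, while on the right the product $(f(Q_\ep)-f(Q))(g(Q_\ep)-g(Q))$ starts only at order $\ep^2$. Bounded convergence (thanks to the uniform remainder) then yields the $\eta$-free identity
\[
\ee\Bigl(\bigl(Df(Q)[W]+\tfrac12 D^2f(Q)[V,V]\bigr)g(Q)\Bigr)=-\tfrac12\,\ee\bigl(Df(Q)[V]\,Dg(Q)[V]\bigr),
\]
in which $V$ may be replaced by $AQ$, and the remaining expectation is over $Q$ and the uniform pair $(I,J)$, which are independent.

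The last step is the average over $(I,J)$, which uses nothing beyond the orthogonality relations $\sum_b q_{bk}q_{bk'}=(Q^TQ)_{kk'}=\delta_{kk'}$. Writing $V_{ik}=\eta(\delta_{iI}q_{Jk}-\delta_{iJ}q_{Ik})$ and $W_{ik}=-\tfrac12(\delta_{iI}q_{Ik}+\delta_{iJ}q_{Jk})$, each of $Df(Q)[W]$, $D^2f(Q)[V,V]$ and $Df(Q)[V]\,Dg(Q)[V]$ becomes a short sum over $I$ and $J$; taking $\ee_{I,J}$ and using $\sum_{I\ne J}=\sum_{I,J}-\sum_{I=J}$ I would obtain
\[
\ee_{I,J}\bigl(Df(Q)[W]\bigr)=-\tfrac1N\sum_{i,k}q_{ik}\fpar{f}{q_{ik}},
\]
\[
\ee_{I,J}\bigl(D^2f(Q)[V,V]\bigr)=\tfrac{2}{N(N-1)}\Bigl(\sum_{i,k}\spar{f}{q_{ik}}-\sum_{i,j,k,k'}q_{jk}q_{ik'}\mpar{f}{q_{ik}}{q_{jk'}}\Bigr),
\]
\[
\ee_{I,J}\bigl(Df(Q)[V]\,Dg(Q)[V]\bigr)=\tfrac{2}{N(N-1)}\Bigl(\sum_{i,k}\fpar{f}{q_{ik}}\fpar{g}{q_{ik}}-\sum_{i,j,k,k'}q_{jk}q_{ik'}\fpar{f}{q_{ik}}\fpar{g}{q_{jk'}}\Bigr).
\]
In the last two evaluations the ``diagonal'' contributions produced by excluding $I=J$ --- namely $\sum_{i,k,k'}q_{ik}q_{ik'}\mpar{f}{q_{ik}}{q_{ik'}}$ and $\sum_i\bigl(\sum_k q_{ik}\fpar{f}{q_{ik}}\bigr)\bigl(\sum_{k'}q_{ik'}\fpar{g}{q_{ik'}}\bigr)$ --- appear with opposite signs among the four constituent terms and cancel, which is precisely what makes the right-hand side of the theorem so clean. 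Substituting the three identities above into the displayed $\eta$-free equation, taking the outer expectation over $Q$, and multiplying through by $N$ reproduces the claimed formula verbatim. The main obstacle is not any of these computations but the justification of the interchange of the limit $\ep\to0$ with the expectation; this is handled by the uniform Taylor remainder on the compact group $SO(N)$, and once that is in place the rest is bookkeeping whose only substantive feature is the cancellation of the diagonal terms.
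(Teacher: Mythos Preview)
Your proposal is correct and follows essentially the same approach as the paper: apply Lemma~\ref{exchlmm} to the pair $(Q,Q_\ep)$, Taylor-expand to second order in $\ep$, and average over $(I,J)$ using the orthogonality of $Q$. The only organizational difference is that the paper sometimes conditions on $J$ first (via the identity $\ee(q_{Ik}q_{Ik'}\mid Q,J)=\frac{1}{N-1}(\delta_{kk'}-q_{Jk}q_{Jk'})$) before unwinding the full $(I,J)$ average, whereas you go straight to $\sum_{I\ne J}=\sum_{I,J}-\sum_{I=J}$; both routes produce the same cancellation of the diagonal terms you highlight, and your packaging with $V=\eta AQ$, $W=-\tfrac12 DQ$ is a tidy way to record the same computation.
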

(Here I would like to emphasize that in the above statement, $SO(N)$ is viewed as a sub-manifold of the manifold of $N\times N$ matrices, and the derivatives are taken in this ambient manifold, and not in $SO(N)$. For example, if $f(Q)$ is the sum of squares of the elements of $Q$, then $\partial f/\partial q_{11} = 2q_{11}$.)
\begin{proof}
By the compactness of $SO(N)$, $f$, $g$ and their first- and second-order derivatives are uniformly bounded on $SO(N)$. This fact will be used several times without explicit mention in this proof.

The plan is to use Lemma \ref{exchlmm}. Let $Q_\ep$ and $\eta$ be as in Section \ref{stein}. 
Let $q_{ij}^\ep$ denote the $(i,j)^{\mathrm{th}}$ entry of $Q_\ep$. Then for  any $1\le k\le N$,
\begin{align*}
q^\ep_{Ik} &= \sqrt{1-\ep^2}\, q_{Ik} + \eta \ep q_{Jk}\, ,\\
q^\ep_{Jk} &= -\eta \ep q_{Ik} + \sqrt{1-\ep^2}\, q_{Jk}\, , \ \text{ and }\\
q^\ep_{k'k} &= q_{k'k} \ \text{ for all } k' \ne I,J\, .
\end{align*}
For each $i$ and $j$, let $\delta_{ij} := q^\ep_{ij} - q_{ij}$. Let $O(\ep^r)$ denote any quantity, random or nonrandom, whose absolute value is bounded by $C\ep^r$, where $C$ is a deterministic constant that does not depend on $\ep$ (but may depend on $N$, $f$ and $g$). Then note that for all $k$,
\begin{align*}
\delta_{Ik} &= -\frac{\ep^2}{2} q_{Ik} + \eta \ep q_{Jk} + O(\ep^3)\, ,\\
\delta_{Jk} &= -\eta \ep q_{Ik} -\frac{\ep^2}{2} q_{Jk} + O(\ep^3)\, , \ \text{ and }\\
\delta_{k'k} &= 0 \ \text{ for all } k' \ne I,J\, .
\end{align*}
Thus, by the boundedness of $f$ and derivatives on $SO(N)$, 
\begin{align}
f(Q_\ep) - f(Q) &= \sum_k \delta_{Ik} \fpar{f}{q_{Ik}} + \sum_k \delta_{Jk}\fpar{f}{q_{Jk}} +\frac{1}{2} \sum_{k,k'} \delta_{Ik}\delta_{Ik'} \mpar{f}{q_{Ik}}{q_{Ik'}}\nonumber \\
&\quad + \frac{1}{2} \sum_{k,k'} \delta_{Jk}\delta_{Jk'} \mpar{f}{q_{Jk}}{q_{Jk'}} +  \sum_{k,k'} \delta_{Ik}\delta_{Jk'} \mpar{f}{q_{Ik}}{q_{Jk'}} + O(\ep^3)\nonumber\\
&= \sum_k \biggl(\eta \ep q_{Jk} - \frac{\ep^2}{2} q_{Ik}\biggr) \fpar{f}{q_{Ik}} - \sum_k \biggl(\eta \ep q_{Ik} +\frac{\ep^2}{2} q_{Jk} \biggr) \fpar{f}{q_{Jk}} \nonumber\\ 
&\quad + \frac{\ep^2}{2}\sum_{k,k'} q_{Jk}q_{Jk'} \mpar{f}{q_{Ik}}{q_{Ik'}} + \frac{\ep^2}{2}\sum_{k,k'} q_{Ik}q_{Ik'} \mpar{f}{q_{Jk}}{q_{Jk'}}\nonumber\\
&\quad - \ep^2\sum_{k,k'} q_{Jk}q_{Ik'} \mpar{f}{q_{Ik}}{q_{Jk'}} + O(\ep^3)\, . \label{f1}
\end{align}
Now note that 
\[
\ee(\eta \mid Q, I, J) = 0
\]
and by the orthogonality of $Q$, 
\begin{equation}\label{eta1}
\ee(q_{Ik}q_{Ik'} \mid Q, J) = \frac{1}{N-1} \sum_{i\ne J} q_{ik}q_{ik'} =
\begin{cases}
-q_{Jk}q_{Jk'}/(N-1) &\text{ if } k\ne k'\, ,\\
\\
(1-q_{Jk}^2)/(N-1)&\text{ if } k= k'\, ,
\end{cases}
\end{equation}
and a similar expression holds with $I$ and $J$ interchanged. Using these identities in \eqref{f1}, we get
\begin{align*}
\ee((f(Q_\ep)-f(Q)) g(Q)) &= -\frac{\ep^2}{2} \sum_k \ee\biggl(q_{Ik}\fpar{f}{q_{Ik}} g\biggr)-\frac{\ep^2}{2} \sum_k \ee\biggl(q_{Jk}\fpar{f}{q_{Jk}} g\biggr)\\
&\quad + \frac{\ep^2}{2(N-1)} \sum_k \ee\biggl(\spar{f}{q_{Ik}}g\biggr) - \frac{\ep^2}{2(N-1)} \sum_{k,k'} \ee\biggl(q_{Ik}q_{Ik'} \mpar{f}{q_{Ik}}{q_{Ik'}}g\biggr)\\
&\quad + \frac{\ep^2}{2(N-1)} \sum_k \ee\biggl(\spar{f}{q_{Jk}}g\biggr) - \frac{\ep^2}{2(N-1)} \sum_{k,k'} \ee\biggl(q_{Jk}q_{Jk'} \mpar{f}{q_{Jk}}{q_{Jk'}}g\biggr)\\
&\quad - \ep^2\sum_{k,k'} \ee\biggl(q_{Jk}q_{Ik'} \mpar{f}{q_{Ik}}{q_{Jk'}}g\biggr) + O(\ep^3)\, .
\end{align*}
Since $(I,J)$ is uniformly distributed over all $(i,j)$ such that $i\ne j$, 
\begin{align*}
&\ee\biggl(q_{Jk}q_{Ik'} \mpar{f}{q_{Ik}}{q_{Jk'}}g\biggr) \\
&= \frac{1}{N(N-1)} \sum_{1\le i\ne j\le N} \ee\biggl(q_{jk}q_{ik'} \mpar{f}{q_{ik}}{q_{jk'}}g\biggr)\\
&= \frac{1}{N(N-1)} \sum_{i,j} \ee\biggl(q_{jk}q_{ik'} \mpar{f}{q_{ik}}{q_{jk'}}g\biggr) - \frac{1}{N(N-1)} \sum_i \ee\biggl(q_{ik}q_{ik'} \mpar{f}{q_{ik}}{q_{ik'}}g\biggr)\, .
\end{align*}
Tackling the other terms in a similar manner, we get
\begin{align*}
\ee((f(Q_\ep)-f(Q)) g(Q)) &= -\frac{\ep^2}{N}\sum_{i,k} \ee\biggl(q_{ik}\fpar{f}{q_{ik}} g\biggr)  + \frac{\ep^2}{N(N-1)} \sum_{i,k} \ee\biggl(\spar{f}{q_{ik}} g\biggr)\\
&\quad - \frac{\ep^2}{N(N-1)} \sum_{i,j,k,k'} \ee\biggl(q_{jk}q_{ik'}\mpar{f}{q_{ik}}{ q_{jk'}} g\biggr) + O(\ep^3)\, . 
\end{align*}
Next, note that by \eqref{f1}, the fact that $\eta^2 = 1$, the symmetry between $I$ and $J$, and equation \eqref{eta1}, 
\begin{align*}
&\frac{1}{2}\ee((f(Q_\ep)-f(Q))(g(Q_\ep)-g(Q))) \\
 &= \frac{\ep^2}{2}\ee\biggl(\biggl(\sum_k q_{Jk}\fpar{f}{q_{Ik}} - \sum_k q_{Ik}\fpar{f}{q_{Jk}}\biggr)\biggl(\sum_k q_{Jk}\fpar{g}{q_{Ik}} - \sum_k q_{Ik}\fpar{g}{q_{Jk}}\biggr)\biggr) + O(\ep^3)\\
 &= \ep^2 \ee\biggl(\sum_{k,k'} q_{Jk}q_{Jk'} \fpar{f}{q_{Ik}} \fpar{g}{q_{Ik'}}- \sum_{k,k'}q_{Jk}q_{Ik'} \fpar{f}{q_{Ik}} \fpar{g}{q_{Jk'}}\biggr) + O(\ep^3)\\
 &= \ep^2 \ee\biggl(\frac{1}{N-1}\sum_k \fpar{f}{q_{Ik}} \fpar{g}{q_{Ik}} - \frac{1}{N-1}\sum_{k,k'} q_{Ik}q_{Ik'} \fpar{f}{q_{Ik}} \fpar{g}{q_{Ik'}}- \sum_{k,k'}q_{Jk}q_{Ik'} \fpar{f}{q_{Ik}} \fpar{g}{q_{Jk'}}\biggr) + O(\ep^3)\\
 &= \ep^2 \ee\biggl(\frac{1}{N(N-1)}\sum_{i,k} \fpar{f}{q_{ik}} \fpar{g}{q_{ik}} - \frac{1}{N(N-1)}\sum_{i,j,k,k'}q_{jk}q_{ik'} \fpar{f}{q_{ik}} \fpar{g}{q_{jk'}}\biggr) + O(\ep^3)\, .
\end{align*}
The proof is now completed by applying Lemma \ref{exchlmm} and taking $\ep$ to zero. 
\end{proof}

\section{The master loop equation for finite $N$}\label{mm1sec}
The goal of this section is to prove Theorem \ref{mastersymm}. We will first prove an `unsymmetrized' version of the theorem. The proof is an application of Theorem \ref{sdthm}, with some heavy computations along the way. 
\begin{thm}\label{mastern}
Take any $N\ge 2$, $\beta\in \rr$ and a finite set $\Lambda\subseteq \zz^d$, and consider $SO(N)$ lattice gauge theory on $\Lambda$ at inverse coupling strength $\beta$. Take a loop sequence $s$ with minimal representation $(l_1,\ldots, l_n)$ such that all vertices of $\zz^d$ that are at distance $\le 1$ from any of the loops in $s$ are contained in $\Lambda$. Let $e$ be the first edge of $l_1$. For each $1\le r\le n$, let $A_r$ be the set of locations in $l_r$ where $e$ occurs, and let $B_r$ be the set of locations in $l_r$ where $e^{-1}$ occurs. Let $C_r = A_r \cup B_r$, and let $m$ be the size of $C_1$. Then 
\begin{align*}
(N-1)m \smallavg{W_{l_1}W_{l_2}\cdots W_{l_n}} &= \textup{twisting term} + \textup{splitting term}\\
&\quad  + \textup{merger term} + \textup{deformation term}\, , 
\end{align*}
where the twisting term is given by
\begin{align*}
&\sum_{\substack{x, y\in A_1\\x\ne y}} \smallavg{W_{\propto_{x,y} l_1}W_{l_2}\cdots W_{l_n}} + \sum_{\substack{x, y\in B_1\\x\ne y}} \smallavg{W_{\propto_{x,y} l_1}W_{l_2}\cdots W_{l_n}} \\
&\qquad - \sum_{x\in A_1, \, y\in B_1} \smallavg{W_{\propto_{x,y} l_1}W_{l_2}\cdots W_{l_n}}- \sum_{x\in B_1, \, y\in A_1} \smallavg{W_{\propto_{x,y} l_1}W_{l_2}\cdots W_{l_n}}\, , 
\end{align*}
the splitting term is given by
\begin{align*}
&\sum_{x\in A_1, \, y\in B_1} \smallavg{W_{\times_{x,y}^1 l_1} W_{\times_{x,y}^2 l_1}W_{l_2}\cdots W_{l_n}} + \sum_{x\in B_1, \, y\in A_1} \smallavg{W_{\times_{x,y}^1 l_1} W_{\times_{x,y}^2 l_1}W_{l_2}\cdots W_{l_n}}\\
&\quad   - \sum_{\substack{x,y\in A_1\\ x\ne y}} \smallavg{W_{\times^1_{x,y} l_1} W_{\times^2_{x,y} l_1}W_{l_2}\cdots W_{l_n}} - \sum_{\substack{x,y\in B_1\\ x\ne y}} \smallavg{W_{\times^1_{x,y} l_1} W_{\times^2_{x,y} l_1}W_{l_2}\cdots W_{l_n}}\, ,
\end{align*}
the merger term is given by
\begin{align*}
&\sum_{r=2}^n\sum_{x\in C_1, \, y\in C_r} \bigavg{W_{l_1 \ominus_{x,y} l_r}\prod_{\substack{2\le t\le n\\t\ne r}} W_{l_t}} - \sum_{r=2}^n\sum_{x\in C_1, \, y\in C_r}\bigavg{ W_{l_1 \oplus_{x,y} l_r}\prod_{\substack{2\le t\le n\\t\ne r}} W_{l_t}}\, ,
\end{align*}
and the deformation term equals
\begin{align*}
&N\beta \sum_{p\in \cp^+(e)}\sum_{x\in C_1}\smallavg{ W_{l_1 \ominus_{x}p}W_{l_2}\cdots W_{l_n}}  - N\beta \sum_{p\in \cp^+(e)}\sum_{x\in C_1}\smallavg{ W_{l_1 \oplus_{x}p}W_{l_2}\cdots W_{l_n}} \, .
\end{align*}
In all of the above, empty sums denote zero.
\end{thm}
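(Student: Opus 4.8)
The plan is to apply the generalized Schwinger--Dyson equation of Theorem~\ref{sdthm} to the single Haar matrix $Q_e$, where $e$ is the first edge of $l_1$, for the pair of functions
\[
f := W_{l_1}, \qquad g := Z_{\Lambda,N,\beta}^{-1}\, W_{l_2}\cdots W_{l_n}\exp\Bigl(N\beta\sum_{p\in\cp^+_\Lambda}W_p\Bigr).
\]
Since each occurrence of $e^{-1}$ in a Wilson word contributes the matrix $Q_e^{-1}=Q_e^T$, I would first extend $f$ and $g$ off $SO(N)$ by replacing every such $Q_e^{-1}$ by $Q_e^T$; this gives $C^\infty$ extensions and makes $f$ a polynomial in the entries $q^e_{ik}$ of $Q_e$ that is homogeneous of degree $m=|C_1|$, so Euler's identity gives $\sum_{i,k}q^e_{ik}\,\fpar{f}{q^e_{ik}}=m\,W_{l_1}$ identically. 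Conditioning on all edge matrices other than $Q_e$, applying Theorem~\ref{sdthm} to $Q_e$, and then integrating over the remaining matrices, the left-hand side of that equation becomes $m\,\ee(W_{l_1}g)=m\,\smallavg{W_{l_1}W_{l_2}\cdots W_{l_n}}$, so after multiplying through by $N-1$ one obtains
\[
(N-1)m\,\smallavg{W_{l_1}\cdots W_{l_n}} = T_1+T_2+T_3+T_4,
\]
where $T_1=\ee\bigl(\sum_{i,k}\spar{f}{q^e_{ik}}g\bigr)$, $T_2=-\ee\bigl(\sum q^e_{jk}q^e_{ik'}\mpar{f}{q^e_{ik}}{q^e_{jk'}}g\bigr)$, $T_3=\ee\bigl(\sum \fpar{f}{q^e_{ik}}\fpar{g}{q^e_{ik}}\bigr)$ and $T_4=-\ee\bigl(\sum q^e_{jk}q^e_{ik'}\fpar{f}{q^e_{ik}}\fpar{g}{q^e_{jk'}}\bigr)$. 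It then remains to match $T_1+T_2$ with the twisting and splitting terms and $T_3+T_4$ with the merger and deformation terms.

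The computational engine is the elementary fact that if a loop $l$ contains $e$ at a location $x$, then, writing $l=aeb$, we have $\fpar{W_l}{q^e_{ik}}=(Q_bQ_a)_{ki}$, while if $l$ contains $e^{-1}$ at $x$, then, writing $l=ce^{-1}d$, we have $\fpar{W_l}{q^e_{ik}}=(Q_dQ_c)_{ik}$; differentiating once more produces, for each ordered pair of distinct $e/e^{-1}$-occurrences, a product of the two complementary matrix words with indices appropriately tied together. Applying this to $f=W_{l_1}$ yields a sum over $x\in C_1$, and applying the product rule to $g$ yields a sum over $e/e^{-1}$-occurrences in $l_2,\ldots,l_n$ (through the factors $W_{l_r}$) together with a sum over occurrences in plaquettes $p\in\cp^+(e)$ carrying the extra factor $N\beta$ (through $\exp(N\beta\sum_p W_p)$; each such $p$ contains $e$ or $e^{-1}$ at a unique location). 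The hypothesis that every vertex within distance $1$ of the loops lies in $\Lambda$ ensures that all these plaquettes lie in $\cp^+_\Lambda$ and that the deformed loops stay in $\Lambda$, so every Wilson loop that appears is well-defined.

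The heart of the argument is the index-contraction bookkeeping. In $T_1$ and $T_2$ both $e/e^{-1}$-occurrences lie in $l_1$, so they form either a same-type pair (both in $A_1$ or both in $B_1$) or an opposite-type pair; carrying out the contraction shows that a same-type pair contributes $+W_{\propto_{x,y}l_1}$ from $T_1$ (a negative twisting) and $-W_{\times^1_{x,y}l_1}W_{\times^2_{x,y}l_1}$ from $T_2$ (a positive splitting), while an opposite-type pair contributes $+W_{\times^1_{x,y}l_1}W_{\times^2_{x,y}l_1}$ from $T_1$ (a negative splitting) and $-W_{\propto_{x,y}l_1}$ from $T_2$ (a positive twisting). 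Summing over ordered pairs and restoring the surviving factors $W_{l_2}\cdots W_{l_n}$ (and cancelling $Z_{\Lambda,N,\beta}$) reproduces precisely the twisting and splitting terms of the statement. In $T_3$ and $T_4$, one occurrence comes from $l_1$ (via $\partial f$) and the other from $g$ (via $\partial g$): if the second occurrence lies in some $l_r$ with $r\ge 2$, the contraction of the two complementary matrix words yields $W_{l_1\ominus_{x,y}l_r}$ from $T_3$ and $W_{l_1\oplus_{x,y}l_r}$ from $T_4$, which assemble into the merger term; if it lies in a plaquette $p\in\cp^+(e)$, the factor $N\beta$ appears and the same contraction yields $W_{l_1\ominus_x p}$ from $T_3$ and $W_{l_1\oplus_x p}$ from $T_4$, giving the deformation term.

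The main obstacle is exactly this last identification: one must carry out the first- and mixed-second-derivative computations for Wilson words in the entries of $Q_e$ while keeping careful track of whether each occurrence is $e$ (a factor $Q_e$) or $e^{-1}$ (a factor $Q_e^T$), then perform the contraction against the factors $q^e_{jk}q^e_{ik'}$, and in each of the resulting cases recognize the emerging trace as the Wilson loop of the correct positive or negative twisting, splitting, merger or deformation, with the correct sign. This is long but mechanical; the only genuinely non-routine points are checking that the polynomial extension of $f$ and $g$ is legitimate (so that Euler's identity and Theorem~\ref{sdthm} both apply to it) and that the orientation conventions built into the definitions of the loop operations in Section~\ref{not1} match the $Q_e$-versus-$Q_e^T$ dichotomy produced by differentiation.
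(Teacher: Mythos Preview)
Your proposal is correct and follows essentially the same approach as the paper: apply Theorem~\ref{sdthm} to the edge matrix $Q_e$ with the same choice of $f$ and $g$, then identify each of the four terms on the right with the twisting, splitting, merger, and deformation terms via the case-by-case index contractions you describe. The paper organizes the computations into a sequence of preparatory lemmas (one for the Euler-type identity, one for each of the four sums $T_1,\ldots,T_4$), but the content of those lemmas is exactly the same-type/opposite-type bookkeeping you outline.
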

The proof of Theorem \ref{mastern} is divided into a number of steps. Throughout, $e$ denotes the first edge of $l_1$, and $q^e_{ij}$ denotes the $(i,j)^{\mathrm{th}}$ entry of $Q_e$. The derivative of a matrix with respect to a scalar variable is understood to be entry-wise differentiation. Note that if $P$ and $Q$ are two matrices, both of which are functions of a scalar variable $x$, then 
\[
\fpar{}{x} (PQ) = \fpar{P}{x} Q + P \fpar{Q}{x}\,.
\]
This fact will be used repeatedly and without mention. Another simple fact that will be used without mention is that for any cycle $l$, $W_l = W_{[l]}$, where $[l]$ is the nonbacktracking core of $l$. This is easy to see because $W_l = W_{l'}$ whenever $l'$ is obtained from $l$ by a backtrack erasure. 
\begin{lmm}\label{s1}
With the above notation,
\[
\sum_{i,k} q_{ik}^e \fpar{Q_e}{q_{ik}^e} = Q_e\, , \ \text{ and } \ \sum_{i,k} q_{ik}^e \fpar{Q_e^T}{q_{ik}^e} = Q_e^T\, .
\]
\end{lmm}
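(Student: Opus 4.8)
The plan is simply to differentiate $Q_e$ entry by entry and sum. The key point to keep in mind is that in Theorem~\ref{sdthm} the functions are defined on an open subset of $\rr^{N\times N}$, so every partial derivative $\fpar{}{q_{ik}^e}$ is taken with respect to the ambient free coordinates $q_{ab}^e$; in particular $\fpar{q_{ab}^e}{q_{ik}^e} = \delta_{ai}\delta_{bk}$, with no constraint coming from $Q_e \in SO(N)$.

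First I would record that, by the definition of entrywise differentiation, $\fpar{Q_e}{q_{ik}^e}$ is the elementary matrix $E_{ik}$ having a $1$ in position $(i,k)$ and zeros elsewhere, since its $(a,b)$ entry is $\fpar{q_{ab}^e}{q_{ik}^e} = \delta_{ai}\delta_{bk}$. Then $\sum_{i,k} q_{ik}^e \fpar{Q_e}{q_{ik}^e} = \sum_{i,k} q_{ik}^e E_{ik}$, whose $(a,b)$ entry is $\sum_{i,k} q_{ik}^e \delta_{ai}\delta_{bk} = q_{ab}^e$; hence the sum equals $Q_e$.

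For the second identity I would note that the $(a,b)$ entry of $\fpar{Q_e^T}{q_{ik}^e}$ is $\fpar{q_{ba}^e}{q_{ik}^e} = \delta_{bi}\delta_{ak}$, so $\fpar{Q_e^T}{q_{ik}^e} = E_{ki}$; summing, the $(a,b)$ entry of $\sum_{i,k} q_{ik}^e \fpar{Q_e^T}{q_{ik}^e}$ is $\sum_{i,k} q_{ik}^e \delta_{ak}\delta_{bi} = q_{ba}^e = (Q_e^T)_{ab}$, which is exactly $Q_e^T$. There is no genuine obstacle in this lemma; the only thing worth stressing, and the one place a careless reading could go wrong, is the interpretation of the partial derivatives as derivatives in the ambient Euclidean coordinates rather than as intrinsic derivatives along the group $SO(N)$.
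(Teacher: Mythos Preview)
Your proof is correct and essentially identical to the paper's: the paper writes the elementary matrix $E_{ik}$ as $u_i u_k^T$ (with $u_i$ the $i^{\mathrm{th}}$ standard basis vector) and then sums $\sum_{i,k} q_{ik}^e u_i u_k^T = Q_e$ and $\sum_{i,k} q_{ik}^e u_k u_i^T = Q_e^T$. Your explicit remark that the partials are taken in the ambient Euclidean coordinates rather than intrinsically on $SO(N)$ is a useful clarification that the paper leaves implicit.
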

\begin{proof}
Note that 
\[
\fpar{Q_e}{q^e_{ik}} = u_i u_k^T\, ,
\]
where $u_i\in \rr^N$ is the vector whose $i^{\mathrm{th}}$ coordinate is $1$ and the rest are zero. Thus, 
\[
\sum_{i,k} q^e_{ik} \fpar{Q_e}{q_{ik}^e} = \sum_{i,k} q_{ik}^e u_i u_k^T = Q_e\, ,
\]
and similarly
\[
\sum_{i,k} q^e_{ik} \fpar{Q_e^T}{q_{ik}^e} = \sum_{i,k} q_{ik}^e u_k u_i^T = Q_e^T\, .
\]
This completes the proof of the lemma. 
\end{proof}
\begin{lmm}\label{s2}
%In the notation of Theorem \ref{mastern},
\[
\sum_{i,k} q_{ik}^e \fpar{W_{l_1}}{q_{ik}^e} = m W_{l_1}\, .
\]
\end{lmm}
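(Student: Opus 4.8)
The plan is to apply the product rule for differentiating a matrix product, and then to contract against $q^e_{ik}$ using Lemma~\ref{s1}. Write $l_1 = \epsilon_1\epsilon_2\cdots\epsilon_L$ with $L = |l_1|$, so that $W_{l_1} = \tr(Q_{\epsilon_1}Q_{\epsilon_2}\cdots Q_{\epsilon_L})$, where we use the convention $Q_{\epsilon_j} := Q_e^T$ whenever $\epsilon_j = e^{-1}$ (this agrees with $Q_e^{-1}$ on $SO(N)$, and it is the polynomial extension to $\rr^{N\times N}$ that we differentiate). Among the factors $Q_{\epsilon_1},\ldots,Q_{\epsilon_L}$, only those with $\epsilon_j\in\{e,e^{-1}\}$ depend on the entries $q^e_{ik}$, since distinct edges carry independent matrices; all other factors are constants for this computation.

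First I would expand, for each fixed pair $(i,k)$,
\[
\fpar{W_{l_1}}{q^e_{ik}} = \sum_{j=1}^L \tr\!\left(Q_{\epsilon_1}\cdots Q_{\epsilon_{j-1}}\,\fpar{Q_{\epsilon_j}}{q^e_{ik}}\,Q_{\epsilon_{j+1}}\cdots Q_{\epsilon_L}\right),
\]
where the $j$-th term vanishes unless $\epsilon_j\in\{e,e^{-1}\}$. Then I would multiply by $q^e_{ik}$, sum over all $i,k$, and pull the finite sum through the trace by linearity:
\[
\sum_{i,k} q^e_{ik}\,\fpar{W_{l_1}}{q^e_{ik}} = \sum_{j=1}^L \tr\!\left(Q_{\epsilon_1}\cdots Q_{\epsilon_{j-1}}\left(\sum_{i,k} q^e_{ik}\,\fpar{Q_{\epsilon_j}}{q^e_{ik}}\right)Q_{\epsilon_{j+1}}\cdots Q_{\epsilon_L}\right).
\]
By Lemma~\ref{s1}, $\sum_{i,k} q^e_{ik}\,\partial Q_e/\partial q^e_{ik} = Q_e$ and $\sum_{i,k} q^e_{ik}\,\partial Q_e^T/\partial q^e_{ik} = Q_e^T$, so for each $j$ with $\epsilon_j\in\{e,e^{-1}\}$ the bracketed factor equals exactly $Q_{\epsilon_j}$, whence the $j$-th summand reduces to $\tr(Q_{\epsilon_1}\cdots Q_{\epsilon_L}) = W_{l_1}$; for every other $j$ the summand is $0$.

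Finally, the number of indices $j$ with $\epsilon_j\in\{e,e^{-1}\}$ is, by definition, the number of locations of $l_1$ at which $e$ or $e^{-1}$ occurs, namely $m = |C_1|$. Summing the surviving terms gives $\sum_{i,k} q^e_{ik}\,\partial W_{l_1}/\partial q^e_{ik} = m\,W_{l_1}$, as asserted. I do not expect a real obstacle here; the only point needing a little care is the bookkeeping that contracting $\sum_{i,k} q^e_{ik}(\cdot)$ into a single factor via Lemma~\ref{s1} returns precisely that factor $Q_{\epsilon_j}$ in both cases $\epsilon_j = e$ and $\epsilon_j = e^{-1}$, so that each contributing location reproduces $W_{l_1}$ unchanged and cyclicity of the trace is not even needed.
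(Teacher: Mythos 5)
Your argument is correct and follows essentially the same route as the paper's proof: expand $W_{l_1}$ via the product rule, contract $\sum_{i,k} q^e_{ik}(\cdot)$ into each $e$- or $e^{-1}$-factor using Lemma~\ref{s1}, and count the $m$ surviving terms, each equal to $W_{l_1}$. The only cosmetic difference is that the paper groups the edges not equal to $e$ or $e^{-1}$ into blocks $P_1,\ldots,P_{m+1}$ before differentiating, whereas you keep all factors explicit and note that the others have zero derivative.
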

\begin{proof}
Write 
\[
W_{l_1} = \tr(P_1 Q_1P_2Q_2\cdots P_m Q_m P_{m+1})\, ,
\]
where each $Q_1,\ldots, Q_m$ is either $Q_e$ or $Q_e^T$, and $P_1,\ldots, P_{m+1}$ are products of $Q_{e'}$ where $e'$ is neither $e$ nor $e^{-1}$. Then 
\[
\fpar{W_{l_1}}{q_{ik}^e} = \sum_{r=1}^m \tr\biggl(P_1Q_1\cdots P_r \fpar{Q_r}{q_{ik}^e} P_{r+1}\cdots P_mQ_mP_{m+1}\biggr)\, .
\]
Consequently, by Lemma \ref{s1},
\begin{align*}
\sum_{i,k} q_{ik}^e \fpar{W_{l_1}}{q_{ik}^e} 
&= \sum_{i,k} q_{ik}^e\sum_{r=1}^m \tr\biggl(P_1Q_1\cdots P_r \fpar{Q_r}{q_{ik}^e} P_{r+1}\cdots P_mQ_mP_{m+1}\biggr)\\
&= \sum_{r=1}^m \tr\biggl(P_1Q_1\cdots P_r\biggl(\sum_{i,k}q_{ik}^e \fpar{Q_r}{q_{ik}^e} \biggr)P_{r+1}\cdots P_mQ_mP_{m+1}\biggr)\\
&= \sum_{r=1}^m \tr(P_1Q_1\cdots P_r Q_rP_{r+1}\cdots P_mQ_mP_{m+1}) = m W_{l_1}\, .
\end{align*}
This completes the proof of the lemma. 
\end{proof}
\begin{lmm}\label{d1}
\begin{align*}
\sum_{i,j,k,k'} q^e_{jk} q^e_{ik'}\mpar{W_{l_1}}{q^e_{ik}}{q^e_{jk'}} &=  \sum_{\substack{x,y\in A_1\\ x\ne y}}W_{\times^1_{x,y} l_1} W_{\times^2_{x,y} l_1} + \sum_{\substack{x,y\in B_1\\ x\ne y}}W_{\times^1_{x,y} l_1} W_{\times^2_{x,y} l_1}  \\
&\qquad + \sum_{x\in A_1,\, y\in B_1} W_{\propto_{x,y} l_1} + \sum_{x\in B_1,\, y\in A_1} W_{\propto_{x,y} l_1}\, .
\end{align*}
\end{lmm}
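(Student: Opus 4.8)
The plan is to differentiate the representation of $W_{l_1}$ from the proof of Lemma~\ref{s2} and then read off the four terms on the right-hand side according to a case split. Write $W_{l_1} = \tr(P_1 Q_1 P_2 Q_2 \cdots P_m Q_m P_{m+1})$ as in that proof, where $Q_1,\ldots,Q_m$ are the matrices attached to the $m$ locations of $C_1$ --- so $Q_r = Q_e$ when the $r$-th such location lies in $A_1$ and $Q_r = Q_e^T$ when it lies in $B_1$ --- and $P_1,\ldots,P_{m+1}$ are products of edge matrices other than $Q_e$ and $Q_e^T$. Each $Q_r$ is linear in the entries of $Q_e$, so $\spar{Q_r}{q^e_{ik}} = 0$, and differentiating twice (exactly as the first derivative is computed in the proof of Lemma~\ref{s2}) gives
\[
\mpar{W_{l_1}}{q^e_{ik}}{q^e_{jk'}} = \sum_{\substack{x,y\in C_1\\ x\ne y}} T_{x,y}\, ,
\]
where $T_{x,y}$ is the trace obtained from $W_{l_1}$ by replacing $Q_x$ by $\partial Q_x/\partial q^e_{ik}$ and $Q_y$ by $\partial Q_y/\partial q^e_{jk'}$. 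By the computation in the proof of Lemma~\ref{s1}, $\partial Q_e/\partial q^e_{\alpha\gamma} = u_\alpha u_\gamma^T$ and $\partial Q_e^T/\partial q^e_{\alpha\gamma} = u_\gamma u_\alpha^T$ (with $u_i$ the $i$-th standard basis vector of $\rr^N$), so $T_{x,y}$ is the trace of a product of matrices containing exactly two rank-one factors, whose positions in the product are determined by $x$ and $y$.

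The next step is to evaluate $\sum_{i,j,k,k'} q^e_{jk}\, q^e_{ik'}\, T_{x,y}$ for each ordered pair $(x,y)$ separately. In each case one collapses the two rank-one factors and uses the identity $\tr(Q_e M) = \sum_{a,b}(Q_e)_{ab} M_{ba}$ (valid for any matrix $M$ not depending on $Q_e$) to reconstitute traces. There are four cases. If $x,y\in A_1$, write $l_1 = aebec$ with $e$ at $x$ and at $y$ and let $Q_a,Q_b,Q_c$ be the matrices of the three arcs; then $Q_x=Q_y=Q_e$, the relevant rank-one factors are $u_iu_k^T$ and $u_ju_{k'}^T$, the product of traces splits, and $\sum_{i,j,k,k'} q^e_{jk}\, q^e_{ik'}\, T_{x,y} = \tr(Q_bQ_e)\,\tr(Q_aQ_eQ_c) = W_{[be]}\,W_{[aec]} = W_{\times^1_{x,y}l_1}\, W_{\times^2_{x,y}l_1}$; this is the \emph{positive} splitting, which is the operative notion precisely because the same edge $e$ occurs at $x$ and $y$. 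The case $x,y\in B_1$ is identical with $e$ replaced by $e^{-1}$ throughout and again yields $W_{\times^1_{x,y}l_1}\, W_{\times^2_{x,y}l_1}$. If $x\in A_1$ and $y\in B_1$ (or vice versa), write $l_1 = aebe^{-1}c$ with $e$ at the $A_1$-location and $e^{-1}$ at the $B_1$-location; now the two rank-one factors are tied together, the product does \emph{not} split, and the sum collapses to the single trace $\tr(Q_aQ_eQ_b^TQ_e^TQ_c) = W_{[aeb^{-1}e^{-1}c]} = W_{\propto_{x,y}l_1}$, the \emph{positive} twisting (operative because an edge and its inverse occur).

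Summing over all ordered pairs of distinct locations in $C_1$ then finishes the proof: the pairs in $A_1\times A_1$ give the first term of the claimed identity, the pairs in $B_1\times B_1$ give the second, and the pairs in $A_1\times B_1$ and in $B_1\times A_1$ give the third and fourth. I expect the only genuine difficulty to be clerical: keeping track of the cyclic positions of the two distinguished factors (so that the arcs $a,b,c$ are correctly identified whichever of $x,y$ comes first in the loop), keeping the transposes straight when collapsing the rank-one matrices (so that an arc appears as $Q_b$ in a splitting computation but as $Q_b^T$ in a twisting computation), and checking that one always lands on the positive --- not the negative --- versions of splitting and of twisting, which is forced since same-edge splitting and edge/inverse-edge twisting are positive by definition. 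A clean way to organize this is to fix once and for all the decomposition of $l_1$ into the arcs cut out by $x$ and $y$ and carry out the four collapses in that notation.
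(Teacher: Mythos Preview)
Your proposal is correct and follows essentially the same route as the paper's proof: write $W_{l_1}=\tr(P_1Q_1\cdots P_mQ_mP_{m+1})$, differentiate twice using $\partial Q_e/\partial q^e_{\alpha\gamma}=u_\alpha u_\gamma^T$, and then do the four-way case split on whether the two distinguished factors are $Q_e$ or $Q_e^T$. The only organizational difference is that the paper first writes the mixed partial as two sums over $r<s$ (one with $\partial Q_r/\partial q_{ik}$ and $\partial Q_s/\partial q_{jk'}$, the other with the roles of $(i,k)$ and $(j,k')$ swapped), invokes the $(i,k)\leftrightarrow(j,k')$ symmetry of the outer sum to collapse this to $2\sum_{r<s}$, and then matches the factor of $2$ against the pairing $(x,y)\leftrightarrow(y,x)$ in the ordered-pair sums on the right-hand side; you instead keep the full ordered-pair sum $\sum_{x\ne y}T_{x,y}$ throughout and match terms one-to-one. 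Both bookkeeping choices land on the same case computations.
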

\begin{proof}
Let us continue using the notation introduced in the proof of Lemma \ref{s2}. Observe that 
\begin{align*}
\mpar{W_{l_1}}{q_{ik}^e}{q_{jk'}^e}
&= \sum_{1\le r< s\le m} \tr\biggl(P_1Q_1\cdots P_r \fpar{Q_r}{q_{ik}^e} P_{r+1}\cdots P_s \fpar{Q_s}{q_{jk'}^e}P_{s+1}\cdots P_mQ_mP_{m+1}\biggr)\\
&\quad + \sum_{1\le r<s\le m} \tr\biggl(P_1Q_1\cdots P_r \fpar{Q_r}{q_{jk'}^e} P_{r+1}\cdots P_s \fpar{Q_s}{q_{ik}^e}P_{s+1}\cdots P_mQ_mP_{m+1}\biggr)\, .
\end{align*}
By the symmetry between the pairs of indices $(i,k)$ and $(j,k')$, this shows that
\begin{align*}
&\sum_{i,j,k,k'} q^e_{jk} q^e_{ik'}\mpar{W_{l_1}}{q^e_{ik}}{q^e_{jk'}}\\
 &= 2\sum_{1\le r< s\le m}\sum_{i,j,k,k'}q_{jk}^e q_{ik'}^e \tr\biggl(P_1Q_1\cdots P_r \fpar{Q_r}{q_{ik}^e} P_{r+1}\cdots P_s \fpar{Q_s}{q_{jk'}^e}P_{s+1}\cdots P_mQ_mP_{m+1}\biggr)
\end{align*}
Now take any $1\le r<s\le m$. First, suppose that $Q_r = Q_s = Q_e$. Let $u_i$ be as in the proof of Lemma \ref{s1}. Then 
\begin{align}
&\tr\biggl(P_1Q_1\cdots P_r \fpar{Q_r}{q_{ik}^e} P_{r+1}\cdots P_s \fpar{Q_s}{q_{jk'}^e}P_{s+1}\cdots P_mQ_mP_{m+1}\biggr) \nonumber\\
&= \tr(P_1Q_1\cdots P_r u_i u_k^T P_{r+1}\cdots P_s u_j u_{k'}^TP_{s+1}\cdots P_mQ_mP_{m+1})\nonumber\\
&= (u_{k'}^TP_{s+1}\cdots P_mQ_mP_{m+1}P_1Q_1\cdots P_r u_i)( u_k^T P_{r+1}\cdots P_s u_j)\nonumber\\
&= (P_{s+1}\cdots P_mQ_mP_{m+1}P_1Q_1\cdots P_r)_{k'i}(P_{r+1}\cdots P_s)_{kj}\, ,\label{pqeq1}
\end{align}
where we are following the convention that $M_{ij}$ denotes the $(i,j)^{\mathrm{th}}$ entry of a matrix $M$. Therefore for this $r$ and $s$,
\begin{align*}
&\sum_{i,j,k, k'} q_{jk}^eq_{ik'}^e\tr\biggl(P_1Q_1\cdots P_r \fpar{Q_r}{q_{ik}^e} P_{r+1}\cdots P_s \fpar{Q_s}{q_{jk'}^e}P_{s+1}\cdots P_mQ_mP_{m+1}\biggr)\\
&= \sum_{i,j,k,k'} q_{jk}^e q_{ik'}^e (P_{s+1}\cdots P_mQ_mP_{m+1}P_1Q_1\cdots P_r)_{k'i}(P_{r+1}\cdots P_s)_{kj}\\
&= \tr(P_{s+1}\cdots P_mQ_mP_{m+1}P_1Q_1\cdots P_rQ_r) \tr(P_{r+1}\cdots P_sQ_s)\\
&= \tr(P_1Q_1\cdots P_rQ_rP_{s+1}\cdots P_mQ_mP_{m+1}) \tr(P_{r+1}\cdots P_sQ_s)\, .
\end{align*}
For $t=1,\ldots, m$, let $z_t$ be the location in $l_1$ of the $t^{\mathrm{th}}$ occurrence of $e$ or $e^{-1}$. Then, if $x= z_r$ and $y = z_s$, it is easy to see that the last line of the above display is exactly equal to 
\[
W_{\times^1_{x,y} l_1} W_{\times^2_{x,y} l_1}\, .
\]
This produces the first kind of terms in the statement of the lemma.

Next, suppose that $Q_r=Q_s=Q_e^T$. Then 
\begin{align}
&\tr\biggl(P_1Q_1\cdots P_r \fpar{Q_r}{q_{ik}^e} P_{r+1}\cdots P_s \fpar{Q_s}{q_{jk'}^e}P_{s+1}\cdots P_mQ_mP_{m+1}\biggr) \nonumber\\
&= \tr(P_1Q_1\cdots P_r u_k u_i^T P_{r+1}\cdots P_s u_{k'} u_{j}^TP_{s+1}\cdots P_mQ_mP_{m+1})\nonumber\\
&= (u_{j}^TP_{s+1}\cdots P_mQ_mP_{m+1}P_1Q_1\cdots P_r u_k)( u_i^T P_{r+1}\cdots P_s u_{k'})\nonumber\\
&= (P_{s+1}\cdots P_mQ_mP_{m+1}P_1Q_1\cdots P_r)_{jk}(P_{r+1}\cdots P_s)_{ik'}\, .\label{pqeq2}
\end{align}
Consequently,
\begin{align*}
&\sum_{i,j,k, k'} q_{jk}^eq_{ik'}^e\tr\biggl(P_1Q_1\cdots P_r \fpar{Q_r}{q_{ik}^e} P_{r+1}\cdots P_s \fpar{Q_s}{q_{jk'}^e}P_{s+1}\cdots P_mQ_mP_{m+1}\biggr)\\
&= \sum_{i,j,k,k'} q_{jk}^e q_{ik'}^e (P_{s+1}\cdots P_mQ_mP_{m+1}P_1Q_1\cdots P_r)_{jk}(P_{r+1}\cdots P_s)_{ik'}\\
&= \tr(P_{s+1}\cdots P_mQ_mP_{m+1}P_1Q_1\cdots P_rQ_r) \tr(P_{r+1}\cdots P_sQ_s)\\
&= \tr(P_1Q_1\cdots P_rQ_rP_{s+1}\cdots P_mQ_mP_{m+1}) \tr(P_{r+1}\cdots P_sQ_s)\, .
\end{align*}
If $x = z_r$ and $y=z_s$, then the last line of the above display is easily seen to be
\[
W_{\times^1_{x,y} l_1} W_{\times^2_{x,y} l_1}\, .
\]
This produces the second kind of terms in the statement of the lemma. 

Next, suppose that $Q_r = Q_e$ and $Q_s = Q_e^T$. Then 
\begin{align}
&\tr\biggl(P_1Q_1\cdots P_r \fpar{Q_r}{q_{ik}^e} P_{r+1}\cdots P_s \fpar{Q_s}{q_{jk'}^e}P_{s+1}\cdots P_mQ_mP_{m+1}\biggr) \nonumber\\
&= \tr(P_1Q_1\cdots P_r u_i u_k^T P_{r+1}\cdots P_s u_{k'} u_{j}^TP_{s+1}\cdots P_mQ_mP_{m+1})\nonumber\\
&= (u_{j}^TP_{s+1}\cdots P_mQ_mP_{m+1}P_1Q_1\cdots P_r u_i)( u_k^T P_{r+1}\cdots P_s u_{k'})\nonumber\\
&= (P_{s+1}\cdots P_mQ_mP_{m+1}P_1Q_1\cdots P_r)_{ji}(P_{r+1}\cdots P_s)_{kk'}\, .\label{pqeq3}
\end{align}
So in this case,
\begin{align*}
&\sum_{i,j,k, k'} q_{jk}^eq_{ik'}^e\tr\biggl(P_1Q_1\cdots P_r \fpar{Q_r}{q_{ik}^e} P_{r+1}\cdots P_s \fpar{Q_s}{q_{jk'}^e}P_{s+1}\cdots P_mQ_mP_{m+1}\biggr)\\
&= \sum_{i,j,k,k'} q_{jk}^e q_{ik'}^e (P_{s+1}\cdots P_mQ_mP_{m+1}P_1Q_1\cdots P_r)_{ji}(P_{r+1}\cdots P_s)_{kk'}\\
&= \tr(P_{s+1}\cdots P_mQ_mP_{m+1}P_1Q_1\cdots P_rQ_r(P_{r+1}\cdots P_s)^T Q_s)\\
&= \tr(P_1Q_1\cdots P_rQ_r(P_{r+1}\cdots P_s)^T Q_sP_{s+1}\cdots P_mQ_mP_{m+1})\, .
\end{align*}
If $x = z_r$ and $y=z_s$, then the last line of the above display equals 
$W_{\propto_{x,y} l_1}$. 
This produces the third kind of terms in the statement of the lemma, namely, those that have $x\in A_1$ and $y\in B_1$. For the fourth kind of terms, suppose that $Q_r = Q_e^T$ and $Q_s = Q_e$. Then 
\begin{align}
&\tr\biggl(P_1Q_1\cdots P_r \fpar{Q_r}{q_{ik}^e} P_{r+1}\cdots P_s \fpar{Q_s}{q_{jk'}^e}P_{s+1}\cdots P_mQ_mP_{m+1}\biggr) \nonumber\\
&= \tr(P_1Q_1\cdots P_r u_k u_i^T P_{r+1}\cdots P_s u_{j} u_{k'}^TP_{s+1}\cdots P_mQ_mP_{m+1})\nonumber\\
&= (u_{k'}^TP_{s+1}\cdots P_mQ_mP_{m+1}P_1Q_1\cdots P_r u_k)( u_i^T P_{r+1}\cdots P_s u_{j})\nonumber\\
&= (P_{s+1}\cdots P_mQ_mP_{m+1}P_1Q_1\cdots P_r)_{k'k}(P_{r+1}\cdots P_s)_{ij}\, .\label{pqeq4}
\end{align}
Consequently,
\begin{align*}
&\sum_{i,j,k, k'} q_{jk}^eq_{ik'}^e\tr\biggl(P_1Q_1\cdots P_r \fpar{Q_r}{q_{ik}^e} P_{r+1}\cdots P_s \fpar{Q_s}{q_{jk'}^e}P_{s+1}\cdots P_mQ_mP_{m+1}\biggr)\\
&= \sum_{i,j,k,k'} q_{jk}^e q_{ik'}^e (P_{s+1}\cdots P_mQ_mP_{m+1}P_1Q_1\cdots P_r)_{k'k}(P_{r+1}\cdots P_s)_{ij}\\
&= \tr(Q_sP_{s+1}\cdots P_mQ_mP_{m+1}P_1Q_1\cdots P_rQ_r(P_{r+1}\cdots P_s)^T )\\
&= \tr(P_1Q_1\cdots P_rQ_r(P_{r+1}\cdots P_s)^T Q_sP_{s+1}\cdots P_mQ_mP_{m+1})\, .
\end{align*}
As before, if $x = z_r$ and $y=z_s$, then the last line of the above display equals $W_{\propto_{x,y} l_1}$. 
\end{proof}
\begin{lmm}\label{d2}
\begin{align*}
\sum_{i,k}\spar{W_{l_1}}{{q^e_{ik}}} &=  \sum_{\substack{x,y\in A_1\\ x\ne y}} W_{\propto_{x,y}l_1}+ \sum_{\substack{x,y\in B_1\\ x\ne y}}W_{\propto_{x,y} l_1} \\
&\qquad + \sum_{x\in A_1,\, y\in B_1} W_{\times^1_{x,y} l_1} W_{\times^2_{x,y} l_1}+ \sum_{x\in B_1,\, y\in A_1} W_{\times^1_{x,y} l_1} W_{\times^2_{x,y} l_1}\, .
\end{align*}
\end{lmm}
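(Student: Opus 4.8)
The plan is to follow the proof of Lemma~\ref{d1} essentially verbatim, with one simplification: the diagonal second derivative is even easier to contract than the mixed one. Keep the notation $W_{l_1} = \tr(P_1Q_1P_2Q_2\cdots P_mQ_mP_{m+1})$ from the proof of Lemma~\ref{s2}, where each $Q_r$ is $Q_e$ or $Q_e^T$ and the $P_j$'s do not depend on the entries of $Q_e$. Since every entry of $Q_e$ and of $Q_e^T$ is a linear function of the variables $q_{ik}^e$, each individual factor $Q_r$ has vanishing pure second derivative, so only the cross terms survive, and
\[
\spar{W_{l_1}}{q_{ik}^e} = \sum_{r\ne s} \tr\biggl(P_1Q_1\cdots P_r\fpar{Q_r}{q_{ik}^e}P_{r+1}\cdots P_s\fpar{Q_s}{q_{ik}^e}P_{s+1}\cdots P_mQ_mP_{m+1}\biggr) = 2\sum_{1\le r<s\le m}\tr(\cdots)\, ,
\]
which is exactly the summand appearing in equations \eqref{pqeq1}--\eqref{pqeq4} of the proof of Lemma~\ref{d1} evaluated at $j=i$ and $k'=k$. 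Thus it only remains to sum these four specialized expressions directly over $i$ and $k$ (there is no $q^e$-weight this time, so no orthogonality identity like \eqref{eta1} is needed).

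Next, split according to whether $(Q_r,Q_s)$ equals $(Q_e,Q_e)$, $(Q_e^T,Q_e^T)$, $(Q_e,Q_e^T)$ or $(Q_e^T,Q_e)$; writing $z_t$ for the location of the $t^{\mathrm{th}}$ occurrence of $e$ or $e^{-1}$ in $l_1$ as in the proof of Lemma~\ref{d1}, these cases correspond respectively to $z_r,z_s\in A_1$; $z_r,z_s\in B_1$; $z_r\in A_1,\ z_s\in B_1$; and $z_r\in B_1,\ z_s\in A_1$. In the two same-type cases, substituting $j=i$, $k'=k$ into \eqref{pqeq1} or \eqref{pqeq2} and summing over $i,k$ collapses $\sum_{i,k}(A)_{\cdot i}(B)_{\cdot i}$ (respectively $\sum_{i,k}(A)_{i\cdot}(B)_{i\cdot}$) into a single trace $\tr(A^TB)$ (respectively $\tr(AB^T)$) of one matrix word; using orthogonality of the $Q$'s to rewrite an inverse word as a transpose and using cyclicity of the trace, this word is identified as $W_{\propto_{z_r,z_s}l_1}$. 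In the two mixed cases, substituting $j=i$, $k'=k$ into \eqref{pqeq3} or \eqref{pqeq4} and summing over $i,k$ gives $\sum_{i,k}(A)_{ii}(B)_{kk} = \tr(A)\tr(B)$, a product of two traces, which is recognized as $W_{\times^1_{z_r,z_s}l_1}W_{\times^2_{z_r,z_s}l_1}$, i.e.\ a negative splitting. (Compared with Lemma~\ref{d1}, the roles of ``same type'' and ``mixed type'' are interchanged: there, the contraction against $q^e q^e$ joined the two derivative insertions through the $Q$'s, producing splittings for same-type and a twist for mixed-type; here the diagonal contraction joins them the opposite way. The pattern-matching of matrix words to loop operations is just as mechanical in both cases.)

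Finally, the bookkeeping: the overall factor of $2$, together with the elementary facts $W_{\propto_{x,y}l_1}=W_{\propto_{y,x}l_1}$ and $W_{\times^1_{x,y}l_1}W_{\times^2_{x,y}l_1}=W_{\times^1_{y,x}l_1}W_{\times^2_{y,x}l_1}$ (swapping the two locations produces the same loop, respectively the same unordered pair of sub-loops, up to cyclic equivalence, and the order in which the two Wilson loops are multiplied is immaterial), converts the sum $2\sum_{r<s}$ over position pairs into the four sums over ordered pairs $(x,y)$ with $x\ne y$ displayed in the statement, each carrying a $+$ sign. I expect the only real obstacle to be this orientation-and-ordering bookkeeping and the careful identification of each matrix word with the claimed twisting or splitting loop; this requires no ideas beyond those already used in the proof of Lemma~\ref{d1}.
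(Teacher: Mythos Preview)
Your proposal is correct and follows essentially the same approach as the paper: specialize the identities \eqref{pqeq1}--\eqref{pqeq4} from the proof of Lemma~\ref{d1} to $j=i$, $k'=k$, sum over $i,k$, and identify the resulting matrix words case by case with the appropriate twisting or splitting. Your observation that the roles of ``same-type'' and ``mixed-type'' pairs are swapped relative to Lemma~\ref{d1}, and your explicit handling of the factor $2$ via the symmetries $W_{\propto_{x,y}l_1}=W_{\propto_{y,x}l_1}$ and $W_{\times^1_{x,y}l_1}W_{\times^2_{x,y}l_1}=W_{\times^1_{y,x}l_1}W_{\times^2_{y,x}l_1}$, make the bookkeeping a bit more transparent than the paper's terse ``this gives the $n^{\mathrm{th}}$ kind of terms,'' but the substance is identical.
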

\begin{proof}
We will continue to use the notations and calculations from the proof of Lemma \ref{d1}. First, note that
\begin{align*}
\sum_{i,k} \spar{W_{l_1}}{{q^e_{ik}}}
 &= 2\sum_{1\le r< s\le m}\sum_{i,k}\tr\biggl(P_1Q_1\cdots P_r \fpar{Q_r}{q_{ik}^e} P_{r+1}\cdots P_s \fpar{Q_s}{q_{ik}^e}P_{s+1}\cdots P_mQ_mP_{m+1}\biggr)\, .
\end{align*}
Take any $1\le r<s \le m$. First, suppose that $Q_r = Q_s = Q_e$. Then  by \eqref{pqeq1},
\begin{align*}
&\sum_{i,k} \tr\biggl(P_1Q_1\cdots P_r \fpar{Q_r}{q_{ik}^e} P_{r+1}\cdots P_s \fpar{Q_s}{q_{ik}^e}P_{s+1}\cdots P_mQ_mP_{m+1}\biggr)\\
&= \sum_{i,k}(P_{s+1}\cdots P_mQ_mP_{m+1}P_1Q_1\cdots P_r)_{ki}(P_{r+1}\cdots P_s)_{ki}\\
&= \tr (P_{s+1}\cdots P_mQ_mP_{m+1}P_1Q_1\cdots P_r(P_{r+1}\cdots P_s)^T)\\
&= \tr(P_1Q_1\cdots Q_{r-1} P_r (P_{r+1}\cdots P_s)^T P_{s+1}Q_{s+1}\cdots P_mQ_mP_{m+1})\, .
\end{align*}
If $x= z_r$ and $y=z_s$, then this is simply $W_{\propto_{x,y}l_1}$. This gives the first kind of terms in the statement of the lemma. 

Next, suppose that $Q_r = Q_s = Q_e^T$. Then by \eqref{pqeq2},
\begin{align*}
&\sum_{i,k} \tr\biggl(P_1Q_1\cdots P_r \fpar{Q_r}{q_{ik}^e} P_{r+1}\cdots P_s \fpar{Q_s}{q_{ik}^e}P_{s+1}\cdots P_mQ_mP_{m+1}\biggr)\\
&= \sum_{i,k}(P_{s+1}\cdots P_mQ_mP_{m+1}P_1Q_1\cdots P_r)_{ik}(P_{r+1}\cdots P_s)_{ik}\\
&= \tr (P_{s+1}\cdots P_mQ_mP_{m+1}P_1Q_1\cdots P_r(P_{r+1}Q_{r+1}\cdots Q_{s-1}P_s)^T)\\
&= \tr (P_1Q_1\cdots P_r(P_{r+1}Q_{r+1}\cdots Q_{s-1}P_s)^TP_{s+1}\cdots P_mQ_mP_{m+1})\, ,
\end{align*}
Again, if $x= z_r$ and $y=z_s$, this is equal to $W_{\propto_{x,y}l_1}$. This gives the second kind of terms in the statement of the lemma.

Next, suppose that $Q_r = Q_e$ and $Q_s = Q_e^T$. Then by \eqref{pqeq3},
\begin{align*}
&\sum_{i,k} \tr\biggl(P_1Q_1\cdots P_r \fpar{Q_r}{q_{ik}^e} P_{r+1}\cdots P_s \fpar{Q_s}{q_{ik}^e}P_{s+1}\cdots P_mQ_mP_{m+1}\biggr)\\
&= \sum_{i,k}(P_{s+1}\cdots P_mQ_mP_{m+1}P_1Q_1\cdots P_r)_{ii}(P_{r+1}\cdots P_s)_{kk}\\
&= \tr (P_{s+1}\cdots P_mQ_mP_{m+1}P_1Q_1\cdots P_r)\tr(P_{r+1}Q_{r+1}\cdots Q_{s-1}P_s)\\
&= \tr (P_1Q_1\cdots P_rP_{s+1}\cdots P_mQ_mP_{m+1})\tr(P_{r+1}Q_{r+1}\cdots Q_{s-1}P_s)\, ,
\end{align*}
If $x= z_r$ and $y=z_s$, this is equal to 
\[
W_{\times^1_{x,y} l_1} W_{\times^2_{x,y} l_1}\, .
\]
This gives the third kind of terms in the statement of the lemma, namely, those that have $x\in A_1$ and $y\in B_1$.

Finally, suppose that $Q_r = Q_e^T$ and $Q_s = Q_e$. Then by \eqref{pqeq4}, 
\begin{align*}
&\sum_{i,k} \tr\biggl(P_1Q_1\cdots P_r \fpar{Q_r}{q_{ik}^e} P_{r+1}\cdots P_s \fpar{Q_s}{q_{ik}^e}P_{s+1}\cdots P_mQ_mP_{m+1}\biggr)\\
&= \sum_{i,k}(P_{s+1}\cdots P_mQ_mP_{m+1}P_1Q_1\cdots P_r)_{kk}(P_{r+1}\cdots P_s)_{ii}\\
&= \tr (P_{s+1}\cdots P_mQ_mP_{m+1}P_1Q_1\cdots P_r)\tr(P_{r+1}Q_{r+1}\cdots Q_{s-1}P_s)\\
&= \tr (P_1Q_1\cdots P_rP_{s+1}\cdots P_mQ_mP_{m+1})\tr(P_{r+1}Q_{r+1}\cdots Q_{s-1}P_s)\, ,
\end{align*}
Again, if $x= z_r$ and $y=z_s$, this is equal to 
\[
W_{\times^1_{x,y} l_1} W_{\times^2_{x,y} l_1}\, .
\]
This gives the fourth kind of terms in the statement of the lemma, namely, those that have $x\in B_1$ and $y\in A_1$.
\end{proof}

\begin{lmm}\label{t1}
Let $l'$ be a non-null loop such that all points within distance $1$ of $l'$ belong to $\Lambda$. Let $A'$ be the set of locations in $l'$ where $e$ occurs, and let $B'$ be the set of locations in $l'$ where $e^{-1}$ occurs. Let $C' = A'\cup B'$ and assume that $C'$ is nonempty. Then 
\[
\sum_{i,j,k,k'} q_{jk}^e q_{ik'}^e \fpar{W_{l_1}}{q_{ik}^e} \fpar{W_{l'}}{q_{jk'}^e} = \sum_{x\in C_1, \, y\in C'} W_{l_1\oplus_{x,y} l'}\, .
\]
\end{lmm}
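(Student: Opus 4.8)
The plan is to carry out the same style of computation as in the proofs of Lemmas~\ref{d1} and~\ref{d2}, except that now the two derivatives act on two \emph{different} Wilson loops, so the two resulting traces will be glued into a single trace --- a merger --- rather than splitting into a product of two. As in the proof of Lemma~\ref{s2}, write
\[
W_{l_1} = \tr(P_1 Q_1 P_2 Q_2 \cdots P_m Q_m P_{m+1})\, , \qquad W_{l'} = \tr(P'_1 Q'_1 \cdots P'_{m'} Q'_{m'} P'_{m'+1})\, ,
\]
where $m' := |C'|$, each $Q_t$ and $Q'_t$ is either $Q_e$ or $Q_e^T$, and the $P_t, P'_t$ are products of matrices $Q_{e'}$ with $e'\notin\{e,e^{-1}\}$. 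Differentiating each trace once and using $\fpar{}{q^e_{ik}}(PQ)=\fpar{P}{q^e_{ik}}Q+P\fpar{Q}{q^e_{ik}}$ gives
\[
\fpar{W_{l_1}}{q^e_{ik}} = \sum_{r=1}^m \tr\Bigl(P_1Q_1\cdots P_r\fpar{Q_r}{q^e_{ik}}P_{r+1}\cdots P_mQ_mP_{m+1}\Bigr)
\]
and a similar expression for $\fpar{W_{l'}}{q^e_{jk'}}$ with a sum over $s=1,\dots,m'$, so the left-hand side of the lemma becomes a double sum over $(r,s)$.

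Next, fix $r$ and $s$. Using $\fpar{Q_e}{q^e_{ik}} = u_iu_k^T$, $\fpar{Q_e^T}{q^e_{ik}} = u_ku_i^T$ (as in Lemma~\ref{s1}) and the elementary identity $\tr(Mu_au_b^TN)=(NM)_{ba}$, the $r$-th trace factor equals $(B_1)_{ki}$ if $Q_r=Q_e$ and $(B_1)_{ik}$ if $Q_r=Q_e^T$, where $B_1 := P_{r+1}Q_{r+1}\cdots P_mQ_mP_{m+1}P_1Q_1\cdots P_r$ is the matrix obtained by cutting $l_1$ open at its $r$-th special location; similarly the $s$-th trace factor equals $(B_2)_{k'j}$ or $(B_2)_{jk'}$, with $B_2$ the analogous matrix for $l'$. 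In each of the four cases, performing the sum over $i,j,k,k'$ against $q^e_{jk}q^e_{ik'}=(Q_e)_{jk}(Q_e)_{ik'}$ contracts the four free indices into one trace carrying \emph{two} copies of $Q_e$; for instance when $Q_r=Q'_s=Q_e$ one obtains $\tr(Q_eB_1Q_eB_2)$, and the remaining cases produce the same expression with transposes on the appropriate segments, which are removed using $\tr(M)=\tr(M^T)$. Writing $x=z_r$ and $y=z'_s$ for the locations in $l_1$ and $l'$ of the $r$-th and $s$-th occurrences of $e$ or $e^{-1}$, a short check against the two displayed definitions of the positive merger $l_1\oplus_{x,y}l'$ (one for when $l'$ meets $l_1$ along $e$, one along $e^{-1}$), using only cyclic invariance of the trace, identifies each of these traces with $W_{l_1\oplus_{x,y}l'}$; the hypothesis that every vertex within distance $1$ of $l'$ lies in $\Lambda$ guarantees that this Wilson variable is defined. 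Since $r\mapsto z_r$ enumerates $C_1$ and $s\mapsto z'_s$ enumerates $C'$, summing over $r$ and $s$ yields exactly $\sum_{x\in C_1,\,y\in C'}W_{l_1\oplus_{x,y}l'}$.

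The computation is routine bookkeeping of the same kind already done in Lemmas~\ref{d1} and~\ref{d2}, so I do not expect a genuine obstacle; the one point worth flagging explicitly is \emph{why} only positive mergers occur. This is forced by the index pattern $q^e_{jk}q^e_{ik'}$: it always glues the row index of one cut loop to the column index of the other through a factor of $Q_e$ on each side, hence reinserts two copies of $Q_e$, which is precisely the defining feature of $\oplus$ rather than $\ominus$. The negative mergers $l_1\ominus_{x,y}l'$ will instead arise, later, from the companion term $\sum_{i,k}\fpar{W_{l_1}}{q^e_{ik}}\fpar{W_{l'}}{q^e_{ik}}$. The only mild nuisance is the fourfold $e$-versus-$e^{-1}$ casework, which adds no new idea: each subcase collapses to one of the two positive-merger formulas after cyclic permutation of the trace and, where a segment appears transposed, an application of $\tr(M)=\tr(M^T)$.
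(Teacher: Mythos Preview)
Your proposal is correct and follows essentially the same route as the paper's proof: write both Wilson loops as alternating products isolating the $Q_e$/$Q_e^T$ factors, expand the two derivatives as a double sum over $(r,s)$, and treat the four cases according to whether $Q_r$ and $Q'_s$ equal $Q_e$ or $Q_e^T$, identifying each resulting single trace with the appropriate positive merger. Your compact $B_1,B_2$ notation is just a repackaging of the paper's explicit matrix strings.

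One small correction worth flagging: the identity $\tr(M)=\tr(M^T)$ is not what handles the mixed cases. When, say, $Q_r=Q_e$ and $Q'_s=Q_e^T$, the contraction yields $\tr(B_1 Q_e B_2^T Q_e)$, and the transpose on $B_2$ is \emph{not} removed --- it is precisely what encodes the reversed traversal $c^{-1}d^{-1}$ appearing in the second definition of $l_1\oplus_{x,y}l'$ (the $e$-versus-$e^{-1}$ case). So the transposed segment matches the merger formula directly, using only cyclic invariance of the trace; no appeal to $\tr(M)=\tr(M^T)$ is needed. Your subsequent sentence about checking against ``the two displayed definitions of the positive merger'' is the right instinct; just drop the remark about removing transposes.
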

\begin{proof}
Let $m'$ be the size of $C'$. Write $W_{l'}$ as 
\[
W_{l'} = \tr(P_1'Q_1'P_2'Q_2'\cdots P_{m'}'Q_{m'}' P_{m'+1}')\,  
\]
where each $Q_1',\ldots, Q_{m'}'$ is either $Q_e$ or $Q_e^T$, and $P_1',\ldots, P_{m'+1}'$ are products of $Q_{e'}$ where $e'$ is neither $e$ nor $e^{-1}$. Then
\begin{align*}
&\fpar{W_{l_1}}{q_{ik}^e}\fpar{W_{l'}}{q_{jk'}^e}\\
&= \sum_{\substack{1\le r\le m\\1\le s\le m'}}\tr\biggl(P_1Q_1\cdots P_r \fpar{Q_r}{q_{ik}^e} P_{r+1}\cdots Q_m P_{m+1}\biggr)\tr\biggl( P_1'Q_1'\cdots P_s'\fpar{Q_s'}{q_{jk'}^e}P_{s+1}'\cdots Q'_{m'}P'_{m'+1}\biggr)\, .
\end{align*}
Take any $r,s$. If $Q_r = Q_s' = Q_e$, then 
\begin{align}
&\tr\biggl(P_1Q_1\cdots P_r \fpar{Q_r}{q_{ik}^e} P_{r+1}\cdots Q_m P_{m+1}\biggr)\tr\biggl( P_1'Q_1'\cdots P_s'\fpar{Q_s'}{q_{jk'}^e}P_{s+1}'\cdots Q'_{m'}P'_{m'+1}\biggr) \nonumber\\
&= \tr(P_1Q_1\cdots P_r u_i u_k^T P_{r+1}\cdots Q_m P_{m+1})\tr( P_1'Q_1'\cdots P_s'u_j u_{k'}^TP_{s+1}'\cdots Q'_{m'}P'_{m'+1}) \nonumber\\
&= (u_k^T P_{r+1}\cdots Q_m P_{m+1}P_1Q_1\cdots P_r u_i )( u_{k'}^TP_{s+1}'\cdots Q'_{m'}P'_{m'+1}P_1'Q_1'\cdots P_s'u_j )\nonumber \\
&= (P_{r+1}\cdots Q_m P_{m+1}P_1Q_1\cdots P_r )_{ki}( P_{s+1}'\cdots Q'_{m'}P'_{m'+1}P_1'Q_1'\cdots P_s')_{k'j}\, .\label{spqeq1}
\end{align}
Therefore, 
\begin{align*}
&\sum_{i,j,k,k'} q_{jk}^eq_{ik'}^e\tr\biggl(P_1Q_1\cdots P_r \fpar{Q_r}{q_{ik}^e} P_{r+1}\cdots Q_m P_{m+1}\biggr)\tr\biggl( P_1'Q_1'\cdots P_s'\fpar{Q_s'}{q_{jk'}^e}P_{s+1}'\cdots Q'_{m'}P'_{m'+1}\biggr)\\
&= \sum_{i,j,k,k'} q_{jk}^eq_{ik'}^e(P_{r+1}\cdots Q_m P_{m+1}P_1Q_1\cdots P_r )_{ki}( P_{s+1}'\cdots Q'_{m'}P'_{m'+1}P_1'Q_1'\cdots P_s')_{k'j}\\
&= \tr(P_{r+1}\cdots Q_m P_{m+1}P_1Q_1\cdots P_rQ_r P_{s+1}'\cdots Q'_{m'}P'_{m'+1}P_1'Q_1'\cdots P_s'Q_s')\\
&= \tr(P_1Q_1\cdots P_rQ_r P_{s+1}'\cdots Q'_{m'}P'_{m'+1}P_1'Q_1'\cdots P_s'Q_s'P_{r+1}\cdots Q_m P_{m+1})\, .
\end{align*}
Let $z_t$ be the location in $l_1$ of the $t^{\mathrm{th}}$ occurrence of $e$ or $e^{-1}$ , as before. Let $z'_t$ be the location in $l'$ of the $t^{\mathrm{th}}$ occurrence of $e$ or $e^{-1}$. If $x= z_r$ and $y = z'_s$, then it is easy to see that the last term in the above display is exactly $W_{l_1\oplus_{x,y} l'}$. This gives the terms corresponding to $x\in A_1$ and $y\in A'$ in the statement of the lemma. 

Next, suppose that $Q_r = Q_e$ and $Q_s' = Q_e^T$. Then 
\begin{align}
&\tr\biggl(P_1Q_1\cdots P_r \fpar{Q_r}{q_{ik}^e} P_{r+1}\cdots Q_m P_{m+1}\biggr)\tr\biggl( P_1'Q_1'\cdots P_s'\fpar{Q_s'}{q_{jk'}^e}P_{s+1}'\cdots Q'_{m'}P'_{m'+1}\biggr)\nonumber \\
&= \tr(P_1Q_1\cdots P_r u_i u_k^T P_{r+1}\cdots Q_m P_{m+1})\tr( P_1'Q_1'\cdots P_s'u_{k'} u_{j}^TP_{s+1}'\cdots Q'_{m'}P'_{m'+1})\nonumber \\
&= (u_k^T P_{r+1}\cdots Q_m P_{m+1}P_1Q_1\cdots P_r u_i )( u_{j}^TP_{s+1}'\cdots Q'_{m'}P'_{m'+1}P_1'Q_1'\cdots P_s'u_{k'} ) \nonumber\\
&= (P_{r+1}\cdots Q_m P_{m+1}P_1Q_1\cdots P_r )_{ki}( P_{s+1}'\cdots Q'_{m'}P'_{m'+1}P_1'Q_1'\cdots P_s')_{jk'}\, .\label{spqeq2}
\end{align}
Thus, 
\begin{align*}
&\sum_{i,j,k,k'} q_{jk}^eq_{ik'}^e\tr\biggl(P_1Q_1\cdots P_r \fpar{Q_r}{q_{ik}^e} P_{r+1}\cdots Q_m P_{m+1}\biggr)\tr\biggl( P_1'Q_1'\cdots P_s'\fpar{Q_s'}{q_{jk'}^e}P_{s+1}'\cdots Q'_{m'}P'_{m'+1}\biggr)\\
&= \sum_{i,j,k,k'} q_{jk}^eq_{ik'}^e(P_{r+1}\cdots Q_m P_{m+1}P_1Q_1\cdots P_r )_{ki}( P_{s+1}'\cdots Q'_{m'}P'_{m'+1}P_1'Q_1'\cdots P_s')_{jk'}\\
&= \tr(P_{r+1}\cdots Q_m P_{m+1}P_1Q_1\cdots P_rQ_r (P_{s+1}'\cdots Q'_{m'}P'_{m'+1}P_1'Q_1'\cdots P_s')^T{Q_s'}^T)\\
&= \tr(P_1Q_1\cdots P_rQ_r (P_{s+1}'\cdots Q'_{m'}P'_{m'+1}P_1'Q_1'\cdots P_s')^T{Q_s'}^TP_{r+1}\cdots Q_m P_{m+1})\, .
\end{align*}
If $x=z_r$ and $y=z_s$, this is equal to $W_{l_1\oplus_{x,y}l'}$. This gives the terms corresponding to $x\in A_1$, $y\in B'$. 

Next, suppose that  $Q_r = Q_e^T$ and $Q_s' = Q_e$. Then 
\begin{align}
&\tr\biggl(P_1Q_1\cdots P_r \fpar{Q_r}{q_{ik}^e} P_{r+1}\cdots Q_m P_{m+1}\biggr)\tr\biggl( P_1'Q_1'\cdots P_s'\fpar{Q_s'}{q_{jk'}^e}P_{s+1}'\cdots Q'_{m'}P'_{m'+1}\biggr) \nonumber\\
&= \tr(P_1Q_1\cdots P_r u_k u_i^T P_{r+1}\cdots Q_m P_{m+1})\tr( P_1'Q_1'\cdots P_s'u_{j} u_{k'}^TP_{s+1}'\cdots Q'_{m'}P'_{m'+1}) \nonumber\\
&= (u_i^T P_{r+1}\cdots Q_m P_{m+1}P_1Q_1\cdots P_r u_k)( u_{k'}^TP_{s+1}'\cdots Q'_{m'}P'_{m'+1}P_1'Q_1'\cdots P_s'u_{j} ) \nonumber\\
&= (P_{r+1}\cdots Q_m P_{m+1}P_1Q_1\cdots P_r )_{ik}( P_{s+1}'\cdots Q'_{m'}P'_{m'+1}P_1'Q_1'\cdots P_s')_{k'j}\, .\label{spqeq3}
\end{align}
Consequently,
\begin{align*}
&\sum_{i,j,k,k'} q_{jk}^eq_{ik'}^e\tr\biggl(P_1Q_1\cdots P_r \fpar{Q_r}{q_{ik}^e} P_{r+1}\cdots Q_m P_{m+1}\biggr)\tr\biggl( P_1'Q_1'\cdots P_s'\fpar{Q_s'}{q_{jk'}^e}P_{s+1}'\cdots Q'_{m'}P'_{m'+1}\biggr)\\
&= \sum_{i,j,k,k'} q_{jk}^eq_{ik'}^e(P_{r+1}\cdots Q_m P_{m+1}P_1Q_1\cdots P_r )_{ik}( P_{s+1}'\cdots Q'_{m'}P'_{m'+1}P_1'Q_1'\cdots P_s')_{k'j}\\
&= \tr(P_{r+1}\cdots Q_m P_{m+1}P_1Q_1\cdots P_rQ_r (P_{s+1}'\cdots Q'_{m'}P'_{m'+1}P_1'Q_1'\cdots P_s')^T{Q_s'}^T)\\
&= \tr(P_1Q_1\cdots P_rQ_r (P_{s+1}'\cdots Q'_{m'}P'_{m'+1}P_1'Q_1'\cdots P_s')^T{Q_s'}^TP_{r+1}\cdots Q_m P_{m+1})\, .
\end{align*}
Again, if we take $x = z_r$ and $ y = z'_s$ then the above expression is nothing but $W_{l_1\oplus_{x,y} l'}$. This takes care of the terms corresponding to $x\in B_1$ and $y\in A'$ in the statement of the lemma. 

Finally, suppose that $Q_r = Q_s' = Q_e^T$. Then 
\begin{align}
&\tr\biggl(P_1Q_1\cdots P_r \fpar{Q_r}{q_{ik}^e} P_{r+1}\cdots Q_m P_{m+1}\biggr)\tr\biggl( P_1'Q_1'\cdots P_s'\fpar{Q_s'}{q_{jk'}^e}P_{s+1}'\cdots Q'_{m'}P'_{m'+1}\biggr) \nonumber\\
&= \tr(P_1Q_1\cdots P_r u_k u_i^T P_{r+1}\cdots Q_m P_{m+1})\tr( P_1'Q_1'\cdots P_s'u_{k'} u_{j}^TP_{s+1}'\cdots Q'_{m'}P'_{m'+1}) \nonumber\\
&= (u_i^T P_{r+1}\cdots Q_m P_{m+1}P_1Q_1\cdots P_r u_k)( u_{j}^TP_{s+1}'\cdots Q'_{m'}P'_{m'+1}P_1'Q_1'\cdots P_s'u_{k'} ) \nonumber\\
&= (P_{r+1}\cdots Q_m P_{m+1}P_1Q_1\cdots P_r )_{ik}( P_{s+1}'\cdots Q'_{m'}P'_{m'+1}P_1'Q_1'\cdots P_s')_{jk'}\, .\label{spqeq4}
\end{align}
Therefore, 
\begin{align*}
&\sum_{i,j,k,k'} q_{jk}^eq_{ik'}^e\tr\biggl(P_1Q_1\cdots P_r \fpar{Q_r}{q_{ik}^e} P_{r+1}\cdots Q_m P_{m+1}\biggr)\tr\biggl( P_1'Q_1'\cdots P_s'\fpar{Q_s'}{q_{jk'}^e}P_{s+1}'\cdots Q'_{m'}P'_{m'+1}\biggr)\\
&= \sum_{i,j,k,k'} q_{jk}^eq_{ik'}^e(P_{r+1}\cdots Q_m P_{m+1}P_1Q_1\cdots P_r )_{ik}( P_{s+1}'\cdots Q'_{m'}P'_{m'+1}P_1'Q_1'\cdots P_s')_{jk'}\\
&= \tr(P_{r+1}\cdots Q_m P_{m+1}P_1Q_1\cdots P_rQ_r P_{s+1}'\cdots Q'_{m'}P'_{m'+1}P_1'Q_1'\cdots P_s'Q_s')\\
&= \tr(P_1Q_1\cdots P_rQ_r P_{s+1}'\cdots Q'_{m'}P'_{m'+1}P_1'Q_1'\cdots P_s'Q_s'P_{r+1}\cdots Q_m P_{m+1})\, .
\end{align*}
As before, if $x = z_r$ and $ y = z'_s$ then the above expression equals $W_{l_1\oplus_{x,y} l'}$. This takes care of the terms corresponding to $x\in B_1$ and $y\in B'$ in the statement of the lemma. 
\end{proof}

\begin{lmm}\label{t2}
Let $l'$ and $C'$ be as in the previous lemma. Then 
\[
\sum_{i,k} \fpar{W_{l_1}}{q^e_{ik}} \fpar{W_{l'}}{q^e_{ik}} = \sum_{x\in C_1,\, y\in C'} W_{l_1\ominus_{x,y} l'}\, .
\]
\end{lmm}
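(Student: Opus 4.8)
The plan is to mimic the proof of Lemma \ref{t1} almost verbatim, the only difference being that the contraction $\sum_{i,j,k,k'} q^e_{jk}q^e_{ik'}(\cdots)$ there is here replaced by the simpler contraction $\sum_{i,k}(\cdots)$. It is precisely this change that converts the positive mergers of Lemma \ref{t1} into the negative mergers appearing in the present statement.

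First I would set up the same notation as in the proof of Lemma \ref{t1}: write $W_{l_1} = \tr(P_1 Q_1 P_2 Q_2 \cdots P_m Q_m P_{m+1})$ and $W_{l'} = \tr(P_1' Q_1' \cdots P_{m'}' Q_{m'}' P_{m'+1}')$, where each of $Q_1,\ldots,Q_m,Q_1',\ldots,Q_{m'}'$ equals $Q_e$ or $Q_e^T$ and each $P$ is a product of matrices $Q_{e'}$ with $e'\notin\{e,e^{-1}\}$. Differentiating both traces location by location and invoking Lemma \ref{s1}, I obtain
\[
\sum_{i,k}\fpar{W_{l_1}}{q^e_{ik}}\fpar{W_{l'}}{q^e_{ik}} = \sum_{r=1}^{m}\sum_{s=1}^{m'}\sum_{i,k}\tr\biggl(P_1Q_1\cdots P_r\fpar{Q_r}{q^e_{ik}}P_{r+1}\cdots Q_mP_{m+1}\biggr)\tr\biggl(P_1'Q_1'\cdots P_s'\fpar{Q_s'}{q^e_{ik}}P_{s+1}'\cdots Q_{m'}'P_{m'+1}'\biggr).
\]
Abbreviating $\Gamma_r := P_{r+1}Q_{r+1}\cdots Q_mP_{m+1}P_1Q_1\cdots P_r$ and $\Gamma_s' := P_{s+1}'Q_{s+1}'\cdots Q_{m'}'P_{m'+1}'P_1'Q_1'\cdots P_s'$, the computations \eqref{spqeq1}--\eqref{spqeq4} (with the index pair $(j,k')$ there replaced by $(i,k)$) show that, for fixed $r,s$, the inner summand equals $(\Gamma_r)_{ki}(\Gamma_s')_{ki}$, $(\Gamma_r)_{ki}(\Gamma_s')_{ik}$, $(\Gamma_r)_{ik}(\Gamma_s')_{ki}$ or $(\Gamma_r)_{ik}(\Gamma_s')_{ik}$ according to whether $(Q_r,Q_s')$ is $(Q_e,Q_e)$, $(Q_e,Q_e^T)$, $(Q_e^T,Q_e)$ or $(Q_e^T,Q_e^T)$. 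Summing over $i,k$ then collapses these into $\tr(\Gamma_r(\Gamma_s')^T)$ in the first and fourth cases and $\tr(\Gamma_r\Gamma_s')$ in the second and third.

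Finally, writing $x=z_r$ and $y=z_s'$ for the locations in $l_1$ and $l'$ of the $r$th and $s$th occurrences of $e^{\pm1}$, I would identify each such trace with $W_{l_1\ominus_{x,y}l'}$ by matching it against the definition of negative merger. In the two cases where the same signed edge occurs at $x$ and $y$ (namely $x\in A_1,\,y\in A'$ or $x\in B_1,\,y\in B'$) the negative merger is $[a\,c^{-1}d^{-1}b]$, and using the $SO(N)$ identity $Q_{c^{-1}}Q_{d^{-1}}=(Q_dQ_c)^{T}$ together with $W_l=W_{[l]}$ one checks that $W_{l_1\ominus_{x,y}l'}=\tr(\Gamma_r(\Gamma_s')^T)$; in the two opposite-sign cases the negative merger is $[a\,d\,c\,b]$ and $W_{l_1\ominus_{x,y}l'}=\tr(\Gamma_r\Gamma_s')$. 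Summing over all $r$ and $s$, equivalently over all $x\in C_1$ and $y\in C'$, yields the asserted identity. There is no genuine obstacle here: the only point needing attention is keeping straight which subwords of $l_1$ and $l'$ play the roles of $a,b,c,d$ in each of the four orientation cases, and this is exactly the bookkeeping already carried out in full in the proof of Lemma \ref{t1}, the present computation being merely its $\sum_{i,k}$ analogue.
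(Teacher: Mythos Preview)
Your proposal is correct and follows essentially the same approach as the paper: both proofs recycle the entry-wise identities \eqref{spqeq1}--\eqref{spqeq4} from the proof of Lemma \ref{t1}, substitute $(j,k')=(i,k)$, and then in each of the four orientation cases identify the resulting single trace with $W_{l_1\ominus_{x,y}l'}$. The only cosmetic difference is your use of the abbreviations $\Gamma_r,\Gamma_s'$, which compresses what the paper writes out in full; the mention of Lemma \ref{s1} is not quite apt (only the derivative formula $\partial Q_e/\partial q^e_{ik}=u_iu_k^T$ from its proof is used), but this is immaterial.
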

\begin{proof}
We will continue using the notations introduced in the proof of Lemma \ref{t1}. First, note that
\begin{align*}
&\sum_{i,k}\fpar{W_{l_1}}{q_{ik}^e}\fpar{W_{l'}}{q_{jk'}^e}\\
&= \sum_{\substack{1\le r\le m\\1\le s\le m'}}\sum_{i,k}\tr\biggl(P_1Q_1\cdots P_r \fpar{Q_r}{q_{ik}^e} P_{r+1}\cdots Q_m P_{m+1}\biggr)\tr\biggl( P_1'Q_1'\cdots P_s'\fpar{Q_s'}{q_{ik}^e}P_{s+1}'\cdots Q'_{m'}P'_{m'+1}\biggr)\, .
\end{align*}
Take any $r$, $s$. First, suppose that $Q_r = Q_s' = Q_e$. By \eqref{spqeq1}, 
\begin{align*}
&\sum_{i,k} \tr\biggl(P_1Q_1\cdots P_r \fpar{Q_r}{q_{ik}^e} P_{r+1}\cdots Q_m P_{m+1}\biggr)\tr\biggl( P_1'Q_1'\cdots P_s'\fpar{Q_s'}{q_{ik}^e}P_{s+1}'\cdots Q'_{m'}P'_{m'+1}\biggr)\\
&=\sum_{i,k} (P_{r+1}\cdots Q_m P_{m+1}P_1Q_1\cdots P_r )_{ki}( P_{s+1}'\cdots Q'_{m'}P'_{m'+1}P_1'Q_1'\cdots P_s')_{ki}\\
&= \tr(P_{r+1}\cdots Q_m P_{m+1}P_1Q_1\cdots P_r( P_{s+1}'\cdots Q'_{m'}P'_{m'+1}P_1'Q_1'\cdots P_s')^T)\\
&= \tr(P_1Q_1\cdots P_r( P_{s+1}'\cdots Q'_{m'}P'_{m'+1}P_1'Q_1'\cdots P_s')^TP_{r+1}\cdots Q_m P_{m+1})\, .
\end{align*}
If $x = z_r$ and $y = z_s'$, the above expression equals $W_{l_1\ominus_{x,y} l'}$. This takes care of the terms in the statement of the lemma that correspond to $x\in A_1$ and $y\in A'$. 

Next, suppose that $Q_r = Q_e$ and $Q'_s = Q_e^T$. Then by \eqref{spqeq2}, 
\begin{align*}
&\sum_{i,k} \tr\biggl(P_1Q_1\cdots P_r \fpar{Q_r}{q_{ik}^e} P_{r+1}\cdots Q_m P_{m+1}\biggr)\tr\biggl( P_1'Q_1'\cdots P_s'\fpar{Q_s'}{q_{ik}^e}P_{s+1}'\cdots Q'_{m'}P'_{m'+1}\biggr)\\
&=\sum_{i,k} (P_{r+1}\cdots Q_m P_{m+1}P_1Q_1\cdots P_r )_{ki}( P_{s+1}'\cdots Q'_{m'}P'_{m'+1}P_1'Q_1'\cdots P_s')_{ik}\\
&= \tr(P_{r+1}\cdots Q_m P_{m+1}P_1Q_1\cdots P_rP_{s+1}'\cdots Q'_{m'}P'_{m'+1}P_1'Q_1'\cdots P_s')\\
&= \tr(P_1Q_1\cdots P_rP_{s+1}'\cdots Q'_{m'}P'_{m'+1}P_1'Q_1'\cdots P_s'P_{r+1}\cdots Q_m P_{m+1})\, . 
\end{align*}
As before, if $x = z_r$ and $y = z_s'$, the above expression equals $W_{l_1\ominus_{x,y} l'}$. This takes care of the terms in the statement of the lemma that correspond to $x\in A_1$ and $y\in B'$. 

Next, suppose that $Q_r = Q_e^T$ and $Q_s' = Q_e$. Then by \eqref{spqeq3},
\begin{align*}
&\sum_{i,k} \tr\biggl(P_1Q_1\cdots P_r \fpar{Q_r}{q_{ik}^e} P_{r+1}\cdots Q_m P_{m+1}\biggr)\tr\biggl( P_1'Q_1'\cdots P_s'\fpar{Q_s'}{q_{ik}^e}P_{s+1}'\cdots Q'_{m'}P'_{m'+1}\biggr)\\
&=\sum_{i,k} (P_{r+1}\cdots Q_m P_{m+1}P_1Q_1\cdots P_r )_{ik}( P_{s+1}'\cdots Q'_{m'}P'_{m'+1}P_1'Q_1'\cdots P_s')_{ki}\\
&= \tr(P_{r+1}\cdots Q_m P_{m+1}P_1Q_1\cdots P_rP_{s+1}'\cdots Q'_{m'}P'_{m'+1}P_1'Q_1'\cdots P_s')\\
&= \tr(P_1Q_1\cdots P_rP_{s+1}'\cdots Q'_{m'}P'_{m'+1}P_1'Q_1'\cdots P_s'P_{r+1}\cdots Q_m P_{m+1})\, .
\end{align*}
Again, if $x = z_r$ and $y = z_s'$, this is equal to $W_{l_1\ominus_{x,y} l'}$. This accounts for the terms with $x\in B_1$ and $y\in A'$. 

Finally, suppose that $Q_r=Q_s'=Q_e^T$. Then by \eqref{spqeq4}, 
\begin{align*}
&\sum_{i,k}\tr\biggl(P_1Q_1\cdots P_r \fpar{Q_r}{q_{ik}^e} P_{r+1}\cdots Q_m P_{m+1}\biggr)\tr\biggl( P_1'Q_1'\cdots P_s'\fpar{Q_s'}{q_{ik}^e}P_{s+1}'\cdots Q'_{m'}P'_{m'+1}\biggr) \nonumber\\
&= \sum_{i,k}(P_{r+1}\cdots Q_m P_{m+1}P_1Q_1\cdots P_r )_{ik}( P_{s+1}'\cdots Q'_{m'}P'_{m'+1}P_1'Q_1'\cdots P_s')_{ik}\\
&= \tr(P_{r+1}\cdots Q_m P_{m+1}P_1Q_1\cdots P_r ( P_{s+1}'\cdots Q'_{m'}P'_{m'+1}P_1'Q_1'\cdots P_s')^T)\\
&= \tr(P_1Q_1\cdots P_r ( P_{s+1}'\cdots Q'_{m'}P'_{m'+1}P_1'Q_1'\cdots P_s')^TP_{r+1}\cdots Q_m P_{m+1})\, .
\end{align*}
As always, if $x = z_r$ and $y = z_s'$, this is equal to $W_{l_1\ominus_{x,y} l'}$. This accounts for the terms with $x\in B_1$ and $y\in B'$. 
\end{proof}

Having verified all the tedious calculations, we are now ready to prove Theorem \ref{mastern}. 
\begin{proof}[Proof of Theorem \ref{mastern}]
Let $Q = (Q_{e'})_{e'\in E^+_\Lambda}$ denote a collection of independent Haar-distributed random $SO(N)$ matrices. Let $f$ and $g$ be two functions of $Q$, defined as 
\[
f := W_{l_1}\, , 
\]
and
\[
g := Z_{\Lambda, N, \beta}^{-1} W_{l_2}W_{l_3}\cdots W_{l_n} \exp\biggl(N\beta\sum_{p\in \cp^+_\Lambda} W_p\biggr)\, .
\]
By Lemma \ref{s2}, 
\begin{align*}
\sum_{i,k} q_{ik}^e \fpar{f}{q_{ik}^e} = m W_{l_1}\, .
\end{align*}
Therefore,
\begin{align*}
\ee\biggl(\sum_{i,k} q_{ik}^e \fpar{f}{q_{ik}^e} g \biggr) &= m\smallavg{W_{l_1}W_{l_2}\cdots W_{l_n}}\, .
\end{align*}
By Lemma \ref{d2}, 
\begin{align*}
&\ee\biggl(\sum_{i,k} \spar{f}{{q_{ik}^e}} g \biggr) = \sum_{\substack{x,y\in A_1\\ x\ne y}} \smallavg{W_{\propto_{x,y}l_1}W_{l_2}\cdots W_{l_n}} + \sum_{\substack{x,y\in B_1\\ x\ne y}}\smallavg{W_{\propto_{x,y} l_1}W_{l_2}\cdots W_{l_n}} \\
&\quad  + \sum_{x\in A_1,\, y\in B_1}\smallavg{ W_{\times^1_{x,y} l_1} W_{\times^2_{x,y} l_1}W_{l_2}\cdots W_{l_n}}+\sum_{x\in B_1,\, y\in A_1}\smallavg{ W_{\times^1_{x,y} l_1} W_{\times^2_{x,y} l_1}W_{l_2}\cdots W_{l_n}}\, .
\end{align*}
By Lemma \ref{d1}, 
\begin{align*}
&\ee\biggl(\sum_{i,j,k,k'} q_{jk}^e q_{ik'}^e \mpar{f}{q_{ik}^e} {q_{jk'}^e}g \biggr) \\
&= \sum_{\substack{x,y\in A_1\\ x\ne y}}\smallavg{W_{\times^1_{x,y} l_1} W_{\times^2_{x,y} l_1} W_{l_2}\cdots W_{l_n}}+ \sum_{\substack{x,y\in B_1\\ x\ne y}}\smallavg{W_{\times^1_{x,y} l_1} W_{\times^2_{x,y} l_1}W_{l_2}\cdots W_{l_n}} \\
&\quad + \sum_{x\in A_1,\, y\in B_1}\smallavg{ W_{\propto_{x,y} l_1}W_{l_2}\cdots W_{l_n}}+ \sum_{x\in B_1,\, y\in A_1}\smallavg{ W_{\propto_{x,y} l_1}W_{l_2}\cdots W_{l_n}}\, .
\end{align*}
Next, note that since all vertices that are within distance $1$ of $l_1$ belong to $\Lambda$, therefore any element of $\cp^+(e)$ must necessarily belong to $\cp^+_\Lambda$. Therefore, for any $i$ and $k$,
\begin{align}
\fpar{g}{q_{ik}^e} &= \sum_{r=2}^n Z_{\Lambda, N,\beta}^{-1} W_{l_2}\cdots W_{l_{r-1}} \fpar{W_{l_r}}{q_{ik}^e} W_{l_{r+1}}\cdots W_{l_n}\exp\biggl(N\beta\sum_{p\in \cp^+_\Lambda} W_p\biggr)\nonumber\\
&\quad + \sum_{p\in \cp^+(e)} Z_{\Lambda, N,\beta}^{-1} W_{l_2}\cdots W_{l_n}N\beta \fpar{W_p}{q_{ik}^e}\exp\biggl(N\beta\sum_{p'\in \cp^+_\Lambda} W_{p'}\biggr)\, .\label{gq}
\end{align}
The above identity and Lemma \ref{t2} show that
\begin{align*}
\ee\biggl(\sum_{i,k} \fpar{f}{q_{ik}^e} \fpar{g}{q_{ik}^e}\biggr) &= \sum_{r=2}^n \sum_{x\in C_1,\, y\in C_r} \bigavg{W_{l_1\ominus_{x,y} l_r} \prod_{\substack{2\le t\le n\\t\ne r}} W_{l_t}} + N\beta \sum_{p\in \cp^+(e)} \sum_{x\in C_1} \smallavg{W_{l_1\ominus_x p}W_{l_2}\cdots W_{l_n}}\, .
\end{align*}
Similarly by Lemma \ref{t1} and equation \eqref{gq},
\begin{align*}
\ee\biggl(\sum_{i,j,k,k'} q_{jk}^e q_{ik'}^e \fpar{f}{q_{ik}^e} \fpar{g}{q_{jk'}^e}\biggr) &= \sum_{r=2}^n \sum_{x\in C_1, \, y\in C_r} \bigavg{W_{l_1\oplus_{x,y} l_r} \prod_{\substack{2\le t\le n\\t\ne r}} W_{l_t}} \\
&\quad + N\beta \sum_{p\in \cp^+(e)} \sum_{x\in C_1} \smallavg{W_{l_1\oplus_x p}W_{l_2}\cdots W_{l_n}}\, .
\end{align*}
Combining all of the above calculations and applying Theorem \ref{sdthm} (by first conditioning on $(Q_{e'})_{e'\ne e}$ and then taking  unconditional expectation on both sides), we get the identity claimed in the statement of Theorem \ref{mastern}. 
\end{proof}

We are now ready to prove Theorem \ref{mastersymm}.

\begin{proof}[Proof of Theorem \ref{mastersymm}]
Let $\fd^+_k(s)$ denote the set of loop sequences obtained from $s$ by positively deforming $l_k$. Define $\fd^-_k(s)$, $\fs^+_k(s)$, $\fs^-_k(s)$, $\ftw^+_k(s)$ and $\ftw^-_k(s)$ similarly. Let $\fst^+_k(s)$ be the set of loop sequences obtained by positively merging $l_k$ with some other $l_r$. Define $\fst^-_k(s)$ similarly. 

A simple but important observation that will be used in this proof is that if $(l_1, \ldots, l_n)$ is any representation of a loop sequence $s$ (and not necessarily the minimal representation), then
\[
\phi(s) = \frac{\smallavg{W_{l_1}W_{l_2}\cdots W_{l_n}}}{N^n}\, .
\]
This is because of our convention that $W_\emptyset = N$.

Now let $s$ be a non-null loop sequence with minimal representation $(l_1,\ldots, l_n)$. Clearly, 
\[
\phi(l_{\pi(1)},\ldots, l_{\pi(n)}) = \phi(l_1,\ldots, l_n)
\] 
for any permutation $\pi$ of $\{1,\ldots, n\}$. Moreover, by changing our arbitrary rule for defining the first edge of a cycle, it is evident that the master loop equation in Theorem \ref{mastern} should hold true, with appropriate modifications on the right-hand side, if $l_1$ is replaced by any other $l_k$ and $e$ is replaced by any edge in $l_k$. That is, take any $1\le k\le n$ and any edge $e$ in $l_k$. For each $1\le r\le n$, let $A_r(e)$ be the locations in $l_r$ where $e$ occurs and let $B_r(e)$ be the locations in $l_r$ where $e^{-1}$ occurs. Let $C_r(e):=A_r(e)\cup B_r(e)$ and let $m_k(e)$ be the size of $C_k(e)$.  Then the appropriate modification of Theorem \ref{mastern} gives:
\begin{align*}
(N-1) m_k(e) \phi_\beta(s)&= \textup{twisting term} + \textup{splitting term}\\
&\quad  + \textup{merger term} + \textup{deformation term}\, , 
\end{align*}
where the twisting term is given by
\begin{align*}
&\sum_{\substack{x, y\in A_k(e)\\x\ne y}} \phi(l_1,\ldots,l_{k-1},\propto_{x,y} l_k,l_{k+1},\ldots, l_n) + \sum_{\substack{x, y\in B_k(e)\\x\ne y}} \phi(l_1,\ldots,l_{k-1},\propto_{x,y} l_k,l_{k+1},\ldots, l_n)  \\
&\quad - \sum_{x\in A_k(e), \, y\in B_k(e)} \phi(l_1,\ldots, l_{k-1},\propto_{x,y} l_k, l_{k+1},\ldots,l_n)\\
&\qquad - \sum_{x\in B_k(e), \, y\in A_k(e)} \phi(l_1,\ldots, l_{k-1},\propto_{x,y} l_k, l_{k+1},\ldots,l_n)\, , 
\end{align*}
the splitting term is given by
\begin{align*}
&N\sum_{x\in A_k(e), \, y\in B_k(e)} \phi(l_1,\ldots,l_{k-1},\times_{x,y}^1 l_k,\times_{x,y}^2 l_k,l_{k+1},\ldots,l_n) \\
&\qquad + N\sum_{x\in B_k(e), \, y\in A_k(e)} \phi(l_1,\ldots,l_{k-1},\times_{x,y}^1 l_k,\times_{x,y}^2 l_k,l_{k+1},\ldots,l_n) \\
&\quad - N\sum_{\substack{x,y\in A_k(e)\\ x\ne y}} \phi(l_1,\ldots, l_{k-1},\times^1_{x,y} l_k,\times^2_{x,y} l_k,l_{k+1},\ldots,l_n)\\
&\quad  - N\sum_{\substack{x,y\in B_k(e)\\ x\ne y}} \phi(l_1,\ldots, l_{k-1},\times^1_{x,y} l_k,\times^2_{x,y} l_k,l_{k+1},\ldots,l_n)\, ,
\end{align*}
the merger term is given by
\begin{align*}
&\frac{1}{N}\sum_{k<r\le n}\sum_{x\in C_k(e), \, y\in C_r(e)} \phi(l_1,\ldots,l_{k-1}, l_k \ominus_{x,y} l_r,l_{k+1},\ldots, l_{r-1}, l_{r+1}, \ldots, l_n) \\
&\quad + \frac{1}{N}\sum_{1\le r< k}\sum_{x\in C_k(e), \, y\in C_r(e)} \phi(l_1,\ldots, l_{r-1}, l_{r+1},\ldots,l_{k-1}, l_k \ominus_{x,y} l_r,l_{k+1}, \ldots, l_n) \\
&\quad - \frac{1}{N}\sum_{k<r\le n}\sum_{x\in C_k(e), \, y\in C_r(e)}\phi(l_1,\ldots,l_{k-1}, l_k \oplus_{x,y} l_r,l_{k+1},\ldots, l_{r-1}, l_{r+1}, \ldots, l_n)\\
&\quad - \frac{1}{N}\sum_{1\le r< k}\sum_{x\in C_k(e), \, y\in C_r(e)} \phi(l_1,\ldots, l_{r-1}, l_{r+1},\ldots,l_{k-1}, l_k \oplus_{x,y} l_r,l_{k+1}, \ldots, l_n) \, .
\end{align*}
and the deformation term equals
\begin{align*}
&N\beta \sum_{p\in \cp^+(e)}\sum_{x\in C_k(e)}\phi(l_1,\ldots, l_{k-1}, l_k \ominus_{x}p,l_{k+1},\ldots,l_n)  \\
&\quad - N\beta \sum_{p\in \cp^+(e)}\sum_{x\in C_k(e)}\phi(l_1,\ldots, l_{k-1},l_k \oplus_{x}p,l_{k+1},\ldots,l_n) \, .
\end{align*}
Within the loop $l_k$, declare two edges to be equivalent if they are either equal or inverses of each other. If $e$ is an edge, then $C_k(e)$ is the equivalence class containing $e$. Construct a set $D_k$ by taking  one member from each equivalence class and sum both sides of the above equation over this set of edges. Since 
\[
\sum_{e\in D_k} m_k(e) = |l_k|\, ,
\]
this gives the equation
\begin{align*}
(N-1)|l_k|\phi(s) &= \sum_{s'\in \ftw^-_k(s)} \phi(s') - \sum_{s'\in \ftw^+_k(s)} \phi(s') + N\sum_{s'\in \fs^-_k(s)}\phi(s') - N\sum_{s'\in \fs^+_k}\phi(s')\\
&\quad + \frac{1}{N} \sum_{s'\in \fst^-_k(s)}\phi(s') - \frac{1}{N} \sum_{s'\in \fst^+_k(s)}\phi(s') + N\beta \sum_{s'\in \fd^-_k(s)}\phi(s') - N\beta \sum_{s'\in \fd^+_k(s)}\phi(s')\, .
\end{align*}
Summing both sides of the above equation over $k$, we get the equation claimed in the statement of the theorem. 
\end{proof}

\section{The master loop equation in the 't Hooft limit}\label{mm2sec}
For any $\Lambda$, $N$ and $\beta$, and any collection of loops $l_1,\ldots, l_n$ that are contained in $\Lambda$, define
\[
\phi_{\Lambda, N,\beta}(l_1,\ldots, l_n) := \frac{\smallavg{W_{l_1}\cdots W_{l_n}}}{N^n}\, .
\]
Since we defined $W_\emptyset = N$, the value of the above expression does not change if we insert some null loops into the collection $l_1,\ldots, l_n$ or delete some null loops from it. In other words, $\phi_{\Lambda, N, \beta}$ is well-defined as a function on loop sequences whose component loops are contained in $\Lambda$. 
%Given any $l_1,\ldots, l_n$, this is well-defined if $\Lambda$ is large enough.

Now note that since $|W_l|\le N$ for any loop $l$, therefore $|\phi_{\Lambda, N,\beta}(s)|\le 1$ for any loop sequence $s$ contained in $\Lambda$. Moreover, if $\Lambda_N$ is a sequence increasing to $\zz^d$, any loop sequence $s$ is eventually contained in $\Lambda_N$ as $N\ra\infty$. By a standard diagonal argument, there exists a subsequence along which the limit of $\phi_{\Lambda_N, N,\beta}(s)$ exists for every loop sequence $s$. The following theorem gives a  recursive relation for any such limit point. We may call this the master loop equation in the 't Hooft limit. 
\begin{thm}\label{master}
Let $\Lambda_N$ be a sequence of sets increasing to $\zz^d$, and take a subsequence of $N$'s such that along this subsequence, the limit of $\phi_{\Lambda_N, N,\beta}(s)$ exists for every loop sequence $s$. Call this limit $\phi_\beta(s)$. Take any loop sequence $s$ with minimal representation $(l_1,\ldots, l_n)$. Let $e$, $m$, $A_1$, $B_1$ and $C_1$ be as in Theorem \ref{mastern}. Then 
\begin{align*}
m \phi_\beta(s) &= \textup{splitting term} + \textup{deformation term}\, ,
\end{align*}
where the splitting term is given by
\begin{align*}
&\sum_{x\in A_1, \, y\in B_1} \phi_\beta(\times_{x,y}^1 l_1, \times_{x,y}^2 l_1,l_2,\ldots,l_n)+\sum_{x\in B_1, \, y\in A_1} \phi_\beta(\times_{x,y}^1 l_1, \times_{x,y}^2 l_1,l_2,\ldots,l_n) \\
&\qquad- \sum_{\substack{x,y\in A_1\\ x\ne y}} \phi_\beta(\times^1_{x,y} l_1,\times^2_{x,y} l_1,l_2,\ldots,l_n)  - \sum_{\substack{x,y\in B_1\\ x\ne y}} \phi_\beta(\times^1_{x,y} l_1,\times^2_{x,y} l_1,l_2,\ldots,l_n)\, ,
\end{align*}
and the deformation term equals
\begin{align*}
&\beta \sum_{p\in \cp^+(e)}\sum_{x\in C_1}\phi_\beta(l_1 \ominus_{x}p,l_2,\ldots,l_n)  - \beta \sum_{p\in \cp^+(e)}\sum_{x\in C_1}\phi_\beta(l_1 \oplus_{x}p,l_2,\ldots,l_n) \, .
\end{align*}
\end{thm}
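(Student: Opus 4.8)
The plan is to derive the limiting equation directly from the finite-$N$ master loop equation of Theorem~\ref{mastern} by dividing through by $N^{n+1}$ and letting $N\to\infty$ along the chosen subsequence. I would fix the loop sequence $s$ with minimal representation $(l_1,\ldots,l_n)$ and let $e,m,A_1,B_1,C_1$ be as in Theorem~\ref{mastern}. Since $\Lambda_N\uparrow\zz^d$ and $l_1,\ldots,l_n$ are fixed finite loops, for all large enough $N$ every vertex within distance $1$ of each $l_i$ belongs to $\Lambda_N$, so Theorem~\ref{mastern} applies with $\Lambda$ replaced by $\Lambda_N$. The same observation shows that every loop sequence $s'$ occurring on the right-hand side --- each is a splitting, twisting, merger or deformation of $s$, hence assembled from the $l_i$ and from plaquettes meeting $e$, all of which stay within distance $1$ of $s$ --- is eventually contained in $\Lambda_N$; consequently $\phi_{\Lambda_N,N,\beta}(s')$ is defined for $N$ large, and by the hypothesis on the subsequence it converges to $\phi_\beta(s')$. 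Throughout I will use the elementary bound $|W_l|\le N$ (the trace of an $N\times N$ orthogonal matrix has $N$ eigenvalues on the unit circle), which gives $|\phi_{\Lambda_N,N,\beta}(t)|\le 1$ for every loop sequence $t$ and every $N$.

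I would then divide both sides of the equation of Theorem~\ref{mastern} (applied to $\Lambda_N$) by $N^{n+1}$ and examine each group of terms. The left-hand side becomes $\tfrac{N-1}{N}\,m\,\phi_{\Lambda_N,N,\beta}(s)$, which tends to $m\,\phi_\beta(s)$. The splitting term is a finite sum (a number of summands depending only on $l_1$) of expectations of products of $n+1$ Wilson loops, with no prefactor of $N$; after division by $N^{n+1}$ it becomes precisely the corresponding finite sum of $\phi_{\Lambda_N,N,\beta}$'s of the split loop sequences, which converges term by term to the splitting term in the statement. The deformation term carries a prefactor $N\beta$ and is a sum, over $x\in C_1$ and over $p\in\cp^+(e)$ (a set whose size depends only on $d$), of expectations of products of $n$ Wilson loops; after division by $N^{n+1}$ the $N$ cancels and one is left with $\beta$ times a finite sum of $\phi_{\Lambda_N,N,\beta}$'s converging to the deformation term in the statement. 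The twisting term is a finite sum of expectations of products of $n$ Wilson loops with no compensating prefactor, so after division it is bounded in absolute value by a constant depending only on $l_1$ times $1/N$, and vanishes in the limit. The merger term is a finite sum of expectations of products of only $n-1$ Wilson loops, again with no prefactor, so after division it is $O(1/N^2)$ and also vanishes. Collecting the two surviving contributions would give the claimed identity.

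There is no substantive obstacle here --- the serious work is already carried out in Theorem~\ref{mastern} --- and the only points deserving care are routine: (i) each of the index sets $A_1,B_1,C_1,\cp^+(e)$ and the range of $r$ in the merger term is finite with cardinality bounded by a function of $s$ and $d$ alone, so that the finite sums commute with the limit and the bound $|W_l|\le N$ may be applied summand by summand; and (ii) the convention $W_\emptyset=N$, which makes $\phi_{\Lambda_N,N,\beta}$ (hence $\phi_\beta$) well defined on a loop sequence irrespective of null components --- exactly as in the proof of Theorem~\ref{mastersymm} --- so that the terms in which a split or deformed loop degenerates to the null loop cause no difficulty. Conceptually, the computation simply records the fact that mergers (which reduce the number of loops) and twistings (which preserve the number of loops but amount to attaching a handle) are suppressed by factors $N^{-2}$ and $N^{-1}$ respectively relative to the natural planar scaling, so only splittings and deformations survive the 't Hooft limit.
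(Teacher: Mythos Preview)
Your proposal is correct and follows essentially the same approach as the paper: divide the finite-$N$ master loop equation of Theorem~\ref{mastern} by $N^{n+1}$, use $|W_l|\le N$ to see that the twisting and merger terms are $O(1/N)$ and $O(1/N^2)$ respectively, and pass to the limit along the subsequence in the remaining finitely many terms. Your additional remarks about the loops eventually fitting in $\Lambda_N$ and the role of the convention $W_\emptyset=N$ are accurate and only make the argument more explicit than the paper's brief proof.
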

\begin{proof}
The proof is a direct application of Theorem \ref{mastern} and the fact that $|W_l|\le N$ for any loop~$l$. Simply divide both sides of the finite $N$ master loop equation (as given in the statement of Theorem~\ref{mastern}) by $N^{n+1}$. It is an easy consequence of the bound $|W_l|\le N$ that the merger and twisting terms vanish as $N$ goes to infinity. The left-hand side of the finite $N$ master loop equation, upon dividing by $N^{n+1}$, tends to $m\phi_\beta(s)$ as $N$ tends to infinity along the subsequence. The splitting and deformation terms tend to the respective terms displayed above. 
\end{proof}

%\section{Existence and uniqueness of solution}\label{unique}

The notable thing about Theorem \ref{master} is that it is true irrespective of the value of $\beta$. However, it is a theorem about subsequential limits. To prove that $\phi_{\Lambda_N, N,\beta}(s)$ converges to a limit as $N\ra\infty$, it suffices to show that there is a unique set of solutions for the master loop equation. This turns out to be true if $|\beta|$ is small enough. This is the content of the next theorem.
\begin{thm}\label{master2}
Given any $L\ge 1$, there exists $\beta_0(L,d)>0$ such that if $|\beta| \le\beta_0(L,d)$, then there is a unique function $\phi_\beta:\cs \ra \rr$ such that (a) $\phi_\beta(\emptyset)=1$, (b) $|\phi_\beta(s)|\le L^{|s|}$ for all $s$,  and (c) $\phi_\beta$ satisfies the master loop equation of Theorem~\ref{master}. Consequently, there exists $\beta_0(d)>0$ such that for $|\beta|\le \beta_0(d)$, $\phi_{\Lambda_N, N,\beta}(s)$ converges to a limit $\phi_\beta(s)$ as $N\ra\infty$ for every loop sequence $s$.
\end{thm}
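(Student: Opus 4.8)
The plan is to prove the uniqueness assertion by a self-improving estimate on a weighted norm of the difference of two solutions, exactly along the lines of Section~\ref{sketch}, and then to derive the convergence statement by compactness. Suppose $\phi_\beta$ and $\psi_\beta$ both satisfy (a)--(c). Let $\Delta$ be the set of finite sequences of positive integers, ordered by $\delta\le\delta'$ iff they have equal length and $\delta$ is dominated componentwise. For a loop sequence $s$ with minimal representation $(l_1,\ldots,l_n)$ put $\delta(s):=(|l_1|,\ldots,|l_n|)$, and for $\delta=(\delta_1,\ldots,\delta_n)\in\Delta$ put $\iota(\delta):=\delta_1+\cdots+\delta_n-n$ and
\[
D(\delta):=\sup\{\,|\phi_\beta(s)-\psi_\beta(s)| : \delta(s)\le\delta\,\},
\]
interpreted as $0$ when no such $s$ exists. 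Then $D$ is nondecreasing in $\delta$; by (b), $D(\delta)\le 2L^{\delta_1+\cdots+\delta_n}$; and $D(\delta)=0$ unless every $\delta_i\ge 4$, since a non-null loop has length at least $4$. For $\lambda\in(0,1)$ set $F(\lambda):=\sum_{\delta\in\Delta}\lambda^{\iota(\delta)}D(\delta)$. The first step is to check $F(\lambda)<\infty$ for $\lambda$ small (depending on $L$): the weighted sum factors over components into a geometric series, giving $F(\lambda)\le\sum_{n\ge1}\bigl(\lambda^{-1}\sum_{k\ge4}(\lambda L)^k\bigr)^n=\sum_{n\ge1}\bigl(\lambda^3 L^4/(1-\lambda L)\bigr)^n$, which converges once $\lambda L<1$ and $\lambda^3 L^4<1-\lambda L$. (It matters here that $D$ is a supremum, not a sum: $\zz^d$ has infinitely many loops of each fixed length, but only their common bound enters.)

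The heart of the argument is the inequality
\[
F(\lambda)\le\Bigl(4\lambda^3+4\lambda+\frac{4|\beta|d}{\lambda^4}+\frac{4|\beta|d}{1-\lambda}\Bigr)F(\lambda).
\]
To obtain it, apply the limiting master loop equation (Theorem~\ref{master}), with $e$ the first edge of $l_1$, to an arbitrary $s$ with $\delta(s)\le\delta$; since the multiplicity $m$ on the left is at least $1$, $|\phi_\beta(s)-\psi_\beta(s)|$ is bounded by the sum of $|\phi_\beta-\psi_\beta|$ over the at most $|l_1|^2$ splittings of $s$ plus $|\beta|$ times the analogous sum over the at most $4(d-1)|l_1|$ deformations of $s$. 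For a splitting $s'$ one has $\iota(\delta(s'))\le\iota(\delta(s))-1$ (the key fact recalled in Section~\ref{sketch}: a split of $l_1$ yields two loops of total length at most $|l_1|$ while increasing the number of loops by one), and each component of $\delta(s')$ is dominated by a piece of a component of $\delta(s)$; by monotonicity $D(\delta(s'))$ is bounded by $D$ of an index $\delta'$ with $\iota(\delta')=\iota(\delta)-1$ obtained by splitting the first entry of $\delta$. For a deformation $s'$, merging a plaquette into $l_1$ changes $|l_1|$ by a bounded amount, so $D(\delta(s'))$ is bounded by $D$ of an index $\delta''$ with $\iota(\delta'')\le\iota(\delta)+3$. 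Multiplying by $\lambda^{\iota(\delta)}$, summing over $\delta$, and reorganizing the double sum — bounding, for each target index, the number of pairs $(\delta,\textrm{operation})$ producing it, and absorbing the polynomial factors in $|l_1|$ into spare powers of $\lambda$ — yields the displayed bound. Fixing $\lambda$ small enough that $4\lambda^3+4\lambda<\tfrac12$ and $F(\lambda)<\infty$, and then taking $\beta_0(L,d)$ small enough that $4|\beta|d(\lambda^{-4}+(1-\lambda)^{-1})<\tfrac12$, forces $F(\lambda)=0$ and hence $\phi_\beta\equiv\psi_\beta$.

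For existence and convergence: $|W_l|\le N$ gives $|\phi_{\Lambda_N,N,\beta}(s)|\le1$ for all $s$, so a diagonal argument extracts, from any subsequence of $(\phi_{\Lambda_N,N,\beta})_N$, a further subsequence along which $\phi_{\Lambda_N,N,\beta}(s)$ converges for every $s$; by Theorem~\ref{master} the limit satisfies the master loop equation, satisfies $|\phi_\beta(s)|\le1$, and has $\phi_\beta(\emptyset)=\lim N/N=1$. The uniqueness statement with $L=1$ shows all such subsequential limits agree, so $\phi_{\Lambda_N,N,\beta}(s)$ itself converges, and one sets $\beta_0(d):=\beta_0(1,d)$.

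I expect the main obstacle to be the reorganization step in the self-improving inequality: one must simultaneously sum over loop sequences, over splitting and deformation operations, and over the implicit choice of first edge, and collapse the result to a constant multiple of $F(\lambda)$. This requires careful bookkeeping of how many pairs $(s,\textrm{operation})$ can map to a given index, of the exact change in $\iota$ under each operation (splittings must strictly decrease it, which makes those contributions $O(\lambda)F(\lambda)$; deformations may increase it, but only boundedly, which is why they come multiplied by $|\beta|$), and of the fact that each estimate must hold uniformly over all $s$ sharing a given index, because $D$ is defined as a supremum rather than a sum.
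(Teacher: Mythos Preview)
The overall architecture matches the paper, but there is a genuine gap in the core step. You write that since $m\ge1$, the difference $|\phi_\beta(s)-\psi_\beta(s)|$ is bounded by the \emph{raw} sum over the (at most $|l_1|^2$) splitting terms and the (at most $4(d-1)|l_1|$) deformation terms, and that the resulting ``polynomial factors in $|l_1|$'' can be absorbed into spare powers of $\lambda$. This does not work. The change in index under a single splitting is exactly $1$ or $3$ (Lemmas~\ref{split1} and~\ref{split2}), and under a deformation at most $4$; these give you a fixed, finite supply of extra $\lambda$'s, which cannot absorb a factor growing like $|l_1|$. If you follow your route, after reorganizing by target index you pick up a coefficient comparable to $\delta'_1+\delta'_2$ in front of $\lambda^{\iota(\delta')}D(\delta')$, and you do not obtain $F(\lambda)\le cF(\lambda)$ for any constant $c$.

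The $1/m$ is not to be discarded: it is precisely what cancels the count of locations. For a fixed $x\in C_1$ there are at most two $y$'s at each distance $k$, and after replacing $T$ by $D$ the bound depends only on $k$, not on $x$; summing over $x\in A_1$ (or $B_1$) then contributes a factor $|A_1|/m\le1$, not $|A_1|$. Likewise the deformation sum has $|\cp^+(e)|\cdot|C_1|\le 2(d-1)m$ terms, and the $1/m$ reduces this to a constant depending only on $d$. Only after this cancellation does one get a bound on $D(\delta)$ with constant coefficients, and then injectivity and disjointness of the index maps $\theta_k,\eta_k,\alpha$ (and the map $\gamma$ handling the case where a deformation annihilates $l_1$, which you also omit and which accounts for the $4|\beta|d/(1-\lambda)$ term) yield the displayed inequality.
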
 
Actually, to prove the convergence of $\phi_{\Lambda_N, N,\beta}$ we need only the case $L=1$. The general case will be needed in a later section. 

We need a few lemmas about properties of loop operations before proving Theorem \ref{master2}.
\begin{lmm}\label{split1}
Let $l$ be a non-null loop and suppose that $x$ and $y$ are two distinct locations in $l$ such that $l$ admits a positive splitting at $x$ and $y$. Let $l_1 := \times^1_{x,y} l$ and $l_2 := \times^2_{x,y} l$. Then $l_1$ and $l_2$ are non-null loops, $|l_1|\le |l| - |y-x|$, and $|l_2|\le |y-x|$. 
\end{lmm}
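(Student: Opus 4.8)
The plan is to unwind the definition of positive splitting, read off the lengths of the two resulting closed paths, and then observe that passing to nonbacktracking cores introduces nothing new; this yields both the length bounds and the assertion that $l_1,l_2$ are non-null.

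\emph{Setup.} By definition, $l$ admits a positive splitting at $x,y$ exactly when the edge of $l$ at location $x$ coincides with the edge at location $y$; call this common edge $e$. Fix a closed-path representative of $l$ and, relabelling if necessary, assume $x<y$. Write $l=aebec$, where $a$ is the portion of $l$ at locations $1,\dots,x-1$, $b$ the portion at locations $x+1,\dots,y-1$, and $c$ the portion at locations $y+1,\dots,|l|$; thus $|b|=y-x-1$, so $|y-x|=|b|+1$, and $|l|=|a|+|b|+|c|+2$. Because the two occurrences of $e$ run between the same ordered pair of vertices, both $aec$ and $be$ are closed paths, so $l_1=\times^1_{x,y}l=[aec]$ and $l_2=\times^2_{x,y}l=[be]$ are bona fide loops; moreover $a$ and $c$ cannot both be empty, since then $l=ebe$ would force $v(e)=u(e)$, impossible for a nearest-neighbour edge of $\zz^d$. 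For the same reason $|b|\ge1$: if $|b|=0$ the two occurrences of $e$ would be adjacent in $l$, again forcing $v(e)=u(e)$.

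\emph{Length bounds.} Here I would just count: $|aec|=|a|+|c|+1=|l|-|b|-1=|l|-|y-x|$ and $|be|=|b|+1=|y-x|$. Since each backtrack erasure strictly decreases the length of a path, the nonbacktracking cores satisfy $|l_1|=|[aec]|\le|aec|=|l|-|y-x|$ and $|l_2|=|[be]|\le|be|=|y-x|$, as claimed.

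\emph{Non-nullity.} This is the only step requiring a genuine argument, and I expect it to be the main obstacle --- chiefly in treating the degenerate cases where $a$ or $c$ is empty. The point is that $aec$ and $be$ are already nonbacktracking, so their cores equal themselves. Indeed, every consecutive pair of edges occurring in $aec$ or in $be$ (including the terminal pair of the closed path) already occurs as a consecutive pair somewhere in $l$: the junction between $a$ and $e$ in $aec$ is the adjacency at the first occurrence of $e$ in $l$; the junction between $e$ and $c$ is the adjacency at the second occurrence; the junction between $b$ and $e$ in $be$ is likewise the adjacency at the second occurrence; and the terminal pair of $be$, namely $e$ followed by the first edge of $b$, is the adjacency immediately after the first occurrence of $e$ in $l$. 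The cases where $a$ or $c$ is empty reduce the relevant junction to the terminal adjacency of $l$ itself. Since $l$ is nonbacktracking, none of these pairs is a backtrack, so $[aec]=aec$ and $[be]=be$. Combined with $|aec|=|a|+|c|+1\ge1$ and $|be|=|b|+1\ge2$ from the Setup, this shows $l_1$ and $l_2$ have positive length, hence are non-null loops, completing the proof.
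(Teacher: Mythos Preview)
Your proof is correct and follows essentially the same route as the paper's. Both arguments write $l=aebec$, read off $|aec|=|l|-(y-x)$ and $|be|=y-x$, and then establish non-nullity by showing that the cycles $aec$ and $be$ are already nonbacktracking because every adjacent edge pair in them occurs as an adjacent pair in $l$; your version spells out the junctions a bit more explicitly, but the substance is the same.
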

\begin{proof}
Without loss of generality suppose that $x< y$. Write $l = aebec$, where the two $e$'s occur at locations $x$ and $y$. Then $l_1 = [aec]$ and $l_2 = [be]$. From this it is clear that 
\[
|l_1|\le |l| - |be| = |l|-(y-x)
\]
and 
\[
|l_2|\le |be| = y-x\, .
\]
To prove that $l_2$ is non-null, observe that since $l$ is a loop, the path $be$ has no interior backtracks. Moreover, $eb$ does not have any interior backtracks either. Therefore the cycle $be$ has no backtracks, which proves that $l_2 = be$, which has length strictly bigger than zero.

Similarly, since $l$ is a loop, the paths $ae$ and $ec$ have no interior backtracks, and the first edge of the path $ae$ cannot be the inverse of the last edge of the path $ec$. Thus the cycle $aec$ has no backtracks and so $l_1 = aec$, which has length strictly bigger than zero.
\end{proof}

\begin{lmm}\label{one}
If $l$ is a cycle of length at least four whose nonbacktracking core is null, then $l$ has at least two backtracks.
\end{lmm}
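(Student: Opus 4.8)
The plan is to combine a contradiction argument with strong induction on the length $n := |l|$. Two elementary observations come first. (i) A backtrack erasure lowers the length of a closed path by exactly $2$, so the length of the nonbacktracking core of $l$ is congruent to $n$ modulo $2$; since the core is null, $n$ is even. (ii) The number of backtracks of a cycle --- that is, the number of cyclic positions $i$ at which $e_{i+1} = e_i^{-1}$ (with the convention $e_{n+1} := e_1$) --- does not depend on which representative closed path we pick, since passing to a cyclically equivalent representative only shifts this set of positions. Also, since $|l| = n \ge 4 > 0$ whereas the core of $l$ is the null cycle, $l$ cannot already be nonbacktracking, so $l$ has at least one backtrack.

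Next I would assume, toward a contradiction, that $l$ has exactly one backtrack. By observation (ii) I may choose a representative $l = e_1 e_2 \cdots e_n$ whose unique backtrack is the terminal one: $e_1 = e_n^{-1}$, and $e_{i+1} \ne e_i^{-1}$ for every $1 \le i \le n-1$. Erasing this backtrack yields the closed path $l' := e_2 e_3 \cdots e_{n-1}$ of length $n-2$. The key bookkeeping step is to check that $l'$ inherits the relevant structure: its nonbacktracking core is again null (continuing to erase backtracks from $l'$ produces a nonbacktracking cycle obtained from $l$ by successive erasures, hence the core of $l$ by Lemma \ref{core}), and $l'$ has at most one backtrack, because the only pair of consecutive edges of $l'$ that was not already consecutive in $l$ is the wrap-around pair $(e_{n-1}, e_2)$, so the sole candidate backtrack of $l'$ is its terminal one.

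Finally I would split on the value of $n-2$, which by observation (i) is even and at least $2$, hence lies in $\{2, 4, 6, \dots\}$. If $n = 4$, then $l' = e_2 e_3$ is a closed path of length $2$ in $\zz^d$; since a directed nearest-neighbor edge is determined by its endpoints, closedness forces $e_3 = e_2^{-1}$, which is a backtrack of $l$ at location $2$ --- contradicting that location $n$ was the only backtrack of $l$. If $n \ge 6$, then $l'$ has length $\ge 4$ and null core, so the induction hypothesis gives $l'$ at least two backtracks, contradicting the bound from the previous paragraph. Either way we reach a contradiction, so $l$ has at least two backtracks.

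I do not anticipate a genuine obstacle: the argument is short and purely combinatorial. The only points that need care are the bookkeeping in the middle step --- invoking Lemma \ref{core} so that ``the core'' is well defined and unchanged by the single erasure, and verifying the ``at most one backtrack'' bound --- together with the small lattice fact that closes the $n=4$ base case, namely that a length-$2$ closed path in $\zz^d$ is necessarily of the form $e e^{-1}$.
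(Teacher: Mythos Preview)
Your proof is correct and follows essentially the same approach as the paper's: both argue by induction on $|l|$, choose a representative with a terminal backtrack, erase it to obtain $l'=e_2\cdots e_{n-1}$, invoke Lemma \ref{core} to see $[l']$ is null, and use the induction hypothesis together with the observation that the only backtrack of $l'$ not already present in $l$ is the terminal one. The only cosmetic difference is that you frame the induction step as a proof by contradiction (assuming exactly one backtrack and deriving two in $l'$), whereas the paper argues directly that one of the two backtracks of $l'$ guaranteed by induction must survive in $l$.
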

\begin{proof}
The proof is by induction on $|l|$. If $|l|=4$, then it is easy to verify using the nature of $\zz^d$ that there are only two possibilities. One is that $l$ is a plaquette, which has no backtracks. The other is that $l$ is the cycle corresponding to a closed path of the form $e_1 e_2 e_2^{-1}e_1^{-1}$, which has two backtracks. This proves the claim when $|l|=4$.

Now suppose that the claim has been proved for all cycles of length less than $n$ and let $l$ be a cycle of length $n$, where $n>4$. Since $\zz^d$ is a bipartite graph, cycles can have only even lengths; so we may take $n\ge 6$. 

If $[l]$ is null, then $l$ must have at least one backtrack. Choose a representative $e_1e_2\cdots e_n$ of $l$ that has a backtrack at location $n$, so that $e_n = e_1^{-1}$. Remove this backtrack to get $l' = e_2\cdots e_{n-1}$. By Lemma \ref{core} $[l']$ is also null, so the induction hypothesis implies that $l'$ must have at least two backtracks. If  $e_{n-1}=e_2^{-1}$, this would account for one of the backtracks. But the other backtrack, wherever it is, must have also been a backtrack in $l$. This shows that $l$ must be having at least two backtracks, completing the proof.
\end{proof}

\begin{lmm}\label{split2}
Let $l$ be a non-null loop and suppose that $x$ and $y$ are two distinct locations in $l$ such that $l$ admits a negative splitting at $x$ and $y$. Let $l_1 := \times^1_{x,y} l$ and $l_2 := \times^2_{x,y} l$. Then $l_1$ and $l_2$ are non-null loops, $|l_1|\le |l| - |y-x|-1$, and $|l_2|\le |y-x|-1$. 
\end{lmm}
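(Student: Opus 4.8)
The plan is to follow the same scheme as the proof of Lemma \ref{split1}, with one extra layer of care because in a negative splitting the two matched edges $e$ and $e^{-1}$ are both deleted (rather than retained), which is what accounts for the extra $-1$'s. Without loss of generality assume $x<y$ and write $l=aebe^{-1}c$, where the edge at location $x$ is $e$, the edge at location $y$ is $e^{-1}$, and $a,b,c$ are (possibly null) paths with $|a|=x-1$, $|b|=y-x-1$ and $|c|=|l|-y$; then $l_1=[ac]$ and $l_2=[b]$. The length bounds are then immediate: $|l_1|=|[ac]|\le |a|+|c|=|l|-(y-x)-1$ and $|l_2|=|[b]|\le |b|=(y-x)-1$. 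So the real content is that $l_1$ and $l_2$ are non-null loops.

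First I would record two structural facts. One, $b$ and $ac$ are genuinely closed paths: $b$ runs from $v(e)$ back to $v(e)$, while $ac$ runs from the base point of $l$, through $a$ to $u(e)=v(e^{-1})$, where $c$ begins, and back to the base point; since $\zz^d$ is bipartite, both $|b|$ and $|ac|$ are even. Two, because the representative $e_1\cdots e_n=aebe^{-1}c$ is a nonbacktracking closed path, no two edges consecutive within $a$, within $b$, or within $c$ form a backtrack, and the cyclic pair $(e_n,e_1)$ is not a backtrack either. Consequently the cycle $b$ has at most one backtrack (only its terminal pair is a "new" consecutive pair), and likewise the cycle $ac$ has at most one backtrack — its only potentially new consecutive pair is the junction of the last edge of $a$ with the first edge of $c$, or, in the degenerate case where $a$ or $c$ is null, the terminal pair of the resulting cycle.

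To finish, I would rule out the small lengths $0$ and $2$ and then invoke Lemma \ref{one}. For $b$: $|b|=0$ would give $l$ an interior backtrack $ee^{-1}$ at location $x$; $|b|=2$ forces $b=ff^{-1}$ (the only closed paths of length two in $\zz^d$), again an interior backtrack of $l$ at location $x+1$; hence $|b|\ge 4$, so by the structural fact $b$ has at most one backtrack, and Lemma \ref{one} forces $[b]=l_2$ to be non-null. For $ac$: $|ac|=0$ would make $l=ebe^{-1}$, which has a terminal backtrack; $|ac|=2$ forces $ac=ff^{-1}$, and in each of the three ways the two edges can be split between $a$ and $c$ this produces either an interior backtrack or the terminal backtrack of $l$; hence $|ac|\ge 4$, it has at most one backtrack, and Lemma \ref{one} forces $[ac]=l_1$ to be non-null. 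Finally $l_1,l_2$ are loops by definition of the nonbacktracking core.

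The step I expect to be the main obstacle is the bookkeeping in the last paragraph: one must enumerate precisely which consecutive pairs of $ac$ (and of $b$) are genuinely "new" relative to the nonbacktracking closed path $l$ — in particular handling the degenerate cases $a=\emptyset$ or $c=\emptyset$ separately — so as to be certain that at most one backtrack can occur. Once the "at most one backtrack" bound is pinned down, Lemma \ref{one} does all the remaining work.
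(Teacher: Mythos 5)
Your proposal is correct and follows essentially the same route as the paper's proof: write $l=aebe^{-1}c$, get the length bounds by counting, observe that $b$ and $ac$ are closed paths with at most one backtrack each (inherited nonbacktracking of $l$ leaves only the terminal pair of $b$, respectively the $a$--$c$ junction, as possible backtracks), rule out lengths $0$ and $2$ using that $l$ has no backtracks, and invoke Lemma \ref{one}. Your explicit case enumeration for $|ac|=2$ and the degenerate $a=\emptyset$ or $c=\emptyset$ cases is just a more detailed rendering of the same argument.
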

\begin{proof}
Without loss of generality suppose that $x< y$. Write $l = aebe^{-1}c$, where $e$ and $e^{-1}$ occur at locations $x$ and $y$ respectively. Then $l_1 = [ac]$ and $l_2 = [b]$. From this it is clear that 
\[
|l_1|\le |l| - |ebe^{-1}| = |l|-(y-x)-1
\]
and 
\[
|l_2|\le |b| = y-x-1\, .
\]
Since $b$ is sandwiched between $e$ and $e^{-1}$ in $l$, it must be a closed path. Since $l$ is a loop and $b$ is a closed path, therefore $|b|\ge 4$; because otherwise, $l$ would have a backtrack.  Again since $l$ is a loop, the closed path $b$ cannot have any interior backtracks (but may have a terminal backtrack). In particular, $b$ can have at most one backtrack. Thus by Lemma \ref{one}, $l_2 = [b]$ cannot be null. 

Similarly, note that $a$ and $c$ cannot both  be null, because otherwise $l$ has a backtrack. For the same reason, the paths $a$ and $c$ cannot have interior backtracks, nor can the last edge of $c$ be the inverse of the first edge of $a$. Therefore the only possible backtrack in $ac$ may be caused by the last edge of $a$ being the inverse of the first edge of $c$. It is easy to see that $ac$ must be a closed path, and therefore it must have length $\ge 4$, because otherwise $l$ has a backtrack. Thus by Lemma \ref{one}, $l_1 = [ac]$ cannot be null. 
\end{proof}

\begin{lmm}\label{deform}
If $l$ and $l'$ are two loops that can be merged together at locations $x$ and $y$, then $|l\oplus_{x,y} l'|$ and $|l\ominus_{x,y} l'|$ are both bounded above by $|l|+|l'|$. 
\end{lmm}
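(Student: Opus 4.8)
The plan is to reduce everything to the single observation that a backtrack erasure never increases the length of a closed path, so that $|[\rho]|\le|\rho|$ for every closed path $\rho$ (indeed the paper already notes that backtrack erasures strictly decrease length). Granting this, the lemma becomes a matter of writing down the lengths of the explicit concatenated paths that appear in the definitions of $l\oplus_{x,y}l'$ and $l\ominus_{x,y}l'$ and reading off the bound; there is no real obstacle, only bookkeeping.

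First I would treat the case where $l$ contains an edge $e$ at location $x$ and $l'$ contains the same edge $e$ at location $y$. Writing $l=aeb$ and $l'=ced$ as in the definition, we have $|a|+|b|=|l|-1$ and $|c|+|d|=|l'|-1$. Hence the path $aedceb$ has length $(|l|-1)+(|l'|-1)+2=|l|+|l'|$, so $|l\oplus_{x,y}l'|=|[aedceb]|\le|l|+|l'|$; and the path $ac^{-1}d^{-1}b$ has length $(|l|-1)+(|l'|-1)=|l|+|l'|-2$, so $|l\ominus_{x,y}l'|=|[ac^{-1}d^{-1}b]|\le|l|+|l'|-2\le|l|+|l'|$. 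Next I would treat the case where $l$ contains $e$ at $x$ and $l'$ contains $e^{-1}$ at $y$. Writing $l=aeb$ and $l'=ce^{-1}d$, again $|a|+|b|=|l|-1$ and $|c|+|d|=|l'|-1$, so the path $aec^{-1}d^{-1}eb$ has length $(|l|-1)+(|l'|-1)+2=|l|+|l'|$ and the path $adcb$ has length $(|l|-1)+(|l'|-1)=|l|+|l'|-2$; applying $|[\cdot]|\le|\cdot|$ in each subcase gives the two bounds exactly as before.

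Since the two doubled (respectively deleted) copies of $e$ are precisely what produces the $+2$ (respectively $-2$), the only point requiring any attention is the elementary fact that concatenation adds lengths, and I expect no genuine difficulty. I would remark in passing that the computation actually yields the sharper bound $|l\ominus_{x,y}l'|\le|l|+|l'|-2$ for negative mergers, which may be convenient later.
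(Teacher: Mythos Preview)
Your proposal is correct and is essentially the same argument the paper gives, just written out in detail: the paper's proof is a one-sentence remark that the bound is ``a straightforward verification using the definitions of positive and negative mergers and the fact that the length of a nonbacktracking core is always less than or equal to the length of the original closed path.'' Your case split and length counts are exactly that verification, and your observation that the negative merger actually satisfies the sharper bound $|l\ominus_{x,y}l'|\le |l|+|l'|-2$ is a harmless (and correct) bonus.
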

\begin{proof}
This is a straightforward verification using the definitions of positive and negative mergers and the fact that the length of a nonbacktracking core is always less than or equal to the length of the original closed path. 
\end{proof}

We now make two important definitions that will be useful throughout the remainder of this manuscript. We have already defined the `length' of a loop sequence. Define the `size' of a non-null sequence $s$ with minimal representation $(l_1,\ldots, l_n)$ as 
\[
\#s := n\, ,
\]
and the `index' of $s$ as 
\[
\iota(s) := |s| - \#s\,.
\]
The size and the index of the null loop sequence are define to be zero. The following two lemmas give two useful properties of the index.
\begin{lmm}\label{iota1}
For any $s$, $\iota(s)\ge 3\#s$. In particular, the index of any non-null loop sequence is strictly  positive. 
\end{lmm}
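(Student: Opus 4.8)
The plan is to reduce everything to the single observation that every non-null loop has length at least four, and then sum over the loops in the minimal representation.

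First I would recall why a non-null loop $l$ satisfies $|l|\ge 4$. By definition a loop is a nonbacktracking cycle, and since $\zz^d$ is a bipartite graph, every closed path in $\zz^d$ has even length; hence $|l|$ is even. A nonbacktracking cycle cannot have length $2$, because a closed path of length $2$ in $\zz^d$ is necessarily of the form $e e^{-1}$, which has a backtrack. Since $l$ is non-null, $|l|\ne 0$, so the smallest remaining possibility is $|l|=4$. (This same even-length/bipartite fact is already invoked in the proof of Lemma~\ref{one}, so it is available.)

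Next, let $s$ be a non-null loop sequence with minimal representation $(l_1,\ldots,l_n)$, so $\#s=n$ and $|s|=|l_1|+\cdots+|l_n|$. By definition of the minimal representation, each $l_i$ is non-null, so $|l_i|\ge 4$ for every $i$. Summing gives $|s|\ge 4n$, and therefore
\[
\iota(s)=|s|-\#s\ge 4n-n=3n=3\#s\,.
\]
For the null loop sequence both sides equal zero, so the inequality $\iota(s)\ge 3\#s$ holds for all $s$. Finally, if $s$ is non-null then $\#s\ge 1$, so $\iota(s)\ge 3>0$, which is the last assertion.

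There is no real obstacle here; the only point that needs care is the justification that a non-null loop has length at least four, which is precisely where bipartiteness of $\zz^d$ and the nonbacktracking property are used together.
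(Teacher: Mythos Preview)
Your proof is correct and follows the same approach as the paper: both reduce to the fact that every non-null loop has length at least four, then sum to get $|s|\ge 4\#s$ and hence $\iota(s)\ge 3\#s$. The paper simply asserts that the smallest non-null loop is a plaquette, while you spell out the bipartite/nonbacktracking justification more explicitly; otherwise the arguments are identical.
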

\begin{proof}
The smallest non-null loop in $\zz^d$ is a plaquette, which has four edges. This proves that $|s|\ge 4\#s$. Consequently, $\iota(s)\ge 4\#s - \#s = 3\#s$. 
\end{proof}
\begin{lmm}\label{iota2}
If $s'$ is obtained from $s$ by a splitting operation, then $\iota(s')<\iota(s)$.
\end{lmm}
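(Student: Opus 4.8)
The plan is to analyze how a splitting operation affects the two quantities $|s|$ (total length) and $\#s$ (number of component loops), and then combine the two effects. Let $s$ have minimal representation $(l_1,\ldots,l_n)$, and suppose $s'$ is obtained by splitting one of the component loops, say $l_1$, at two distinct locations $x,y$ into the pair $l_1' := \times^1_{x,y}l_1$ and $l_1'' := \times^2_{x,y}l_1$, so that (before passing to minimal representation) $s'$ is represented by $(l_1',l_1'',l_2,\ldots,l_n)$. By Lemma~\ref{split1} (positive splitting) and Lemma~\ref{split2} (negative splitting), both $l_1'$ and $l_1''$ are \emph{non-null} loops; this is the crucial input, since it guarantees that the minimal representation of $s'$ really does have $n+1$ components, i.e.\ $\#s' = n+1 = \#s + 1$.

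Next I would bound $|s'|$. In the positive case, Lemma~\ref{split1} gives $|l_1'| \le |l_1| - |y-x|$ and $|l_1''| \le |y-x|$, so $|l_1'| + |l_1''| \le |l_1|$, whence $|s'| = |l_1'|+|l_1''|+|l_2|+\cdots+|l_n| \le |l_1|+\cdots+|l_n| = |s|$. In the negative case, Lemma~\ref{split2} gives the sharper bounds $|l_1'| \le |l_1| - |y-x| - 1$ and $|l_1''| \le |y-x| - 1$, so $|l_1'|+|l_1''| \le |l_1| - 2$ and hence $|s'| \le |s| - 2 \le |s|$. In either case $|s'| \le |s|$.

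Combining, $\iota(s') = |s'| - \#s' \le |s| - (\#s + 1) = \iota(s) - 1 < \iota(s)$, which is the claim. (In fact this gives the slightly stronger statement $\iota(s') \le \iota(s) - 1$ for positive splittings and $\iota(s') \le \iota(s) - 3$ for negative splittings, though only the strict inequality is asserted.)

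I do not expect any serious obstacle here: the whole argument is a two-line bookkeeping computation once the length bounds of Lemmas~\ref{split1} and~\ref{split2} are in hand. The one point that genuinely requires those lemmas — rather than being obvious — is that the two pieces produced by a split are both non-null, so that $\#s$ strictly increases by exactly one; without that, $\#s'$ could a priori equal $\#s$ and the index need not drop. Everything else is immediate from the definitions of $|s|$, $\#s$ and $\iota(s)$.
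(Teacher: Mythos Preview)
Your proposal is correct and follows essentially the same approach as the paper: use Lemmas~\ref{split1} and~\ref{split2} to conclude that $\#s' = \#s + 1$ (since both split pieces are non-null) and $|s'|\le |s|$, then combine. The paper's proof is simply a more compressed version of what you wrote.
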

\begin{proof}
Lemmas \ref{split1} and \ref{split2} show that $\#s' = \#s + 1$.  Moreover, these lemmas also show that $|s'|\le |s|$. Thus, $\iota(s')<\iota(s)$. 
\end{proof}
 After this initial preparation, we are now ready to prove Theorem \ref{master2}. 

\begin{proof}[Proof of Theorem \ref{master2}]
Fix $\beta\in \rr$ and suppose that $\phi_\beta$ and $\psi_\beta$ are two functions that satisfy the conditions (a), (b) and (c) of the theorem statement. For each $s\in \cs$, let
\[
T(s) := |\phi_\beta(s)-\psi_\beta(s)|\, .
\]
If $(l_1,\ldots, l_n)$ is the minimal representation of $s$ and $\delta_i := |l_i|$, the vector $\delta(s):= (\delta_1,\ldots, \delta_n)$ will be called the `degree vector' of $s$. The degree vector of the null loop sequence is the `null sequence' $\emptyset$ that does not contain any element.

Let $\Delta$ be the set of all finite sequences of integers, including the null sequence. Clearly, any degree vector is an element of $\Delta$, but not all elements of $\Delta$ are degree vectors. In particular, if any component of $\delta$ is nonpositive, then $\delta$ is not a degree vector.  Given a vector $\delta = (\delta_1,\ldots, \delta_n)\in \Delta$, define
\[
|\delta| := \sum_{i=1}^n \delta_i\, , \ \ \#\delta := n\, , \ \ \text{and} \ \ \iota(\delta) := |\delta|-\#\delta\, .
\]
All of the above quantities are defined to be zero for the empty sequence. Note that $\iota(s)=\iota(\delta(s))$. 

Given two non-null elements $\delta = (\delta_1,\ldots, \delta_n)\in \Delta$ and $\delta' = (\delta_1',\ldots, \delta_m')\in \Delta$, we will say that $\delta \le \delta'$ if $m=n$ and $\delta_i\le\delta_i'$ for each $i$.

Let $\Delta^+$ be the subset of $\Delta$ consisting of all $\delta$ whose components are all $\ge 4$. In particular if $s$ is a non-null loop sequence, then $\delta(s)\in\Delta^+$. 

For each $\delta\in \Delta^+$, define
\[
D(\delta) = \sup_{s\in \cs\, : \, \delta(s)\le\delta} T(s)\, .
\]
(Note that for any $\delta\in \Delta^+$, there is at least one non-null $s$ such that $\delta(s)\le \delta$.) If $\delta\in \Delta\backslash \Delta^+$, let $D(\delta)=0$. For each $\lambda > 0$, define
\[
F(\lambda) := \sum_{\delta\in \Delta^+} \lambda^{\iota(\delta)} D(\delta) =  \sum_{\delta\in \Delta} \lambda^{\iota(\delta)} D(\delta)\, .
\]
We claim that  if $\lambda< (2L)^{-4/3}$, then $F(\lambda)<\infty$. To see this, first note that
\begin{align*}
F(\lambda) &= \sum_{r=1}^\infty \sum_{n=1}^r\sum_{\substack{\delta\in \Delta^+\, : \, |\delta|= r,\\
\#\delta = n}} \lambda^{r-n} D(\delta)\, .
\end{align*}
Given $r$ and $n$, the number of $\delta\in \Delta^+$ such that $|\delta|=r$ and $\#\delta=n$ is bounded above by the number of ways of choosing positive integers $\delta_1,\ldots,\delta_n$ such that $\sum\delta_i = r$. This is equal to the number of ways of choosing a strictly increasing sequence of  $n+1$ numbers from the set $\{0,1,\ldots, r\}$ with the restriction that the first number is $0$ and the last number is $r$, since $\delta_1,\ldots,\delta_n$ may be obtained as the successive differences between these numbers. This shows that% the number of ways of choosing degree vectors $\delta$ such that $|\delta| = r$ and $\#\delta = n$ is bounded above by
\[
|\{\delta\in \Delta^+ : |\delta| = r, \, \#\delta = n\}| \le {r\choose n-1}\, .
\]
Next, note that if $\delta\in \Delta^+$, then $r\ge 4n$, and therefore $r-n \ge 3r/4$. Finally, note that by the condition (b) in the statement of the theorem, $D(\delta)\le 2L^{|\delta|}$ for all $\delta$. Combining all of the above, we get that for any $\lambda <1$, 
\begin{align*}
\sum_{r=1}^\infty \sum_{n=1}^r\sum_{\substack{\delta\in \Delta^+\, : \, |\delta|= r,\\
\#\delta = n}} \lambda^{r-n} D(\delta) &\le \sum_{r=1}^\infty\sum_{n=1}^r 2L^r\lambda^{3r/4} {r\choose n-1}\\
&\le \sum_{r=1}^\infty 2L^r\lambda^{3r/4} 2^r\, .
\end{align*}
This proves the claim that if $\lambda < (2L)^{-4/3}$ then $F(\lambda)<\infty$.

Now take any $\delta = (\delta_1,\ldots,\delta_n)\in \Delta^+$ such that $|\delta|=r$. Let $s$ be a non-null loop sequence with minimal representation $(l_1,\ldots, l_n)$, such that $\delta(s)\le \delta$. Let $e$, $m$, $A_1$, $B_1$ and $C_1$ be as in the statement of Theorem \ref{mastern}.  Then by condition (c), 
\begin{align}
T(s) &\le \frac{1}{m}\sum_{x\in A_1,\, y\in B_1} T(\times_{x,y}^1l_1, \times_{x,y}^2l_1,l_2\ldots,l_n)+ \frac{1}{m}\sum_{x\in B_1,\, y\in A_1} T(\times_{x,y}^1l_1, \times_{x,y}^2l_1,l_2\ldots,l_n) \nonumber\\
&\quad + \frac{1}{m}\sum_{\substack{x,y\in A_1\\ x\ne y}} T(\times^1_{x,y} l_1,\times^2_{x,y} l_1,l_2,\ldots,l_n) + \frac{1}{m}\sum_{\substack{x,y\in B_1\\ x\ne y}} T(\times^1_{x,y} l_1,\times^2_{x,y} l_1,l_2,\ldots,l_n)  \nonumber \\
&\quad + \frac{|\beta|}{m} \sum_{p\in \cp^+(e)}\sum_{x\in C_1}T(l_1 \ominus_{x}p,l_2,\ldots,l_n)+\frac{|\beta|}{m} \sum_{p\in \cp^+(e)}\sum_{x\in C_1}T(l_1 \oplus_{x}p,l_2,\ldots,l_n) \, .\label{tt1}
\end{align}
We will bound the terms on the right-hand side one by one, freely using the fact that $D(\delta)=0$ if $\delta\in \Delta\backslash \Delta^+$. By Lemma \ref{split1},
\begin{align}
&\frac{1}{m}\sum_{\substack{x,y\in A_1\\x\ne y}} T(\times_{x,y}^1 l_1, \times^2_{x,y} l_1, l_2,\ldots, l_n)\nonumber\\
&\le \frac{1}{m}\sum_{x\in A_1} \sum_{y\in A_1\backslash\{x\}} D(\delta_1-|y-x|, |y-x|, \delta_2,\ldots,\delta_n)\nonumber\\
&\le \frac{2}{m}\sum_{x\in A_1} \sum_{k=1}^{\infty} D(\delta_1-k, k, \delta_2,\ldots,\delta_n)\le 2\sum_{k=1}^{\infty} D(\delta_1-k, k, \delta_2,\ldots,\delta_n)\, .\label{tt2}
\end{align}
The same bound holds if $A_1$ is replaced by $B_1$. Next, by Lemma \ref{split2},
\begin{align}
&\frac{1}{m}\sum_{\substack{x\in A_1, \, y\in B_1}} T(\times_{x,y}^1 l_1, \times^2_{x,y} l_1, l_2,\ldots, l_n)\nonumber\\
&\le \frac{1}{m}\sum_{x\in A_1} \sum_{y\in B_1} D(\delta_1-|x-y|-1, |x-y|-1, \delta_2,\ldots,\delta_n)\nonumber\\
&\le \frac{2}{m}\sum_{x\in A_1} \sum_{k=1}^{\infty} D(\delta_1-k-1, k-1, \delta_2,\ldots,\delta_n)\le 2\sum_{k=1}^{\infty} D(\delta_1-k-1, k-1, \delta_2,\ldots,\delta_n)\, ,\label{tt3}
\end{align}
and the same bound holds if $A_1$ and $B_1$ are swapped. 
If $p\in \cp^+(e)$ and $x\in C_1$, then by Lemma~\ref{deform}, $|l_1\ominus_x p|\le \delta_1+4$. Thus, if $l_1\ominus_xp$ is non-null, then 
\begin{align*}
T(l_1\ominus_xp, l_2,\ldots, l_n) \le D(\delta_1+4,\delta_2,\ldots,\delta_n)\, .
\end{align*}
On the other hand, if $l_1\ominus_x p$ is null, then
\begin{align*}
T(l_1\ominus_xp, l_2,\ldots, l_n) &= T(l_2,\ldots, l_n)\le D(\delta_2,\ldots,\delta_n)\, .
\end{align*}
Note that this is true even if $n=1$, in which case $(\delta_2,\ldots, \delta_n)$ is the empty sequence, because by condition (a) in the statement of the theorem, $T(\emptyset)=0$, and $D(\emptyset)=0$ by definition. Combining the two cases listed above, we get
\begin{align}\label{tt4}
T(l_1\ominus_xp, l_2,\ldots, l_n)\le D(\delta_1+4, \delta_2,\ldots,\delta_n) + D(\delta_2,\ldots,\delta_n)\, .
\end{align}
The same bound holds for $T(l_1\oplus_x p, l_2,\ldots, l_n)$. 

For each $k\ge 1$, define two maps $\theta_k$ and $\eta_k$ from $\Delta^+$ into $\Delta$ as 
\begin{align*}
\theta_k(\delta_1,\ldots,\delta_n) &:= (\delta_1-k,k,\delta_2,\ldots, \delta_n)\, ,\\
\eta_k(\delta_1,\ldots,\delta_n) &:= (\delta_1-k-1, k-1,\delta_2,\ldots,\delta_n)\, .
\end{align*}
Additionally, define maps $\alpha$ and $\gamma$ from $\Delta^+$ into $\Delta$ as 
\begin{align*}
\alpha(\delta_1,\ldots,\delta_n) &:= (\delta_1+4,\delta_2,\ldots, \delta_n)\, ,\\
\gamma(\delta_1,\ldots,\delta_n) &:= (\delta_2,\ldots,\delta_n)\, .
\end{align*}
(When $n=1$, $\gamma(\delta_1)=\emptyset$.) Then by \eqref{tt1}, \eqref{tt2}, \eqref{tt3}, \eqref{tt4} and the fact that $|\cp^+(e)|\le2(d-1)$, 
\begin{align*}
T(s) &\le 4\sum_{k=1}^\infty D(\theta_k(\delta)) + 4\sum_{k=1}^\infty D(\eta_k(\delta)) + 4|\beta| d D(\alpha(\delta)) + 4|\beta| d D(\gamma(\delta))\, .
\end{align*}
Since this bound holds for every non-null $s$ such that $\delta(s)\le \delta$, therefore the right-hand side is an upper bound for $D(\delta)$. Thus,
\begin{align}
F(\lambda) &\le \sum_{\delta\in \Delta^+} \lambda^{\iota(\delta)}\biggl(4\sum_{k=1}^\infty D(\theta_k(\delta)) + 4\sum_{k=1}^\infty D(\eta_k(\delta)) + 4|\beta| d D(\alpha(\delta)) + 4|\beta| d D(\gamma(\delta))\biggr)\, .\label{ff1}
\end{align}
Now note that the maps $\theta_1,\theta_2,\ldots$ are all injective, and their ranges are disjoint since the second component of any vector that is in the image of $\theta_k$ must be equal to $k$. Moreover, $\iota(\theta_k(\delta)) = \iota(\delta) - 1$ for any  $\delta\in \Delta^+$. Thus,
\begin{align}
\sum_{\delta\in \Delta^+}\sum_{k=1}^\infty \lambda^{\iota(\delta)} D(\theta_k(\delta)) &= \sum_{\delta\in \Delta^+}\sum_{k=1}^\infty \lambda^{\iota(\theta_k(\delta))+1} D(\theta_k(\delta))\nonumber\\
&\le \sum_{\delta\in \Delta} \lambda^{\iota(\delta)+1} D(\delta) = \lambda F(\lambda)\, . \label{ff2}
\end{align}
Similarly, note that the maps $\eta_1,\eta_2,\ldots$ are all injective, and their ranges are disjoint. Moreover, $\iota(\eta_k(\delta)) = \iota(\delta) -3$. Thus,
\begin{align}
\sum_{\delta\in \Delta^+}\sum_{k=1}^\infty \lambda^{\iota(\delta)} D(\eta_k(\delta)) &= \sum_{\delta\in \Delta^+}\sum_{k=1}^\infty \lambda^{\iota(\eta_k(\delta))+3} D(\eta_k(\delta))\nonumber\\
&\le \sum_{\delta\in \Delta} \lambda^{\iota(\delta)+3} D(\delta) = \lambda^3 F(\lambda)\, .\label{ff3}
\end{align}
Next, note that the map $\alpha$ is injective and $\iota(\alpha(\delta)) = \iota(\delta) + 4$. Thus,
\begin{align}
\sum_{\delta\in \Delta^+} \lambda^{\iota(\delta)} D(\alpha(\delta)) &= \sum_{\delta\in \Delta^+} \lambda^{\iota(\alpha(\delta))-4} D(\alpha(\delta))\nonumber\\
&\le \sum_{\delta\in \Delta} \lambda^{\iota(\delta)-4} D(\delta) =\lambda^{-4} F(\lambda)\, .\label{ff4}
\end{align}
Finally, note that for any $(\delta_1,\ldots,\delta_n)\in \Delta^+$, 
\[
\gamma^{-1}(\delta_1,\ldots, \delta_n) \subseteq\{(k, \delta_1,\ldots,\delta_n) : k\ge 1\}\, .
\]
Moreover, for any $\delta\in \Delta^+$, either $\gamma(\delta)=\emptyset$ or $\gamma(\delta)\in \Delta^+$. Thus, 
\begin{align}
\sum_{\delta\in \Delta^+} \lambda^{\iota(\delta)} D(\gamma(\delta)) &= \sum_{\delta'\in \Delta^+} \sum_{\delta\in \gamma^{-1}(\delta')}\lambda^{\iota(\delta)} D(\delta')\nonumber\\
&\le \sum_{\delta'\in \Delta^+} \sum_{k=1}^\infty \lambda^{\iota(\delta') + k -1} D(\delta')= \frac{F(\lambda)}{1-\lambda}\, .\label{ff5}
\end{align} 
Combining \eqref{ff1}, \eqref{ff2}, \eqref{ff3}, \eqref{ff4} and \eqref{ff5}, we get
\begin{align*}
F(\lambda)&\le \biggl(4\lambda^3 + 4\lambda + \frac{4|\beta| d}{\lambda^4} + \frac{4|\beta| d}{1-\lambda}\biggr)F(\lambda)\, .
\end{align*}
Suppose that $\lambda$ is so small that $\lambda < (2L)^{-4/3}$ and $4\lambda^3 + 4\lambda < 1$. Then it is possible to choose $\beta_0$ small enough so that if $|\beta| \le \beta_0$, then the coefficient of $F(\lambda)$ on the right is strictly less than~$1$. Since $F(\lambda)$ is nonnegative and finite (since $\lambda <(2L)^{-4/3}$), this would imply that $F(\lambda)=0$. Finally, to complete the proof, note that since $\delta(s)\in \Delta^+$ for any non-null $s$, therefore $T(s)\le D(\delta(s)) \le \lambda^{-\iota(\delta(s))} F(\lambda)=0$. This proves the uniqueness claim in the statement of the theorem. To prove the convergence of $\phi_{\Lambda_N, N, \beta}(s)$, recall that these functions are uniformly bounded  by $1$ in absolute value. Therefore an application of the uniqueness part of the theorem with $L=1$, together with a simple subsequence argument, completes the proof. 
\end{proof}
We end this section with a symmetrized version of the limiting master loop equation. This will be useful for certain purposes later. %Given a loop sequence $s$, let $\fs^+(s)$ be the set of positive splittings of $s$ and let $\fs^-(s)$ be the set of negative splittings of $s$. Similarly, let $\fd^+(s)$ be the set of positive deformations of $s$ and let $\fd^-(s)$ be the set of negative deformations of $s$. 
%If $(l_1,\ldots, l_n)$ is the minimal representation of $s$, let $\fs^+_k(s)$ be the set of loop sequences produced by splitting $l_k$, and define $\fs^-_k(s)$, $\fd^+_k(s)$ and $\fd^-_k(s)$ similarly. 
\begin{thm}\label{master3}
Let $\beta_0(d)$ and $\phi_\beta$ be as in Theorem \ref{master2}. Then for any non-null loop sequence $s$ and $|\beta|\le \beta_0(d)$, \begin{align*}
|s|\phi_\beta(s) &=  \sum_{s'\in \fs^-(s)} \phi_\beta(s')-  \sum_{s'\in \fs^+(s)} \phi_\beta(s') + \beta \sum_{s'\in \fd^-(s)} \phi_\beta(s')- \beta \sum_{s'\in \fd^+(s)} \phi_\beta(s')\, .
\end{align*}
\end{thm}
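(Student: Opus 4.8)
The plan is to derive Theorem~\ref{master3} by passing to the $N\to\infty$ limit in the symmetrized finite-$N$ master loop equation of Theorem~\ref{mastersymm}, in the same spirit in which the unsymmetrized limiting equation (Theorem~\ref{master}) was obtained from Theorem~\ref{mastern}. Fix $|\beta|\le\beta_0(d)$, so that by Theorem~\ref{master2} the limit $\phi_\beta(s')=\lim_{N\to\infty}\phi_{\Lambda_N,N,\beta}(s')$ exists for every loop sequence $s'$, where $\Lambda_N\uparrow\zz^d$ is the given sequence. Let $s$ be a non-null loop sequence with minimal representation $(l_1,\dots,l_n)$. For all sufficiently large $N$ every vertex of $\zz^d$ within distance $1$ of any $l_i$ lies in $\Lambda_N$, so Theorem~\ref{mastersymm} applies with $\Lambda=\Lambda_N$ and yields the stated identity for $\phi=\phi_{\Lambda_N,N,\beta}$, in which the twisting sums appear with coefficient $\pm1$, the splitting sums with coefficient $\pm N$, the merger sums with coefficient $\pm\frac1N$, the deformation sums with coefficient $\pm N\beta$, and the left-hand side is $(N-1)|s|\,\phi(s)$.

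Next I would divide both sides of that identity by $N$ and let $N\to\infty$. The operation sets $\ftw^\pm(s)$, $\fs^\pm(s)$, $\fst^\pm(s)$, $\fd^\pm(s)$ are all finite --- there are only finitely many ordered pairs of locations among the finitely many component loops, and $|\cp^+(e)|\le 2(d-1)$ for every edge $e$ --- and $|\phi_{\Lambda_N,N,\beta}(\cdot)|\le 1$ uniformly in $N$; hence the limit may be taken term by term, using the pointwise convergence $\phi_{\Lambda_N,N,\beta}\to\phi_\beta$ from Theorem~\ref{master2}. After division by $N$ the left-hand side tends to $|s|\,\phi_\beta(s)$; the twisting sums acquire a prefactor $\frac1N$ and vanish, the merger sums acquire a prefactor $\frac1{N^2}$ and vanish, while the splitting and deformation sums keep their prefactors $\pm1$ and $\pm\beta$ and converge to $\sum_{s'\in\fs^-(s)}\phi_\beta(s')-\sum_{s'\in\fs^+(s)}\phi_\beta(s')$ and $\beta\sum_{s'\in\fd^-(s)}\phi_\beta(s')-\beta\sum_{s'\in\fd^+(s)}\phi_\beta(s')$ respectively. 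Collecting these limits gives exactly the identity asserted in the theorem.

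I do not expect a genuine obstacle here; the only points that need care are the bookkeeping of which prefactors survive division by $N$ (immediate from the coefficients in Theorem~\ref{mastersymm}) and the justification that the finite sums commute with the limit (immediate from the finiteness of the operation sets together with the uniform bound and Theorem~\ref{master2}). If one preferred not to reuse the symmetrized finite-$N$ equation, an equivalent route would be to symmetrize the limiting equation of Theorem~\ref{master} directly: apply it with $l_1$ and its first edge replaced by an arbitrary component loop $l_k$ and an arbitrary edge of $l_k$ --- legitimate because $\phi_\beta$ is symmetric in the $l_i$ and independent of the choice of ``first edge'' of a cycle --- then sum over one representative edge from each edge-equivalence class in $l_k$ so that the total multiplicity equals $|l_k|$, and finally sum over $k$ using $\sum_k|l_k|=|s|$. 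This is the same combinatorial argument already used to pass from Theorem~\ref{mastern} to Theorem~\ref{mastersymm}, now with the twisting and merger terms already absent by the first route's limiting argument.
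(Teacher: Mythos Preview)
Your proposal is correct and follows essentially the same approach as the paper: divide both sides of the symmetrized finite-$N$ equation (Theorem~\ref{mastersymm}) by $N$ and let $N\to\infty$, so that the twisting and merger terms vanish while the splitting and deformation terms survive. You have supplied more detail than the paper does (which states only that one divides by $N$ and passes to the limit), and your alternative route via symmetrizing Theorem~\ref{master} is also valid.
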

\begin{proof}
This is proved by simply dividing both sides in Theorem \ref{mastersymm} by $N$ and letting $N$ tend to infinity. 
\end{proof}

\section{Power series expansion}\label{power}
The goal of this section is to show that the function $\phi_\beta$ of Theorem \ref{master2} has a convergent power series expansion in $\beta$. This is a technical step that is required for the proof of Theorem \ref{mainthm}. We will start by defining a collection of real numbers $a_k(s)$, where $k$ runs over nonnegative integers and $s$ runs over loop sequences. The definition will involve a nested induction: Having defined $a_i(s)$ for all $i<k$ and all $s$, we will define $a_k(s)$ by induction over $\iota(s)$. Note that we can do induction over the index because by Lemma \ref{iota1}, the index of a non-null loop sequence is always a positive integer.

First, define $a_0(\emptyset)=1$ and $a_0(s) = 0$ for every non-null $s$. Next take any $k\ge 1$ and suppose that $a_i(s)$ has been defined for every $i< k$ and every $s$. Let $a_k(\emptyset)=0$. Take any non-null $s$ with minimal representation $(l_1,\ldots, l_n)$. Let $e$, $m$, $A_1$, $B_1$ and $C_1$ be as in the statement of Theorem~\ref{mastern}. Suppose that $a_k(s')$ has been defined for every $s'$ with $\iota(s')<\iota(s)$. In particular, Lemma \ref{iota2}  shows that if $s'$ is obtained by splitting $s$, then $\iota(s') < \iota(s)$ and hence $a_k(s')$ has already been defined. Moreover, if $s$ has the smallest possible index, then again by Lemma~\ref{iota2}, it cannot be split. Therefore the following definition makes sense:
\begin{align*}
a_k(s) &:= \frac{1}{m}\sum_{x\in A_1, \, y\in B_1} a_k(\times_{x,y}^1 l_1, \times_{x,y}^2 l_1,l_2,\ldots,l_n) + \frac{1}{m}\sum_{x\in B_1, \, y\in A_1} a_k(\times_{x,y}^1 l_1, \times_{x,y}^2 l_1,l_2,\ldots,l_n) \\
&\quad  - \frac{1}{m}\sum_{\substack{x,y\in A_1\\ x\ne y}} a_k(\times^1_{x,y} l_1,\times^2_{x,y} l_1,l_2,\ldots,l_n) - \frac{1}{m}\sum_{\substack{x,y\in B_1\\ x\ne y}} a_k(\times^1_{x,y} l_1,\times^2_{x,y} l_1,l_2,\ldots,l_n)  \\
&\quad + \frac{1}{m} \sum_{p\in \cp^+(e)}\sum_{x\in C_1}a_{k-1}(l_1 \ominus_{x}p,l_2,\ldots,l_n)  - \frac{1}{m} \sum_{p\in \cp^+(e)}\sum_{x\in C_1}a_{k-1}(l_1 \oplus_{x}p,l_2,\ldots,l_n) \, .
%\frac{2}{|s|}\sum_{s'\in \fs^-(s)} a_k(s') - \frac{2}{|s|}\sum_{s'\in \fs^+(s)} a_k(s') + \frac{1}{|s|}\sum_{s'\in \fd^-(s)} a_{k-1}(s') - \frac{1}{|s|}\sum_{s'\in \fd^+(s)} a_{k-1}(s')\, .
\end{align*}
A crucial part of the argument involves the use of Catalan numbers. This is inspired by a similar use of Catalan numbers in \cite{collinsetal09}. Recall the definition of the Catalan numbers: $C_0=1$, and for~$i\ge 1$, 
\[
C_i = \frac{1}{i+1}{2i\choose i} = {2i\choose i} - {2i \choose i+1}\,.
\]
We will use a well known recursion relation for Catalan numbers: For each $i\ge 0$, 
\begin{equation}\label{catalan}
C_{i+1} = \sum_{j=0}^i C_j C_{i-j}\,.
\end{equation}
We will also use the facts that $C_i$ is increasing in $i$ and that 
\begin{equation}\label{catalan2}
C_{i+1}\le 4C_i
\end{equation}
for each $i\ge 0$.
\begin{lmm}\label{aklmm}
There is a constant $K(d)$ such that if $s$ is a loop sequence and $\delta = (\delta_1,\ldots,\delta_n)$ is its degree vector, then
\[
|a_k(s)| \le K(d)^{5k+\iota(\delta)} C_{\delta_1-1}\cdots C_{\delta_n-1}\, ,
\]
where $C_i$ is the $i^{\mathrm{th}}$ Catalan number. The product of Catalan numbers is interpreted as $1$ when~$s=\emptyset$.
\end{lmm}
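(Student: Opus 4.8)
The plan is to prove the bound by a nested induction that mirrors the nested induction defining the $a_k$'s: an outer induction on $k$ and, for each fixed $k$, an inner induction on the index $\iota(s)$, which is legitimate since by Lemma \ref{iota1} the index is a positive integer and, by Lemma \ref{iota2}, splitting strictly lowers it. Throughout I take $K=K(d)\ge 1$ and use freely that the Catalan numbers are nondecreasing and bounded below by $1$. The base case $k=0$ is immediate ($a_0(\emptyset)=1$, $a_0(s)=0$ otherwise), and for $k\ge 1$ the case $s=\emptyset$ is immediate since $a_k(\emptyset)=0$. So I fix $k\ge 1$ and a non-null $s$ with minimal representation $(l_1,\dots,l_n)$ and degree vector $\delta=(\delta_1,\dots,\delta_n)$, assume the claimed estimate for every $a_{k-1}(\cdot)$ and for every $a_k(s')$ with $\iota(s')<\iota(s)$, and bound the six groups of terms in the defining recursion one at a time.

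For the four splitting sums I would argue as in the proof of Theorem \ref{master2}. Writing $k'=|x-y|$ for the gap between the two split locations, Lemmas \ref{split1} and \ref{split2} say the two new loops have lengths dominated by $(\delta_1-k',k')$ (positive splitting) or $(\delta_1-k'-1,k'-1)$ (negative splitting), and in both cases the new loop sequence $s'$ has $\#s'=n+1$, $|s'|\le|s|$, hence $\iota(s')\le\iota(\delta)-1$. By the inner induction hypothesis and monotonicity of Catalan numbers, $|a_k(s')|\le K^{5k+\iota(\delta)-1}\,C_{a-1}C_{b-1}\prod_{i\ge 2}C_{\delta_i-1}$ with $(a,b)$ dominated as above. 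Summing over the second location for fixed first location and dividing by $m=|C_1|$ exactly as in \eqref{tt2}--\eqref{tt3}, the crucial identity is the Catalan recursion
\[
\sum_{k'\ge 1}C_{\delta_1-k'-1}\,C_{k'-1}=\sum_{j=0}^{\delta_1-2}C_{\delta_1-2-j}\,C_j=C_{\delta_1-1},
\]
and likewise $\sum_{k'}C_{\delta_1-k'-2}\,C_{k'-2}\le C_{\delta_1-3}\le C_{\delta_1-1}$; so each of the four splitting sums is at most an absolute constant times $K^{5k+\iota(\delta)-1}\prod_{i=1}^n C_{\delta_i-1}$.

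For the two deformation sums, Lemma \ref{deform} gives $|l_1\ominus_x p|,\,|l_1\oplus_x p|\le\delta_1+4$. When $l_1\ominus_x p$ is non-null the resulting sequence $s''$ has $\iota(s'')\le\iota(\delta)+4$, so by the outer induction hypothesis together with \eqref{catalan2} applied four times,
\[
|a_{k-1}(l_1\ominus_x p,l_2,\dots,l_n)|\le K^{5(k-1)+\iota(\delta)+4}\,C_{\delta_1+3}\prod_{i\ge 2}C_{\delta_i-1}\le 4^4\,K^{5k+\iota(\delta)-1}\prod_{i=1}^n C_{\delta_i-1},
\]
while if $l_1\ominus_x p$ is null the term is $a_{k-1}(l_2,\dots,l_n)$, which since $\iota((\delta_2,\dots,\delta_n))=\iota(\delta)-(\delta_1-1)\le\iota(\delta)-3$ and $C_{\delta_1-1}\ge 1$ is also bounded by $K^{5k+\iota(\delta)-1}\prod_{i=1}^n C_{\delta_i-1}$. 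Since $|\cp^+(e)|\le 2(d-1)$ and the prefactor $\tfrac1m$ cancels the sum over the $m$ locations of $C_1$, the same bound $A'\,d\,K^{5k+\iota(\delta)-1}\prod C_{\delta_i-1}$ holds for the deformation contribution, for an absolute constant $A'$.

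Adding the six contributions gives $|a_k(s)|\le A\,d\,K^{5k+\iota(\delta)-1}\prod_{i=1}^n C_{\delta_i-1}$ for an absolute constant $A$, and choosing $K(d):=\max\{1,\,Ad\}$ closes the induction. The one point that really needs care — and the only genuine subtlety — is the bookkeeping that forces a strict drop in every branch: a splitting lowers $\iota$ by at least $1$, and a deformation lowers $5k$ by $5$ while raising $\iota$ by at most $4$, so in every case $5k+\iota$ decreases by exactly one, which furnishes precisely the single spare factor of $K$ needed to absorb the combinatorial constants $\,4(d-1)$, $4^4$ and the like. The Catalan recursion \eqref{catalan} is exactly the identity that lets the split-loop length bounds from Lemmas \ref{split1}--\ref{split2} telescope back to $C_{\delta_1-1}$, and \eqref{catalan2} plays the analogous role for the "$+4$" length inflation caused by a deformation; note that without the coefficient $5$ (rather than, say, $1$) in front of $k$ there would be no room on the deformation side to absorb the dimensional constant, so the specific form $5k+\iota(\delta)$ of the exponent is what makes the induction close.
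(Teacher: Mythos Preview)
Your proof is correct and follows essentially the same approach as the paper: the same nested induction on $k$ and $\iota(s)$, the same use of Lemmas \ref{split1}--\ref{split2} together with the Catalan recursion \eqref{catalan} to collapse the splitting sums, and the same use of Lemma \ref{deform} with \eqref{catalan2} for the deformation terms. The only differences are cosmetic: you use the uniform drop $\iota(s')\le\iota(\delta)-1$ for both kinds of splitting (the paper records the sharper $\iota(\delta)-3$ for negative splitting), and you treat the case $l_1\ominus_x p=\emptyset$ explicitly, whereas the paper absorbs it silently into the same bound. Your closing remark explaining why the exponent must be $5k+\iota(\delta)$ rather than, say, $k+\iota(\delta)$ is a nice addition.
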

\begin{proof}
The number $K=K(d)$ will be chosen at the end of the proof. Assuming that $K\ge 1$ is given, we will prove the claim by the same nested induction that was used to define $a_k(s)$.

Clearly the claim is true when $k=0$, irrespective of $s$. It also holds for any $k$ when $s=\emptyset$. Take any $k\ge 1$ and non-null $s$, and suppose that we have proved that the bound on $|a_i(s)|$ holds for all $i<k$ and $s\in \cs$, and the bound on $|a_k(s')|$ holds for all $s'$ with $\iota(s') <\iota(s)$. As usual let $(l_1,\ldots, l_n)$ be the minimal representation of $s$ and let $e$, $m$, $A_1$, $B_1$ and $C_1$ be as in the statement of Theorem~\ref{mastern} (not to be confused with the Catalan number $C_1$, which will be clear from the context).

By the induction hypothesis, Lemma \ref{split2}, the identity \eqref{catalan} and the monotonicity of Catalan numbers,
\begin{align}
&\frac{1}{m}\sum_{x\in A_1,\, y\in B_1} |a_k(\times_{x,y}^1 l_1,\times_{x,y}^2l_1,l_2,\ldots, l_n) |\nonumber\\
&\le \frac{1}{m}\sum_{x\in A_1,\, y\in B_1}K^{5k+\iota(\delta)-3} C_{\delta_1 - |x-y|-2} C_{|x-y|-2}C_{\delta_2-1}\cdots C_{\delta_n-1}\nonumber\\
&\le \frac{2}{m}\sum_{x\in A_1} \sum_{r=2}^{\delta_1-2} K^{5k+\iota(\delta)-3}C_{\delta_1 - r-2} C_{r-2}C_{\delta_2-1}\cdots C_{\delta_n-1}\nonumber\\
&\le 2K^{5k+\iota(\delta)-3}  C_{\delta_1-1}\cdots C_{\delta_n-1}\, .\label{cc1}
\end{align}
Similarly, by the induction hypothesis, Lemma \ref{split1} and the identity \eqref{catalan}, 
\begin{align}
&\frac{1}{m}\sum_{\substack{x,y\in A_1\\ x\ne y}} |a_k(\times_{x,y}^1 l_1,\times_{x,y}^2l_1,l_2,\ldots, l_n) |\nonumber\\
&\le \frac{1}{m}\sum_{\substack{x,y\in A_1\\ x\ne y}}K^{5k+\iota(\delta)-1} C_{\delta_1 - |x-y|-1} C_{|x-y|-1}C_{\delta_2-1}\cdots C_{\delta_n-1}\nonumber\\
&\le \frac{2}{m}\sum_{x\in A_1} \sum_{r=1}^{\delta_1-1} K^{5k+\iota(\delta)-1}C_{\delta_1 - r-1} C_{r-1}C_{\delta_2-1}\cdots C_{\delta_n-1}\nonumber\\
&\le 2K^{5k+\iota(\delta)-1} C_{\delta_1-1}\cdots C_{\delta_n-1}\, .\label{cc2}
\end{align}
The same bounds holds if $A_1$ and $B_1$ are swapped. Next, take any $p\in \cp^+(e)$ and $x\in C_1$. Let $s' := (l_1\oplus_x p, l_2,\ldots, l_n)$ and let $\delta'$ be the degree vector of $s'$. By the induction hypothesis, Lemma~\ref{deform}, the monotonicity of Catalan numbers, and the inequality \eqref{catalan2}, 
\begin{align}
|a_{k-1}(l_1\oplus_{x} p , l_2,\ldots, l_n)| &\le K^{5(k-1)+\iota(\delta')} C_{\delta_1+3} C_{\delta_2-1}\cdots C_{\delta_n-1}\nonumber\\
&\le K^{5k+ \iota(\delta)-1}4^{4}C_{\delta_1-1}\cdots C_{\delta_n-1}\, .\label{cc3} 
\end{align}
The same bound holds if $\oplus$ is replaced by $\ominus$. Using \eqref{cc1}, \eqref{cc2} and \eqref{cc3} and the definition of $a_k(s)$, we get 
\begin{align*}
|a_k(s)| &\le (4K^{-3} + 4K^{-1} + 1024 d K^{-1} ) K^{5k + \iota(\delta)} C_{\delta_1-1}\cdots C_{\delta_n-1}\, .
\end{align*} 
Choosing $K$ so large that the term inside the bracket is $\le 1$,  we are done. 
\end{proof}
\begin{lmm}\label{a0}
Take any non-null loop sequence $s$ with minimal representation $(l_1,\ldots, l_n)$. Let $e$, $m$, $A_1$, $B_1$ and $C_1$ be as in the statement of Theorem \ref{mastern}. Then 
\begin{align*}
a_0(s) &= \frac{1}{m}\sum_{x\in A_1, \, y\in B_1} a_0(\times_{x,y}^1 l_1, \times_{x,y}^2 l_1,l_2,\ldots,l_n) + \frac{1}{m}\sum_{x\in B_1, \, y\in A_1} a_0(\times_{x,y}^1 l_1, \times_{x,y}^2 l_1,l_2,\ldots,l_n) \\
&\quad - \frac{1}{m}\sum_{\substack{x,y\in A_1\\ x\ne y}} a_0(\times^1_{x,y} l_1,\times^2_{x,y} l_1,l_2,\ldots,l_n) - \frac{1}{m}\sum_{\substack{x,y\in B_1\\ x\ne y}} a_0(\times^1_{x,y} l_1,\times^2_{x,y} l_1,l_2,\ldots,l_n)\, .
\end{align*}
\end{lmm}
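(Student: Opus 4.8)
The plan is to read off the identity directly from the definition of $a_0$, using only the fact that splittings of a non-null loop produce two non-null loops. First I would recall that the left-hand side is, by definition, $a_0(s)=0$, since $s$ is non-null by hypothesis. So it suffices to show that each of the summands on the right-hand side is zero.

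Next I would examine a generic summand, say $a_0(\times^1_{x,y} l_1, \times^2_{x,y} l_1, l_2,\ldots,l_n)$; the four families of summands (the two positive-splitting sums over $x,y\in A_1$ or $x,y\in B_1$ with $x\ne y$, and the two negative-splitting sums over $x\in A_1,\,y\in B_1$ and $x\in B_1,\,y\in A_1$) are all handled in exactly the same way. Since $A_1$ is the set of locations of $l_1$ carrying $e$ and $B_1$ the set carrying $e^{-1}$, a pair with one index in $A_1$ and one in $B_1$ is a legitimate negative-splitting pair for $l_1$ (and automatically $x\ne y$, as $A_1\cap B_1=\emptyset$), while a pair of distinct indices both in $A_1$ or both in $B_1$ is a legitimate positive-splitting pair. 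By Lemma \ref{split1} (positive case) and Lemma \ref{split2} (negative case), $\times^1_{x,y} l_1$ and $\times^2_{x,y} l_1$ are both non-null loops. Hence the loop sequence $(\times^1_{x,y} l_1, \times^2_{x,y} l_1, l_2,\ldots,l_n)$ is non-null, so its value under $a_0$ is $0$ by the defining convention $a_0(\emptyset)=1$ and $a_0(s')=0$ for non-null $s'$.

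Therefore every term on the right-hand side vanishes, the right-hand side equals $0$, and this matches $a_0(s)=0$, giving the claim. I do not expect any genuine obstacle here; the only point requiring a word of care is that every index pair occurring under the summation signs really does correspond to a splitting of $l_1$, which is immediate from the definitions of $A_1$, $B_1$ together with $A_1\cap B_1=\emptyset$, so that Lemmas \ref{split1} and \ref{split2} apply to each term.
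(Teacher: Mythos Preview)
Your argument is correct and matches the paper's own proof essentially line for line: both observe that $a_0(s)=0$ for non-null $s$, then invoke Lemmas~\ref{split1} and~\ref{split2} to see that every splitting of $l_1$ produces two non-null loops, so every summand on the right vanishes. The paper is slightly terser and also remarks that if $s$ cannot be split the right-hand side is an empty sum; you might add that parenthetical, but otherwise there is nothing to change.
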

\begin{proof}
Recall that $a_0(s)=0$ for every non-null $s$. By Lemmas \ref{split1} and \ref{split2}, every splitting of $s$ produces a non-null sequence. Thus, both sides of the claimed identity are equal to zero. (If $s$ cannot be split, then the right-hand side is an empty sum and hence zero by convention.)
\end{proof}
The following theorem is the main result of this section. 
\begin{thm}\label{akthm}
Let $\beta_0(d)$ and $\phi_\beta$ be as in Theorem \ref{master2}. There exists $0<\beta_1(d)\le  \beta_0(d)$ such that for any $s$, the series 
\[
\sum_{k=0}^\infty a_k(s)\beta^k
\]
converges absolutely when $|\beta|\le \beta_1(d)$ and is equal to $\phi_\beta(s)$. Moreover, when $|\beta|\le \beta_1(d)$, 
\[
\sum_{k=0}^\infty |a_k(s)||\beta|^k \le C(d)^{|s|},
\]
where $C(d)$ depends only on $d$. 
\end{thm}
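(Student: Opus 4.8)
The plan is to prove Theorem \ref{akthm} in two stages: first establish absolute convergence of the series $\psi_\beta(s):=\sum_k a_k(s)\beta^k$ for small $|\beta|$, then show that the resulting function $\psi_\beta$ satisfies the three conditions of Theorem \ref{master2}, so that uniqueness forces $\psi_\beta=\phi_\beta$.

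For the convergence, I would invoke Lemma \ref{aklmm}: $|a_k(s)|\le K(d)^{5k+\iota(\delta)}\prod_i C_{\delta_i-1}$ where $\delta=\delta(s)$. Since $C_{\delta_i-1}\le 4^{\delta_i-1}$, the product of Catalan numbers is bounded by $4^{|s|}=4^{\iota(s)+\#s}$, which is a fixed constant once $s$ is fixed. Therefore $|a_k(s)\beta^k|\le C(s)\, (K(d)^5|\beta|)^k$, which is summable as soon as $|\beta|<K(d)^{-5}$. So I would set $\beta_1(d):=\min\{\beta_0(d),\tfrac{1}{2}K(d)^{-5}\}$ (any explicit choice strictly below $K(d)^{-5}$ works), giving absolute convergence of $\psi_\beta(s)$ for every $s$ when $|\beta|\le\beta_1(d)$.

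Next I would verify conditions (a)--(c) of Theorem \ref{master2} for $\psi_\beta$ with an appropriate $L$. Condition (a) is immediate: $a_0(\emptyset)=1$ and $a_k(\emptyset)=0$ for $k\ge1$, so $\psi_\beta(\emptyset)=1$. For condition (b), using the bound above, $|\psi_\beta(s)|\le \sum_k K(d)^{5k+\iota(\delta)}4^{|s|}|\beta|^k = K(d)^{\iota(s)}4^{|s|}\sum_k(K(d)^5|\beta|)^k$; choosing $\beta_1(d)$ so that $K(d)^5|\beta|\le 1/2$ makes the geometric sum at most $2$, giving $|\psi_\beta(s)|\le 2\cdot (4K(d))^{|s|}\le L^{|s|}$ for $L:=L(d)$ a sufficiently large constant depending only on $d$. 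For condition (c), the master loop equation of Theorem \ref{master} for $\psi_\beta$: starting from the defining recursion for $a_k(s)$, multiply both sides by $\beta^k$ and sum over $k\ge0$. For $k\ge1$ the recursion has the splitting terms (with $a_k$) and the deformation terms (with $a_{k-1}$, carrying a factor $\beta^{k-1}$); for $k=0$ Lemma \ref{a0} shows the same splitting identity holds with no deformation term. Summing, the splitting terms assemble into $\sum_x\psi_\beta(\cdots)$ and the deformation terms into $\beta\sum_k a_{k-1}(\cdots)\beta^{k-1}=\beta\,\psi_\beta(\cdots)$, which is exactly the content of Theorem \ref{master}, provided the interchange of summation over $k$ with the finite sums over split/deformed sequences is justified — and it is, because each inner sum is finite and each series converges absolutely by the first stage. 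Once condition (c) holds, Theorem \ref{master2} applied with this $L=L(d)$ (shrinking $\beta_1(d)$ further if needed so that $|\beta|\le\beta_0(L(d),d)$) yields $\psi_\beta=\phi_\beta$, completing the proof.

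The main obstacle is bookkeeping rather than conceptual: one must be careful that the constant $K(d)$ from Lemma \ref{aklmm} and the constant $L(d)$ needed for condition (b) are mutually compatible with the threshold $\beta_0(L,d)$ from Theorem \ref{master2}, so that a single $\beta_1(d)>0$ works for all of convergence, the bound (b), and the applicability of the uniqueness theorem; this is a matter of taking $\beta_1(d)$ to be the minimum of finitely many positive quantities, each depending only on $d$. A secondary point requiring care is the justification of summing the $a_k$-recursion over $k$ — in particular that the $k=0$ term is handled separately via Lemma \ref{a0} (since there is no $a_{-1}$), and that the rearrangement of the doubly-indexed sum is legitimate, which follows from the absolute convergence already established together with the finiteness of the loop-operation sums ($|A_1|,|B_1|,|C_1|\le|l_1|$ and $|\cp^+(e)|\le 2(d-1)$).
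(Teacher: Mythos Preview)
Your proposal is correct and follows essentially the same route as the paper: use Lemma \ref{aklmm} to get absolute convergence for $|\beta|<K(d)^{-5}$, then verify conditions (a)--(c) of Theorem \ref{master2} for $\psi_\beta$ with $L=4K(d)$ (or a constant multiple), summing the $a_k$-recursion over $k$ with Lemma \ref{a0} handling the $k=0$ term, and conclude $\psi_\beta=\phi_\beta$ by uniqueness. The only cosmetic difference is that the paper notes $a_0(s)=0$ for non-null $s$, so the geometric sum starts at $k=1$ and is bounded by $1$ when $|\beta|\le(2K)^{-5}$, giving exactly $(4K)^{|s|}$ rather than your $2\cdot(4K)^{|s|}$; since $|s|\ge 4$ for non-null $s$, your extra factor of $2$ is harmlessly absorbed into $L$.
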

\begin{proof}
Let $K = K(d)$ be as in Lemma \ref{aklmm}. If $|\beta|<K^{-5}$, then by Lemma \ref{aklmm}, the series 
\[
\psi_\beta(s) := \sum_{k=0}^\infty a_k(s)\beta^k
\]
converges absolutely for any $s$. Take a non-null loop sequence $s$ with minimal representation $(l_1,\ldots, l_n)$. Let $e$, $m$, $A_1$, $B_1$ and $C_1$ be as in the statement of Theorem \ref{mastern}. Then by the definition of $a_k(s)$, Lemma \ref{a0}, and the absolute convergence of the series defining $\psi_\beta$, we get
\begin{align*}
\psi_\beta(s) &= \frac{1}{m}\sum_{x\in A_1, \, y\in B_1} \sum_{k=0}^\infty a_k(\times_{x,y}^1 l_1, \times_{x,y}^2 l_1,l_2,\ldots,l_n)\beta^k \\
&\quad +\frac{1}{m}\sum_{x\in B_1, \, y\in A_1} \sum_{k=0}^\infty a_k(\times_{x,y}^1 l_1, \times_{x,y}^2 l_1,l_2,\ldots,l_n)\beta^k  \\
&\quad - \frac{1}{m}\sum_{\substack{x,y\in A_1\\ x\ne y}}\sum_{k=0}^\infty a_k(\times^1_{x,y} l_1,\times^2_{x,y} l_1,l_2,\ldots,l_n)\beta^k\\
&\quad  - \frac{1}{m}\sum_{\substack{x,y\in B_1\\ x\ne y}} \sum_{k=0}^\infty a_k(\times^1_{x,y} l_1,\times^2_{x,y} l_1,l_2,\ldots,l_n) \beta^k \\
&\quad + \frac{1}{m} \sum_{p\in \cp^+(e)}\sum_{x\in C_1}\sum_{k=1}^\infty a_{k-1}(l_1 \ominus_{x}p,l_2,\ldots,l_n)\beta^{k}  \\
&\quad - \frac{1}{m} \sum_{p\in \cp^+(e)}\sum_{x\in C_1}\sum_{k=1}^\infty a_{k-1}(l_1 \oplus_{x}p,l_2,\ldots,l_n)\beta^{k}\, .
\end{align*}
Applying the definition of $\psi_\beta$ on the right-hand side, this gives 
\begin{align*}
\psi_\beta(s) &= \frac{1}{m}\sum_{x\in A_1, \, y\in B_1} \psi_\beta(\times_{x,y}^1 l_1, \times_{x,y}^2 l_1,l_2,\ldots,l_n) + \frac{1}{m}\sum_{x\in B_1, \, y\in A_1} \psi_\beta(\times_{x,y}^1 l_1, \times_{x,y}^2 l_1,l_2,\ldots,l_n) \\
&\quad   - \frac{1}{m}\sum_{\substack{x,y\in A_1\\ x\ne y}} \psi_\beta(\times^1_{x,y} l_1,\times^2_{x,y} l_1,l_2,\ldots,l_n) - \frac{1}{m}\sum_{\substack{x,y\in B_1\\ x\ne y}} \psi_\beta(\times^1_{x,y} l_1,\times^2_{x,y} l_1,l_2,\ldots,l_n)   \\
&\quad + \frac{\beta}{m} \sum_{p\in \cp^+(e)}\sum_{x\in C_1}\psi_\beta(l_1 \ominus_{x}p,l_2,\ldots,l_n) - \frac{\beta}{m} \sum_{p\in \cp^+(e)}\sum_{x\in C_1}\psi_\beta(l_1 \oplus_{x}p,l_2,\ldots,l_n) 
\end{align*}
In other words, $\psi_\beta$ satisfies the master loop equation of Theorem \ref{master}. 

Now recall that $a_0(\emptyset)=1$ and $a_k(\emptyset)=0$ for every $k\ge 1$. Thus, $\psi_\beta(\emptyset)=1$. For any non-null $s$, $a_0(s)=0$. Therefore by Lemma \ref{aklmm} and the inequality \eqref{catalan2},  if $|\beta|\le (2K)^{-5}$ then for any non-null $s$ with degree vector $\delta$, 
\begin{align*}
|\psi_\beta(s)| \le \sum_{k=0}^\infty |a_k(s)||\beta|^k &\le \sum_{k=1}^\infty K^{5k + \iota(\delta)} 4^{|\delta|}|\beta|^k\\
&\le \sum_{k=1}^\infty (K^5 |\beta|)^k (4K)^{|\delta|} \le (4K)^{|\delta|}= (4K)^{|s|}\, .
\end{align*}
Thus, if $|\beta|$ is small enough, then we have verified all three conditions of Theorem \ref{master2}, proving that $\psi_\beta=\phi_\beta$. The above inequality also proves the second assertion of the theorem. 
\end{proof}

The following corollary of Theorem \ref{akthm} gives a symmetrized version of the recursion relation for~$a_k$.
\begin{cor}\label{akcor}
For any non-null $s$ and any $k\ge 1$,
\begin{align*}
a_k(s) = \frac{1}{|s|}\sum_{s'\in \fs^-(s)} a_k(s') - \frac{1}{|s|}\sum_{s'\in \fs^+(s)} a_k(s') + \frac{1}{|s|}\sum_{s'\in \fd^-(s)} a_{k-1}(s') - \frac{1}{|s|}\sum_{s'\in \fd^+(s)} a_{k-1}(s')\, .
\end{align*}
\end{cor}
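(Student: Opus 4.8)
The plan is to obtain this identity directly from the symmetrized limiting master loop equation (Theorem~\ref{master3}) together with the power series representation of $\phi_\beta$ furnished by Theorem~\ref{akthm}; passing through the symmetrized equation is cleaner than re-running the edge-summing argument from the proof of Theorem~\ref{mastersymm} on the defining recursion for $a_k$.

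First I would fix $\beta_1(d)>0$ as in Theorem~\ref{akthm}, so that for $|\beta|\le\beta_1(d)$ the series $\sum_{k\ge 0} a_k(s)\beta^k$ converges absolutely to $\phi_\beta(s)$ for every loop sequence $s$, and note $\beta_1(d)\le\beta_0(d)$, so that Theorem~\ref{master3} applies on the same range. Fix a non-null $s$. Substituting the series for $\phi_\beta$ everywhere in the equation of Theorem~\ref{master3} and using that the index sets $\fs^-(s)$, $\fs^+(s)$, $\fd^-(s)$, $\fd^+(s)$ are all finite (finitely many ordered pairs of locations in the component loops of $s$, and, since $|\cp^+(e)|\le 2(d-1)$, finitely many plaquettes through a fixed edge), one may interchange each finite sum with the corresponding absolutely convergent series. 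This yields, for all $|\beta|\le\beta_1(d)$,
\begin{align*}
|s|\sum_{k=0}^\infty a_k(s)\beta^k &= \sum_{k=0}^\infty\biggl(\sum_{s'\in\fs^-(s)}a_k(s') - \sum_{s'\in\fs^+(s)}a_k(s')\biggr)\beta^k \\
&\qquad + \sum_{k=0}^\infty\biggl(\sum_{s'\in\fd^-(s)}a_k(s') - \sum_{s'\in\fd^+(s)}a_k(s')\biggr)\beta^{k+1}\, .
\end{align*}

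After the reindexing $k\mapsto k-1$ in the second sum on the right, both sides are power series in $\beta$ with a common positive radius of convergence, so their coefficients coincide. Equating the coefficient of $\beta^k$ for $k\ge 1$ and dividing by $|s|$ produces exactly the asserted recursion; for $k=0$ the deformation terms are absent, consistent with Lemma~\ref{a0}, which is why the statement is restricted to $k\ge 1$. The argument is entirely routine: the only two points needing a word of justification are the interchange of the finite sums with the series (immediate from absolute convergence) and the matching of coefficients (standard uniqueness of power series expansions on an interval), and neither presents any genuine difficulty, since the substantive content is already contained in Theorems~\ref{master3} and~\ref{akthm}.
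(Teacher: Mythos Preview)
Your proposal is correct and follows exactly the same approach as the paper: expand both sides of the symmetrized limiting master loop equation (Theorem~\ref{master3}) as absolutely convergent power series in $\beta$ using Theorem~\ref{akthm}, then equate coefficients. You have simply spelled out in detail (finiteness of the index sets, the reindexing, uniqueness of power series coefficients) what the paper's proof states in two sentences.
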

\begin{proof}
Consider the two sides of the identity given by Theorem \ref{master3}. For $|\beta|$ small enough, both sides may be expanded as convergent power series in $\beta$ by Theorem \ref{akthm}. The claimed identity is obtained by equating the coefficients of the power series on the two sides.
\end{proof}

Our last task in this section is to prove Proposition \ref{algoprop}.

\begin{proof}[Proof of Proposition \ref{algoprop}]
In each term on the right-hand side of the recursion, either $k$ is replaced by $k-1$ or $s$ is replaced by a loop sequence of smaller index (by Lemma \ref{iota2}). Since the index is always a nonnegative integer and $a_k(s)$ is defined a priori whenever $k=0$ or $s=\emptyset$, this proves that the recursion must terminate. Since the recursive relation that defines the $a_k(s)$'s of Proposition \ref{algoprop} is the same as the one for the $a_k(s)$'s defined at the beginning of this section, with the same initial values, therefore the $a_k(s)$'s of Proposition \ref{algoprop} are the same as those of Theorem \ref{akthm}, which clearly are the same as those in Corollary \ref{series}. 
\end{proof}

\section{Absolute convergence of the sum over trajectories}\label{conv}
Recall the definitions of $\mx(s)$ and $w_\beta(X)$ from Section \ref{not1}. The goal of this section is to prove the following theorem.
\begin{thm}\label{convthm}
There exists $\beta_2(d)> 0$ such that if $|\beta|\le \beta_2(d)$, then for any non-null loop sequence~$s$,
\[
\sum_{X\in \mx(s)} |w_\beta(s)|<\infty\, .
\]
\end{thm}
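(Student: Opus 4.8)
The plan is to implement the strategy outlined in Section~\ref{sketch}: dominate the absolute values $|w_\beta(X)|$ by a family of non-negative coefficients and show these grow only exponentially in the number of deformations. Concretely, for a non-null loop sequence $s$ and an integer $k\ge 0$, I would define $b_k(s)$ by the recursion
\[
|s|\,b_k(s):=\sum_{s'\in\fs^-(s)}b_k(s')+\sum_{s'\in\fs^+(s)}b_k(s')+\sum_{s'\in\fd^-(s)}b_{k-1}(s')+\sum_{s'\in\fd^+(s)}b_{k-1}(s')\, ,
\]
with $b_0(\emptyset):=1$, $b_k(\emptyset):=0$ for $k\ge 1$, and the convention $b_{-1}\equiv 0$. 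This is the all-signs-positive counterpart of the symmetrized recursion of Corollary~\ref{akcor}. It is well posed: by Lemma~\ref{iota2} a splitting strictly lowers the index, so the recursion defines $b_k(s)$ by a nested induction (outer on $k$, inner on $\iota(s)$, which by Lemma~\ref{iota1} is a positive integer for non-null $s$). An immediate induction gives $b_k(s)\ge 0$, and comparing with Corollary~\ref{akcor} via the triangle inequality gives $|a_k(s)|\le b_k(s)$.

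Next I would show $b_k(s)\,|\beta|^k=\sum_{X\in\mx_k(s)}|w_\beta(X)|$. First, $\mx_k(s)$ is finite: along a vanishing trajectory each deformation raises the index by at most $4$ (Lemma~\ref{deform}) while each splitting lowers it by at least $1$ (Lemma~\ref{iota2}), so such a trajectory with exactly $k$ deformations has length at most $\iota(s)+5k$, and at each step only finitely many moves are available. Hence $c_k(s):=\sum_{X\in\mx_k(s)}|v(X)|$ is a well-defined non-negative number, and since $w_\beta(X)=v(X)\beta^{\delta(X)}$ with $\delta(X)=k$ for $X\in\mx_k(s)$, we get $\sum_{X\in\mx_k(s)}|w_\beta(X)|=c_k(s)\,|\beta|^k$. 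Decomposing a vanishing trajectory from a non-null $s$ by its first transition — which is a splitting (deformation count unchanged) or a deformation (count reduced by one), in either case of $\beta$-free weight of modulus $1/|s|$ — shows that $c_k$ satisfies the very recursion above, with $c_0(\emptyset)=1$ and $c_k(\emptyset)=0$ for $k\ge 1$. Two solutions of that recursion with the same values at $\emptyset$ coincide (again by nested induction on $(k,\iota(s))$), so $b_k=c_k$, as desired.

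The crux is the exponential bound, the analogue of Lemma~\ref{aklmm}: there is $K=K(d)\ge 1$ such that, with $\delta=(\delta_1,\ldots,\delta_n)$ the degree vector of $s$,
\[
b_k(s)\le K^{5k+\iota(\delta)}\,C_{\delta_1-1}\cdots C_{\delta_n-1}\, ,
\]
the empty product being $1$. This would be proved by the same nested induction, bounding the right-hand side of the defining recursion term by term. For splittings of a component loop $l_j$ one invokes Lemmas~\ref{split1} and~\ref{split2} to control the two new lengths and the index drop, uses the Catalan recursion~\eqref{catalan} to telescope $\sum_r C_{\delta_j-r-1}C_{r-1}\le C_{\delta_j-1}$ (and its analogue for negative splittings), and notes that there are only $O(\delta_j)$ splittings of $l_j$ giving a prescribed pair of lengths, so the factor $\delta_j$ cancels against the $1/|s|=1/\sum_j\delta_j$ after summing over $j$. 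For deformations of $l_j$ one uses Lemma~\ref{deform} together with $C_{\delta_j+3}\le 4^4C_{\delta_j-1}$ from~\eqref{catalan2}, the bound $|\cp^+(e)|\le 2(d-1)$, and the (more favorable) degenerate case in which the deformed loop becomes null. Collecting these estimates yields $b_k(s)\le (c_1K^{-1}+c_2K^{-3}+c_3 dK^{-1})K^{5k+\iota(\delta)}\prod_i C_{\delta_i-1}$ for absolute constants $c_1,c_2,c_3$, and choosing $K(d)$ large enough makes the bracket $\le 1$. Keeping the multiplicities, the index shifts, and the Catalan telescoping correctly aligned — and disposing of the null-loop case — is the part I expect to be the main (if routine) obstacle. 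Finally, taking $\beta_2(d):=(2K(d))^{-5}$, for $|\beta|\le\beta_2(d)$ and any non-null $s$ with degree vector $\delta$,
\[
\sum_{X\in\mx(s)}|w_\beta(X)|=\sum_{k=0}^\infty b_k(s)\,|\beta|^k\le K(d)^{\iota(\delta)}\Bigl(\prod_i C_{\delta_i-1}\Bigr)\sum_{k=0}^\infty\bigl(K(d)^5|\beta|\bigr)^k<\infty\, ,
\]
which is the assertion of Theorem~\ref{convthm}.
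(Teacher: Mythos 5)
Your proposal is correct and follows essentially the same route as the paper: the all-positive-signs coefficients $b_k(s)$, their Catalan-number exponential bound (the paper's Lemma~\ref{bklmm}), and the identification $b_k(s)|\beta|^k=\sum_{X\in\mx_k(s)}|w_\beta(X)|$ via a first-step decomposition (the paper's Lemmas~\ref{mx} and~\ref{replmm}), followed by summing the geometric series. Your repackaging of the last step as a uniqueness statement for the recursion is only a cosmetic difference.
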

The proof of Theorem \ref{convthm} requires some preparation. First, recall the definitions of $\fs(s)$ and $\fd(s)$ that were stated prior to the statement of Theorem~\ref{master3}. Inductively define a collection of real numbers $b_k(s)$ where $k\ge 0$ and $s\in \cs$, as follows. Let $b_0(\emptyset)=1$ and $b_0(s)=0$ for every non-null $s$. For $k\ge 1$, let $b_k(\emptyset)=0$. Having defined $b_i(s)$ for every $i<k$ and every $s$, and also $b_k(s')$ for every $s'$ with $\iota(s')<\iota(s)$, define 
\[
b_k(s) := \frac{1}{|s|}\sum_{s'\in \fs(s)} b_k(s')  + \frac{1}{|s|}\sum_{s'\in \fd(s)} b_{k-1}(s')\, .
\]
The following lemma gives an analog of Lemma \ref{aklmm} for $b_k(s)$. 
\begin{lmm}\label{bklmm}
There is a universal constant $K$ such that if $s$ is a loop sequence and $\delta = (\delta_1,\ldots,\delta_n)$ is its degree vector, then
\[
0\le b_k(s) \le K^{5k+\iota(\delta)} C_{\delta_1-1}\cdots C_{\delta_n-1}\, ,
\]
where $C_i$ is the $i^{\mathrm{th}}$ Catalan number. The product of Catalan numbers is interpreted as $1$ when~$s=\emptyset$.
\end{lmm}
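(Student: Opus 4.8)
The plan is to prove Lemma~\ref{bklmm} by the same nested induction that was used to define the $b_k(s)$'s and to prove Lemma~\ref{aklmm}: fix a constant $K=K(d)\ge 1$ (to be pinned down at the end), induct on $k$, and within each fixed $k$ induct on $\iota(s)$. The nonnegativity $b_k(s)\ge 0$ is immediate from the defining recursion, since each term on the right-hand side is a nonnegative combination of previously-defined $b$-values, so the real content is the upper bound. The base cases are trivial: for $k=0$ the claim reads $b_0(\emptyset)=1\le K^{\iota(\emptyset)}=1$ and $b_0(s)=0$ for non-null $s$, and for any $k\ge 1$ we have $b_k(\emptyset)=0$, which is certainly $\le K^{5k}$.

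For the inductive step, take $k\ge 1$ and a non-null $s$ with minimal representation $(l_1,\ldots,l_n)$ and degree vector $\delta=(\delta_1,\ldots,\delta_n)$, assuming the bound holds for all $b_i$ with $i<k$ and for all $b_k(s')$ with $\iota(s')<\iota(s)$. The point is that the recursion for $b_k$ has exactly the same shape as the recursion for $a_k$ in Section~\ref{power}: a sum over splittings $\fs(s)$ with coefficient $1/|s|$ contributing a factor $\beta^0$, plus a sum over deformations $\fd(s)$ with coefficient $1/|s|$ contributing $b_{k-1}$. So I would simply rerun the estimates \eqref{cc1}, \eqref{cc2}, \eqref{cc3} from the proof of Lemma~\ref{aklmm} verbatim, but now with absolute values already removed (everything is nonnegative). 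Concretely: $\fs(s)=\fs^+(s)\cup\fs^-(s)$, and using Lemma~\ref{split1} for positive splittings and Lemma~\ref{split2} for negative splittings together with the Catalan recursion \eqref{catalan} and monotonicity of the $C_i$, the splitting sum is bounded by a constant multiple (independent of $d$) of $K^{5k+\iota(\delta)-1}C_{\delta_1-1}\cdots C_{\delta_n-1}$ — here I use $\iota(\delta)-1\ge \iota(\delta)-1$ from Lemma~\ref{iota2}, exactly as in \eqref{cc2}, and the slightly better exponent $\iota(\delta)-3$ from Lemma~\ref{split2} for the negative splittings as in \eqref{cc1}. For the deformation sum, $|\cp^+(e)|\le 2(d-1)\le 2d$ and $|C_1|\le|s|$, so after the $1/|s|$ normalization there are at most $2d$ relevant plaquettes (per location) and, by Lemma~\ref{deform} and \eqref{catalan2}, each $b_{k-1}$ term is bounded by $K^{5(k-1)+\iota(\delta')}C_{\delta_1+3}C_{\delta_2-1}\cdots C_{\delta_n-1}\le K^{5k+\iota(\delta)-1}\cdot 4^4\cdot C_{\delta_1-1}\cdots C_{\delta_n-1}$, matching \eqref{cc3}. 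Collecting the terms, $b_k(s)\le (cK^{-1}+c'dK^{-1})K^{5k+\iota(\delta)}C_{\delta_1-1}\cdots C_{\delta_n-1}$ for absolute constants $c,c'$, and choosing $K=K(d)$ large enough that $cK^{-1}+c'dK^{-1}\le 1$ closes the induction.

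One subtlety to handle carefully, as in Section~\ref{power}: when a deformation or splitting produces a null loop, its degree vector drops the corresponding component and the Catalan product loses a factor $C_{\delta_i-1}\ge C_3\ge 1$, so the bound only improves; and when $s$ is so small that it admits no splitting, the splitting sum is empty, consistent with the convention. The genuinely delicate point — really the only place any thought is needed beyond transcribing the $a_k$ proof — is bookkeeping the exponent of $K$: one must check that negative splittings decrease $\iota$ by $3$ (not just $1$) so that the $K^{5k+\iota(\delta)-3}$ term still comes with a bounded Catalan-sum coefficient after reindexing via \eqref{catalan}, and that the deformation term's index increase of at most $4$ (Lemma~\ref{deform}) combined with the drop from $k$ to $k-1$ nets out to $5\cdot(-1)+4=-1$ in the exponent, leaving the crucial $K^{-1}$ gain. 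I expect no serious obstacle: the statement is deliberately the $b$-analog of Lemma~\ref{aklmm}, the recursion is simpler (no signs, no $\beta$), and all the combinatorial inputs (Lemmas~\ref{split1}, \ref{split2}, \ref{deform}, \ref{iota2}, and the Catalan facts \eqref{catalan}, \eqref{catalan2}) are already in hand. The word ``universal'' in the statement versus ``$K(d)$'' in Lemma~\ref{aklmm} is worth a remark — the $d$-dependence enters only through $|\cp^+(e)|\le 2(d-1)$, so strictly the constant does depend on $d$; I would either absorb the factor $d$ harmlessly (it only forces $K$ larger) or note that the statement should read $K=K(d)$, consistent with Lemma~\ref{aklmm}.
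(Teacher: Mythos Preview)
Your proposal is essentially correct and follows the same approach as the paper: nested induction on $(k,\iota(s))$, bounding splittings via Lemmas~\ref{split1}, \ref{split2} and the Catalan recursion~\eqref{catalan}, and deformations via Lemma~\ref{deform} and~\eqref{catalan2}, then choosing $K=K(d)$ large enough. One small correction: the recursion for $b_k$ is the \emph{symmetrized} one (normalized by $|s|$ and summed over all of $\fs(s)$, $\fd(s)$), whereas the $a_k$ recursion and estimates \eqref{cc1}--\eqref{cc3} are unsymmetrized (normalized by $m$, acting only on $l_1$); so you cannot quite ``rerun verbatim'' but must sum over all components $r=1,\ldots,n$ as the paper does---the $\delta_r$ factors then cancel against $|s|=\sum_r\delta_r$ and the same final bound emerges. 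Your observation about the $d$-dependence of $K$ is correct; the paper in fact takes the same $K(d)$ as in Lemma~\ref{aklmm}.
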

\begin{proof}
Let $K=K(d)$ be the same number as in Lemma \ref{aklmm}. We will prove the claim by the same nested induction that was used to define $b_k(s)$. The fact that $b_k(s)\ge 0$ is evident from the recursive definition. So the only challenge is to prove the upper bound. 

The claim is true when $k=0$, irrespective of $s$, and it also holds for any $k$ when $s=\emptyset$. Take any $k\ge 1$ and non-null $s$, and suppose that we have proved that the bound on $b_i(s)$ holds for all $i<k$ and $s\in \cs$, and the bound on $b_k(s')$ holds for all $s'$ with $\iota(s') <\iota(s)$. Let $(l_1,\ldots, l_n)$ be the minimal representation of $s$. Let $\delta_r = |l_r|$. % and let $e$, $m$, $A_1$, $B_1$ and $C_1$ be as in the statement of Theorem~\ref{master}. 

By the induction hypothesis, Lemma \ref{split1} and the identity \eqref{catalan},
\begin{align*}
&\frac{1}{|s|} \sum_{s'\in \fs^+(s)} b_k(s')
= \frac{1}{|s|} \sum_{r=1}^n \sum_{s'\in \fs^+_r(s)} b_k(s')\\
&\le \frac{1}{|s|} \sum_{r=1}^n \sum_{1\le x\ne y\le \delta_r} K^{5k + \iota(\delta)-1} C_{\delta_1-1}\cdots C_{\delta_{r-1}-1} C_{\delta_r - |x-y|-1} C_{|x-y|-1} C_{\delta_{r+1}-1} \cdots C_{\delta_n-1}\\
&\le \frac{2}{|s|} \sum_{r=1}^n \sum_{x=1}^{\delta_r}\sum_{p=1}^{\delta_r-1} K^{5k + \iota(\delta)-1} C_{\delta_1-1}\cdots C_{\delta_{r-1}-1} C_{\delta_r - p-1} C_{p-1} C_{\delta_{r+1}-1} \cdots C_{\delta_n-1}\\
&\le \frac{2}{|s|} \sum_{r=1}^n \delta_r K^{5k + \iota(\delta)-1} C_{\delta_1-1}\cdots C_{\delta_{r-1}-1} C_{\delta_r-1}  C_{\delta_{r+1}-1} \cdots C_{\delta_n-1}\\
&= 2 K^{5k + \iota(\delta)-1}C_{\delta_1-1} \cdots C_{\delta_n-1}\, .
\end{align*}
Similarly, by the induction hypothesis, Lemma \ref{split2}, the identity \eqref{catalan} and the monotonicity of Catalan numbers,
\begin{align*}
&\frac{1}{|s|} \sum_{s'\in \fs^-(s)} b_k(s')
\le \frac{1}{|s|} \sum_{r=1}^n \sum_{s'\in \fs^-_r(s)} b_k(s')\\
&\le \frac{1}{|s|} \sum_{r=1}^n \sum_{1\le x\ne y\le \delta_r} K^{5k + \iota(\delta)-3} C_{\delta_1-1}\cdots C_{\delta_{r-1}-1} C_{\delta_r - |x-y|-2} C_{|x-y|-2} C_{\delta_{r+1}-1} \cdots C_{\delta_n-1}\\
&\le \frac{2}{|s|} \sum_{r=1}^n \sum_{x=1}^{\delta_r}\sum_{p=2}^{\delta_r-2} K^{5k + \iota(\delta)-3} C_{\delta_1-1}\cdots C_{\delta_{r-1}-1} C_{\delta_r - p-2} C_{p-2} C_{\delta_{r+1}-1} \cdots C_{\delta_n-1}\\
&\le \frac{2}{|s|} \sum_{r=1}^n \delta_r K^{5k + \iota(\delta)-3} C_{\delta_1-1}\cdots C_{\delta_{r-1}-1} C_{\delta_r-3}  C_{\delta_{r+1}-1} \cdots C_{\delta_n-1}\\
&\le 2 K^{5k + \iota(\delta)-3}C_{\delta_1-1} \cdots C_{\delta_n-1}\, .
\end{align*}
Next, note that by the induction hypothesis, Lemma \ref{deform}, and the inequality \eqref{catalan2},
\begin{align*}
\frac{1}{|s|} \sum_{s'\in \fd^+(s)} b_k(s') &\le \frac{1}{|s|} \sum_{r=1}^n \sum_{s'\in \fd^+_r(s)} b_{k-1}(s') \\
&\le \frac{1}{|s|} \sum_{r=1}^n |\fd^+_r(s)| K^{5(k-1) + \iota(\delta)+4} C_{\delta_1-1}\cdots C_{\delta_{r-1}-1} C_{\delta_r + 3} C_{\delta_{r+1}-1} \cdots C_{\delta_n-1}\\
&\le \frac{1}{|s|} \sum_{r=1}^n 2d\delta_r K^{5(k-1) + \iota(\delta)+4}4^4 C_{\delta_1-1}\cdots C_{\delta_{r-1}-1} C_{\delta_r-1} C_{\delta_{r+1}-1} \cdots C_{\delta_n-1}\\
&\le 512d K^{5k+\iota(\delta)-1} C_{\delta_1-1}\cdots C_{\delta_n-1}\, .
\end{align*}
The same bound holds when $\fd^+(s)$ is replaced by $\fd^-(s)$. Combining all of the above bounds and substituting in the definition of $b_k(s)$, we get
\begin{align*}
b_k(s) &\le (2K^{-3} + 2K^{-1} + 1024 dK^{-1}) K^{5k + \iota(\delta)} C_{\delta_1-1}\cdots C_{\delta_n-1}\, .
\end{align*} 
By our choice of $K$, the term inside the bracket is $\le 1$. This completes the proof of the lemma. 
\end{proof}
Given a loop sequence $s$, recall that $\mx(s)$ denotes the set of all vanishing trajectories that start at $s$. Let $\mx_k(s)$ denote the set of vanishing trajectories starting at $s$ that have $k$ deformations. Given a trajectory $X = (s_1,\ldots, s_n)$ and a loop sequence $s_0$, let $(s_0, X)$ denote the trajectory $(s_0,s_1,\ldots, s_n)$, provided that it is a valid trajectory. %For a vanishing trajectory $X$, let $\delta(X)$ denote the number of deformation steps in $X$ and let $\chi(X)$ denote the number of splitting steps in $X$. 
\begin{lmm}\label{mx}
For any non-null loop sequence $s$ and any $k\ge 0$, $\mx_k(s)$ is a finite set. Moreover, $\mx_0(s)$ is empty.
\end{lmm}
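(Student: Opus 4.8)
The statement to prove is Lemma~\ref{mx}: for any non-null loop sequence $s$ and any $k \ge 0$, the set $\mx_k(s)$ of vanishing trajectories starting at $s$ with exactly $k$ deformations is finite, and $\mx_0(s)$ is empty. The plan is to first dispose of the $\mx_0(s) = \emptyset$ claim and then handle finiteness by an induction that reduces the ``budget'' of either $k$ or the index along each trajectory.

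\emph{Emptiness of $\mx_0(s)$.} A trajectory with zero deformations consists entirely of splitting steps. By Lemma~\ref{iota2}, each splitting step strictly decreases the index $\iota$; but by Lemma~\ref{iota1}, every non-null loop sequence has index $\ge 3$, and a splitting of $s$ produces a loop sequence with one more component, all of whose loops are non-null (Lemmas~\ref{split1} and~\ref{split2}), so it too is non-null with positive index. Hence a pure-splitting trajectory starting from a non-null $s$ can never reach the null loop sequence $\emptyset$: the sequence $\iota(s_0) > \iota(s_1) > \cdots$ is a strictly decreasing sequence of positive integers, so the trajectory terminates at a non-null loop sequence, never at $\emptyset$. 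Therefore $\mx_0(s)$ is empty. (One should note that a splitting step applied to a loop sequence cannot produce $\emptyset$ directly, precisely because the splitting lemmas guarantee both halves are non-null; this is what makes the argument clean.)

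\emph{Finiteness of $\mx_k(s)$.} I would argue by induction on $k$, and within each $k$ by induction on $\iota(s)$. A vanishing trajectory in $\mx_k(s)$ with $k \ge 1$ begins with some first step $s \to s_1$, which is either a splitting or a deformation. There are only finitely many choices for this first step: the number of splittings of $s$ is finite (it is controlled by pairs of locations in the component loops, of which there are finitely many), and the number of deformations is finite since $|\cp^+(e)| \le 2(d-1)$ for each edge $e$ and each component loop has finitely many locations. If the first step is a splitting, then $\iota(s_1) < \iota(s)$ by Lemma~\ref{iota2}, while the number of remaining deformations is still $k$; by the inner induction hypothesis (on index, for the same $k$) there are finitely many ways to complete the trajectory from $s_1$. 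If the first step is a deformation, then the remainder of the trajectory lies in $\mx_{k-1}(s_1)$, which is finite by the outer induction hypothesis on $k$. Since each of finitely many first steps admits finitely many completions, $\mx_k(s)$ is finite. The base case $k = 0$ is the emptiness statement just proved (vacuously finite), which anchors both inductions.

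\emph{Main obstacle.} The only real subtlety is making the double induction well-founded: one must check that the inner induction on $\iota(s)$ genuinely has a base case --- i.e., that a loop sequence of minimal index cannot be split at all (which follows from Lemma~\ref{iota2}, since a splitting would produce something of strictly smaller index, impossible at the minimum) --- so that for such $s$ every trajectory in $\mx_k(s)$ must begin with a deformation and the outer induction on $k$ takes over. One must also be slightly careful that ``exactly $k$ deformations'' is an invariant that splits correctly across the first step: a splitting step contributes $0$ deformations so the tail still needs $k$, while a deformation step contributes $1$ so the tail needs $k-1$. Both facts are immediate from the definitions in Section~\ref{not1}, so no genuine difficulty arises; the argument is essentially bookkeeping on the well-order $(k, \iota(s))$ under lexicographic comparison.
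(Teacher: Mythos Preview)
Your proof is correct. For the emptiness of $\mx_0(s)$ you argue essentially as the paper does: splittings of a non-null loop sequence always produce non-null loop sequences (Lemmas~\ref{split1} and~\ref{split2}), so a pure-splitting trajectory can never reach $\emptyset$. Your detour through the strictly decreasing index is not needed for this part, but does no harm.

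For finiteness, you take a different route from the paper. The paper gives a direct length bound: each splitting decreases $\iota$ by at least one (Lemma~\ref{iota2}), each deformation increases $\iota$ by at most four (Lemma~\ref{deform}), and $\iota \ge 0$ always (Lemma~\ref{iota1}); hence a vanishing trajectory with $k$ deformations has at most $\iota(s) + 4k$ splittings, so its length is bounded by a function of $k$ and $s$, and there are only finitely many trajectories of bounded length from $s$. Your argument instead runs a lexicographic induction on $(k,\iota(s))$, peeling off the first step and invoking the appropriate hypothesis. Both are valid. The paper's argument is shorter and yields an explicit bound on trajectory length; your nested induction, while slightly heavier here, is exactly the scheme the paper uses later to define $a_k(s)$ and $b_k(s)$ and to prove Lemma~\ref{replmm} and equation~\eqref{tkclaim}, so it has the virtue of rehearsing the main inductive engine of the paper in a simple setting.
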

\begin{proof}
By Lemmas \ref{split1} and \ref{split2}, splittings of non-null loops can never give rise to null loops. Therefore a vanishing trajectory must contain at least one deformation step, proving that $\mx_0(s)$ is empty for any non-null $s$.

Next, note that by Lemmas \ref{split1} and \ref{split2}, a splitting always reduces the index of a loop sequence, and by Lemma \ref{deform}, a deformation can increase the index by at most four. Moreover by Lemma~\ref{iota1}, the index of any loop sequence is nonnegative. Therefore, if a vanishing trajectory has $k$ deformations, the maximum number of splittings it can have is bounded by a finite number that depends only on $k$ and the initial loop sequence. Therefore the length of any vanishing trajectory with $k$ deformations is bounded a finite number depending only on $k$ and the initial loop sequence. Since there can only be a finite number of vanishing trajectories of a given length starting from a given loop sequence, this shows that $\mx_k(s)$ is a finite set. 
\end{proof}
\begin{lmm}\label{replmm}
For any $\beta$, $k$ and $s$, let 
\[
S_{\beta, k}(s) := \sum_{X\in \mx_k(s)} |w_\beta(X)|\, .
\]
Then there exists $\beta_3(d)>0$ such that if $|\beta|\le \beta_3(d)$, then for any non-null loop sequence $s$,
\[
S_{\beta, k}(s) = b_k(s) |\beta|^k\, .
\]
\end{lmm}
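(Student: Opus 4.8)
The plan is to prove the identity by the same nested induction used to define the numbers $b_k(s)$: an outer induction on $k$ and, for fixed $k$, an inner induction on the index $\iota(s)$. The engine is a decomposition of each vanishing trajectory according to its first step. Fix a non-null $s$, and let $X = (s_0,s_1,\ldots,s_m)\in\mx_k(s)$, so that $s_0=s$ and $s_m=\emptyset$. Since $s$ is non-null we have $m\ge 1$, and the first step $s_0\to s_1$ is a definite splitting operation or a definite deformation operation. Deleting it yields $X' = (s_1,\ldots,s_m)$, which is again a vanishing trajectory and lies in $\mx_k(s_1)$ if the first step was a splitting, and in $\mx_{k-1}(s_1)$ if it was a deformation (a deformation being exactly one of the two kinds of step that count toward the deformation count). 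Conversely, prepending a fixed first step $s\to s'$ to an element of the appropriate set $\mx_\bullet(s')$ gives back an element of $\mx_k(s)$, and the two maps are mutually inverse. Thus $\mx_k(s)$ is the disjoint union, over all splitting operations $s\to s'$ and all deformation operations $s\to s'$, of copies of $\mx_k(s')$ and of $\mx_{k-1}(s')$ respectively.

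Next I would track the weights through this bijection. If $X=(s,X')$ is obtained by prepending a first step $s\to s'$, then $w_\beta(X) = w_\beta(s,s')\,w_\beta(X')$ by the definition of the weight of a trajectory (the empty-product convention covering the degenerate case $s'=\emptyset$, $X' = (\emptyset)$). Taking absolute values and using $|w_\beta(s,s')| = 1/|s|$ for a splitting step and $|w_\beta(s,s')| = |\beta|/|s|$ for a deformation step, then summing over $X\in\mx_k(s)$ via the decomposition above, gives
\[
S_{\beta,k}(s) = \frac{1}{|s|}\sum_{s'\in\fs(s)} S_{\beta,k}(s') + \frac{|\beta|}{|s|}\sum_{s'\in\fd(s)} S_{\beta,k-1}(s')\, ,
\]
where, exactly as in the defining recursion for $b_k$, the sums over $\fs(s)$ and $\fd(s)$ are sums over operations, so that a loop sequence reachable in several ways is counted with multiplicity. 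This recursion is valid for every real $\beta$, because $\mx_k(s)$ is a finite set by Lemma~\ref{mx} and hence each $S_{\beta,k}(s)$ is a finite sum; in particular the identity we are after will hold for all $\beta$, and $\beta_3(d)$ may be taken to be any positive constant.

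It remains to feed the claimed identity into this recursion. For the base of the outer induction, $k=0$: for non-null $s$, $\mx_0(s)=\emptyset$ by Lemma~\ref{mx}, so $S_{\beta,0}(s)=0=b_0(s)$; and $S_{\beta,0}(\emptyset)=1=b_0(\emptyset)$. For the inductive step, fix $k\ge 1$ and assume $S_{\beta,k-1}(s')=b_{k-1}(s')|\beta|^{k-1}$ for every loop sequence $s'$, including $\emptyset$ (for which both sides vanish when $k-1\ge 1$). Now induct on $\iota(s)$: by Lemma~\ref{iota2} every $s'$ occurring in the splitting sums satisfies $\iota(s')<\iota(s)$, so the inner hypothesis gives $S_{\beta,k}(s') = b_k(s')|\beta|^k$, while the deformation sums are covered by the outer hypothesis. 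Substituting these into the displayed recursion for $S_{\beta,k}(s)$, factoring out $|\beta|^k$, and comparing with the recursion defining $b_k(s)$ yields $S_{\beta,k}(s) = b_k(s)|\beta|^k$, which closes both inductions.

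None of the steps is deep; the one place that requires genuine care is the bookkeeping. One must make the bijection between a vanishing trajectory and the pair (first operation, tail trajectory) precise, and verify that the operation-counting convention built into $\fs(s)$ and $\fd(s)$ (as emphasized in Section~\ref{not1}) matches exactly the way a trajectory records its initial step, so that no multiplicities are dropped or double-counted. Watching the degenerate case in which a deformation of a single-loop sequence produces the null loop sequence — so that the tail trajectory is the length-zero trajectory $(\emptyset)$ with weight $1$ — is the main spot where a slip could occur, and I would flag it explicitly in the write-up.
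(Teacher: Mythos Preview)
Your proposal is correct and follows essentially the same approach as the paper: nested induction on $k$ and $\iota(s)$, decomposing each trajectory according to its first step, and matching the resulting recursion for $S_{\beta,k}$ against the defining recursion for $b_k$. Your observation that the identity in fact holds for every real $\beta$ (since each $\mx_k(s)$ is finite) is correct; the paper's proof likewise uses no smallness of $\beta$, so the constant $\beta_3(d)$ in the statement is superfluous.
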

\begin{proof}
The proof goes by our usual route of nested induction. First, note that $b_0(s)=0$ for any non-null $s$ and $\mx_0(s)$ is empty by Lemma \ref{mx}. Therefore 
\[
S_{\beta, 0}(s) = 0 = b_0(s)\, .
\]
Next, suppose that the claim has been proved for every $k'$ smaller than $k$. We will prove it for $k$ by induction on $\iota(s)$. First, take any $s$ with the smallest possible $\iota(s)$. Then by Lemmas \ref{split1} and~\ref{split2}, $\fs(s)$ is empty. Thus, 
\[
b_k(s)|\beta|^k = \frac{|\beta|}{|s|} \sum_{s'\in \fd(s)}b_{k-1}(s')|\beta|^{k-1}\, .
\]
But for any $s'\in \fd(s)$, the induction hypothesis gives
\[
b_{k-1}(s')|\beta|^{k-1} = S_{\beta, k-1}(s') = \sum_{X'\in \mx_{k-1}(s')} |w_\beta(X')|\,.
\]
Since $\fs(s)$ is empty, therefore any element of $\mx_k(s)$ may be uniquely obtained as $(s, X')$ where $X'\in \mx_{k-1}(s')$ for some $s'\in \fd(s)$. Therefore by the last two displays,
\begin{align*}
b_k(s)|\beta|^k &= \frac{|\beta|}{|s|}\sum_{s'\in \fd(s)}\sum_{X'\in \mx_{k-1}(s')} |w_\beta(X')|\\
&= \sum_{s'\in \fd(s)}\sum_{X'\in \mx_{k-1}(s')} |w_\beta(s,s')||w_\beta(X')|\\
&= \sum_{s'\in \fd(s)}\sum_{X'\in \mx_{k-1}(s')} |w_\beta(s,X')|\\
&= \sum_{X\in \mx_k(s)} |w_\beta(X)|\, .
\end{align*}
Now take any non-null $s$ and assume that claim has been for proved for all $b_k(s')$ where $\iota(s')<\iota(s)$. Then note that
\[
b_k(s)|\beta|^k = \frac{1}{|s|} \sum_{s'\in \fs(s)} b_k(s')|\beta|^k+\frac{|\beta|}{|s|} \sum_{s'\in \fd(s)}b_{k-1}(s')|\beta|^{k-1}\, .
\]
By the  induction hypotheses, this gives
\begin{align*}
b_k(s)|\beta|^k &= \sum_{s'\in \fs(s)} \sum_{X'\in \mx_k(s')} |w_\beta(s,s')|| w_\beta(X')| + \sum_{s'\in \fd(s)} \sum_{X'\in \mx_{k-1}(s')} |w_\beta(s,s') ||w_\beta(X')|\\
&= \sum_{s'\in \fs(s)} \sum_{X'\in \mx_k(s')} |w_\beta(s,X')| + \sum_{s'\in \fd(s)} \sum_{X'\in \mx_{k-1}(s')} |w_\beta(s,X') |\, .
\end{align*}
To complete the proof, note that any $X\in \mx_k(s)$ may either be obtained uniquely as $(s,X')$ for either some $X'\in \mx_k(s')$ where $s'\in\fs(s)$, or some $X'\in \mx_{k-1}(s')$ where $s'\in \fd(s)$.
\end{proof}

We are now ready to prove Theorem \ref{convthm}.

\begin{proof}[Proof of Theorem \ref{convthm}]
Since a vanishing trajectory $X$ starting at a non-null loop sequence $s$ must have finite length, therefore $X\in\mx_k(s)$ for some $k$. Therefore by Lemma \ref{replmm}, 
\begin{align*}
\sum_{X\in \mx(s)} |w_\beta(X)| &= \sum_{k=0}^\infty \sum_{X\in \mx_k(s)} |w_\beta(X)| = \sum_{k=0}^\infty b_k(s) |\beta|^k\, .
\end{align*}
Lemma \ref{bklmm} shows that the last expression converges if $|\beta|$ is small enough (depending only on $d$). This completes the proof of the theorem. 
\end{proof}

\section{Proof of Theorem \ref{mainthm} (Gauge-string duality)}\label{mainthmsec}
Finally, we are ready to prove Theorem \ref{mainthm}. We will freely use the notations and variables introduced in all preceding sections. The proof is very similar to the proof of Lemma~\ref{replmm}. For any $\beta$, $k$ and $s$, let 
\[
T_{\beta, k}(s) := \sum_{X\in \mx_k(s)} w_\beta(X)\, .
\]
Since $\mx_k(s)$ is finite by Lemma \ref{mx}, $T_{\beta, k}(s)$ is well-defined. We claim that there exists $\beta_4(d)>0$ such that if $|\beta|\le \beta_4(d)$, then for any non-null loop sequence $s$,
\begin{equation}\label{tkclaim}
T_{\beta, k}(s) = a_k(s) \beta^k\, .
\end{equation}
The proof is by nested induction. First, note that $a_0(s)=0$ for any non-null $s$ and $\mx_0(s)$ is empty by Lemma \ref{mx}. Therefore 
\[
T_{\beta, 0}(s) = 0 = a_0(s)\, .
\]
Next, suppose that the claim has been proved for every $k'$ smaller than $k$. We will prove it for $k$ by induction on $\iota(s)$. First, take any $s$ with the smallest possible $\iota(s)$. Then by Lemmas \ref{split1} and~\ref{split2}, $\fs(s)$ is empty. Thus, by Corollary \ref{akcor}, 
\[
a_k(s)\beta^k = \frac{\beta}{|s|} \sum_{s'\in \fd^-(s)}a_{k-1}(s')\beta^{k-1} -\frac{\beta}{|s|} \sum_{s'\in \fd^+(s)}a_{k-1}(s')\beta^{k-1}\, .
\]
But for any $s'\in \fd(s)$, the induction hypothesis implies that 
\[
a_{k-1}(s')\beta^{k-1} = T_{\beta, k-1}(s') = \sum_{X'\in \mx_{k-1}(s')} w_\beta(X')\,.
\]
Since $\fs(s)$ is empty, therefore any element of $\mx_k(s)$ may be uniquely obtained as $(s, X')$ where $X'\in \mx_{k-1}(s')$ for some $s'\in \fd(s)$. Therefore by the last two displays,
\begin{align*}
a_k(s)\beta^k &= \frac{\beta}{|s|}\sum_{s'\in \fd^-(s)}\sum_{X'\in \mx_{k-1}(s')} w_\beta(X') - \frac{\beta}{|s|}\sum_{s'\in \fd^+(s)}\sum_{X'\in \mx_{k-1}(s')} w_\beta(X') \\
&= \sum_{s'\in \fd(s)}\sum_{X'\in \mx_{k-1}(s')} w_\beta(s,s')w_\beta(X')\\
&= \sum_{s'\in \fd(s)}\sum_{X'\in \mx_{k-1}(s')} w_\beta(s,X')\\
&= \sum_{X\in \mx_k(s)} w_\beta(X)\, .
\end{align*}
Now take any non-null $s$ and assume that the identity for $a_k(s')\beta^k$  has been proved for  all $s'$ with smaller index. By Corollary \ref{akcor},
\begin{align*}
a_k(s)\beta^k &=  \frac{1}{|s|} \sum_{s'\in \fs^-(s)} a_k(s')\beta^k - \frac{1}{|s|} \sum_{s'\in \fs^+(s)} a_k(s')\beta^k\\
&\quad +\frac{\beta}{|s|} \sum_{s'\in \fd^-(s)}a_{k-1}(s')\beta^{k-1}-\frac{\beta}{|s|} \sum_{s'\in \fd^+(s)}a_{k-1}(s')\beta^{k-1}\, .
\end{align*}
By the  induction hypotheses, this gives
\begin{align*}
a_k(s)\beta^k &= \sum_{s'\in \fs(s)} \sum_{X'\in \mx_k(s')} w_\beta(s,s') w_\beta(X') + \sum_{s'\in \fd(s)} \sum_{X'\in \mx_{k-1}(s')} w_\beta(s,s') w_\beta(X')\\
&= \sum_{s'\in \fs(s)} \sum_{X'\in \mx_k(s')} w_\beta(s,X') + \sum_{s'\in \fd(s)} \sum_{X'\in \mx_{k-1}(s')} w_\beta(s,X')\, .
\end{align*}
To complete the proof of \eqref{tkclaim}, note that any $X\in \mx_k(s)$ may either be obtained uniquely as $(s,X')$ for either some $X'\in \mx_k(s')$ where $s'\in\fs(s)$, or some $X'\in \mx_{k-1}(s')$ where $s'\in \fd(s)$.

Having proved \eqref{tkclaim}, the proof of Theorem \ref{mainthm} is completed by observing that if $|\beta|$ is small enough (depending only on $d$), then by the identity \eqref{tkclaim} and Theorem \ref{akthm},
\[
\phi_\beta(s) = \sum_{k=0}^\infty a_k(s)\beta^k = \sum_{k=0}^\infty \sum_{X\in \mx_k} w_\beta(X)\, ,
\]
and by Theorem \ref{convthm}, the sum on the right may be reorganized to simply a sum over all $X\in \mx(s)$.

\section{Proof of Corollary \ref{factor} (Factorization of Wilson loops)}\label{factorsec}
The following simple lemma will be used in this section.
\begin{lmm}\label{trivial}
If $s$ is a non-null loop sequence and $s'$ is a splitting or deformation of $s$, then $s\ne s'$.
\end{lmm}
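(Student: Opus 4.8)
The plan is to split the proof into the two cases in the hypothesis. If $s'$ is a \emph{splitting} of $s$, the statement is immediate: by Lemma~\ref{iota2} we have $\iota(s')<\iota(s)$, so in particular $s'\ne s$. All the work is therefore in the \emph{deformation} case.

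For a deformation, note that $s'$ is obtained from the minimal representation $(l_1,\ldots,l_n)$ of $s$ by replacing some component $l_r$ with $l_r':=l_r\oplus_x p$ or $l_r':=l_r\ominus_x p$, where $p$ is a plaquette. Since loop sequences are ordered up to insertion and deletion of null loops, the minimal representation is unique, so to prove $s\ne s'$ it suffices to show $l_r'\ne l_r$ as loops (if $l_r'$ is non-null, the minimal representations of $s$ and $s'$ then differ in slot $r$; if $l_r'$ happens to be null, then $\#s'<\#s$ or $s'=\emptyset$, and again $s\ne s'$ because $s$ is non-null). The key tool is the map $r$ from paths to $1$-chains introduced before Corollary~\ref{area}: since $r(e^{-1})=-r(e)$, the chain $r$ is unchanged by a backtrack erasure, hence $r(\gamma)=r([\gamma])$ for any closed path $\gamma$, and $r$ is well defined on loops. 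I would then check, directly from the four merger formulas in Section~\ref{not1} (writing $l_r=aeb$ and $p=ced$ or $p=ce^{-1}d$ according to whether $p$ shares the edge $e$ or the edge $e^{-1}$ with $l_r$, and computing $r$ of the displayed nonbacktracking core), that in every case
\[
r(l_r')=r(l_r)\pm r(p)\,.
\]
Finally, a plaquette consists of four distinct edges of $\zz^d$ (equivalently $r(p)=\delta(p)$ in the notation of Section~\ref{results}), so $r(p)\ne 0$ in the free $\zz$-module over $E^+$; hence $r(l_r')\ne r(l_r)$ and therefore $l_r'\ne l_r$, which completes the proof.

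I do not expect a genuine obstacle here; the only thing requiring care is the bookkeeping in the identity $r(l_r')=r(l_r)\pm r(p)$, where one must run through the four combinations (positive vs.\ negative merger, shared edge $e$ vs.\ $e^{-1}$) and use the cancellations $r(e)+r(e^{-1})=0$ together with $r([\gamma])=r(\gamma)$. Each case collapses to a one-line computation, so I would present them compactly rather than in full.
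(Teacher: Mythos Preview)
Your argument is correct. For the splitting case you invoke Lemma~\ref{iota2} (index strictly decreases), whereas the paper appeals directly to Lemmas~\ref{split1} and~\ref{split2} to say $\#s'=\#s+1$; these are equivalent.

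For the deformation case your route genuinely differs from the paper's. The paper handles positive and negative mergers with a plaquette by separate ad hoc arguments: for $l\oplus_{x,y}p$ it checks that the concatenated path has no backtracks, so $|l\oplus_{x,y}p|=|l|+|p|>|l|$; for $l\ominus_{x,y}p$ it observes that the number of occurrences of the distinguished edge $e$ drops by one (since a plaquette contains $e$ at exactly one location). You instead use the $1$-chain map $r$ as a uniform invariant and reduce all four merger subcases to the single identity $r(l_r')=r(l_r)\pm r(p)$ with $r(p)\ne 0$. This is exactly the computation that reappears in the paper's proof of Lemma~\ref{rlmm} for the area law, so you are effectively borrowing that later machinery here. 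The payoff is a cleaner, case-free argument; the paper's approach is more elementary in that it avoids introducing $r$ at this stage and works purely with lengths and edge multiplicities. Both are short.
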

\begin{proof}
If $s'$ is a splitting of $s$, then by Lemmas \ref{split1} and \ref{split2}, $s'$ has more component loops than $s$, and therefore $s'\ne s$.

Now take any two non-null loops $l$ and $l'$ which can be merged at locations $x$ and $y$. If $l$ and $l'$ have the same edge $e$ at the two locations, write $l = aeb$ and $l'=ced$. Then $l\oplus_{x,y}l' = [aedceb]$. Since $l$ and $l'$ are loops, it is easy to see that the closed path $aedceb$ does not have any backtracks, and is therefore the same as $[aedceb]$. Thus, $|l\oplus_{x,y}l'|=|l|+|l'|$, and  in particular, $l\oplus_{x,y} l'\ne l$. 

Next, note that if $l'$ does not have $e$ in any other location than $y$, then the closed path $ac^{-1}d^{-1} b$ has at least one less occurrence of $e$ than the loop $l$. Since $l\ominus_{x,y} l' = [ac^{-1} d^{-1} b]$, therefore $l\ominus_{x,y} l'$ has at least one less occurrence of $e$ than the loop $l$. In particular, $l\ominus_{x,y} l'\ne l$.

In a similar manner, one can show that  the same conclusions can be drawn when the edge at the $x^{\mathrm{th}}$ location of $l$ is the inverse of the edge at the $y^{\mathrm{th}}$ location of $l'$. 

From the above discussion it follows that if $l$ is a loop and $l'$ is a deformation of $l$, then $l\ne l'$. Thus, if $s$ is a loop sequence and $s'$ is obtained by deforming the $i^{\mathrm{th}}$ component in the minimal representation of $s$, then either the $i^{\mathrm{th}}$ component of the minimal representation of $s'$ is different than that of $s$, or $s'$ has a smaller number of components in its minimal representation than $s$ (which happens when the deformation results in a null loop). In either case, $s\ne s'$. This completes the proof of the lemma. 
\end{proof}
We will also need the following general fact.
\begin{lmm}\label{general}
Let $n$ and $m$ be two positive integers. Let $a_0,a_1,\ldots, a_{n-1}$ and $b_0,\ldots, b_{m-1}$ be positive real numbers, and $a_n=b_m=0$. Let $\ma(n,m)$ be the set of all nondecreasing functions $\alpha:\{0,1,\ldots, n+m\}\ra\{0,1,\ldots, n\}$ such that $\alpha(0)=0$, $\alpha(n+m)=n$ and $\alpha(i+1)-\alpha(i)\le 1$ for each $i<n$. Then 
\begin{align*}
\sum_{\alpha \in \ma(n,m)}\prod_{i=0}^{n+m-1}\frac{1}{a_{\alpha(i)}+b_{i-\alpha(i)}} = \frac{1}{a_0a_1\cdots a_{n-1} b_0b_1\cdots b_{m-1}}\, .
\end{align*}
%where empty products stand for $1$. 
\end{lmm}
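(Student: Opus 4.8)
The plan is to recognize $\ma(n,m)$ as the set of monotone lattice paths from $(0,0)$ to $(n,m)$. Given $\alpha\in\ma(n,m)$, associate to each time $i\in\{0,1,\dots,n+m\}$ the grid point $(\alpha(i),\,i-\alpha(i))$; the conditions $\alpha(0)=0$, $\alpha(n+m)=n$, $\alpha$ nondecreasing with unit increments say precisely that this is a path starting at $(0,0)$, ending at $(n,m)$, which at each step either increases its first coordinate (an ``increasing step'' of $\alpha$) or its second coordinate (a ``flat step''). There are exactly $n$ increasing steps and $m$ flat steps, so $\ma(n,m)$ is in bijection with the $\binom{n+m}{n}$ such paths. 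The factor $1/(a_{\alpha(i)}+b_{i-\alpha(i)})$ depends only on the grid point $(p,q)=(\alpha(i),i-\alpha(i))$ occupied at time $i$, for $i=0,\dots,n+m-1$, i.e.\ on every point of the path other than its endpoint $(n,m)$; since at such a point $(p,q)$ is not equal to $(n,m)$, at least one of $a_p,b_q$ is positive, so every factor is well defined and positive, and the degenerate values $a_n=b_m=0$ are never met at a point that contributes a factor.

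The cleanest argument is probabilistic. Set up a random walk on the grid: from a point $(p,q)\ne(n,m)$, move to $(p+1,q)$ with probability $a_p/(a_p+b_q)$ and to $(p,q+1)$ with probability $b_q/(a_p+b_q)$. When $p=n$ we have $a_n=0$ so the walk is forced to increase $q$, and when $q=m$ it is forced to increase $p$; hence the walk reaches $(n,m)$ after exactly $n+m$ steps and its trajectory is exactly an element of $\ma(n,m)$. For a given $\alpha$, the probability of its trajectory is
\[
\prod_{i=0}^{n+m-1}\frac{c_i(\alpha)}{a_{\alpha(i)}+b_{i-\alpha(i)}}\,,
\]
where $c_i(\alpha)=a_{\alpha(i)}$ if $\alpha$ increases at step $i$ and $c_i(\alpha)=b_{i-\alpha(i)}$ otherwise. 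The key observation is that, as $\alpha$ is traversed, the increasing steps occur exactly at the values $\alpha(i)=0,1,\dots,n-1$ (each once) and the flat steps exactly at the values $i-\alpha(i)=0,1,\dots,m-1$ (each once), so $\prod_{i}c_i(\alpha)=a_0a_1\cdots a_{n-1}\,b_0b_1\cdots b_{m-1}$ independently of $\alpha$. Summing the trajectory probabilities over all $\alpha\in\ma(n,m)$ gives $1$; factoring out this constant numerator product yields the identity.

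Alternatively, and this is the version I would write out in full to avoid invoking probability, one proves the identity by induction on $n+m$ after extending it to allow $n=0$ or $m=0$, where $\ma$ is a singleton and the identity is immediate using $a_n=0$, resp.\ $b_m=0$. For $n,m\ge 1$, split $\ma(n,m)$ according to whether $\alpha(1)=1$ or $\alpha(1)=0$; deleting the first step and reindexing identifies the first class with $\ma(n-1,m)$ (sequence $a$ replaced by $(a_1,\dots,a_{n-1})$, which still ends in $0$) and the second with $\ma(n,m-1)$ (sequence $b$ shifted similarly). Each group contributes a common factor $1/(a_0+b_0)$ from the initial point $(0,0)$, and the inductive hypothesis evaluates the two resulting sums, giving
\[
\frac{1}{a_0+b_0}\left(\frac{1}{a_1\cdots a_{n-1}\,b_0\cdots b_{m-1}}+\frac{1}{a_0\cdots a_{n-1}\,b_1\cdots b_{m-1}}\right)=\frac{1}{a_0+b_0}\cdot\frac{a_0+b_0}{a_0\cdots a_{n-1}\,b_0\cdots b_{m-1}}\,,
\]
which is the claim. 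The computations here are entirely routine; the only point requiring care is the index bookkeeping in the bijection with lattice paths, so that each of $a_0,\dots,a_{n-1},b_0,\dots,b_{m-1}$ appears as a numerator exactly once, together with checking that the zero values $a_n,b_m$ are only encountered at points not contributing a factor. That, rather than any genuine difficulty, is the main thing to get right.
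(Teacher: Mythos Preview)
Your proposal is correct. The inductive argument you sketch is exactly the paper's approach: induction on $n+m$, splitting $\ma(n,m)$ according to whether $\alpha(1)=0$ or $\alpha(1)=1$, identifying the two pieces with $\ma(n,m-1)$ and $\ma(n-1,m)$ after shifting the relevant sequence, and recombining via $\frac{1}{a_0+b_0}\bigl(\frac{1}{a_1\cdots a_{n-1}b_0\cdots b_{m-1}}+\frac{1}{a_0\cdots a_{n-1}b_1\cdots b_{m-1}}\bigr)$. The only cosmetic difference is that the paper keeps $n,m\ge 1$ throughout and handles the base cases $n=m=1$ and $n=1$ (resp.\ $m=1$) separately, whereas you extend the statement to $n=0$ or $m=0$ to get a cleaner induction; this is a minor streamlining, not a change of strategy.

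Your probabilistic argument, on the other hand, is a genuinely different proof that the paper does not give. Rewriting each summand as $\bigl(\prod_i c_i(\alpha)\bigr)^{-1}$ times the probability of the corresponding lattice path under the Markov chain with transition probabilities $a_p/(a_p+b_q)$ and $b_q/(a_p+b_q)$, and then noting that $\prod_i c_i(\alpha)=a_0\cdots a_{n-1}b_0\cdots b_{m-1}$ for every $\alpha$ (because increasing steps visit $\alpha(i)=0,\dots,n-1$ once each and flat steps visit $i-\alpha(i)=0,\dots,m-1$ once each), reduces the identity to the statement that the path probabilities sum to $1$. This is shorter and more conceptual than the induction; the induction buys nothing extra here beyond being self-contained without probabilistic language.
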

\begin{proof}
Let $L$ denote the left-hand side.  First, suppose that $n=m=1$. Then either $\alpha(0)=0=\alpha(1)$ and $\alpha(2)=1$, or $\alpha(0)=0$ and $\alpha(1)=\alpha(2)=1$. In the first case,
\begin{align*}
L &= \frac{1}{(a_0 + b_0)(a_0+b_1)} = \frac{1}{(a_0 + b_0)a_0}\, ,
\end{align*}
where the second identity holds because $b_1=0$. In the second case,
\begin{align*}
L &= \frac{1}{(a_0+ b_0)(a_1+b_0)} = \frac{1}{(a_0 + b_0)b_0}\, ,
\end{align*}
Summing the two cases gives the desired result.

Next, suppose that $n=1$ and $m >1$. Then 
\begin{align*}
L &= \sum_{i=0}^{m} \frac{1}{(a_0+b_0)(a_0+b_1)\cdots (a_0+b_i)b_ib_{i+1}\cdots b_{m-1}}\,,
\end{align*}
where the product $b_ib_{i+1}\cdots b_{m-1}$ is understood to be equal to $1$ when $i=m$. 
Suppose that the desired identity has been proved for smaller values of $m$. Then  
\begin{align*}
L &= \frac{1}{(a_0+b_0)b_0b_1\cdots b_{m-1}} + \frac{1}{(a_0+b_0)} \sum_{i=1}^{m} \frac{1}{(a_0+b_1)\cdots (a_0+b_i)b_ib_{i+1}\cdots b_{m-1}}\\
&= \frac{1}{(a_0+b_0)b_0b_1\cdots b_{m-1}} + \frac{1}{(a_0+b_0)a_0b_1\cdots b_{m-1}}\\
&= \frac{1}{a_0b_0b_1\cdots b_{m-1}}\,.
\end{align*}
The proof goes through in a similar manner when $n>1$ and $m=1$.

Finally, take $n>1$ and $m>1$ and assume that the result has been proved for all smaller values of $n+m$. Let $\ma_0(n,m)$ consist of all $\alpha\in \ma(n,m)$ with $\alpha(1)=0$, and let $\ma_1(n,m)$ consist of all $\alpha\in \ma(n,m)$ with $\alpha(1)=1$. Clearly, these two sets are disjoint and their union is $\ma(n,m)$. Take any $\alpha\in \ma_0(n,m)$ and let $\alpha'(i) := \alpha(i+1)$ for $i=0,1,\ldots, n+m-1$. It is easy to see that the map $\alpha \mapsto \alpha'$ gives a bijection between $\ma_0(n,m)$ and $\ma(n,m-1)$. Thus, if we let $b'_i = b_{i+1}$ for $i=0,1,\ldots, m-1$, then by the induction hypothesis,
\begin{align*}
\sum_{\alpha \in \ma_0(n,m)}\prod_{i=0}^{n+m-1}\frac{1}{a_{\alpha(i)}+b_{i-\alpha(i)}}&= \sum_{\alpha' \in \ma(n,m-1)}\frac{1}{(a_0+b_0)}\prod_{i=1}^{n+m-1}\frac{1}{a_{\alpha'(i-1)}+b_{i-\alpha'(i-1)}}\\
&= \frac{1}{(a_0+b_0)}\sum_{\alpha' \in \ma(n,m-1)}\prod_{i=0}^{n+m-2}\frac{1}{a_{\alpha'(i)}+b'_{i-\alpha'(i)}}\\
&= \frac{1}{(a_0+b_0)a_0a_1\cdots a_{n-1} b'_0b'_1\cdots b'_{m-2}}\\
&= \frac{1}{(a_0+b_0)a_0a_1\cdots a_{n-1} b_1b_2\cdots b_{m-1}}
\end{align*}
Next, let $\alpha''(i) := \alpha(i+1)-1$ for $i=0,1,\ldots, n+m-1$.  Again, it is easy to see that the map $\alpha \mapsto \alpha''$ gives a bijection between $\ma_1(n,m)$ and $\ma(n-1, m)$. Let $a_i' := a_{i+1}$. Then by the induction hypothesis,
\begin{align*}
\sum_{\alpha \in \ma_1(n,m)}\prod_{i=0}^{n+m-1}\frac{1}{a_{\alpha(i)}+b_{i-\alpha(i)}}&= \sum_{\alpha'' \in \ma(n,m-1)}\frac{1}{(a_0+b_0)}\prod_{i=1}^{n+m-1}\frac{1}{a_{\alpha''(i-1)+1}+b_{i-\alpha''(i-1)-1}}\\
&= \frac{1}{(a_0+b_0)}\sum_{\alpha'' \in \ma(n,m-1)}\prod_{i=0}^{n+m-2}\frac{1}{a'_{\alpha''(i)}+b_{i-\alpha''(i)}}\\
&= \frac{1}{(a_0+b_0)a_0'a_1'\cdots a'_{n-2} b_0b_1\cdots b_{m-1}}\\
&= \frac{1}{(a_0+b_0)a_1a_2\cdots a_{n-1} b_0b_1b_2\cdots b_{m-1}}
\end{align*}
Summing the last two displays gives the desired result. 
\end{proof}

If $s = (l_1,\ldots, l_n)$ and $s' = (l_1',\ldots, l_m')$ are two loop sequences, we will denote by $(s,s')$ the concatenated loop sequence $(l_1,\ldots, l_n, l_1',\ldots,l_m')$.

Let $X= (s_0,s_1,\ldots, s_n)$ and $X' = (s_0',s_1',\ldots, s_m')$ be two vanishing trajectories. Let $\alpha: \{0,1,\ldots, n+m\}\ra \{0,1,\ldots, n\}$ be a nondecreasing function such that $\alpha(0)=0$, $\alpha(n+m)=n$, and $\alpha(i+1)-\alpha(i)\le 1$ for all $i<n+m$. Define the `merging' of $X$ and $X'$ by $\alpha$ as the trajectory $\alpha(X,X')$ whose $i^{\mathrm{th}}$ component is the concatenated loop sequence  $(s_{\alpha(i)}, s'_{i-\alpha(i)})$. Let $\ma(X,X')$ be the set of all $\alpha$ as above. 
\begin{lmm}\label{bijec}
Take any two non-null loops $l$ and $l'$. For any $Y\in \mx(l,l')$, there exist unique $X\in \mx(l)$, $X'\in \mx(l')$ and $\alpha \in \ma(X,X')$ such that $Y = \alpha(X, X')$. Conversely, for any $X\in \mx(l)$, $X'\in \mx(l')$ and $\alpha \in \ma(X,X')$, $\alpha(X,X')\in \mx(l,l')$. 
\end{lmm}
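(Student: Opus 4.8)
The plan is to prove both directions of the correspondence by working with the structure of vanishing trajectories starting from a two-component loop sequence $(l,l')$. The key observation is that in any trajectory $Y\in\mx(l,l')$, no merger operations are allowed (by definition of trajectory), and mergers are the only operation that could combine a loop descended from $l$ with a loop descended from $l'$. Since deformations and splittings each act on a single component loop, I would argue that at every time step the loop sequence $s_i$ in $Y$ splits canonically into a ``left part'' (the loops descended from $l$) and a ``right part'' (the loops descended from $l'$), and each step of $Y$ modifies exactly one of these two parts. This gives a natural way to extract the pair $(X,X')$ and the merging function $\alpha$.

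First I would make precise the notion of ``descent'': declare that in $(l,l')$ the loop $l$ is the unique ancestor on the left and $l'$ the unique ancestor on the right, and propagate this labeling through each deformation step (the deformed loop inherits the label of the loop it came from) and each splitting step (both resulting loops inherit the label of the loop that was split). Then $s_i = (t_i, u_i)$ where $t_i$ is the subsequence of left-labeled loops and $u_i$ the subsequence of right-labeled loops. Because the last loop sequence $s_{n+m}$ is null, both $t_{n+m}$ and $u_{n+m}$ are null. Each transition $s_i\to s_{i+1}$ is either a move on the left part (so $u_{i+1}=u_i$ and $t_i\to t_{i+1}$ is a valid deformation or splitting) or a move on the right part (symmetrically). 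Define $\alpha(i)$ to be the number of left-moves among the first $i$ steps; then $t_i = s_{\alpha(i)}$-th element of the extracted left trajectory and $u_i$ the corresponding right one, and $\alpha$ is nondecreasing with unit increments. Removing repetitions from $(t_0,t_1,\ldots)$ gives $X\in\mx(l)$ and similarly $X'\in\mx(l')$; one must check $X$ and $X'$ are genuine vanishing trajectories (each consecutive pair is a deformation or splitting or equal, and the last is null) — here Lemma~\ref{trivial} is what guarantees that a non-repeated step is a genuine deformation/splitting rather than a stutter, so the de-duplication is well-defined and the resulting $\alpha\in\ma(X,X')$. Uniqueness follows because the labeling is forced: given $Y$, the left/right decomposition of each $s_i$ is determined, hence $\alpha$, hence $X$ and $X'$.

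For the converse, given $X=(s_0,\ldots,s_n)\in\mx(l)$, $X'=(s_0',\ldots,s_m')\in\mx(l')$ and $\alpha\in\ma(X,X')$, I would verify directly that $\alpha(X,X')$ is a vanishing trajectory starting at $(l,l')$: its $0$-th term is $(s_0,s_0')=(l,l')$, its last term is $(s_n,s_m')=(\emptyset,\emptyset)=$ the null loop sequence, and each consecutive pair $(s_{\alpha(i)},s'_{i-\alpha(i)})\to (s_{\alpha(i+1)},s'_{i+1-\alpha(i+1)})$ differs in exactly one coordinate (since $\alpha(i+1)-\alpha(i)\in\{0,1\}$ and the two increments sum to $1$), and that single-coordinate change is a deformation or splitting of the corresponding component of $X$ or $X'$ — which is a deformation or splitting of the concatenated loop sequence in the sense defined in Section~\ref{not1}. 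Thus every step is an allowed transition and $\alpha(X,X')\in\mx(l,l')$.

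The main obstacle I expect is the bookkeeping around \emph{stutter steps} — the places where a deformation or splitting of a component yields the null loop, so that the component count of $t_i$ or $u_i$ drops and we must be careful that the de-duplicated trajectories $X,X'$ and the function $\alpha$ are recovered consistently and that the forward and backward maps are genuine inverses. Lemma~\ref{trivial} is the tool that rules out spurious coincidences $s=s'$ for a single deformation or splitting, but I will need to track that the \emph{concatenated} loop sequences $(s_{\alpha(i)},s'_{i-\alpha(i)})$ can still coincide for consecutive $i$ only when one of the two underlying trajectory coordinates is already at its null endpoint — i.e. the ``padding'' at the ends. Making the correspondence a clean bijection therefore requires defining $\ma(X,X')$ to allow exactly these end-paddings (which is why $\ma(n,m)$ in Lemma~\ref{general} permits $\alpha(i+1)=\alpha(i)$ freely once $\alpha(i)=n$), and checking that no interior stutter arises; this is the one place where care, rather than routine verification, is needed.
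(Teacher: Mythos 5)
Your proposal follows essentially the same route as the paper's proof: since mergers are forbidden, every component of $Y$ decomposes into a part descended from $l$ and a part descended from $l'$; you let $\alpha(i)$ count the steps in which the left part has moved, use Lemma \ref{trivial} to see that exactly one of the two parts changes at each step (so $\alpha$, and hence $X$ and $X'$, are read off from $Y$), and check the converse by direct verification, which is exactly the paper's argument. The one point where you diverge --- the worry about stutter steps and end-padding --- is unnecessary: because a splitting or deformation never fixes a non-null loop sequence (Lemma \ref{trivial}), every step of $\alpha(X,X')$ advances exactly one of the two coordinates and consecutive concatenated sequences never coincide, so $\ma(X,X')$ requires no padding and no de-duplication beyond what the paper already does.
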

\begin{proof}
%It is an easily verifiable fact that any splitting or deformation of a loop sequence is not equal to the original loop sequence. This will be used several times without explicit mention in this proof. 
The first element of $Y$ is $(l,l')$. Since loops are not allowed to merge in a trajectory, it is easy to see that any component of $Y$ is a loop sequence of the form $(s,s')$ where $s$ is a descendant of $l$ and $s'$ is a descendant of $l'$, where `descendant' means a loop sequence that may be obtained by successive deformations and splittings. Following this convention, write the $i^{\mathrm{th}}$ component of $Y$ as $(s_i, s_i')$, with $i$ running from $0$ to some finite number $k$, where $s_k=s_k'=\emptyset$. 

Since only one loop is allowed to be split or deformed at each step, therefore for each $i$, either $s_{i+1}$ is a deformation or splitting of $s_i$ and $s_{i+1}'= s_i'$, or $s_{i+1}'$ is a deformation or splitting of $s_i'$ and $s_{i+1}=s_i$. Define a sequence $\alpha(0), \alpha(1),\ldots$ inductively as $\alpha(0)=0$, and for each $i\ge 0$, 
\begin{equation}\label{arecur}
\alpha(i+1) =
\begin{cases}
\alpha(i)+1 &\text{ if $s_{i+1}\ne s_i$,}\\
\alpha(i) &\text{ if $s_{i+1}=s_i$.}
\end{cases}
\end{equation}
Similarly, define $\gamma(0)=0$, and for each $i\ge 0$,
\begin{equation}\label{grecur}
\gamma(i+1) =
\begin{cases}
\gamma(i)+1 &\text{ if $s_{i+1}'\ne s_i'$,}\\
\gamma(i) &\text{ if $s'_{i+1}=s'_i$.}
\end{cases}
\end{equation}
Note that for each $i\ge 0$, by Lemma \ref{trivial},
\[
\alpha(i+1)+\gamma(i+1)=\alpha(i)+\gamma(i)+1\, ,
\]
and therefore
\begin{equation}\label{ag}
\alpha(i) +\gamma(i) = i\,.
\end{equation}
Let $n := \alpha(k)$ and $m:= k-n$. For $0\le j\le n$, let 
\begin{equation}\label{tjdef}
t_j := s_{\alpha^{-1}(j)}\, ,
\end{equation}
where 
\[
\alpha^{-1}(j) := \min\{0\le i\le k: \alpha(i)=j\}. 
\]
Similarly, for $0\le j\le m$, let 
\begin{equation}\label{tjpdef}
t_j':= s'_{\gamma^{-1}(j)}\, .
\end{equation}
By definition of $\alpha$, it follows that if $\alpha(i)=\alpha(i')$ then $s_i = s_{i'}$. Similarly if $\gamma(i)=\gamma(i')$ then $s_i' = s'_{i'}$. Therefore for any $0\le i\le n+m$, 
\[
t_{\alpha(i)} = s_{\alpha^{-1}(\alpha(i))} = s_i\, ,
\]
and by \eqref{ag},
\[
t'_{i-\alpha(i)} = t'_{\gamma(i)} = s'_{\gamma^{-1}(\gamma(i))} = s'_i\, .
\]
Thus, if $X = (t_0, \ldots, t_n)$ and $X' = (t_0',\ldots, t_m')$, then $Y = \alpha(X,X')$. 

To prove uniqueness of the representation, take any $X = (t_0,\ldots, t_n)$, $X' = (t_0',\ldots, t_m')$ and $\alpha\in \ma(X,X')$, and let $Y = \alpha(X,X')$. Write the $i^{\mathrm{th}}$ component of $Y$ as $(s_i, s_i')$. It is clear from the definition of $\alpha(X,X')$ and Lemma \ref{trivial} that the function $\alpha$ must satisfy the recursion \eqref{arecur} and the function $\gamma(i)=i-\alpha(i)$ must satisfy the recursion \eqref{grecur}, and that $t_j$ and $t_j'$ are given by \eqref{tjdef} and \eqref{tjpdef}. This completes the proof of the one-to-one correspondence between $Y\in \mx(l,l')$ and triples $(X,X', \alpha)$ where $X\in \mx(l)$, $X'\in \mx(l')$ and $\alpha \in \ma(X,X')$.  
\end{proof}

We are now ready to prove Corollary \ref{factor}. For any trajectory $X$, let $\delta^+(X)$ and $\delta^-(X)$ be the number of positive and negative splittings of $X$, and let $\chi^+(X)$ and $\chi^-(X)$ be the number of positive and negative deformations of $X$. Note that if $X = (s_0,\ldots, s_n)$, where $s_n= \emptyset$, then
\begin{equation}\label{weight}
w_\beta(X) = \frac{(-1)^{\delta^+(X)}  (-\beta)^{\chi^+(X)} \beta^{\chi^-(X)}}{|s_0||s_1|\cdots |s_{n-1}|}\, .
\end{equation}
Let $l$ be a non-null loop. By Theorem \ref{convthm} and Lemma \ref{bijec},
\begin{align}\label{yid1}
\sum_{Y\in \mx(l,l)} w_\beta(Y) &= \sum_{X, X'\in \mx(l)}\sum_{\alpha\in \ma(X,X')} w_\beta(\alpha(X,X'))\, .
\end{align}
Fix $X = (s_0,\ldots, s_n)$ and $X' = (s_0',\ldots, s_m')$ in $\mx(l)$, where $s_0=s_0'=l$, and $s_n=s_m'=\emptyset$. Take any $\alpha \in \ma(X,X')$ and let $Y = \alpha(X,X')$. Let $t_i = (s_{\alpha(i)}, s'_{i-\alpha(i)})$ be the $i^{\mathrm{th}}$ component of $Y$.  Then  by equation \eqref{weight},
\begin{align*}
w_\beta(\alpha(X,X')) &= \frac{(-1)^{\delta^+(Y)}(-\beta)^{\chi^+(Y)} \beta^{\chi^-(Y)}}{|t_0||t_1|\cdots |t_{n+m-1}|}\\
&= \frac{(-1)^{\delta^+(X)+ \delta^+(X')} (-\beta)^{\chi^+(X)+\chi^+(X')} \beta^{\chi^-(X)+\chi^-(X')}}{|t_0||t_1|\cdots |t_{n+m-1}|}\, .
\end{align*}
Note that the numerator does not depend on $\alpha$. The dependence on $\alpha$ comes only through the term
\begin{align*}
T(\alpha) &:= \frac{1}{|t_0||t_1|\cdots |t_{n+m-1}|}\\
&= \prod_{i=0}^{n+m-1}\frac{1}{|s_{\alpha(i)}| + |s'_{i-\alpha(i)}|}\, .
\end{align*}
By Lemma \ref{general},  
\begin{equation*}%\label{tclaim}
\sum_{\alpha\in \ma(X,X')} T(\alpha) = \frac{1}{|s_0|\cdots |s_{n-1}||s'_0|\cdots |s'_{m-1}|}\, .
\end{equation*}
Combining all of the above calculations, we obtain 
\begin{align}\label{yid2}
\sum_{\alpha\in \ma(X,X')} w_\beta(\alpha(X,X')) = w_\beta(X)w_\beta(X')\, .
\end{align}
Thus, by Theorem \ref{mainthm} and the identities \eqref{yid1} and \eqref{yid2},
\begin{align*}
\lim_{N\ra\infty} \frac{\smallavg{W_{l}}^2}{N^2} &= \sum_{X, X'\in \mx(l)} w_\beta(X)w_\beta(X')\\
&= \sum_{X,X'\in \mx(l)} \sum_{\alpha \in \ma(X,X')} w_\beta(\alpha(X,X'))\\
&= \sum_{Y\in \mx(l,l)} w_\beta(Y) = 
\lim_{N\ra\infty} \frac{\smallavg{W_{l}^2}}{N^2}\, .
\end{align*}
Thus, for any $l_1,\ldots, l_n$, using the inequality $|W_l|\le N$ and the Cauchy--Schwarz inequality we get
\begin{align*}
& N^{-n}|\smallavg{W_{l_1}\cdots W_{l_n}} - \smallavg{W_{l_1}\cdots W_{l_{n-1}} } \smallavg{W_{l_n}}|\\
&\le  N^{-n}|\smallavg{W_{l_1}\cdots W_{l_{n-1}}(W_{l_n}- \smallavg{W_{l_n}})}|\\
&\le  N^{-1}\smallavg{|W_{l_n}-\smallavg{W_{l_n}}|}\\
&\le  N^{-1}\smallavg{(W_{l_n}-\smallavg{W_{l_n}})^2}^{1/2} = N^{-1}(\smallavg{W_{l_n}^2}-\smallavg{W_{l_n}}^2)^{1/2}\, .
\end{align*}
By the previous display, the last term tends to zero as $N\ra\infty$. The proof of Corollary \ref{factor} can now be easily completed using induction on $n$.

\section{Proof of Corollary \ref{area} (Area law upper bound)}\label{areasec}
We will use the following two lemmas.
\begin{lmm}\label{rlmm}
For any non-null loop $l$, $\textup{area}(l)\le $ the minimum number of deformations in a vanishing trajectory starting from $l$. 
\end{lmm}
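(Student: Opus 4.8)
\textbf{Proof strategy for Lemma \ref{rlmm}.}
The plan is to show that the boundary operation on $2$-chains is, in a suitable sense, compatible with the splitting and deformation operations on loops, so that running any vanishing trajectory backwards produces a $2$-chain whose boundary is $r(l)$ and whose area is at most the number of deformations in the trajectory. Concretely, fix a vanishing trajectory $X=(s_0,s_1,\ldots,s_n)$ starting at $s_0 = l$ and ending at $s_n=\emptyset$. Reading the trajectory in reverse, I will assign to each loop sequence $s_i$ in the trajectory a $2$-chain $x_i$ in the standard cell complex of $\zz^d$, built up step by step, such that $\delta(x_i) = \sum_{l'} r(l')$ where the sum runs over the component loops of $s_i$, and such that $\textup{area}(x_i)$ is at most the number of deformation steps among $s_n,s_{n-1},\ldots,s_i$ (equivalently, among the last $n-i$ transitions). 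Taking $i=0$ then gives a $2$-chain $x_0$ with $\delta(x_0)=r(l)$ and area bounded by the total number of deformations in $X$, and since $\textup{area}(l)$ is the minimum over all such $2$-chains, the lemma follows.

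The construction proceeds by downward induction on $i$. The base case is $x_n := 0$, the zero $2$-chain, which has boundary $0 = r(\emptyset)$ (an empty sum) and area $0$. For the inductive step, suppose $x_{i+1}$ has been constructed with $\delta(x_{i+1})$ equal to the sum of $r(l')$ over component loops $l'$ of $s_{i+1}$. There are two cases according to whether the transition $s_i \to s_{i+1}$ is a splitting or a deformation. If $s_{i+1}$ is a \emph{splitting} of $s_i$, then the component loops of $s_{i+1}$ are obtained from those of $s_i$ by replacing one loop $l$ by a pair $\times^1_{x,y}l, \times^2_{x,y}l$. The key algebraic fact to check here is that $r(l) = r(\times^1_{x,y}l) + r(\times^2_{x,y}l)$ as $1$-chains: this is immediate because $r$ is additive over concatenation, insensitive to cyclic rearrangement and to backtrack erasure (the last since $r(e)+r(e^{-1})=0$), and positive splitting writes $l$ as $aebec$ with $\times^1 l = [aec]$, $\times^2 l = [be]$, so $r(\times^1 l) + r(\times^2 l) = r(a)+r(e)+r(c) + r(b)+r(e)$; using $r(e)+r(e)$ versus $r(aebec)=r(a)+r(e)+r(b)+r(e)+r(c)$ these agree — and the negative splitting case $l = aebe^{-1}c$ is handled identically using $r(e)+r(e^{-1})=0$. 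Hence we may simply take $x_i := x_{i+1}$; the boundary condition is preserved and the area does not change, and the number of deformations among the last transitions is also unchanged, so the induction hypothesis on area is maintained. If $s_{i+1}$ is a \emph{deformation} of $s_i$, obtained by merging a plaquette $p$ into one of the component loops, then I will show $\delta(\pm p)$ differs from $\sum r(l')$ over $s_{i+1}$ by exactly $\sum r(l')$ over $s_i$; that is, $r(l_i \oplus_x p)$ (or $\ominus$) equals $r(l_i) \pm \delta(p_0)$ where $p_0\in\cp^+$ is the positively oriented representative of $p$. Granting this, set $x_i := x_{i+1} \mp p_0$; then $\delta(x_i) = \delta(x_{i+1}) \mp \delta(p_0)$ matches the sum of $r(l')$ over $s_i$, and $\textup{area}(x_i) \le \textup{area}(x_{i+1}) + 1$, which is still at most the number of deformation steps among the last $n-i$ transitions (since this transition contributes one more deformation). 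This completes the induction.

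The main obstacle is the deformation case: one must carefully verify that a positive or negative merger of a plaquette $p$ into a loop $l$ changes the $1$-chain $r(l)$ by precisely $\pm\delta(p_0)$. This requires unwinding the definition of merger — writing $l = aeb$ and $p = ce d$ or $p = ce^{-1}d$, forming the nonbacktracking core of the resulting cycle, and tracking how $r$ behaves under the insertion of the plaquette's remaining three edges — and checking the sign bookkeeping so that a \emph{positive} deformation corresponds to subtracting $p_0$ (or adding, consistent with whatever sign convention makes $\delta(x_i)$ come out right) from the $2$-chain. Since $r$ annihilates backtracking pairs and the plaquette contributes its full boundary $\delta(p_0) = e_1 + e_2 - e_3 - e_4$ up to orientation of the edge $e$ along which the merger occurs, this is a finite case check over the four sub-cases ($e$ vs. $e^{-1}$ in $p$, and positive vs. negative merger), and the orientation of $e$ within $p$ relative to the standard orientation of $p_0$ exactly accounts for the $\pm$ sign. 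Once this computation is pinned down, the rest of the argument is the routine downward induction sketched above.
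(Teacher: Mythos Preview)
Your proposal is correct and follows essentially the same approach as the paper: both arguments extend $r$ additively to loop sequences, verify that $r$ is unchanged by splitting and changes by $\pm r(p)=\pm\delta(p)$ under deformation by a plaquette $p$, and use this to produce a $2$-chain with boundary $r(l)$ built from the $k$ deformation plaquettes. The only cosmetic difference is that the paper writes the result as a direct telescoping sum $r(s_0)-r(s_n)=\sum_j \sigma_j r(p_j)$ and then sets $x=\sum_j \sigma_j p_j$, whereas you package the same computation as a downward induction building $x_i$ one step at a time; the content is identical.
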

\begin{proof}
Let $r$ be the map defined prior to the statement of Corollary \ref{area}. Extend the definition of $r$ as follows. Let $r(\emptyset)=0$, and for a loop sequence $s= (l_1,\ldots, l_n)$, let 
\[
r(s) := r(l_1)+\cdots +r(l_n)\, .
\]
Note that if $\rho$ is a path and $\rho'$ is obtained from $\rho$ by a backtrack erasure, then $r(\rho)=r(\rho')$. Thus, for any closed path $l$, 
\begin{equation}\label{ll}
r(l)=r([l])\, .
\end{equation}
Using this it is easy to see that if $s'$ is a splitting of $s$, then $r(s)=r(s')$. 

Next, take any two loops $l$ and $l'$ and locations $x$ in $l$ and $y$ in $l'$ such that $l$ and $l'$ can be merged at $x$ and $y$. First, suppose that the $x^{\mathrm{th}}$ edge of $l$ is the same as the $y^{\mathrm{th}}$ edge of $l'$. Then it follows easily from definition and the identity \eqref{ll}  that
\[
r(l\oplus_{x,y}l') = r(l)+r(l')
\]
and
\[
r(l\ominus_{x,y}l') = r(l)-r(l')\,.
\]
On the other hand, if the $x^{\mathrm{th}}$ edge of $l$ is the inverse of the $y^{\mathrm{th}}$ edge of $l'$, then
\[
r(l\oplus_{x,y}l') = r(l)-r(l')
\]
and
\[
r(l\ominus_{x,y}l') = r(l)+r(l')\,.
\]
Combining the above observations, it follows that if $X= (s_0,s_1,\ldots,s_n)$ is a vanishing trajectory with $s_n=\emptyset$ and $k$ deformations, and $p_1,\ldots, p_k$ are the plaquettes involved in the deformations, then
\[
r(s_0) = r(s_0)-r(s_n)=\sigma_1r(p_1)+\cdots +\sigma_k r(p_k)
\]
for some $\sigma_1,\ldots, \sigma_k\in \{-1,1\}$. Now let $x$ be the lattice surface
\[
x := \sigma_1p_1+\cdots +\sigma_k p_k\, .
\]
Then note that 
\begin{align*}
\textup{area}(x) &\le |\sigma_1|+\cdots +|\sigma_k| = k\, ,
\end{align*}
and 
\begin{align*}
\delta(x) &= \sigma_1\delta(p_1)+\cdots +\sigma_k \delta(p_k)\\
&= \sigma_1r(p_1)+\cdots +\sigma_k r(p_k) = r(s_0)\, .
\end{align*}
Thus, if $s_0$ is equal to a single loop $l$, then $x$ is a surface of area $\le k$ and with boundary $l$. This completes the proof of the lemma. 
\end{proof}
\begin{lmm}\label{newlmm}
If $l$ is a non-canceling loop, then $|l|\le 4\,\textup{area}(l)$.
\end{lmm}
\begin{proof}
Take any $2$-chain $x$ such that $\delta(x)=r(l)$, in the notation introduced immediately preceding the statement of Corollary \ref{area}. Take any edge $e$ in $l$. Recall that $\cp^+(e)$ is the set of all positively oriented plaquettes that contain either $e$ or $e^{-1}$. Let $m(e)$ be the number of occurrences of $e$ in $l$. The non-canceling nature of $l$ implies that
\[
\sum_{\substack{p\in \cp^+(e)\\ e\in p}} n_p - \sum_{\substack{p\in \cp^+(e)\\ e^{-1}\in p}} n_p= m(e), 
%\begin{cases}
%m(e) &\text{ if $e\in E^+$,}\\
%-m(e) &\text{ if $e\in E^-$.} 
%\end{cases}
\]
irrespective of whether $e$ is positively or negatively oriented. In particular,
\[
\sum_{p\in \cp^+(e)} |n_p|\ge m(e).
\]
Summing over all distinct $e\in l$, and noting that each plaquette contains at most four distinct edges of $l$, gives the desired result.
\end{proof}
We are now ready to finish the proof of Corollary \ref{area}. It is easy to give an argument that is purely derived from the statement of Theorem \ref{mainthm}, but to save space we will use some facts that have already been proved while proving Theorem~\ref{mainthm}. Let $a_k(l)$ be the coefficient defined in Section~\ref{power}. 
Then by Theorem \ref{akthm},
\begin{align*}
\lim_{N\ra\infty} \frac{\smallavg{W_l}_{\Lambda_N, N, \beta}}{N} &= \sum_{k=0}^\infty a_k(l)\beta^k\, .
\end{align*}
Moreover by second inequality in Theorem \ref{akthm}, there is a constant $K$, depending only on $d$, such that 
\[
|a_k(l)|\le K^{k+|l|}
\]
for all $k$. By equation~\eqref{tkclaim} and Lemma \ref{rlmm}, $a_k(l)=0$ when $k<\textup{area}(l)$. Thus,
\[
\lim_{N\ra\infty} \frac{|\smallavg{W_l}_{\Lambda_N, N, \beta}|}{N}\le \sum_{k\ge \textup{area}(l)} K^{k+|l|} |\beta|^k\,.
\]
By Lemma \ref{newlmm}, this completes the proof of Corollary~\ref{area}. % when $|\beta|$ is small enough. Since $|W_l|\le N$ for all $l$, we can simply increase the value of $C(d)$ to render the statement valid for all $\beta$. 

\section{Proof of Corollary \ref{part} (Limit of partition function)}\label{partsec}
Let $\Lambda_N'$ be the same as $\Lambda_N$, except that vertices on opposite faces are connected by edges to give it the graph structure of a torus. Define $SO(N)$ lattice gauge theory on $\Lambda_N'$ just as on $\Lambda_N$, except that there are some additional plaquettes due to presence of the extra edges. Let $\cp_N'$ be the set of plaquettes in $\Lambda_N'$ and let $Z_{\Lambda_N', N,\beta}$ be the partition function of this new theory.

Let $K$ be a compact set equipped with its Borel sigma algebra, and let $\mu$ be a finite positive measure on $K$. If $H$ and $H'$ are two bounded measurable real-valued functions on $K$, then
\begin{align*}
\log \int_K e^{H(x)} d\mu(x) - \log \int_K e^{H'(x)} d\mu(x) &= \log \frac{\int_K e^{H(x)-H'(x)} e^{H'(x)} d\mu(x)}{\int_K e^{H'(x)} d\mu(x)}\\
&\le \log \sup_{x\in K}e^{H(x)-H'(x)} = \sup_{x\in K}(H(x)-H'(x))\, .
\end{align*}
Since the number of additional plaquettes in $\cp_N'$ is of order $N^{d-1}$ and $|\tr(Q_p)|\le N$ for all $p$, the above inequality can be used to show that 
\[
|\log Z_{\Lambda_N, N, \beta} - \log Z_{\Lambda_N', N, \beta}| \le C(d)|\beta| N^{d+1}\, ,
\]
where $C$ is a  constant that depends only on $d$. In particular,
\[
\lim_{N\ra\infty} \frac{\log Z_{\Lambda_N, N, \beta} - \log Z_{\Lambda_N', N, \beta}}{N^2|\Lambda_N|} = 0\, .
\]
Since $Z_{\Lambda_N', N, 0} = 1$, therefore for any $\beta_1 > 0$, 
\begin{align*}
\log Z_{\Lambda_N', N, \beta_1} &= \int_0^{\beta_1} \fpar{}{\beta} \log Z_{\Lambda_N', N, \beta}\,d\beta\\
&= \sum_{p\in\cp_N'} N\int_0^{\beta_1} \smallavg{W_p}_{\Lambda_N', N, \beta}\, d\beta
\end{align*}
The main useful feature of $\Lambda_N'$ is its symmetry: each term in the above sum has the same value. Thus, for any fixed plaquette $p$,
\begin{align*}
\log Z_{\Lambda_N', N, \beta_1} &=  |\cp_N'| N\int_0^{\beta_1} \smallavg{W_p}_{\Lambda_N', N, \beta}\, d\beta\, .
\end{align*}
Following the steps in the proof of Theorems \ref{mastern} and \ref{master} with $\Lambda_N'$ instead of $\Lambda_N$, it can be shown that any limit point of $(N^{-1}\smallavg{W_s}_{\Lambda_N', N, \beta})_{s\in \cs}$ satisfies the master loop equation of Theorem \ref{master}. Therefore by Theorem \ref{master2} and Theorem \ref{mainthm}, it follows that if $|\beta|$ is sufficiently small, then
\[
\lim_{N\ra\infty} \frac{\smallavg{W_p}_{\Lambda_N', N, \beta} }{N} = \lim_{N\ra\infty} \frac{\smallavg{W_p}_{\Lambda_N, N, \beta} }{N} = \sum_{X\in \mx(p)} w_\beta(X)\, .
\]
It is easy to see that the proof of Theorem \ref{convthm} actually gives something slightly stronger: 
\[
\sup_{0\le \beta\le \beta_1} \sum_{X\in \mx(p)} |w_\beta(X)|<\infty\, .
\]
Therefore by the bounded convergence theorem for integrals, we get
\begin{align*}
\lim_{N\ra\infty}\int_0^{\beta_1} \frac{\smallavg{W_p}_{\Lambda_N', N, \beta}}{N}\, d\beta &= \sum_{X\in \mx(p)}\int_0^{\beta_1}w_\beta(X)\, d\beta\,.
\end{align*}
Now, $w_\beta(X) = v(X) \beta^{\delta(X)}$ where $v(X)$ does not depend on $\beta$. Thus,
\[
\int_0^{\beta_1}w_\beta(X)\, d\beta = \frac{\beta_1 w_{\beta_1}(X)}{\delta(X)+1}\, .
\]
The proof is now completed by observing that $|\cp_N'|/|\Lambda_N|\ra d(d-1)/2$ as $N\ra\infty$. The proof is similar when $\beta_1< 0$.

\section{Proof of Corollary \ref{series} (Real analyticity at strong coupling)}\label{seriessec}
Just like Corollary \ref{area}, Corollary \ref{series} can be derived purely from the statement of Theorem \ref{mainthm}. However, for the sake of saving space, we will prove Corollary \ref{series} using facts that have been proven while proving Theorem \ref{mainthm}. 

The proof of the first power series expansion in the statement of the corollary is already implicit in the proof of Theorem \ref{mainthm}. Specifically, it follows easily from equation \eqref{tkclaim} and Theorem~\ref{akthm}. For the second series, one simply needs to combine Corollary \ref{part} with equation \eqref{tkclaim} and Theorem~\ref{convthm}.

\section{Proof of Lemma \ref{core} (Uniqueness of nonbacktracking core)}\label{coresec}
Write $\rho = e_1e_2\cdots e_n$. Then $n$ is the length of $\rho$ and $\rho'$. The lemma will be proved by induction on $n$. If $n=0$, then $\rho$ and $\rho'$ are null paths with no backtracks. So the only possibility is that $\rho_1=\rho = \rho' = \rho_2$.

Suppose that the claim has been proved for all paths with length less than $n$. If $\rho$ has no backtracks, then neither does $\rho'$. In this case there is nothing to prove, since the only possibility is that $\rho_1=\rho$ and $\rho_2=\rho'$.

So assume that $\rho$ has at least one backtrack, so that $\rho'$ also has at least one backtrack. Since $\rho_1$ and $\rho_2$ are nonbacktracking, therefore $\rho_1\ne \rho$ and $\rho_2\ne \rho'$. Let $\tau_1$ be the path produced after the first backtrack erasure in the sequence that produces $\rho_1$. Let $\tau_2$ be the path produced after the first backtrack erasure in the sequence that produces $\rho_2$.

If $\tau_1\sim\tau_2$, then the induction hypothesis implies that $\rho_1\sim\rho_2$, and there is nothing more to prove. So assume that $\tau_1\not \sim \tau_2$. Let $\tau_1$ be produced by erasing a backtrack at a location $i$ of $\rho$ and $\tau_2$ be produced by erasing a backtrack at a location $i'$ of $\rho'$. Since $\rho$ is a cyclic permutation of $\rho'$, there is a location $j$ in $\rho$ that corresponds to the location $i'$ in $\rho'$. 

If $j=i$, then clearly $\tau_1\sim \tau_2$, so this cannot be the case. Therefore assume that $j\ne i$. We claim that $j\ne i+1$. This is proved by contradiction. Suppose that $j=i+1$. There are two cases. First, if $n>j$, then $e_{i+2}=e_{i+1}^{-1} = e_i$ since $j=i+1$ and $\rho$ has backtracks at $i$ and $j$. Thus, 
\[
\tau_1 = e_1\cdots e_{i-1}e_{i+2}e_{i+3}\cdots e_n = e_1 \cdots e_{i-1} e_i e_{i+3}\cdots e_n \sim \tau_2\, ,
\] 
which is a contradiction. Next, if $n=j$, then $e_1 = e_n^{-1} = e_{n-1}$. Thus,
\[
\tau_1 = e_1e_2\cdots e_{n-2} = e_{n-1}e_2e_3\cdots e_{n-2} \sim e_2e_3\cdots e_{n-1}\sim\tau_2\, ,
\] 
which, again, is a contradiction. Therefore we have established the claim that $j\ne i+1$. A similar argument shows that $j\ne i-1$. 

Suppose that $i< j$. Since $j\ne i+1$ and $\rho$ has backtracks at $i$ and $j$, the following is a well-defined closed path:
\[
\tau_3 := e_1e_2\cdots e_{i-1}e_{i+2}\cdots e_{j-1} e_{j+2} \cdots e_n\, .
\] 
Note that $\tau_3$ can be obtained by a single backtrack erasure from $\tau_1$, and is cyclically equivalent to a path $\tau_4$ that can be obtained by a single backtrack erasure from $\tau_2$. Continue erasing backtracks from $\tau_3$ and $\tau_4$ to arrive at   nonbacktracking  closed paths $\rho_3$ and $\rho_4$ respectively. Applying the induction hypothesis to the pair $(\tau_3, \tau_4)$, we get $\rho_3\sim \rho_4$. 

Now, $\rho_1$ and $\rho_3$ are both obtained by erasing backtracks starting from $\tau_1$, possibly in different orders. Therefore, by the induction hypothesis, $\rho_1\sim \rho_3$. Similarly, $\rho_2$ and $\rho_4$ are both obtained by erasing backtracks starting from $\tau_2$. Thus, by the induction hypothesis, $\rho_4\sim \rho_2$.  Since $\rho_3\sim \rho_4$ as observed in the previous paragraph, these two observations show that $\rho_1\sim \rho_2$. This completes the proof of the lemma when $i< j$. If $i>j$, the proof goes through similarly after defining $\tau_3 =  e_1e_2\cdots e_{j-1}e_{j+2}\cdots e_{i-1} e_{i+2} \cdots e_n$.

\section{Open problems}\label{opensec}
There are many open problems in the mathematics of lattice gauge theories. Here is a tentative list of some problems that are most closely related to this paper:
\begin{enumerate}[1.]
\item Can there be a simplification of the formula given in Theorem \ref{mainthm}, for example along the lines of the reduction proposed by Eguchi and Kawai \cite{eguchikawai82}?
\item Do the Wilson loop expectations, suitably rescaled, converge as $N\ra\infty$ for any value of $\beta$? Does the rescaled log-partition function converge?
\item Assuming that Wilson loop expectations converge, is the string representation of Theorem \ref{mainthm} valid for all $\beta$? If not, where does it break down? What is the formula for the limiting Wilson loop expectation when $\beta$ is large?
\item Is the limit of the rescaled log-partition function, assuming it exists, a real analytic function of $\beta$ with infinite radius of convergence? If not, where does analyticity break down? The answer to this question is known in two dimensions: Gross and Witten \cite{grosswitten80} and Wadia \cite{wadia} showed using arguments that are almost rigorous, that in two dimensional $U(N)$ lattice gauge theory, the limit of the log-partition function is not analytic, and identified the exact point of phase transition. Theorem \ref{mainthm} gives hope that such a phase transition may be proved in higher dimensions. 
\item Is there any way to build a similar theory when the lattice scaling is taken to zero?
\item Is it possible to exploit the finite $N$ master loop equation (Theorem \ref{mastersymm}) to prove interesting results --- such as area law or a formula for Wilson loop expectations --- without taking $N$ to infinity?
\item Is it possible to prove an area law lower bound for general loops, either for finite $N$, or in the limit $N\ra\infty$, using the techniques of this paper? This would generalize a result of Seiler \cite{seiler78}, who proved it for rectangles. 
\item Is it possible to extend the techniques developed here to other settings, such as AdS/CFT?
\end{enumerate}

\vskip.2in

\noindent{\bf Acknowledgments.} I thank  Amir Dembo, Persi Diaconis, Bruce Driver, Len Gross, Alice Guionnet, Jafar Jafarov, Todd Kemp, Herbert Neuberger, Steve Shenker, Lior Silberman, Tom Spencer and Akshay Venkatesh for many helpful discussions and comments.  I am grateful to the referee for a number of useful suggestions, and to H.-T.~Yau for his enthusiasm about getting this paper published in CMP.%, and Jafar Jafarov for carefully checking the proof. %Thanks to Abdelmalek Abdesselam for pointing out the reference \cite{levy11}.

\end{document}